\DeclareMathOperator\supp{supp}
\newtheorem{theorem}{Theorem}
\newtheorem{proposition}{Proposition}[section]
\newtheorem{lemma}[proposition]{Lemma}
\newtheorem{corollary}[proposition]{Corollary}
\theoremstyle{definition}
\newtheorem{remark}[proposition]{Remark}
\numberwithin{equation}{section}
\newcommand\eps{\varepsilon}
\newcommand\e{{\rm e}}
\newcommand\dd{{\rm d}}
\newcommand\ddt{{\frac{\dd}{\dd t}}}
\def\Re{{\rm Re}}
\def\l {\langle}
\def\r {\rangle}
\newcommand\de{{\partial}}
\newcommand\cA{{\mathcal A}}
\newcommand\cB{{\mathcal B}}
\newcommand\cF{{\mathcal F}}
\newcommand\cI{{\mathcal I}}
\newcommand\cL{{\mathcal L}}
\newcommand\cM{{\mathcal M}}
\newcommand\cN{{\mathcal N}}
\newcommand\cP{{\mathcal P}}
\newcommand\cR{{\mathcal R}}
\newcommand\cS{{\mathcal S}}
\newcommand\cT{{\mathcal T}}
\newcommand\cV{{\mathcal V}}
\newcommand\sfE{{\mathsf E}}
\newcommand{\norm}[1]{\left\lVert #1 \right\rVert}
\newcommand{\abs}[1]{\left\lvert #1 \right\rvert}
\renewcommand\subsubsection{\@startsection{subsubsection}{3}%
\normalparindent{.5\linespacing\@plus.7\linespacing}{-.5em}
{\normalfont\bfseries}}
\def\@tocline#1#2#3#4#5#6#7{\relax
  \ifnum #1>\c@tocdepth 
  \else
    \par \addpenalty\@secpenalty\addvspace{#2}%
    \begingroup \hyphenpenalty\@M
    \@ifempty{#4}{%
      \@tempdima\csname r@tocindent\number#1\endcsname\relax
    }{%
      \@tempdima#4\relax
    }%
    \parindent\z@ \leftskip#3\relax \advance\leftskip\@tempdima\relax
    \rightskip\@pnumwidth plus4em \parfillskip-\@pnumwidth
    #5\leavevmode\hskip-\@tempdima
      \ifcase #1
       \or\or \hskip 1em \or \hskip 2em \else \hskip 3em \fi%
      #6\nobreak\relax
    \dotfill\hbox to\@pnumwidth{\@tocpagenum{#7}}\par
    \nobreak
    \endgroup
  \fi}
\newcommand{\CC}{\mathbb{C}}
\newcommand{\NN}{\mathbb{N}}
\newcommand{\ZZ}{\mathbb{Z}}
\newcommand\TT {{\mathbb T}}
\newcommand\RR {{\mathbb R}}
\renewcommand{\circ}{\Gamma}
\renewcommand{\star}{\GG}
\begin{document}
\title[Stability of viscous $3d$ stratified Couette flow via dispersion and mixing]{\vspace*{-3cm}Stability of viscous three-dimensional stratified Couette flow \\ via dispersion and mixing} 

\author[M. Coti Zelati]{Michele Coti Zelati}
\address{Department of Mathematics, Imperial College London}
\email{m.coti-zelati@imperial.ac.uk}

\author[A. Del Zotto]{Augusto Del Zotto}
\address{Department of Mathematics, Imperial College London}
\email{a.del-zotto20@imperial.ac.uk}

\author[K. Widmayer]{Klaus Widmayer}
\address{Faculty of Mathematics, University of Vienna \& Institute of Mathematics, University of Zurich}
\email{klaus.widmayer@math.uzh.ch}

\subjclass[2020]{35Q35, 76D05, 76D50}

\keywords{Boussinesq equations, stratified Couette flow, enhanced dissipation, inviscid damping, transition threshold, internal gravity waves, dispersion}

\begin{abstract}
This article explores the stability of stratified Couette flow in the viscous $3d$ Boussinesq equations. In this system, mixing effects arise from the shearing background, and gravity acts as a restoring force leading to dispersive internal gravity waves. These mechanisms are of fundamentally different nature and relevant in complementary dynamical regimes. Our study combines them to establish a bound for the nonlinear transition threshold, which is quantitatively larger than the inverse Reynolds number $\nu$, and increases with stronger stratification resp.\ gravity.

\end{abstract}

\maketitle

\newcommand\q{q}
\newcommand\Q{Q}
\newcommand\GG{G}

\setcounter{tocdepth}{2}
\tableofcontents

\section{Introduction}
This article is devoted to the study of stable dynamics in the three-dimensional Boussinesq equations
\begin{equation}\label{eq:3DBoussinesq}
\begin{cases}
\de_t v+(v\cdot\nabla)v+\nabla p=\nu\Delta v+\mathfrak{g}\vartheta \vec{e}_y,\qquad \nabla\cdot v=0,\\
\de_t\vartheta+v\cdot\nabla\vartheta=\nu\Delta \vartheta,
\end{cases}
\end{equation}
This system describes the evolution of an incompressible, viscous and inhomogeneous fluid with velocity $v(t,x,y,z)\in\RR^3$, pressure $p(t,x,y,z)\in\RR$ and temperature $\vartheta(t,x,y,z)\in\RR$. The dynamics of the fluid are given by the classical momentum equation of Navier-Stokes and are coupled to an advection-diffusion equation for the temperature through buoyancy forces due to gravity (acting here in the $y$ direction with constant of gravity $\mathfrak{g}>0$). The parameter $\nu\in(0,1)$ is the kinematic viscosity coefficient, proportional to the inverse Reynolds number of the fluid, which for convenience we take equal to the diffusivity parameter in the $\vartheta$-equation. 

One of the main reasons for our interest in \eqref{eq:3DBoussinesq} is the fact that it provides a comparatively simple yet highly relevant setting to study the interplay of different stabilizing mechanisms in fluid systems. In particular, in addition to damping due to viscosity, \eqref{eq:3DBoussinesq} exhibits both dispersive and mixing effects, which are frequently at the origin of stable fluid flows. A crucial challenge hereby lies in the fact that although in general one may expect stability to arise from a combination of such mechanisms, their distinct nature makes it difficult to treat them in a combined fashion, and the quantitative analysis often (and necessarily) relies on rather different tools -- we refer to the discussion below for more details. The goal of this article is to implement an approach that overcomes this difficulty: we demonstrate how mixing and dispersion combine to yield an improved stability threshold for dynamics near the classical stationary structure given by a linearly stratified Couette flow.

More precisely, we study \eqref{eq:3DBoussinesq} on the spatial domain $(x,y,z)\in\TT\times\RR\times\TT$. Such three-dimensional channels are a natural setting to study dynamics of stratified flow in the absence of boundaries, and admit a large family of stationary states: shear flows $v=(f(y),0,0)$ with linearly stratified temperature profiles $\vartheta=a+by$, $a,b\in\RR$. Amongst them, the stably stratified Couette flow stands out as a particularly simple yet relevant example:
\begin{equation}\label{eq:steadystate}
v^s=(y,0,0),\qquad \de_y p^s=\mathfrak{g}(1+\alpha y), \qquad \vartheta^s=1+\alpha y,\qquad \alpha>0.
\end{equation}
The choice of sign $\alpha>0$ hereby assures that with respect to the direction of gravity, warmer fluid is on top of colder fluid. This is referred to as stable stratification, since gravity acts as a restoring force for perturbations in the $y$-direction,\footnote{Contrast this with the case $\alpha<0$, where gravity gives rise to instability of Rayleigh-B\'enard type.} which gives rise to internal gravity waves. 

Writing $v=v^s+u$, $\vartheta=\vartheta^s-\sqrt{\alpha/\mathfrak{g}}\,\theta$, the perturbations $(u,\theta)$ of \eqref{eq:steadystate} in \eqref{eq:3DBoussinesq} satisfy
\begin{equation}\label{eq:3DBoussinesqPert}
\begin{cases}
\de_t u+y\de_xu+u^{2}\vec{e}_x+ (u\cdot\nabla)u+\nabla p=\nu\Delta u-\beta\theta \vec{e}_y,\quad \nabla\cdot u=0, \\
\de_t\theta+y\de_x\theta -\beta u^{2}+u\cdot\nabla\theta=\nu\Delta\theta,
\end{cases}
\end{equation}
where $\beta=\sqrt{\alpha \mathfrak{g}}$ is the Brunt-V\"ais\"al\"a frequency, reflecting the strength of the response of the fluid to displacements in the direction of gravity.

As may be apparent, the dynamics of solutions to \eqref{eq:3DBoussinesqPert} which are independent of $x$ resp.\ $x$ and $z$ are qualitatively and quantitatively different from those that depend on the $x$ and $z$ variables. To reflect this in the analysis, we decompose functions $\varphi:\mathbb{T}\times\mathbb{R}\times\mathbb{T}\to \mathcal{V}$, $\cV\in\{\RR,\RR^3\}$, as
\begin{equation}
 \varphi(x,y,z)=\varphi_0(y,z)+\varphi_{\neq}(x,y,z),\qquad \varphi_0(y,z):=\frac{1}{2\pi}\int_{\mathbb{T}}\varphi(x,y,z)\dd x,
\end{equation}
where we call $\varphi_0$, the mean in $x$ of $\varphi$, the \emph{zero mode} of $\varphi$.
Moreover, we let 
\begin{equation}\label{eq:zmean-notation}
 \overline{\varphi}_0(y):=\frac{1}{2\pi}\int_{\TT}\varphi_0(y,z)\dd z,\qquad \widetilde{\varphi}_0(y,z):=\varphi_0(y,z)-\overline{\varphi}_0(y),
\end{equation}
denote the mean resp.\ mean-free components in $z$ of $\varphi_0$. We refer to $\overline{\varphi}_0$ as the \emph{double zero} and $\widetilde{\varphi}_0$ as the \emph{simple zero} mode of $\varphi$.

Our main result (Theorem \ref{thm:transitionthreshold}) provides a lower bound for the size of the basin of attraction of the stably stratified Couette flow \eqref{eq:steadystate} in \eqref{eq:3DBoussinesq} in Sobolev regularity:
\begin{theorem}[Transition threshold]\label{thm:transitionthreshold}
  Let $m\geq 3$ and $0<\nu<1$. There exist universal constants $c_1,c_2>0$ such that the following holds true. For initial data $(u(0),\theta(0))$ with vanishing $x$-$z$-mean
  \begin{equation}\label{eq:zero-mean-xz-id}
   \iint_{\TT\times\TT}u(0)\dd x\dd z=\iint_{\TT\times\TT}\theta(0)\dd x\dd z=0, 
  \end{equation}
  and size
  \begin{equation}\label{eq:id_size}
   \norm{u(0)}_{H^{2m+1}\cap W^{2m+5,1}}+\norm{\theta(0)}_{H^{2m+1}\cap W^{2m+5,1}}\leq\eps_0,
  \end{equation}
  there exists a unique, global solution $(u(t),\theta(t))\in C_tH^{2m}([0,\infty)\times\TT\times\RR\times\TT)$, which moreover satisfies an inviscid damping and enhanced dissipation estimate
  \begin{equation}\label{eq:basicbounds}
      \|u_{\neq}^1(t)\|_{L^2}+\l t\r^{\frac32}\|u_{\neq}^2(t)\|_{L^2}+\|u_{\neq}^3(t)\|_{L^2}+\l t\r^{\frac12}\|\theta_{\neq}(t)\|_{L^2}\lesssim \eps_0\lambda^{-\frac12}\e^{-\lambda\nu^{\frac13} t},\qquad \lambda(\beta):=\frac{2\beta-1}{2\beta+1},
  \end{equation} 
  as well as dispersive bounds
  \begin{equation}\label{eq:dispersion}
    \|u^2_0(t)\|_{L^\infty}+\|\widetilde{u}^3_0(t)\|_{L^\infty}+\|\widetilde{\theta}_0(t)\|_{L^\infty}\lesssim \eps_0\beta^{-\frac13}\left(t^{-\frac13}\e^{-\nu t} + \nu^{-\frac23}\eps_0\right), 
  \end{equation}
  provided one of the following two options holds true:
  \begin{equation}\label{eq:transthresh1}
      \beta>\frac12 \quad \textnormal{and}\quad \eps_0\leq c_1\nu^{\frac{11}{12}}
  \end{equation}
  or
  \begin{equation}\label{eq:transthresh2}
      \beta>c_2\nu^{-\frac12} \quad \textnormal{and}\quad  \eps_0\leq c_1\nu^{\frac89}.
  \end{equation}
  \end{theorem}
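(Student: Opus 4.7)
The plan is to separate the dynamics in the $x$-variable into the $x$-zero mode, which evolves via a dispersive internal-gravity-wave system, and the $x$-nonzero modes, which are subjected to shear-induced mixing and enhanced dissipation, and to close a single bootstrap argument on a norm encoding the full linear decay of both regimes.

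First, I would recast \eqref{eq:3DBoussinesqPert} in symmetrized form. After eliminating the pressure via $-\Delta p = \de_i\de_j(u^i u^j) + 2\de_x u^2 - \beta\de_y\theta$ and passing to the vorticity $q=\nabla\times u$, the system splits in Fourier along $x$. On the $x$-nonzero part one gets a scalar transport--diffusion system coupling a component of $q$ with $\theta_{\neq}$ through the buoyancy coefficient $\beta$; this should admit (by the earlier linear sections) the sharp enhanced-dissipation bound $\e^{-\lambda(\beta)\nu^{1/3} t}$ together with the inviscid-damping time weights $\jap{t}^{3/2}$ on $u^2_{\neq}$ and $\jap{t}^{1/2}$ on $\theta_{\neq}$, as recorded in \eqref{eq:basicbounds}. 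On the $x$-zero, $z$-nonzero ``simple zero'' part, the buoyancy generates a two-dimensional wave equation for $(\widetilde u^2_0, \widetilde \theta_0)$ with dispersion relation $\omega(\eta, l) = \beta |l|/\sqrt{\eta^2+l^2}$; stationary phase applied to the associated oscillatory integral yields the $\beta^{-1/3} t^{-1/3}$ dispersive $L^\infty$-decay of \eqref{eq:dispersion}. The ``double zero'' part $\overline{u^1_0},\overline{\theta^{}_0}$ feels neither mixing nor dispersion and decays only on the slow viscous time scale $\nu^{-1}$, so it must be regarded as a potentially growing forcing.

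Next, I would set up the bootstrap. Define
\begin{equation*}
E(t) = \sup_{0\le s \le t}\eps_0^{-1}\e^{\lambda \nu^{1/3} s}\Big(\norm{u^1_{\neq}}_{H^{2m}} + \jap{s}^{3/2}\norm{u^2_{\neq}}_{H^{2m}} + \norm{u^3_{\neq}}_{H^{2m}} + \jap{s}^{1/2}\norm{\theta_{\neq}}_{H^{2m}}\Big),
\end{equation*}
and $D(t) = \sup_{0\le s\le t}\eps_0^{-1}\beta^{1/3}\jap{s}^{1/3}\big(\norm{u^2_0}_{L^\infty}+\norm{\widetilde u^3_0}_{L^\infty}+\norm{\widetilde\theta_0}_{L^\infty}\big)$, plus a high-regularity $L^2$-energy $F(t)$ controlling $(u_0,\theta_0)$ in $H^{2m+1}\cap W^{2m+5,1}$. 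The hypothesis is $E+D+F\le M$ for a universal $M$; short-time existence is standard, and the task is to propagate this bound via the Duhamel formula applied mode by mode.

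The bulk of the analysis is the nonlinear control. The interactions split into four types -- nonzero$\times$nonzero$\to$nonzero, nonzero$\times$nonzero$\to$zero, zero$\times$nonzero$\to$nonzero, and zero$\times$zero$\to$zero -- each of which benefits from a distinct gain: integrability of $\e^{-2\lambda\nu^{1/3} s}$ producing the standard zero-mode gain of $\nu^{-1/3}$; the $\jap{s}^{-3/2}$ inviscid-damping decay of $u^2_{\neq}$ controlling transport nonlinearities; and the dispersive $L^\infty$-decay of simple zeros absorbing losses in $\jap{s}$. The truly problematic terms are the lift-up/stretching couplings into $u^1_0$ and $\overline{u^1_0}$, which enjoy no dispersion: they accumulate as Duhamel integrals of nonlinear sources of size $\eps_0^2$ against the pure heat semigroup, producing the factor $\nu^{-2/3}\eps_0$ that appears on the right of \eqref{eq:dispersion}.

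The principal obstacle, and the source of the two thresholds \eqref{eq:transthresh1}--\eqref{eq:transthresh2}, is the interface between the dispersive and energy pictures: stationary phase requires $L^1$-type control on the data (hence the $W^{2m+5,1}$ hypothesis), and this must be propagated under the nonlinear flow by estimating Duhamel sources in $L^1\cap L^2$ using tame product estimates with the $H^{2m+1}$ energy. Closing the bootstrap amounts to balancing the slowest-decaying nonlinear channel -- feedback through $\overline{u^1_0}$ into the simple-zero wave system -- against the $\beta$-dependent wave decay. In the regime $\beta>1/2$ the rate $\lambda(\beta)$ is bounded below but not enhanced, and one obtains $\eps_0\lesssim \nu^{11/12}$; in the strongly stratified regime $\beta\gtrsim \nu^{-1/2}$ the wave coupling redistributes part of the would-be growth into dispersively decaying modes, improving the threshold to $\eps_0\lesssim \nu^{8/9}$.
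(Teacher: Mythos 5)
Your high-level picture (split into $x$-zero and $x$-nonzero modes, enhanced dissipation and inviscid damping on the nonzero modes, stationary-phase dispersion on the simple zero modes, slow heat dynamics on the double zero modes, a single bootstrap to close everything) matches the architecture of the paper. However there are several points where your plan either diverges in a way that would create serious trouble, or where you have the structure wrong.

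\textbf{Missing: the symmetric variables.} You propose to propagate the inviscid-damping weights $\jap{s}^{3/2}$ and $\jap{s}^{1/2}$ directly in the bootstrap norm $E(t)$, possibly after passing to vorticity. The paper instead works with the \emph{symmetric variables} $\GG=-|\nabla_{x,z}|^{-1/2}|\nabla_L|^{3/2}U^2$, $\Gamma=|\nabla_{x,z}|^{1/2}|\nabla_L|^{1/2}\Theta$, bootstraps \emph{unweighted} $L^2$-norms of $\cA\GG_{\neq}$, $\cA\Gamma_{\neq}$, and lets the algebraic decay rates in \eqref{eq:basicbounds} fall out automatically from the $|\nabla_L|^{\pm s}$ factors via \eqref{eq:time_extraction}. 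The point is that $\GG$ and $\Gamma$ satisfy a skew-symmetric coupled system whose energy functional \eqref{eq:GGamma_energy} is coercive precisely when $\beta>1/2$; this is where the Miles--Howard condition enters and where the $\lambda(\beta)$ rate comes from. Without this algebraic structure the linear oscillatory term $\pm\beta|\nabla_{x,z}||\nabla_L|^{-1}$ in the moving frame has time-dependent coefficients that are not obviously absorbable, and there is also a genuinely destabilizing $\pm\frac12\partial_x\partial_y^L|\nabla_L|^{-2}$ term that must be handled via the ghost multiplier $\cM_2$. Your plan is silent on both of these, and a vorticity formulation does not rescue them.

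\textbf{Missing: the nonlinear decomposition of $U_0^2$.} The reason the paper achieves a threshold strictly larger than $\nu$ for the double-zero modes is Lemma~\ref{lemma:innl_decomp}: $U_0^2$ is split into a piece $U_0^{2,in}$ (propagated by the semigroup, decaying in $L^\infty$ at order $2m$ derivatives) and a nonlinear remainder $U_0^{2,nl}$ bounded in $H^{2m}$ with extra smallness. Since $\overline{U}_0^2\equiv 0$, the double-zero modes do not self-interact and are forced \emph{only} by $U_0^2$, so this decomposition is exactly what is needed. A direct energy estimate as in your sketch would only give the bare $\nu^{-1}$ threshold from the heat semigroup. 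You identify that the double-zero modes are the slow channel, which is right, but without this lemma the argument would not close above $\nu$.

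\textbf{A structural error.} You say the problematic feedback loop is ``through $\overline{u}^1_0$ into the simple-zero wave system.'' That channel does not exist: $U^1_0$ (and hence $\overline{U}^1_0$) forces \emph{no} zero modes, because the only way $U^1$ appears in $\cT$ is via $U^1\partial_x$, and $\partial_x$ kills zero modes. The paper tracks $U^1_0$ through the passive scalar $V_0=U^1_0+\beta^{-1}\Theta_0$, which is merely advected and dissipated -- it is the ``worst'' mode in the sense that it carries no dispersion, hence gives the largest contribution $\max\{\nu^{-8/9},\beta^{-1/3}\nu^{-11/12}\}$, but it does not feed back. The genuinely dangerous feedback into the simple-zero wave system comes from $\overline{U}^3_0$ and $\overline{\Theta}_0$, which \emph{do} force the simple zero modes; controlling them requires the aforementioned decomposition of $U^2_0$ and the mean-zero assumption \eqref{eq:zero-mean-xz-id}. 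Getting this wrong means your accounting of which terms dominate the threshold would not match the actual $\nu^{11/12}$ and $\nu^{8/9}$ exponents.
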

  
Besides being the first such transition threshold established for the $3d$ Boussinesq equations, the key novelty in our approach lies in its reliance and exploitation of both mixing effects (enhanced dissipation and inviscid damping around the Couette flow) and a dispersive, oscillatory mechanism (due to buoyancy forces around the linear stratification). These qualitatively and quantitatively different stabilizing dynamics are captured e.g.\ in \eqref{eq:basicbounds} and \eqref{eq:dispersion}. In particular, they allow us (see \eqref{eq:transthresh1}--\eqref{eq:transthresh2}) to quantify the size of the stability transition as at least $\nu^p\gg\nu$, for a $p<1$. To the best of our knowledge, this is the first instance of a threshold that is quantitatively larger than $\nu$ in a three-dimensional hydrodynamic stability problem. Moreover, the influence of the strength of the coupling is quantitatively tracked through the Brunt-V\"ais\"al\"a frequency $\beta$ and allows to further increase the threshold, provided $\beta$ is sufficiently large.

\begin{remark}
 Theorem \ref{thm:transitionthreshold} is a simplified version of the full result we establish. 
 \begin{itemize}
     \item[$-$] In Propositions \ref{prop:GGamma}--\ref{prop:doublezero} we obtain more precise information on the thresholds for the various components of \eqref{eq:3DBoussinesqPert}. In particular, the transition threshold for the nonzero modes is $\nu^{\frac56}$, while the threshold for the simple and double zero modes continuously improves with increasing $\beta$ from \eqref{eq:transthresh1} to \eqref{eq:transthresh2}. The largest contributions hereby are due to the dynamics of $u^1$ -- see also the discussion below. 
     \item[$-$] The mean-zero condition \eqref{eq:zero-mean-xz-id} is automatic for $u^2(0)$ by incompressibility, and in fact only required for $u^3$ and $\theta$. It can be relaxed in a quantified fashion, but an assumption of this nature is necessary for a threshold larger than $\nu$. 
     \item[$-$] Furthermore, with minor adaptions of our methods it seems possible to prove our result also in the case of kinematic viscosity $\nu>0$ differing from diffusivity $\kappa>0$ (as in \cite{CZDZ23}), provided they satisfy
\begin{equation}
    \frac{\max\{\nu,\kappa\}}{\min\{\nu,\kappa\}}<4\beta-1.
\end{equation}
    \item[$-$] The classical spectral stability for $2d$ inviscid stratified flows, known as the Miles-Howard criterion \cites{M61, H61}, requires the Richardson number $\beta^2$ to be greater than $\frac14$. This condition is reflected in \eqref{eq:transthresh1} and used to ensure a coercivity condition of a certain energy functional (see \eqref{eq:GGamma_energy} below), although in the $3d$ viscous setting we consider, this may not be necessary. 
    \item[$-$] We expect our approach to also yield (improved) thresholds in other settings that combine mixing and dispersive dynamics, such as the $3d$ magnetohydrodynamics setting of \cite{L18}. This will be the subject of future study.
 \end{itemize}

\end{remark}

\subsubsection*{Context}
The study of the stability of laminar flow -- and in particular its most basic example, the Couette flow -- in viscous flows at high Reynolds number has a long history, dating back to the end of the nineteenth century \cites{rayleigh1879stability,kelvin1887stability,reynolds1883xxix}. Since then, countless articles have been devoted to estimate, in terms of relevant parameters, the maximal size of perturbations for which a flow avoids a transition to a turbulent state, and instead (asymptotically) retains essential, stable features. That in general the size of such a \emph{transition threshold} may depend on the Reynolds number was clear to O.\ Reynolds himself in 1883, in view of his famous experiments \cite{reynolds1883xxix}, which demonstrated the transition to turbulence of certain laminar configurations when increasing the flow rates in a pipe. However, proving optimal size bounds requires a deep and quantitative understanding of the dynamics. To date, the best understood setting in $3d$ is that of the homogeneous Navier-Stokes equations near Couette flow (in certain channel-like geometries) \cites{BGM17,BGM20,BGM22,WZ21,CWZ20}. Recently, important progress has been made to extend such results to more general, non-monotone shears \cites{CDLZ23,LWZ20Kolmo}. However, due to their relation with atmospheric and oceanic sciences, as well as engineering applications involving heat transfer \cites{pedlosky2013geophysical, vallis2017atmospheric,wyngaard2010turbulence}, these questions are not only relevant in the case of homogeneous fluids governed by the Navier-Stokes equations, but also in non-homogeneous fluids as described by \eqref{eq:3DBoussinesq}.
\paragraph{\emph{Mixing effects in homogeneous fluids}}
The presence of a background shear flow is responsible for fluid mixing, a mechanism that produces small scales and causes inviscid damping and enhanced dissipation. In our setting, these two effects are sharply quantified in \eqref{eq:basicbounds}, the former in the $\nu$-independent algebraic decay rate of $u^2,\theta$, and the latter via the exponential decay on a time-scale of order $\nu^{-\frac13}$, which is much shorter than the dissipative one proportional to $\nu^{-1}$. 

Our understanding of these effects is best in $2d$. There inviscid damping near shear flows has been studied in the $2d$ homogeneous Euler equations both at the linear level \cites{BCZV19,CZZ19,GNRS20,Jia20,JiaGev20,WZZ18,WZZ19,WZZKolmo20,Zillinger16,IIJ24} and at the nonlinear level \cites{BM15,IJ22,IJ20,IJnon20,MZ20}. When dissipation is present, the nonlinear stability of shear flows in the $2d$ Navier-Stokes equations and related models have been studied in various contexts \cites{BMV16,BVW18,DL23,WZZKolmo20,WZ23,DelZotto23,DL22,CZEW20,CLWZ20,WZ21}. We only highlight that for the $2d$ Couette flow on $\TT\times\RR$, the stability threshold depends on the regularity of the perturbation: it is at least of order $\nu^\frac13$ in Sobolev regularity \cite{MZ22}, and independent of viscosity in Gevrey regularity \cite{BMV16}. This dependence on the topology is due to the so-called Orr mechanism, a transient growth of the stream function that can be suppressed by regularity and (in part) by viscosity. 

In $3d$, there are to date no stability results of shear flows in the inviscid setting. However, in the Navier-Stokes equations, the nonlinear stability of Couette flow in a channel was investigated in \cites{BGM17,BGM20,BGM22,CWZ20,WZ21}. Here one encounters a severe instability mechanism, known as the lift-up effect: this involves the stretching and tilting of vortices by the Couette flow, causing complex flow patterns. This forces a growth of order $\nu^{-1}$ in the first component of the velocity, and implies that the stability thresholds of order $\nu$ in both Gevrey \cites{BGM20,BGM22} and Sobolev \cites{WZ21,CWZ20} are sharp. More general monotone shears are still awaiting exploration, but recent progress has been made for other prototypical non-monotone shear flows \cites{CDLZ23,LWZ20Kolmo}. We also refer to \cite{CDLZ23}*{page 3} for a quick overview of the state of the art.

\paragraph{\emph{The inhomogeneous setting}}
In the inviscid $2d$ setting, early linear studies date back to Hartman \cite{Hartman}, with more recent work addressing the linear stability of stably stratified Couette flow \cites{BCZD22,CZN23strip,CZN23chan,YL18}. For a nonlinear result giving an extended time-span of stability, see \cite{BBCZD23}. One of the distinctive features of the interaction between shearing and stratification in $2d$ is an oscillatory coupling that induces an instability that slows down inviscid damping rates. This is captured in \cites{BCZD22,BBCZD23} thanks to specific \emph{symmetric variables}, which play a key role in the present article as well (see \eqref{eq:SimVar} below). In the presence of dissipation, a nonlinear stability threshold was established for the $2d$ stably stratified Couette flow in \cites{ZZ23,Zillinger21bouss}.

In $3d$, the lift-up effect is suppressed by the coupling with the temperature equation, as can already be seen in the linearized dynamics of the zero modes \cite{CZDZ23}, the so-called streaks. Although this was already noticed in other coupled systems (such as in the MHD equations \cite{L18}), the suppression of lift-up does not directly imply a quantitative improvement of the stability threshold over the Navier-Stokes setting. Indeed, in the corresponding MHD problem it is still only known to be at least of order $\nu$, as in the case of homogeneous Navier-Stokes \cite{BGM20}. It is thus one of the key aspects of our result (Theorem \ref{thm:transitionthreshold}) that not only a transition threshold is established, but that it improves over that of the corresponding Navier-Stokes setting (see \eqref{eq:transthresh1}, \eqref{eq:transthresh2}). The decisive novelty to obtain such a quantitative improvement is the use of dispersive mechanisms in the nonlinear analysis.

\paragraph{\emph{The role of stable stratification and dispersive effects}}
From a (geo-) physical viewpoint, a suitable \emph{stable} stratification is widely considered a stabilizing mechanism: deviations from such a configuration are subject to a restoring force due to gravity. This is best understood without background flow, i.e.\ near $v=0$. Here the linearization of \eqref{eq:3DBoussinesq} yields a constant coefficient system, which in the inviscid case features purely dispersive (in $2d$, see \cite{EW15}) resp.\ stationary and dispersive behaviors (in $3d$, see \cite{W19}). This highlights a parallel with rotational effects, another key aspect of many geophysical flows, as both stratification and rotation typically feature anisotropic, degenerate dispersion relations.\footnote{There is a vast literature on many related models, see e.g.\ \cite{GS2007} for an overview. In terms of dispersion relations, a particularly close connection exists with the $3d$ Euler equations near a rigid rotation, a swirling configuration which was recently shown to be nonlinearly stable \cites{GHPW21,GPW23} in axisymmetry.}
On the full space $\RR^2$ resp.\ $\RR^3$, this leads to amplitude decay. The role of the Richardson number $\beta^2$ can then be simply traced as that of a large parameter, which increases the strength of the dispersive effects. In the presence of viscosity, it is moreover possible to show that a perturbation of the stable stratification without background shear will give rise to a global solution of \eqref{eq:3DBoussinesq} provided $\beta$ is sufficiently large \cites{T21,TS17}.\footnote{This proceeds in the same spirit as works on fast rotation in the Navier-Stokes equations with Coriolis force, see e.g.\ \cites{CDGG06,GRM09}, and also related geophysical settings \cite{GIQT22}.}

With Theorem \ref{thm:transitionthreshold}, we provide a quantitative validation of the aforementioned physical intuition that stratification helps to stabilize a flow also in the presence of a shearing background in $3d$. The latter leads to structural changes in the system (see also the discussion of our proof below) and introduces variable coefficients, so that it is not straightforward to understand what of the dispersive effects survives. In particular, it is worth mentioning that in the analogous $2d$ situation stabilization is not at all the case: when $\nu=0$, the interplay of stratification and shearing generates an instability mechanism that weakens inviscid damping rates \cites{CZN23strip,CZN23chan,BBCZD23,BCZD22}, and there does not seem to be room for dispersive effects to improve this. Moreover, for $\nu>0$ the best known stability threshold is of order $\nu^\frac12$ \cite{ZZ23}, and thereby significantly smaller than that of order $\nu^\frac13$ proved for the homogeneous case \cite{MZ22}.

\subsubsection*{About the proof of Theorem \ref{thm:transitionthreshold}}
To give a rough idea of the ideas and techniques underlying Theorem \ref{thm:transitionthreshold}, we review here the dynamics it captures. 
We refer to Sections \ref{ssec:unknowns} and \ref{ssec:proofsetup} for the details regarding the choice of unknowns, the setup of our main bootstrap argument and a more elaborate overview of the proof. The proper choice of unknowns also includes translation to a moving frame -- for the sake of simplicity, we ignore this more delicate point in this preliminary presentation.

\emph{Linearized Dynamics.} Friction forces act (via kinematic viscosity/thermal diffusivity) on the full system \eqref{eq:3DBoussinesqPert} isotropically, with strength proportional to $\nu$, and are relevant on the time scale $O(\nu^{-1})$, characteristic of the heat equation. In addition, $u^1$ and $u^3$ are forced by $u^2$ and $\theta$, while $u^2$ and $\theta$ are coupled in an oscillatory fashion. As already observed in \cite{CZDZ23} (see also \cite{L18} for the related MHD case), this suppresses the classical lift-up instability mechanism in the Navier-Stokes equations near the Couette flow. Moreover, the same oscillatory coupling between $\theta$ and $u^2$ also reflects a restoring mechanism due to internal gravity waves: at its core, this is a dispersive mechanism with zero-homogeneous dispersion relation $\frac{\abs{k,l}}{\abs{k,\eta,l}}$, where $(k,\eta,l)\in\ZZ\times\RR\times\ZZ$ are the Fourier variables on $\TT\times\RR\times\TT$ (see also \cite{W19} for the corresponding analysis in the inviscid case without background shear). However, this mechanism is witnessed here in general in a moving frame (and thus with time dependent coefficients),  which makes it difficult to exploit.

The dominant dynamics are then as follows:
\begin{enumerate}
    \item (Nonzero modes $k\neq 0$) The effect of friction is enhanced by the shearing of the background Couette flow, which leads to the enhanced dissipation \eqref{eq:basicbounds} of the nonzero (i.e.\ $x$-dependent) modes in the system. This effect is relevant on a time scale $O(\nu^{-\frac13})\ll O(\nu^{-1})$. It is easily witnessed for $u^1_{\neq}$ and $u^3_{\neq}$, but due to the oscillatory coupling between $u^2_{\neq}$ and $\theta_{\neq}$ it is only apparent in suitable \emph{symmetric variables} $\GG,\Gamma$ (see \eqref{eq:SimVar}) replacing $u^2_{\neq},\theta_{\neq}$.\footnote{These symmetric variables go back to at least \cites{BCZD22,BBCZD23}, in the $2d$ inviscid setting, and have also been used in prior work \cite{CZDZ23} on the linear dynamics in \eqref{eq:3DBoussinesqPert}.} 
    
    \item (Simple zero modes $k=0$, $l\neq $0) The zero (i.e.\ $x$-independent) modes do not witness a dissipation enhancement. However, the simple zero modes form a constant coefficient system with three degrees of freedom (due to incompressibility), which features one purely dissipating mode (closely related to $u^1_0$), and two modes which are also dispersing via internal gravity waves on a time scale $O(1)$. The latter are bona fide dispersive waves with dispersion relation $\frac{\abs{l}}{\abs{\eta,l}}$, and closely related to the symmetric variables. In particular, this leads to amplitude decay of the simple zero modes $\widetilde{u}^2_0, \widetilde{u}^3_0$ and $\widetilde{\theta}_0$
    as in Proposition \ref{prop:disp_est} (compare the first term on the right-hand side of \eqref{eq:dispersion}).
    
    \item (Double zero modes $k=l=0$) The double zero modes, depending neither on $x$ nor on $z$, simply obey a linear heat equation on $\RR$. (Their nonlinear interactions motivate our assumption \eqref{eq:zero-mean-xz-id}.)
\end{enumerate}

\emph{Nonlinear Behavior.}
The nonlinear interactions determine the transition threshold we find for the full system \eqref{eq:3DBoussinesqPert}. This is established in a perturbative spirit from the linear dynamics via a nonlinear bootstrap argument -- see Theorem \ref{thm:bootstrap_step}. Hereby, a precise analysis of the quadratically nonlinear interactions is indispensable. It relies on some key features, listed here by the type of outputs they produce:
\begin{enumerate}
    \item (Nonzero modes) Nonlinear interactions leading to a nonzero mode output need to include at least one nonzero mode. As a consequence, enhanced dissipation of such modes can be propagated with a comparatively large transition threshold of $\nu^{\frac56}$ (see Propositions \ref{prop:GGamma} and \ref{prop:U13}). This is relatively direct for $u^1_{\neq}$ and $u^3_{\neq}$, but relies on a specific energy functional that uses the skew-symmetric structure of the coupling of the modes $u^2_{\neq}$ and $\theta_{\neq}$ in the symmetric variables. The overall approach is similar in spirit to prior works \cites{BGM17,WZ21,L18}.

    \item (Simple zero modes $k=0$, $l\neq 0$) Obtaining a threshold that is quantitatively larger than $\nu$ also for the simple zero modes uses on the one hand that $u^1_0$ does not force any zero modes, and on the other hand requires the propagation of dispersive features on $\widetilde{u}^2_0,\widetilde{u}^3_0,\widetilde{\theta}_0$ (see Proposition \ref{prop:LinftyGGamma}). In particular, we establish a decomposition of $\widetilde{u}^2_0$ into a piece (emanating from the initial data) that decays in amplitude and a nonlinear contribution, which is comparatively small (see Lemma \ref{lemma:innl_decomp}), and use that $\widetilde{u}^2_0$ and $\widetilde{u}^3_0$ are connected by incompressibility. Several further structural features of the Euler nonlinearity then play an important role, e.g.\ that $\overline{u}^2_0=0$ and that $\overline{u}^3_0$ cannot force simple zero modes. However, nonlinear interactions including double zero modes exist, and it is from those that the weakest control on $\widetilde{u}^2_0,\widetilde{u}^3_0$ and $\widetilde{\theta}_0$ with a threshold of $\max\{\nu^{\frac56},\beta^{\frac13}\nu^{\frac89}\}$ derives.\footnote{It is here that assumption \eqref{eq:zero-mean-xz-id} is used to prevent exceedingly large contributions.} In all of this, the non-dispersing contribution gives the weakest bound for $\widetilde{u}^1_0$ (which fortunately is only a passive dynamic), and thus the largest contribution $\max\{\nu^{\frac89},\beta^{\frac13}\nu^{\frac{11}{12}}\}$ to the threshold -- see Proposition \ref{prop:zero}.

     \item (Double zero modes) Due to the comparatively slow effect of the linear heat equation dynamic, the nonlinear control of double zero modes crucially relies on the absence of self-interactions of such modes, which are avoided by the Euler nonlinearity. Under assumption \eqref{eq:zero-mean-xz-id} (which can be quantitatively relaxed), favorable bounds (as they are needed for the simple zero modes) on $\overline{u}^3_0$ and $\overline{u}^1_0$ can be established. As in the simple zero modes, the double zero mode $\overline{u}^1_0$ is only transported, leading to a similar threshold as in the simple zero case -- see Proposition \ref{prop:doublezero}.

     \item (The role of $\beta$) As can be seen from these arguments, the role of $\beta$ is to increase the strength of the dispersive effect, thus allowing for a larger threshold if $\beta$ is sufficiently large.
\end{enumerate}

\subsection{Structure of the equations and choice of unknowns}\label{ssec:unknowns}
The linearized transport structure of \eqref{eq:3DBoussinesqPert} makes it
convenient to adopt the  change variables 
\begin{equation}\label{eq:changelin}
    (x,y,z)\mapsto (x-yt,y,z).
\end{equation}
In this way, the differential operators become
\begin{equation}\label{eq:nablaL}
    \nabla=(\de_x,\de_y,\de_z)\mapsto \nabla_L:=(\de_x,\de_y^L,\de_z), \qquad \Delta\mapsto \Delta_L
\end{equation}
where
\begin{equation}
    \de_y^L:= \de_y -t\de_x, \qquad \Delta_L:= \de^2_x+\left(\de_y^L\right)^2+\de_z^2
\end{equation}
As a convention, a function $f$ in the original coordinates will be capitalized to $F$ in the moving frame \eqref{eq:changelin}. We will write $\nabla_{x,z}=(\de_x,\de_z)$ to emphasize the gradient with respect to $(x,z)$ only. In particular, we have that $\nabla_L\cdot U=\nabla\cdot u=0$, and from this it follows that $\iint_{\TT\times\TT}U^2\dd x\dd z$ is a constant, integrable function of $y\in\RR$, and thus
\begin{equation}\label{eq:xz-meanU2}
 \overline{U}^2_0(y)=\iint_{\TT\times\TT}U^2(x,y,z)\dd x\dd z=0.
\end{equation}
As noted in \cite{CZDZ23}, the coupling between $u^2$ and $\theta$ is conveniently captured at the linearized level by the symmetric variables 
\begin{equation}\label{eq:SimVar}
 \GG:=-|\nabla_{x,z}|^{-\frac12}|\nabla_L|^{\frac32}U^2, \qquad  \Gamma:=|\nabla_{x,z}|^{\frac12}|\nabla_L|^{\frac12}\Theta.
\end{equation}
We highlight that $\GG$ is well-defined by \eqref{eq:xz-meanU2}, and by construction both $\GG$ and $\Gamma$ have zero mean over $(x,z)\in\TT\times\TT$. These new variables play a crucial role in the analysis of the linearized system, as they admit a favorable energy structure (see the proof of Proposition \ref{prop:GGamma}).

Under the change of coordinates \eqref{eq:changelin} and the change of variables \eqref{eq:SimVar}, we  obtain from \eqref{eq:3DBoussinesqPert} that
\begin{subequations}\label{eq:UGGamma}
\begin{alignat}{4}
\de_tU^1&=\nu\Delta_LU^1-U^2 +\de_xP  + \cT(U,U^1) +\de_x\cP(U,U), \\
\de_tU^3&=\nu\Delta_LU^3 +\de_zP + \cT(U,U^3) +\de_z\cP(U,U),   \\
\de_t\GG &=\nu\Delta_L \GG +\frac{1}{2}\de_x\de_y^L|\nabla_L|^{-2}\GG +\beta |\nabla_{x,z}||\nabla_L|^{-1}\Gamma + \cT_\star(U,U^2)+\de_y^L\cP_\star(U,U),\\
\de_t\Gamma & = \nu\Delta_L\Gamma-\frac{1}{2}\de_x\de_y^L|\nabla_L|^{-2}\Gamma -\beta |\nabla_{x,z}||\nabla_L|^{-1}\GG+\cT_\circ (U,\Theta).
\end{alignat}
\label{eq:full_shorthand}
\end{subequations}
However, $\overline{\Theta}_0=\iint_{\TT\times\TT}\Theta \dd x\dd z$ is not conserved, and thus \eqref{eq:UGGamma} needs to be complemented with
\begin{equation}
\de_t \overline{\Theta}_0+\de_y(\overline{U^2\Theta})_0=\nu\de_{yy}\overline{\Theta}_0,
\end{equation}
to ensure equivalence with \eqref{eq:3DBoussinesqPert}.
In \eqref{eq:UGGamma}, the linear part of the pressure is denoted by 
\begin{equation}
P:=-2\de_x|\nabla_L|^{-2}U^2-\beta\de_y^L|\nabla_L|^{-2}\Theta,
\end{equation}
%
%
and for  a scalar function $F$, the bilinear forms defining the transport and pressure nonlinearities are written as
\begin{equation}
    \cT(U,F)=-(U\cdot\nabla_L F), \qquad 
    \cP(U,U)=-|\nabla_L|^{-2}(\nabla_L \otimes \nabla_L) (U\otimes U),  
\end{equation}
with modified transport terms
\begin{equation}
\cT_\star(U,F)=-|\nabla_{x,z}|^{-\frac12}|\nabla_L|^{\frac32}\cT(U,F),\qquad 
\cT_\circ (U,F)=|\nabla_{x,z}|^{\frac12}|\nabla_L|^{\frac12}\cT(U,F),
\end{equation}
and modified pressure 
\begin{equation}
    \cP_\star(U,U)= |\nabla_{x,z}|^{-\frac12}|\nabla_L|^{\frac32}\cP(U,U).
\end{equation}
In particular, with \eqref{eq:xz-meanU2} $U^2$ and $\Theta$ can be recovered from $\GG$ and $\Gamma$ as
\begin{equation}\label{eq:U2Theta_recover}
 U^2=-\abs{\nabla_{x,z}}^{\frac12}\abs{\nabla_L}^{-\frac32}\GG,\qquad \Theta=\abs{\nabla_{x,z}}^{-\frac12}\abs{\nabla_L}^{-\frac12}\Gamma+\overline{\Theta}_0.   
\end{equation}

\subsection{Setup and overview of the proof of Theorem \ref{thm:transitionthreshold}}\label{ssec:proofsetup}
We will work with energies that are defined through Fourier multipliers. The main multiplier $\cA $ combines regularity, time decay and \emph{ghost weights} $\cM_j$ ($1\leq j\leq 3$), and is of the form
\begin{equation}\label{def:cA}
    \cA =\e^{\lambda\nu^{\frac13}t}\l\nabla\r^{2m} \cM,\qquad \cM=\prod_{j=1}^3\cM_j,\qquad \lambda=\lambda(\beta):=\frac{2\beta-1}{2\beta +1}.
\end{equation}
See Section \ref{ssec:neq_prelim} for further details. 
We will show the following bootstrap result:
\begin{theorem}[Bootstrap step]\label{thm:bootstrap_step}
Under the hypothesis of Theorem \ref{thm:transitionthreshold}, assume that for some $T>0$ we have the following bounds for $t\in [0,T]$, where $\eps=C_\beta^{-1}\eps_0$ with $C_\beta:=\sqrt{\lambda(\beta)}=\sqrt{\frac{2\beta-1}{2\beta+1}}$ and $C_0\geq 10^4$ :
    \begin{align}
    \norm{\cA \GG_{\neq}}^2_{L^\infty_tL^2}+\nu\norm{ \nabla_L \cA\GG_{\neq}}^2_{L^2_tL^2}+\norm{\sqrt{-\frac{\dot{\cM}}{\cM}}\cA \GG_{\neq}}^2_{L^2_tL^2}  &\leq 100\varepsilon^2, \label{eq:bootstrap_nonzero_G}\\
    \norm{\cA \Gamma_{\neq}}^2_{L^\infty_tL^2}+\nu\norm{\nabla_L\cA  \Gamma_{\neq}}^2_{L^2_tL^2}+\norm{\sqrt{-\frac{\dot{\cM}}{\cM}}\cA \Gamma_{\neq}}^2_{L^2_tL^2}  &\leq 100\varepsilon^2,\label{eq:bootstrap_nonzero_Gamma}\\
    \norm{\cA  U^j_{\neq}}^2_{L^\infty_tL^2}+\nu\norm{\nabla_L\cA  U^j_{\neq}}^2_{L^2_{t}L^2}+\norm{\sqrt{-\frac{\dot{\cM}}{\cM}}\cA  U^j_{\neq}}^2_{L^2_tL^2}  &\leq 100 C_0^2\varepsilon^2, \qquad j\in\{1,3\},\label{eq:bootstrap_nonzero_U}
\end{align}
and 
\begin{align}
    \norm{\GG_0}^2_{L^\infty_tH^{2m}}+\nu\norm{\nabla\GG_0}^2_{L^2_tH^{2m}} &\leq 100\varepsilon^2,\label{eq:bootstrap_zero_g}\\
    \norm{\Gamma_0}^2_{L^\infty_tH^{2m}}+\nu\norm{\nabla\Gamma_0}^2_{L^2_tH^{2m}} &\leq 100\varepsilon^2,\label{eq:bootstrap_zero_gamma}\\
    \norm{\overline{\Theta}_0}^2_{L^\infty_tH^{2m+1}}+\nu\norm{\de_y\overline{\Theta}_0}^2_{L^2_tH^{2m+1}} &\leq 100\varepsilon^2,\label{eq:bootstrap_doublezero_Theta}\\
    \norm{U^r_0}^2_{L^\infty_tH^{2m}}+\nu\norm{\nabla U^r_0}^2_{L^2_tH^{2m}} &\leq 100C_0^2\varepsilon^2, \qquad r\in\{1,3\},\label{eq:bootstrap_zero_u}
\end{align}
Then \eqref{eq:bootstrap_nonzero_G}--\eqref{eq:bootstrap_zero_u} hold in $[0,T]$, with $100$ replaced by $50$.
\end{theorem}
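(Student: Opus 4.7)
The plan is to improve each of the eight bootstrap norms \eqref{eq:bootstrap_nonzero_G}--\eqref{eq:bootstrap_zero_u} in turn, by performing weighted $L^2$ energy estimates directly on the system \eqref{eq:UGGamma} (plus the scalar equation for $\overline{\Theta}_0$), treating the linear part to produce the dissipative and ghost-weight good terms, and then estimating each quadratic interaction under the bootstrap hypothesis to obtain a cubic remainder that is small compared to $\varepsilon^2$ thanks to the thresholds \eqref{eq:transthresh1}--\eqref{eq:transthresh2}. Concretely, for each norm I aim to show $E(t)\leq \tfrac{1}{2}\cdot 100\varepsilon^2 + C\varepsilon^3 \nu^{-\sigma}$ with $\sigma$ small enough that $\varepsilon\leq c_1 \nu^{(\sigma+1)/2}$ forces the cubic remainder below, say, $\varepsilon^2$. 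The factor $C_0$ appearing in \eqref{eq:bootstrap_nonzero_U} and \eqref{eq:bootstrap_zero_u} will be chosen large once and for all to absorb the cross-mode coupling constants that feed $U^1,U^3$ from $\GG,\Gamma$ via the pressure and the $-U^2\vec e_x$ term.

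For the nonzero modes \eqref{eq:bootstrap_nonzero_G}--\eqref{eq:bootstrap_nonzero_U}, I would test the third and fourth equations of \eqref{eq:UGGamma} against $\cA^2\GG_{\neq}$ and $\cA^2\Gamma_{\neq}$ and sum. The dissipation contributes $\nu\norm{\nabla_L\cA\GG_{\neq}}^2_{L^2L^2}+\nu\norm{\nabla_L\cA\Gamma_{\neq}}^2_{L^2L^2}$; the time derivative of the multiplier produces the enhanced-dissipation gain $\lambda\nu^{1/3}\norm{\cA\cdot}^2$ together with $\norm{\sqrt{-\dot\cM/\cM}\,\cA\cdot}^2_{L^2L^2}$ from the ghost weight $\cM$, and the skew-symmetric character of $\pm\beta|\nabla_{x,z}||\nabla_L|^{-1}$ makes the off-diagonal linear coupling vanish in this combined energy. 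The symmetric terms $\pm\tfrac12\partial_x\partial_y^L|\nabla_L|^{-2}$, which encode the Miles--Howard coercivity requirement $\lambda(\beta)>0$, are absorbed by the ghost weight exactly as in the proof of Proposition \ref{prop:GGamma}. For the nonlinearities $\cT_\star,\cT_\circ,\cP_\star$, a mode-class decomposition shows every interaction contains at least one nonzero-mode factor, on which the exponential $\e^{\lambda\nu^{1/3}t}$ from $\cA$ converts $\nu^{1/3}$-loss into gain, and product estimates of paraproduct type (together with the dispersive $L^\infty$ bound on $\widetilde{U}_0^2,\widetilde{U}_0^3,\widetilde{\Theta}_0$ from Proposition \ref{prop:LinftyGGamma}) close the estimate at threshold $\nu^{5/6}$. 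The $U^1_{\neq},U^3_{\neq}$ bounds are analogous, with the linear source $-U^2\vec{e}_x$ and the pressure recovered from $\GG,\Gamma$ through \eqref{eq:U2Theta_recover}, and are the reason for the large constant $C_0$.

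For the zero modes \eqref{eq:bootstrap_zero_g}--\eqref{eq:bootstrap_zero_u}, there is no ghost weight and no enhanced dissipation, so I would use only the $H^{2m}$ (or $H^{2m+1}$) energy with viscous dissipation $\nu\norm{\nabla\cdot}^2$. The scalar equation for $\overline{\Theta}_0$ is forced by $\partial_y(\overline{U^2\Theta})_0$, whose inputs reduce to nonzero-mode quadratics by \eqref{eq:xz-meanU2}, yielding a cubic term of size $\varepsilon^3$ that is bounded using the enhanced-dissipation $L^2_t$ norms already controlled in Step 1. For $\GG_0,\Gamma_0$ and $U^{1,3}_0$ the nonlinearities split into nonzero$\times$nonzero (handled as above) and zero$\times$zero terms. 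The latter are the bottleneck: naively a simple-zero self-interaction $\widetilde U_0^2\partial_y\widetilde U_0^{\cdot}$ integrates to $\varepsilon^2/\nu$, which is larger than $\varepsilon^2$. To bypass this I would import Proposition \ref{prop:LinftyGGamma} (the dispersive decomposition of $\widetilde{U}_0^2,\widetilde{U}_0^3,\widetilde{\Theta}_0$ from Lemma \ref{lemma:innl_decomp}), placing one factor in $L^\infty_{y,z}$ with its $t^{-1/3}\e^{-\nu t}$ decay and the other in $H^{2m}$, producing a time-integrable cubic term bounded by $\beta^{-1/3}\varepsilon^3\nu^{-2/3}$. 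Using also the structural facts $\overline U_0^2\equiv 0$, $\overline U_0^3$ cannot force simple-zero modes, and $U_0^1$ does not feed $U_0^2,U_0^3,\Theta_0$, this closes $\GG_0,\Gamma_0$ at threshold $\max\{\nu^{5/6},\beta^{1/3}\nu^{8/9}\}$ and $U_0^{1,3}$ at $\max\{\nu^{8/9},\beta^{1/3}\nu^{11/12}\}$, matching \eqref{eq:transthresh1}--\eqref{eq:transthresh2}. The double-zero system is purely diffusive and is handled analogously.

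The main obstacle is precisely the zero$\times$zero simple-zero interaction just described: without the dispersive amplitude decay of the symmetric variables, one obtains a threshold no better than $\nu$, matching the $3d$ MHD state of the art. The technical heart of the proof is therefore twofold: first, propagating the $L^\infty$ dispersive bound \eqref{eq:dispersion} through the bootstrap (with sharp $\beta^{-1/3}$ gain, coming from the dispersion relation $|l|/|\eta,l|$ and stationary-phase on the constant-coefficient simple-zero linear system), and second, reconciling this $L^\infty$-based control with the $H^{2m}$ energy method, which requires careful paraproduct decompositions and use of incompressibility to convert $\widetilde U_0^2$ into $\partial_z\widetilde U_0^3/|\nabla_{x,z}|$ wherever a loss of $y$-derivative would otherwise occur. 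Once these dispersive estimates and the mode-class bookkeeping are set up, all eight bounds improve by a factor of two, yielding the conclusion of Theorem \ref{thm:bootstrap_step}.
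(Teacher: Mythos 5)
Your overall architecture mirrors the paper's: you split the bootstrap into nonzero modes (symmetric energy in the variables $\GG,\Gamma$ with ghost weights), simple zero modes (dispersive $L^\infty$ decay fed into $H^{2m}$ energy estimates), and double zero modes (no self-interactions), and you correctly identify the role of the structural facts $\overline{U}^2_0\equiv 0$, $\widetilde{U}^3_0=-\de_z^{-1}\de_y U^2_0$, and the absence of zero-mode forcing from $U^1_0$. However, there is a genuine gap in your treatment of $U^1_0$: you propose to close \eqref{eq:bootstrap_zero_u} for $r=1$ by a direct $H^{2m}$ energy estimate on the equation $\de_t U^1_0=\nu\Delta U^1_0-U^2_0+\cT(U,U^1)_0$, but the linear lift-up forcing $-U^2_0$ makes this fail. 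Unlike the nonzero modes, where the same forcing is tamed via the $|\nabla_L|^{-3/2}$ time decay and the $\cM_3$ ghost weight (cf.\ the treatment of $\langle\cA U^1_{\neq},\cA U^2_{\neq}\rangle$ in Section \ref{ssec:U13neq}), for $k=0$ there is no ghost weight and $\|U^2_0\|_{H^{2m}}\sim\|\GG_0\|_{H^{2m}}$ does not decay in time (only its $L^\infty$ amplitude does). Hence $\int_0^T|\langle U^1_0,U^2_0\rangle_{H^{2m}}|\,\dd t$ is of size $\eps^2 T$, which is unbounded and cannot be closed. The paper avoids this entirely by passing to the passive scalar $V_0=U^1_0+\beta^{-1}\Theta_0$ defined in \eqref{eq:def_V0}, for which the linear lift-up forcing is exactly cancelled by the $\beta U^2_0$ term in the $\Theta_0$ equation (see \eqref{eq:V_0_dynamics}); $U^1_0$ is then recovered from $V_0$ and $\Theta_0$. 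Without this change of unknown, or some substitute for it, your plan cannot produce the stated improvement on \eqref{eq:bootstrap_zero_u} for $r=1$.

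Two further imprecisions are worth noting, though they are less serious than the lift-up issue. First, the diagonal terms $\pm\tfrac12\de_x\de_y^L|\nabla_L|^{-2}$ are \emph{not} absorbed by the ghost weight alone: the multiplier $\de_x\de_y^L|\nabla_L|^{-2}$ changes sign in $t$, so a ghost weight defined with $-\dot M/M$ equal to it would not be bounded. What actually cancels these terms is the cross-term $\frac{1}{2\beta}\langle\de_x\de_y^L|\nabla_{x,z}|^{-1}|\nabla_L|^{-1}\cA\GG_{\neq},\cA\Gamma_{\neq}\rangle$ in the energy functional $\sfE$ of \eqref{eq:GGamma_energy}; the ghost weight $\cM_2$ then absorbs the \emph{residual} term arising from $\frac{\dd}{\dd t}(\de_x\de_y^L|\nabla_{x,z}|^{-1}|\nabla_L|^{-1})$, and the coercivity of $\sfE$ is exactly where $\beta>\tfrac12$ enters. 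Second, Lemma \ref{lemma:innl_decomp} decomposes only $U^2_0$; by Remark \ref{rem:innl_decomp} the analogous decomposition fails for $\widetilde{U}^3_0$, which is why the double zero equations \eqref{eq:doublezero_mode_eqs} being forced \emph{only} by $U^2$ (and not $U^3$) is structurally essential, and why the incompressibility substitution is always used in the direction $\widetilde{U}^3_0\mapsto-\de_z^{-1}\de_y U^2_0$ rather than the reverse as you suggest.
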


The proof of Theorem \ref{thm:transitionthreshold} follows directly from this theorem: By standard local well-posedness we can assume that there exists $T>0$ such that the assumptions \eqref{eq:bootstrap_nonzero_G}--\eqref{eq:bootstrap_zero_u} hold on $[0,T]$. Theorem \ref{thm:bootstrap_step} and continuity of the norms then imply that the set of times on which \eqref{eq:bootstrap_nonzero_G}--\eqref{eq:bootstrap_zero_u} holds is closed, open and non-empty in $[0,\infty)$. This implies Theorem \ref{thm:transitionthreshold}, upon also collecting the dispersive bounds from Proposition \ref{prop:LinftyGGamma}.

\begin{proof}[Proof of Theorem \ref{thm:bootstrap_step}]
 The below Propositions \ref{prop:GGamma}--\ref{prop:doublezero} combine to give the claim.
\end{proof}

\begin{proposition}[Proposition for $G$, $\Gamma$]\label{prop:GGamma}
    There exists $C_1>0$ such that under the bootstrap assumptions \eqref{eq:bootstrap_nonzero_G}--\eqref{eq:bootstrap_zero_u} there holds that
    \begin{align}
    \norm{\cA \GG_{\neq}}^2_{L^\infty_tL^2}+\nu\norm{ \nabla_L \cA\GG_{\neq}}^2_{L^2_tL^2}+\norm{\sqrt{-\frac{\dot{\cM}}{\cM}}\cA \GG_{\neq}}^2_{L^2_tL^2} &\leq C_\beta^{-2}\eps_0^2+C_1(C_\beta^{-2}\nu^{-\frac56}\eps)\eps^2,\\
    \norm{\cA \Gamma_{\neq}}^2_{L^\infty_tL^2}+\nu\norm{\nabla_L \cA \Gamma_{\neq}}^2_{L^2_tL^2}+\norm{\sqrt{-\frac{\dot{\cM}}{\cM}}\cA \Gamma_{\neq}}^2_{L^2_tL^2}  &\leq C_\beta^{-2}\eps_0^2+C_1(C_\beta^{-2}\nu^{-\frac56}\eps)\eps^2,
\end{align}
\end{proposition}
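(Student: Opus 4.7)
The plan is to run a symmetrized energy estimate at the level of $(\cA\GG_{\neq}, \cA\Gamma_{\neq})$ in $L^2$, exploiting the skew-symmetric coupling between $\GG$ and $\Gamma$ at order $\beta|\nabla_{x,z}||\nabla_L|^{-1}$, and then to absorb the nonlinear forcing via the enhanced-dissipation--type CK terms produced by the ghost weights $\cM_j$ and the viscous CK $\nu\|\nabla_L\cA\cdot\|_{L^2}^2$. Concretely, I would apply $\cA$ to the equations for $\GG_{\neq}$ and $\Gamma_{\neq}$, pair each with $\cA\GG_{\neq}$ resp.\ $\cA\Gamma_{\neq}$, and sum. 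The enhanced-dissipation factor $\e^{\lambda\nu^{1/3}t}$ in $\cA$ yields a $2\lambda\nu^{1/3}(\|\cA\GG_{\neq}\|_{L^2}^2+\|\cA\Gamma_{\neq}\|_{L^2}^2)$ on the left, which must be balanced by the off-diagonal contribution
\[
 2\beta\bigl\langle |\nabla_{x,z}||\nabla_L|^{-1}\cA\Gamma_{\neq},\cA\GG_{\neq}\bigr\rangle -2\beta\bigl\langle|\nabla_{x,z}||\nabla_L|^{-1}\cA\GG_{\neq},\cA\Gamma_{\neq}\bigr\rangle,
\]
which vanishes up to commutators between $\cA$ and $|\nabla_{x,z}||\nabla_L|^{-1}$ (negligible after integration by parts since $\cA$ is a Fourier multiplier). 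The transport-like linear terms $\pm\tfrac12\de_x\de_y^L|\nabla_L|^{-2}$ are antisymmetric in frequency and their contribution is absorbed by the $\cM_1$ ghost-weight CK term, which is precisely designed for this purpose (cf.\ Section \ref{ssec:neq_prelim}); the Miles--Howard type inequality $\beta>1/2$, i.e.\ $\lambda(\beta)>0$, is used here to close the coercivity with the factor $C_\beta^{-2}$ accounting for the prefactor in the energy.

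Once the linear energy identity is established, the initial data produces the $C_\beta^{-2}\eps_0^2$ contribution via \eqref{eq:id_size} and the boundedness of the multipliers $|\nabla_{x,z}|^{\pm 1/2}|\nabla_L|^{\pm 3/2}$ at $t=0$ together with $\cM(0)\simeq 1$. The remaining work is to bound the nonlinear contributions
\[
 \int_0^t \bigl\langle\cA\cT_\star(U,U^2),\cA\GG_{\neq}\bigr\rangle + \bigl\langle\cA\de_y^L\cP_\star(U,U),\cA\GG_{\neq}\bigr\rangle + \bigl\langle\cA\cT_\circ(U,\Theta),\cA\Gamma_{\neq}\bigr\rangle\,\dd s,
\]
by splitting each quadratic interaction according to the projection of both inputs onto zero/nonzero modes. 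For the nonzero-by-nonzero interactions, I would distribute derivatives via a paraproduct-style decomposition, pay the $|\nabla_L|^{3/2}$ (or $|\nabla_L|^{1/2}$) price on the low-frequency factor at a loss compensated by $\langle\nabla\rangle^{2m}$ with $m\geq 3$, and use the $\nu\|\nabla_L\cA\cdot\|_{L^2_tL^2}^2$ CK term together with the ghost-weight CK to trade a full derivative $\de_y^L$ for a factor of $\nu^{-1/2}$ in time-integrated $L^2$; the enhanced dissipation $\e^{-\lambda\nu^{1/3}t}$ combined with the CK weights produces a time integral gain of $\nu^{-1/3}$, and the overall interaction is estimated as $\lesssim \nu^{-5/6}\eps^3$. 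For the zero-by-nonzero interactions, the zero-mode inputs are controlled by \eqref{eq:bootstrap_zero_g}--\eqref{eq:bootstrap_zero_u} with their own viscous CK, which again yields a $\nu^{-1/2}$ factor; the resulting bound is of the same order. Interactions with $\overline{\Theta}_0$ are handled through \eqref{eq:bootstrap_doublezero_Theta} (using the additional derivative controlled there to compensate $|\nabla_L|^{1/2}$ in $\cT_\circ$). Recovering $U^2$ and $\Theta$ from $(\GG, \Gamma, \overline{\Theta}_0)$ via \eqref{eq:U2Theta_recover} costs bounded multipliers on nonzero modes and is handled by Plancherel.

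The main obstacle is the bookkeeping of the fractional operators $|\nabla_{x,z}|^{\pm 1/2}|\nabla_L|^{\pm 3/2}$ hidden in $\cT_\star$, $\cT_\circ$, and $\cP_\star$: since $|\nabla_L|$ grows like $t$ on low $x$-frequencies, these weights interact nontrivially with the moving frame and with $\cA$. The cleanest way to deal with this is to commute the multipliers through $\cA$ by computing on the Fourier side at frequency $(k,\eta,l)$ and bounding the resulting symbols pointwise, controlling the loss by $\langle k,\eta-kt,l\rangle/\langle k,l\rangle$ which is harmless for $k\neq 0$ after $m$ derivatives are spent. A subtler point is the pressure $\de_y^L\cP_\star(U,U)$ where the extra $\de_y^L$ raises the derivative count by one; this is absorbed by using one of the two CK terms (viscous or ghost) on the $\GG_{\neq}$ side, which is the true source of the $\nu^{-5/6}$ loss. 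Summing the linear and nonlinear contributions yields the stated bound, with constant $C_1$ depending only on $m$.
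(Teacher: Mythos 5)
Your proposal runs the energy estimate on the \emph{diagonal} quantity $\|\cA\GG_{\neq}\|^2+\|\cA\Gamma_{\neq}\|^2$ (``pair each with $\cA\GG_{\neq}$ resp.\ $\cA\Gamma_{\neq}$, and sum''), and claims that the linear terms $\pm\tfrac12\de_x\de_y^L|\nabla_L|^{-2}$ are ``antisymmetric in frequency'' and absorbed by $\cM_1$. This is where the argument breaks. On the Fourier side the symbol $\tfrac{-k(\eta-kt)}{2|k,\eta-kt,l|^2}$ is \emph{even} under $(k,\eta,l)\mapsto(-k,-\eta,-l)$, so for a real-valued $\GG_{\neq}$ the pairing $\langle\tfrac12\de_x\de_y^L|\nabla_L|^{-2}\cA\GG_{\neq},\cA\GG_{\neq}\rangle$ does \emph{not} vanish by symmetry; and because $\GG_{\neq}\neq\Gamma_{\neq}$, the two contributions with opposite signs don't cancel against each other either. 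Moreover this symbol is $O(1)$ near the Orr critical time $\eta\approx kt$, whereas $-\dot M_1/M_1=\tfrac{\nu^{1/3}k^2}{k^2+\nu^{2/3}|\eta-kt|^2}\lesssim\nu^{1/3}$ there, so $\cM_1$ is parametrically too small to absorb it. Indeed no bounded ghost weight can: $\int_0^\infty\tfrac{|k||\eta-kt|}{|k,\eta-kt,l|^2}\,\dd t$ diverges, so a candidate $\cM$ with $-\dot M/M$ equal to this quantity would degenerate to $0$, contradicting \eqref{eq:ghost_constant}.

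The paper closes this gap by working with the \emph{combined} functional $\sfE$ of \eqref{eq:GGamma_energy}, which carries the cross-term $\tfrac{1}{\beta}\langle\de_x\de_y^L|\nabla_{x,z}|^{-1}|\nabla_L|^{-1}\cA\GG_{\neq},\cA\Gamma_{\neq}\rangle$. When $\ddt\sfE$ is computed, the $\pm\beta|\nabla_{x,z}||\nabla_L|^{-1}$ coupling inserted into this cross-term produces exactly $\tfrac12\langle\de_x\de_y^L|\nabla_L|^{-2}\cA\Gamma_{\neq},\cA\Gamma_{\neq}\rangle - \tfrac12\langle\de_x\de_y^L|\nabla_L|^{-2}\cA\GG_{\neq},\cA\GG_{\neq}\rangle$, which cancels the bad diagonal contributions pointwise. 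The price is a new term from differentiating the time-dependent operator $\de_x\de_y^L|\nabla_{x,z}|^{-1}|\nabla_L|^{-1}$ — which, unlike your linear term, \emph{is} integrable in time, and is precisely what $\cM_2$ is designed to absorb (your proposal never invokes $\cM_2$, which is a symptom of having missed this structure). The condition $\beta>\tfrac12$ enters to make $\sfE$ coercive, see \eqref{eq:coercive-L2}, and the tuning $\lambda=\tfrac{2\beta-1}{2\beta+1}$ is chosen so that after undoing the coercivity constant, the $\lambda\nu^{1/3}\|\cdot\|^2_{L^2_t}$ contribution is exactly bounded by Corollary \ref{cor:ED_bounds}. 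Your treatment of the nonlinear terms and the target power $\nu^{-5/6}$ is broadly on the right track (cf.\ Lemma \ref{lem:GGammanonlin}), but the linear estimate as you wrote it does not close; you should redo it with the cross-term energy $\sfE$ and Lemma \ref{lem:GGamma_reduction}.
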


\begin{proof}[About the proof of Proposition \ref{prop:GGamma}]
We exploit the symmetric structure of the equations for the symmetrized variables $G$ and $\Gamma$ through a combined energy functional 
\begin{equation}\label{eq:GGamma_energy}
    \sfE(t)=\frac12\left[\|\cA \GG_{\neq}(t)\|^2+\|\cA \Gamma_{\neq}(t)\|^2+\frac{1}{\beta}\l\de_x\de_y^L|\nabla_{x,z}|^{-1}|\nabla_L|^{-1}\cA \GG_{\neq}(t) ,\cA \Gamma_{\neq}(t)\r\right].
\end{equation}
Here the Fourier multiplier $\cA$ encodes time decay and spatial regularity (see also the discussion in Section \ref{ssec:neq_prelim}). We highlight the coercivity of $\sfE$ for $\beta>\frac12$ in the sense that
\begin{equation}\label{eq:coercive-L2}
\frac12\left(1-\frac{1}{2\beta}\right)\left[\|\cA \GG_{\neq}(t)\|^2+\|\cA \Gamma_{\neq}(t)\|^2\right] \leq \sfE(t) \leq\frac12\left(1+\frac{1}{2\beta}\right)\left[\|\cA \GG_{\neq}(t)\|^2+\|\cA \Gamma_{\neq}(t)\|^2\right].
\end{equation}
From \eqref{eq:coercive-L2} we then obtain the claim by establishing suitable bounds on the time derivative of $\sfE(t)$. The multiplier $\cA$ is hereby crafted to absorb certain linear terms, which allows us to reduce to estimating trilinear expressions -- see Lemma \ref{lem:GGamma_reduction}. These arise naturally from energy estimates and are of the form
\begin{equation}
 \l\cA F_{\neq},\cA\cB(U,H)\r,\qquad \cB\in\{\cT_\star,\cT_\circ,\cP_\star\},\quad F\in\{\GG,\Gamma\},\quad H\in\{U,\Theta\},
\end{equation}
expressed in terms of the variables in \eqref{eq:UGGamma}. In particular, we note that the quadratic nonlinear terms must always involve at least one nonzero mode, i.e.\
\begin{equation}
 \cB(U,H)_{\neq}=\cB(U_{\neq},H_0)_{\neq}+\cB(U_0,H_{\neq})_{\neq} +\cB(U_{\neq},H_{\neq})_{\neq}. 
\end{equation}
To overcome the derivative loss in these nonlinearities, we utilize the energy structure and dissipation, distributing derivatives in $\cA$ and symmetric variable multipliers across the three terms to ensure that the final bound relies on dissipative estimates to the smallest possible extent. 
 The bound for the threshold then follows by tracing powers of $\nu$ needed for the various terms that appear: from the bootstrap assumptions it is clear that bounds in $L^2_tL^2$ with maximal order of derivatives incur a loss of $\nu^{-\frac12}$ for the dissipative contributions, whereas for one order of derivatives less enhanced dissipation yields $L^2_tL^2$ bounds of order $\nu^{-\frac16}$ (see Corollary \ref{cor:ED_bounds}), and terms involving only $L^\infty_t$ norms and ghost multipliers are uniformly bounded in $\nu$.
To give an example, a simple such bound appearing after suitably distributing derivatives and the multipliers is 
\begin{equation}
    \int_0^\infty\norm{\cA F_{\neq}}\norm{\cA \GG_{\neq}}\norm{\nabla_L\cA \GG_{\neq}}\lesssim \norm{\cA F_{\neq}}_{L^\infty_tL^2}\norm{\cA \GG_{\neq}}_{L^2_tL^2}\norm{\nabla_L\cA \GG_{\neq}}_{L^2_tL^2}\lesssim \nu^{-\frac23}\eps^3,
\end{equation}
where $F\in\{\GG,\Gamma\}$.
We direct the reader to Section \ref{ssec:GGamma_neq} for the detailed proof of the proposition.
\end{proof}

\begin{proposition}[Proposition for $U^r$, $r=1,3$]\label{prop:U13}
    There exists $C_2>0$ such that under the bootstrap assumptions \eqref{eq:bootstrap_nonzero_G}--\eqref{eq:bootstrap_zero_u} there holds that
\begin{align}
    &\norm{\cA U^r_{\neq}}^2_{L^\infty_tL^2}+\nu\norm{\nabla_L \cA U^r_{\neq}}^2_{L^2_tL^2}+\norm{\sqrt{-\frac{\dot{\cM}}{\cM}}\cA U^r_{\neq}}^2_{L^2_tL^2} \leq 4C_0^2\eps_0^2+C_2(\nu^{-\frac23}\varepsilon) \varepsilon^2.
\end{align}
\end{proposition}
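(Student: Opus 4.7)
The plan is to carry out an $\cA$-weighted energy estimate for $U^r_{\neq}$, $r\in\{1,3\}$, directly from the evolution equations in \eqref{eq:UGGamma}. Pairing each equation with $\cA^2 U^r_{\neq}$ and integrating in space yields, on the left-hand side, the three good quantities appearing in the conclusion: the enhanced-dissipation term $\lambda\nu^{\frac13}\|\cA U^r_{\neq}\|_{L^2}^2$ (from $\de_t$ hitting the factor $\e^{\lambda\nu^{1/3}t}$ in $\cA$), the dissipative term $\nu\|\nabla_L\cA U^r_{\neq}\|_{L^2}^2$ (via integration by parts against $\nu\Delta_L U^r$), and the ghost-weight term $\|\sqrt{-\dot\cM/\cM}\,\cA U^r_{\neq}\|_{L^2}^2$ (from $\de_t\cM$). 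The initial-data term contributes $\|\cA(0)U^r_{\neq}(0)\|^2\leq 4C_0^2\eps_0^2$ after absorbing a factor $2$ for $r$ passing through the bound in Cauchy–Schwarz; all remaining contributions are pushed to the right and must be absorbed in $C_2(\nu^{-2/3}\eps)\eps^2$.

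The linear contributions consist of the $-U^2\vec{e}_1$ forcing (only in the $U^1$ equation) and the pressure terms $\de_r P$, with $P=-2\de_x|\nabla_L|^{-2}U^2-\beta\de_y^L|\nabla_L|^{-2}\Theta$. Using the reconstruction \eqref{eq:U2Theta_recover}, each of these becomes a bounded Fourier multiplier applied to $\GG_{\neq}$ or $\Gamma_{\neq}$: crucially, in the $U^1$ equation the total coefficient of $U^2_{\neq}$ is $(1-2\de_x^2|\nabla_L|^{-2})$, a bounded symbol, which reflects the (linear) suppression of lift-up. Combined with the symbols $|\nabla_{x,z}|^{\frac12}|\nabla_L|^{-\frac32}$ for $U^2_{\neq}$ and $|\nabla_{x,z}|^{-\frac12}|\nabla_L|^{-\frac12}$ for $\Theta_{\neq}$, Cauchy–Schwarz in time followed by the bootstrap bounds \eqref{eq:bootstrap_nonzero_G}–\eqref{eq:bootstrap_nonzero_Gamma} shows that these contribute at most $O(\eps^2)$, which is much smaller than the threshold term.

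The nonlinear contributions $\langle\cA U^r_{\neq},\cA[\cT(U,U^r)+\de_r\cP(U,U)]_{\neq}\rangle$ dominate and dictate the threshold. Splitting $U=U_0+U_{\neq}$, at least one nonzero mode must appear in every nontrivial frequency interaction producing a nonzero output, so enhanced dissipation is always available on one factor. The strategy is then to place the ``outer'' $\cA U^r_{\neq}$ in $L^\infty_tL^2$ (using \eqref{eq:bootstrap_nonzero_U}), the nonzero mode enhanced-dissipation factor in $L^2_tL^2$ via the ghost-weight term (gaining $\nu^{-\frac16}$, by Corollary \ref{cor:ED_bounds}), and the highest-derivative factor in $L^2_tL^2$ via the $\nabla_L\cA$ dissipation bound (gaining $\nu^{-\frac12}$). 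The product of these two $L^2_t$ gains is exactly the $\nu^{-\frac23}$ prefactor in the statement. For the pressure term, the multiplier $|\nabla_L|^{-2}(\nabla_L\otimes\nabla_L)$ is bounded on $L^2$, so no derivative loss arises from $\cP$; the single external $\de_r$ is absorbed into the $\nabla_L\cA$ factor of the dissipative norm.

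The main obstacle will be the paraproduct-type distribution of regularity in the transport nonlinearity $\cT(U,U^r)=-U\cdot\nabla_L U^r$ at top order, where the commutator $[\cA,U\cdot\nabla_L]$ must be tamed. This requires the standard estimates for $\langle\nabla\rangle^{2m}$ and the exponential factor $\e^{\lambda\nu^{1/3}t}$, together with monotonicity properties of $\cM$ that trade unfavorable loss of regularity for a $\sqrt{-\dot\cM/\cM}$ term absorbable on the LHS. One must also be careful with interactions involving zero modes $U_0$, which carry no enhanced dissipation and for which $\de_y^L$-type growth is absent, so one plays $H^{2m}$-regularity from \eqref{eq:bootstrap_zero_g}–\eqref{eq:bootstrap_zero_u} against the two $L^2_t$-gains on the nonzero factors. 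These technical points closely parallel the nonlinear analyses of \cites{BGM17,WZ21,L18}, which we adapt to the present symmetrized setting. The detailed distribution of the multiplier $\cA$ across the various frequency regimes, and the precise verification that every worst-case term respects the $\nu^{-\frac23}\eps^3$ bound, constitute the bulk of the work.
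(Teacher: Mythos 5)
Your overall strategy (energy estimate in the $\cA$-weighted norm, split into linear and nonlinear contributions, exploit enhanced dissipation and ghost weights for the nonlinear terms) does match the paper's. However, there are two genuine gaps in the way you set up the energy inequality and treat the linear terms.

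First, you list $\lambda\nu^{\frac13}\|\cA U^r_{\neq}\|_{L^2}^2$ among the \emph{good} quantities on the left-hand side, attributing it to $\partial_t$ hitting the growing factor $\e^{\lambda\nu^{1/3}t}$ in $\cA$. But this term has the \emph{wrong} sign: since $\dot\cA=\lambda\nu^{1/3}\cA+\frac{\dot\cM}{\cM}\cA$, computing $\frac12\frac{d}{dt}\|\cA U^r_{\neq}\|^2$ produces a contribution $+\lambda\nu^{1/3}\|\cA U^r_{\neq}\|^2$, which grows the energy and must be \emph{absorbed}, not collected on the favorable side. (The favorable ghost-weight piece $-\|\sqrt{-\dot\cM/\cM}\,\cA U^r_{\neq}\|^2$ comes only from $\dot\cM/\cM$.) The absorption is precisely what Lemma \ref{lem:enhanced_dissip_estim} (and hence Corollary \ref{cor:ED_bounds}) is designed to do: $\lambda\nu^{1/3}\|\cA U^r_{\neq}\|^2_{L^2_tL^2}$ is bounded by $O(\eps^2)$ using $\|\cA U^r_{\neq}\|^2_{L^2_tL^2}\lesssim\eps^2\nu^{-1/3}$. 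Without invoking this, the energy inequality does not close.

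Second, your treatment of the linear lift-up and pressure terms is insufficient and is based on the mistaken impression that they are ``much smaller than the threshold term''. In fact they are the \emph{largest} contribution after the initial data: they give $O(C_0^2\eps^2)$, not $o(\eps^2)$, and since $\eps\lesssim\nu^{5/6}$ the nonlinear correction $\nu^{-2/3}\eps\cdot\eps^2$ is much \emph{smaller} than $\eps^2$. More importantly, a plain ``Cauchy--Schwarz in time'' applied to, say, $\int_0^\infty|\langle\cA U^1_{\neq},\cA U^2_{\neq}\rangle|$ using the $L^2_tL^2$ bounds from Corollary \ref{cor:ED_bounds} would produce $\nu^{-1/6}\eps\cdot\nu^{-1/6}\eps=\nu^{-1/3}\eps^2$, which is far too large to close the bootstrap (indeed $\nu^{-1/3}\eps^2\gg C_0^2\eps^2$ for small $\nu$). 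The crucial mechanism is the algebraic decay encoded in the symbol $|\nabla_{x,z}|^{1/2}|\nabla_L|^{-3/2}$ once $U^2_{\neq}$ is rewritten in terms of $\GG_{\neq}$: this symbol is pointwise dominated by $-\dot M_3/M_3$ (see \eqref{def:M3}), so that
\[
|\langle\cA U^1_{\neq},\cA U^2_{\neq}\rangle|\leq\norm{\sqrt{-\tfrac{\dot\cM_3}{\cM_3}}\cA U^1_{\neq}}\,\norm{\sqrt{-\tfrac{\dot\cM_3}{\cM_3}}\cA\GG_{\neq}},
\]
which integrates to $O(C_0\eps^2)$ using the ghost-weight pieces of the bootstrap assumptions, \emph{without} any power of $\nu^{-1}$. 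The same argument (with the extra decay from $|\nabla_L|^{-2}$ in the pressure) handles the $\de_r\de_x|\nabla_L|^{-2}U^2_{\neq}$ and $\de_r\de_y^L|\nabla_L|^{-2}\Theta_{\neq}$ contributions. You do mention the multiplier symbols and, later, the monotonicity of $\cM$, but the proof as written would not establish the $O(\eps^2)$ bound for the linear terms, and this is precisely what the $\cM_3$ ghost weight is engineered to deliver.

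Your account of the nonlinear trilinear estimates (one $L^\infty_tL^2$ factor, one ghost-weight $L^2_t$ factor gaining $\nu^{-1/6}$, one dissipative $L^2_t$ factor gaining $\nu^{-1/2}$, product $\nu^{-2/3}$) does capture the essence of the paper's argument in Lemma \ref{lem:Urnonlin} and is consistent with the claimed threshold.
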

The proof of this proposition proceeds directly via energy estimates and trilinear bounds, for which techniques as in the proof of Proposition \ref{prop:GGamma} are employed -- see Section \ref{ssec:U13neq}.

\begin{proposition}[Proposition for zero modes]\label{prop:zero}
 There exists $C_3>0$ such that under the bootstrap assumptions \eqref{eq:bootstrap_nonzero_G}--\eqref{eq:bootstrap_zero_u} there holds that
 \begin{align}
    \norm{\GG_0}^2_{L^\infty_tH^{2m}}+\nu\norm{\nabla\GG_0}^2_{L^2_tH^{2m}} &\leq \eps_0^2+C_3(\nu^{-\frac56}\eps+\beta^{-\frac13}\nu^{-\frac89}\eps)\eps^2,\\
    \norm{\Gamma_0}^2_{L^\infty_tH^{2m}}+\nu\norm{\nabla\Gamma_0}^2_{L^2_tH^{2m}} &\leq \eps_0^2+C_3(\nu^{-\frac56}\eps+\beta^{-\frac13}\nu^{-\frac89}\eps)\eps^2,\\
    \norm{\widetilde{U}^3_0}^2_{L^\infty_tH^{2m}}+\nu\norm{\nabla \widetilde{U}^3_0}^2_{L^2_tH^{2m}} &\leq \eps_0^2+C_3(\nu^{-\frac56}\eps+\beta^{-\frac13}\nu^{-\frac89}\eps)\eps^2,\\
    \norm{\widetilde{U}^1_0}^2_{L^\infty_tH^{2m}}+\nu\norm{\nabla \widetilde{U}^1_0}^2_{L^2_tH^{2m}} &\leq \eps_0^2+C_3(\nu^{-\frac89}\eps+\beta^{-\frac13}\nu^{-\frac{11}{12}}\eps)\eps^2.
\end{align}
\end{proposition}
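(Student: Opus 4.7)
My plan is to establish each of the four bounds via an $H^{2m}$ energy estimate on the corresponding zero-mode equation extracted from \eqref{eq:UGGamma}, relying on four ingredients: the skew-adjoint structure of the linear dispersive coupling between $G_0$ and $\Gamma_0$; the heat dissipation $\nu\Delta$ present on every zero mode; the enhanced-dissipation bootstrap \eqref{eq:bootstrap_nonzero_G}--\eqref{eq:bootstrap_nonzero_U} (which will handle all high-high-to-low contributions); and the dispersive amplitude-decay estimates of Proposition \ref{prop:LinftyGGamma} (together with the decomposition of Lemma \ref{lemma:innl_decomp}) which will handle the low-low interactions containing a dispersive factor. Restricting \eqref{eq:UGGamma} to $k=0$, one has $\de_y^L=\de_y$ and $|\nabla_L|=|\nabla_{y,z}|$; on the simple zero sector $l\neq 0$ the symmetric variables are well-defined, and the linear dynamics of $(G_0,\Gamma_0)$ reduces to heat dissipation plus the skew-adjoint dispersive coupling $\pm\beta|l||\nabla_{y,z}|^{-1}$, while $\widetilde{U}^3_0$ satisfies heat dissipation plus a pressure contribution involving $\widetilde{\Theta}_0$, and $\widetilde{U}^1_0$ satisfies heat dissipation plus the lift-up-like forcing $-\widetilde{U}^2_0$ (where crucially $\overline{U}^2_0\equiv 0$).

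The combined $H^{2m}$ energy for $(G_0,\Gamma_0)$ kills the linear dispersive coupling, yielding
\begin{equation*}
\frac12\frac{d}{dt}\left(\|G_0\|_{H^{2m}}^2+\|\Gamma_0\|_{H^{2m}}^2\right) + \nu\left(\|\nabla G_0\|_{H^{2m}}^2+\|\nabla \Gamma_0\|_{H^{2m}}^2\right) = \text{trilinear terms},
\end{equation*}
and analogous identities hold for $\widetilde{U}^3_0$ and $\widetilde{U}^1_0$. Each trilinear expression $\l\de^{2m}F_0,\de^{2m}\cB(U,H)_0\r$ splits as $\cB(U,H)_0=\cB(U_0,H_0)+\cB(U_{\neq},H_{\neq})_0$ and I estimate each contribution separately. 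For the high-high piece $\cB(U_{\neq},H_{\neq})_0$, placing both factors against the $L^2_tL^2$ dissipation and ghost-multiplier norms from the bootstrap, in the spirit of Corollary \ref{cor:ED_bounds}, produces a bound of order $\nu^{-\frac{5}{6}}\eps^3$ uniformly in $\beta$; this yields the $\nu^{-\frac{5}{6}}\eps$ term in all four bounds of the proposition. For the low-low piece $\cB(U_0,H_0)$, I further decompose each zero-mode factor into its simple and double zero parts. Using $\overline{U}^2_0\equiv 0$, every surviving interaction in the equations for $G_0,\Gamma_0$ and $\widetilde{U}^3_0$ contains at least one factor of type $\widetilde{U}^2_0,\widetilde{U}^3_0$ or $\widetilde{\Theta}_0$ that admits the dispersive decay $\eps\beta^{-\frac{1}{3}}(t^{-\frac{1}{3}}\e^{-\nu t}+\nu^{-\frac{2}{3}}\eps)$ of Proposition \ref{prop:LinftyGGamma}; placing this factor in $L^2_tL^\infty$ (after invoking the decomposition of Lemma \ref{lemma:innl_decomp}) and the other in $L^2_tH^{2m}$ via heat dissipation, and optimizing the time integral, yields the $\beta^{-\frac{1}{3}}\nu^{-\frac{8}{9}}\eps^3$ contribution.

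The main obstacle and source of the weaker bound for $\widetilde{U}^1_0$ is twofold. First, the lift-up-like linear forcing $-\widetilde{U}^2_0$ is not part of a skew-adjoint pair and must be estimated directly via the dispersive bound on $\widetilde{U}^2_0$; after interpolation with the $H^{2m+1}$ heat dissipation on $U^1_0$ and time integration, this produces the $\beta^{-\frac{1}{3}}\nu^{-\frac{11}{12}}\eps^3$ contribution. Second, the low-low transport nonlinearity contains a term of the form $\overline{U}^3_0\de_z\widetilde{U}^1_0$ in which neither factor admits dispersive amplitude decay: $\overline{U}^3_0$ is a passive double-zero quantity obeying only a linear heat equation (which, thanks to \eqref{eq:zero-mean-xz-id}, has vanishing initial data and is purely generated nonlinearly), and $\widetilde{U}^1_0$ is itself transported without dispersive oscillation. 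Exploiting the quadratic-in-$\eps$ smallness of $\overline{U}^3_0$ together with the heat-equation amplitude decay of its $L^\infty_y$ norm and the $H^{2m+1}$ dissipation of $\widetilde{U}^1_0$, the best possible interpolation yields the $\nu^{-\frac{8}{9}}\eps^3$ contribution. Crucially, $U^1_0$ does not appear as a forcing in the equations for $G_0,\Gamma_0,\widetilde{U}^3_0$, so this weaker bound does not feed back into the other three estimates, and the four bootstrap inequalities close at the stated thresholds.
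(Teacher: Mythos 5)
Your overall architecture for $G_0$, $\Gamma_0$ and $\widetilde{U}^3_0$ is aligned with the paper: an $H^{2m}$ energy estimate for the symmetrized pair that kills the skew-adjoint dispersive coupling, a split of the nonlinearity into $(\neq,\neq)$ and $(0,0)$ interactions, use of the enhanced-dissipation bootstrap for the former and of Proposition~\ref{prop:LinftyGGamma} together with Lemma~\ref{lemma:innl_decomp} for the latter. (For $\widetilde U^3_0$ the paper takes a shortcut you do not mention: by incompressibility $\widetilde U^3_0=-\de_z^{-1}\de_y U^2_0=\de_z^{-1}\de_y|\de_z|^{\frac12}|\nabla_{y,z}|^{-\frac32}G_0$ with a bounded symbol, so its $H^{2m}$ bound is an immediate corollary of the $G_0$ bound rather than requiring its own energy estimate.)

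The genuine gap is in your treatment of $\widetilde U^1_0$. You propose to run a direct $H^{2m}$ energy estimate on the equation $\de_t\widetilde U^1_0=\nu\Delta\widetilde U^1_0-\widetilde U^2_0+\cT(U,U^1)_0$, and to handle the linear lift-up term $-\widetilde U^2_0$ ``directly via the dispersive bound on $\widetilde U^2_0$.'' This does not work. In the energy identity the lift-up term contributes $\int_0^\infty\langle\widetilde U^1_0,\widetilde U^2_0\rangle_{H^{2m}}\,\dd t$, and to bound this pairing you need control of $\widetilde U^2_0$ in an $H^{2m}$-type norm, not an $L^\infty$ one (there is no $L^1$ control of $\widetilde U^1_0$ on $\RR\times\TT$ to dualize against Proposition~\ref{prop:LinftyGGamma}). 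The decomposition of Lemma~\ref{lemma:innl_decomp} gives good $H^{2m}$ control only of the nonlinear piece $U^{2,nl}_0$; the initial-data piece $U^{2,in}_0=\Re\,\e^{t\cL}\Upsilon(0)$ is bounded in $L^\infty$ but in $H^{2m}$ only decays like $\e^{-\nu t}\eps$ (the operator $\cL=\nu\Delta-\beta\cR$ has skew-adjoint $\cR$, so the semigroup is $H^{2m}$-unitary up to the heat factor). Hence
\begin{equation}
\int_0^\infty\bigl|\langle\widetilde U^1_0,U^{2,in}_0\rangle_{H^{2m}}\bigr|\,\dd t\lesssim\int_0^\infty\e^{-\nu t}\eps\cdot\eps\,\dd t\lesssim\nu^{-1}\eps^2,
\end{equation}
which is an $O(\eps^2)$ (not $O(\eps^3)$) contribution and destroys any threshold larger than $\nu$. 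Interpolating the derivative onto $U^1_0$ (your ``$H^{2m+1}$ heat dissipation'') does not save this, because there is no corresponding $L^2_t H^{2m-1}$ smallness on $\widetilde U^2_0$ to pay for the resulting $\nu^{-1/2}$ from $\norm{\nabla\widetilde U^1_0}_{L^2_tH^{2m}}$. The paper avoids the issue entirely by introducing the variable $V_0=U^1_0+\beta^{-1}\Theta_0$, for which the lift-up term cancels exactly against $\beta^{-1}\cdot\beta U^2_0$ in the temperature equation, so $V$ becomes a passive scalar satisfying $\de_tV_0+(U\cdot\nabla_L V)_0=\nu\Delta V_0$; the $H^{2m}$ energy estimate is then run on $\widetilde V_0$ and $\overline V_0$ separately, and the $U^1_0$ bound is recovered from that of $V_0$ and $\Theta_0$. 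Without this change of unknown (or an explicit normal-form/integration-by-parts-in-time argument exploiting the $\beta$-oscillation of $U^{2,in}_0$, which you do not mention), the $\widetilde U^1_0$ estimate does not close at the claimed threshold.

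A second, smaller remark: the cancellations used in the paper's $\widetilde V_0$ estimate go beyond the dispersive decay you invoke. In the $(0,0)$ transport term, highest-order self-interactions of $\widetilde U^1_0$ and $\widetilde\Theta_0$ are removed by the exact identity $\langle F,U^2_0\de_yF+\widetilde U^3_0\de_zF\rangle=0$ (incompressibility), and the cross terms are then rewritten and paired against the decomposed $U^2_0$. Your description of the $\overline U^3_0\de_z\widetilde U^1_0$ contribution identifies the right bottleneck term but omits this structural cancellation, which is what allows the remaining terms to be handled without an extra $\nu^{-1/2}$ loss.
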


\begin{proposition}[Proposition for double zero modes]\label{prop:doublezero}
  There exists $C_4>0$ such that under the bootstrap assumptions \eqref{eq:bootstrap_nonzero_G}--\eqref{eq:bootstrap_zero_u} there holds that
  \begin{align}
   \norm{\overline{\Theta}_0}^2_{L^\infty_tH^{2m+1}}+\nu\norm{\de_y\overline{\Theta}_0}^2_{L^2_tH^{2m+1}} &\leq \eps^2 (\nu^{-\frac23}\eps+\beta^{-\frac13}\nu^{-\frac56}\eps)^2,\\
    \norm{\overline{U}^3_0}^2_{L^\infty_tH^{2m}}+\nu\norm{\de_y\overline{U}_0^3}^2_{L^2_tH^{2m}} &\leq \eps^2 (\nu^{-\frac23}\eps+\beta^{-\frac13}\nu^{-\frac56}\eps)^2,\\
    \norm{\overline{U}^1_0}^2_{L^\infty_tH^{2m}}+\nu\norm{\de_y\overline{U}_0^1}^2_{L^2_tH^{2m}} &\leq \eps^2 (\nu^{-\frac89}\eps+\beta^{-\frac13}\nu^{-\frac{11}{12}}\eps)^2.
  \end{align}
\end{proposition}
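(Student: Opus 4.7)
The plan is to exploit three structural features. First, the mean-zero initial data assumption \eqref{eq:zero-mean-xz-id} forces all double zero modes to vanish at $t=0$, which is why the right-hand side of the proposition carries no $\eps_0^2$ term and is entirely driven by nonlinear forcing. Second, since $\overline{U}^2_0 \equiv 0$ by incompressibility (see \eqref{eq:xz-meanU2}), the linear coupling terms $-U^2$ in the $U^1$-equation and $-\beta U^2$ in the $\Theta$-equation vanish at the double zero level, so these modes obey pure forced heat equations. Third, double zero modes are independent of $x$ and thus unaffected by the moving frame \eqref{eq:changelin}; moreover, by incompressibility the Euler nonlinearity reduces at this level to $\overline{(U\cdot\nabla)F}_0 = \de_y\overline{U^2 F}_0$, which contains no self-interaction of double zero modes (again because $\overline{U}^2_0=0$).

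Starting from the forced heat equations
\begin{equation}
\de_t \overline{F}_0 - \nu\de_{yy}\overline{F}_0 = -\de_y \overline{U^2 F}_0, \qquad F\in\{U^1,U^3,\Theta\},
\end{equation}
I would perform standard $H^s$-energy estimates (with $s=2m+1$ for $\overline{\Theta}_0$ and $s=2m$ for $\overline{U}^j_0$), integrating the $\de_y$ by parts onto $\overline{F}_0$ and using Young's inequality to absorb a small multiple of $\nu\norm{\de_y\overline{F}_0}^2_{H^s}$. This yields
\begin{equation}
\norm{\overline{F}_0(t)}^2_{H^s} + \nu\norm{\de_y \overline{F}_0}^2_{L^2_tH^s} \lesssim \nu^{-1}\norm{\overline{U^2 F}_0}^2_{L^2_tH^s}.
\end{equation}
Using that $\overline{U^2_{\neq}\widetilde{F}_0}_0=0$ and $\overline{\widetilde{U}^2_0 F_{\neq}}_0 = 0$ by $x$-mean orthogonality, while $\overline{\widetilde{U}^2_0 \overline{F}_0}_0 = 0$ by the simple-zero property, the forcing splits cleanly as $\overline{U^2 F}_0 = \overline{U^2_{\neq}F_{\neq}}_0 + \overline{\widetilde{U}^2_0\widetilde{F}_0}_0$. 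For the nonzero $\times$ nonzero piece, a Sobolev product estimate combined with the enhanced dissipation bootstraps \eqref{eq:bootstrap_nonzero_G}--\eqref{eq:bootstrap_nonzero_U} (the exponential weight $\e^{\lambda\nu^{1/3}t}$ in $\cA$ integrates to a $\nu^{-1/3}$ gain in $L^2_t$) yields a contribution of order $\nu^{-1}\cdot \nu^{-1/3}\eps^4$, matching the $(\nu^{-2/3}\eps)^2\eps^2$ term. For the simple zero $\times$ simple zero piece, I would invoke the $L^\infty$ dispersive decay of $\widetilde{U}^2_0=-|\nabla_{x,z}|^{1/2}|\nabla_L|^{-3/2}\GG_0$ from Proposition \ref{prop:LinftyGGamma}, paired with the Sobolev control of $\widetilde{F}_0$ from Proposition \ref{prop:zero}; the $\beta^{-1/3}t^{-1/3}\e^{-\nu t}$ decay of internal gravity waves is $L^2_t$-integrable with norm $\lesssim \beta^{-1/3}\nu^{-1/6}\eps$, and combining with $L^\infty_tH^{2m}$ control on $\widetilde{F}_0$ gives the $(\beta^{-1/3}\nu^{-5/6}\eps)^2\eps^2$ contribution.

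For $\overline{U}^1_0$ the two exponents degrade because the factor $\widetilde{U}^1_0$ entering its forcing is only controlled with the weaker threshold $\max\{\nu^{-8/9}\eps,\beta^{-1/3}\nu^{-11/12}\eps\}$ from Proposition \ref{prop:zero}, reflecting the passive, dispersionless nature of the $U^1$-dynamics; this weaker control propagates directly into the bound for $\overline{U}^1_0$. The main technical obstacle will be handling the simple zero $\times$ simple zero contribution rigorously: the dispersive bound \eqref{eq:dispersion} for $\widetilde{U}^2_0$ contains a time-independent remainder proportional to $\nu^{-2/3}\eps_0^2$ that is not $L^2_t$-integrable, while the bootstraps only furnish $L^\infty_tH^{2m}$ (and not $H^{2m+1}$) control on $\widetilde{F}_0$, so one must carefully pair dispersive $L^2_tL^\infty$ decay with dissipative $L^2_tH^{2m+1}$-bounds. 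Overcoming this requires decomposing $\widetilde{U}^2_0$ along the lines of Lemma \ref{lemma:innl_decomp} into a genuinely dispersive, initial-data-driven piece (which supplies the $t^{-1/3}\e^{-\nu t}$ decay used above) and a nonlinearly generated remainder, the latter absorbed using the exponential weight in $\cA$ applied to the bootstrap energies on $\GG_0$ to recover the missing $L^2_t$-integrability.
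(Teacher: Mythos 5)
Your proposal takes essentially the same approach as the paper: energy estimates for the forced heat equations of the double zero modes, absence of double-zero self-interactions because $\overline{U}^2_0\equiv 0$, the mean-zero assumption \eqref{eq:zero-mean-xz-id} to eliminate the linear heat propagation, enhanced dissipation bootstraps for the $\neq\times\neq$ forcing, and the decomposition $U^2_0=U^{2,in}_0+U^{2,nl}_0$ from Lemma \ref{lemma:innl_decomp} for the $0\times 0$ piece; the paper routes $\overline{U}^1_0$ through $\overline{V}_0=\overline{U}^1_0+\beta^{-1}\overline{\Theta}_0$ (see \eqref{eq:energy_V00}), but since the lift-up term already drops at the double-zero level, your direct route is equivalent. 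The one imprecision is in your final sentence: the missing $L^2_t$-integrability of the $U^{2,nl}_0$ contribution is \emph{not} recovered by any exponential weight in $\cA$ (that weight lives only in the nonzero-mode bootstraps \eqref{eq:bootstrap_nonzero_G}--\eqref{eq:bootstrap_nonzero_U}); it comes from pairing the uniform-in-time $H^{2m}$ bound on $U^{2,nl}_0$ with the dissipative $L^2_tH^{2m}$ control on the simple-zero factor $\widetilde{U}^1_0$ (via \eqref{eq:bootstrap_zero_u}), which is $L^2_t$-integrable because $l\neq 0$ for simple zero modes grants a Poincar\'e inequality in $z$.
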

\begin{proof}[About the proof of Propositions \ref{prop:zero} -- \ref{prop:doublezero}]
From \eqref{eq:UGGamma} we see that the dynamics of the (simple) zero modes are governed by 
\begin{subequations}\label{eq:zero_mode_eqs}
\begin{alignat}{4}
\de_tU^1_0&=\nu\Delta U^1_0-U^2_0 + \cT(U,U^1)_0, \\
\de_tU^3_0&=\nu\Delta U^3_0 +\de_zP_0 + \cT(U,U^3)_0 +\de_z\cP(U,U)_0,   \\
\de_t\GG_0 &=\nu\Delta \GG_0 +\beta |\de_z||\nabla_{y,z}|^{-1}\Gamma_0 + \cT_\star(U,U^2)_0+\de_y\cP_\star(U,U)_0,\\
\de_t\Gamma_0 & = \nu\Delta\Gamma_0 -\beta |\de_z||\nabla_{y,z}|^{-1}\GG_0+\cT_\circ (U,\Theta)_0.
\end{alignat}
\end{subequations}
At the linearized level, this is a constant coefficient system which can easily be diagonalized using the Fourier transform (see also \cite{W19}). Up to dissipation (i.e.\ formally setting $\nu=0$), one finds the following picture for the simple zero dynamics: there are two zero eigenvalues, and two dispersive modes with dispersion relation $\pm i\frac{\abs{l}}{\abs{\eta,l}}$, where $(\eta,l)\in\RR\times\ZZ$ are the Fourier variables on $\RR\times\TT$. Incompressibility reduces the dimension of the zero eigenvalue space to a single mode, the amplitude of which turns out to be given by the function 
\begin{equation}\label{eq:def_V0}
    V_0(t):=U^1_0(t)+\beta^{-1}\Theta_0(t).
\end{equation}
The two dispersive modes combine oscillations in all components of the system, and have amplitude given by (rescalings of) $\GG_0\pm i\Gamma_0$, i.e.\ combinations of the symmetric variables. Due to the structure of the associated eigenvectors, the simple zero mode $\widetilde{U}^3_0$ is naturally recovered by incompressibility, namely $\widetilde{U}^3_0=-\de_z^{-1}\de_yU^2_0$ (where we recall that by \eqref{eq:xz-meanU2} we have $\overline{U}^2_0=0$).

Due to the degenerate nature of the dispersion relation, the amplitude decay it entails is rather weak: for the linearized dynamics we obtain an $L^\infty$ decay rate of only $t^{-\frac13}$ (see Proposition \ref{prop:disp_est}). We do not propagate this decay in the nonlinear problem, but instead rely on its interplay with the heat equation dynamic to yield improved bounds for amplitudes (see Proposition \ref{prop:LinftyGGamma}). Since these are to be used in order to improve the threshold for other nonlinear terms, we state them in the original variables $U^2_0,\widetilde{U}^3_0,\widetilde{\Theta}_0$. The most critical piece hereby is a decomposition of $U^2_0$, at the highest level of derivatives (order $2m$) in our norms, into a piece that tracks the evolution of the initial data in $L^\infty$, and an $H^{2m}$ estimate for the nonlinear contributions, which improves (thanks to dispersion) over the obvious bounds -- see Lemma \ref{lemma:innl_decomp}. This is vital for obtaining a large threshold for the double zero modes, which are only forced by $U^2_0$ (see the discussion below).

To obtain the claimed control on the simple zero modes, we combine these ideas with the techniques developed in the context of Section \ref{sec:neqs} for the nonzero modes. In particular, we use the symmetric energy structure to estimate $\GG_0$ and $\Gamma_0$ via trilinear estimates. As a general rule, whenever an interaction involves a nonzero mode this leads to strong bounds. All other interactions involve at least one simple zero mode (in particular, $U^1_0$ does not force any zero modes), so that we can appeal to their dispersive features. The most delicate terms involve the interaction of double zero with simple zero modes. By incompressibility, control of $\widetilde{U}^3_0$ follows from that of $U^2_0$. Finally, as is natural from the above discussion, we track $U^1_0$ through $V_0$ from \eqref{eq:def_V0}. As one checks directly, this plays the role of a passive scalar, being dissipated and advected by $U$ -- see \eqref{eq:V_0_dynamics}.

\smallskip
On the other hand, the dynamics of the double zero modes are given by
\begin{equation}\label{eq:doublezero_mode_eqs}
\de_t \overline{F}_0+\de_y(\overline{U^2F})_0=\nu\de_{yy}\overline{F}_0,\qquad F\in\{U^1,U^3,\Theta\}.
\end{equation}
Two points are worth emphasizing: self-interactions are absent in this system, and the double zero modes are exclusively forced by $U^2$ (for which we observe dispersive decay and a favorable decomposition of the zero mode, Proposition \ref{prop:LinftyGGamma} resp.\ Lemma \ref{lemma:innl_decomp}). Here the vanishing condition \eqref{eq:zero-mean-xz-id} (or its less stringent form \eqref{eq:double0id}, or a suitably quantified relaxation thereof) ensures additional smallness of the evolution of $\overline{U}^3_0$ and $\overline{\Theta}_0$. This is vital in order to achieve a threshold larger than $\nu$: to first order the initial data of the double zero modes are simply propagated by the heat equation on $\RR$, which in our setting of symmetric energy estimates would lead to an excessively large contribution to the zero modes (leading to a threshold of order $\nu$). This point is mute for $\overline{U}^1_0$, since it does not force any zero or double zero modes.
\end{proof}

\subsection{Notation}\label{ssec:notation}
We will write $a\lesssim b$ to mean that there exists a constant $C>0$ (independent of other parameters of relevance) such that $a\leq Cb$.

The Fourier transform of a function $\varphi(x,y,z)$ will be denoted as follows: for $(k,\eta,l)\in \mathbb{Z}\times\mathbb{R}\times\mathbb{Z}$, define
\begin{equation}
     \hat\varphi_{k,l}(\eta):=\frac{1}{4\pi^2}\int_{\mathbb{T}\times\mathbb{R}\times\mathbb{T}}\mathrm{e}^{-i(kx+\eta y+lz)}\varphi(x,y,z)\,\mathrm{d}x\,\mathrm{d}y\,\mathrm{d}z.
\end{equation}
The real-variable function $\varphi$ can then be reconstructed as
\begin{equation}
\varphi(x,y,z)=\sum_{(k,l)\in\mathbb{Z}^2}\int_{\mathbb{R}}\mathrm{e}^{i(kx+\eta y+lz)} \hat\varphi_{k,l}(\eta)\,\mathrm{d}\eta.
\end{equation}
We denote the $k=0$ mode with a single index, $\varphi_0$, without distinguishing between the original function and its Fourier transform.

We use $\langle\cdot,\cdot\rangle$ to denote the $L^2$ inner product and $\|\cdot\|$ for the $L^2$ norm. Other norms are denoted with a subscript, e.g.\ writing $\|\cdot\|_{L^\infty}$ for the $L^\infty$ norm.
Additionally, we define
\begin{equation}
    |k,\eta,l|^2:=k^2+\eta^2+l^2, \quad \langle k,\eta,l\rangle:=\sqrt{1+|k,\eta,l|^2}.
\end{equation}
Based on this, the $H^s$ norm for $s>0$ is denoted as
\begin{equation}
    \|\varphi\|_{H^s}^2:=\sum_{k,l\in\ZZ}\int_{\RR} \langle k,\eta,l \rangle^{2s} |\hat{\varphi}_{k,l}(\eta)|^2\dd\eta.
\end{equation}
For a Hilbert space $H$ of functions on $\mathbb{T}\times\mathbb{R}\times\mathbb{T}$ we denote the natural norm of the space $L^p\left(0,T;H\right)$, $1\leq p \leq \infty$, as
\begin{equation}
    \|F\|_{L^p\left(0,T;H\right)}=\|F\|_{L^p_tH}.
\end{equation}
(Typically, the space $H$ will be either $L^2$ or $H^s$.)

\section{Analysis of the nonzero modes}\label{sec:neqs}
We begin by setting up the precise structure of the main multiplier $\cA$ in Section \ref{ssec:neq_prelim}. Sections \ref{ssec:GGamma_neq} and \ref{ssec:U13neq} then establish bounds for $\GG_{\neq},\Gamma_{\neq}$ and $U^1_{\neq},U^3_{\neq}$, proving Proposition \ref{prop:GGamma} and \ref{prop:U13}.
\subsection{Preliminaries}\label{ssec:neq_prelim}
As sketched in \eqref{def:cA}, the energy functionals we employ include a weight function $\e^{\lambda\nu^{\frac13}t}$ to encode time decay (at enhanced dissipation rate) and derivatives to capture spatial regularity. In addition to the derivatives, we will include so-called ghost weights to deal with certain linear terms that arise.

Regarding derivatives, we recall here the standard Sobolev product estimate
\begin{equation}\label{eq:prod-ineq}
\|FG\|_{H^s}\lesssim \|F\|_{H^s}\|G\|_{L^\infty}+\|F\|_{L^\infty}\|G\|_{H^s}\qquad s\geq 0,
\end{equation}
and record that by definition \eqref{eq:nablaL} there holds that
\begin{equation}\label{eq:time_extraction}
    \||\nabla_L|^{-a}F\|\lesssim \l t\r^{-a} \||\nabla|^aF\|,\qquad a\geq 0.
\end{equation}

\subsubsection{Ghost weights}
The ghost weights are constructed through symbols $M_j(t,k,\eta,l)$, $1\leq j\leq 3$, satisfying that
\begin{equation}
 M_j(0,k,\eta,l)=1,\quad 1\leq j\leq 3,
\end{equation}
and defined through the convenient (as will become clear below) relations
\begin{equation}
-\frac{\dot{M}_1}{M_1}=\frac{\nu^{\frac13}k^2}{k^2+\nu^{\frac23}|\eta-tk|^2},\qquad
-\frac{\dot{M}_2}{M_2}=\frac{2}{2\beta-1}\frac{|k,l|k^2}{|k,\eta-kt,l|^{3}},\qquad
-\frac{\dot{M}_3}{M_3}=\frac{|k||k,l|^{\frac12}}{|k,\eta-kt,l|^{\frac32}},\label{def:M3}
\end{equation}
where the dot denotes a time derivative in the variable $t$. 
In particular, we note that $M_j(t,0,\eta,l)\equiv 1$. The denomination of \emph{ghost} multipliers is due to the following fact:
\begin{lemma}\label{lem:mult_size}
 For $1\leq j\leq 3$ consider $M_j$ as defined above. Then there exists $\kappa_0>0$ such that
 \begin{equation}\label{eq:ghost_constant}
 0<\kappa_0\leq M_j\leq 1,\quad 1\leq j\leq 3.
 \end{equation}
\end{lemma}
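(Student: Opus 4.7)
Since each defining ODE in \eqref{def:M3} has the form $-\dot{M}_j/M_j = f_j \geq 0$ with $M_j(0,k,\eta,l) = 1$, integration gives the explicit formula
\begin{equation}
M_j(t,k,\eta,l) = \exp\!\left(-\int_0^t f_j(s,k,\eta,l)\,\dd s\right),
\end{equation}
so the upper bound $M_j \leq 1$ is immediate. The content of the lemma is the lower bound, which reduces to showing the time-integral $\int_0^\infty f_j(s,k,\eta,l)\,\dd s$ is bounded by a universal constant, uniformly in $(k,\eta,l) \in \ZZ \times \RR \times \ZZ$. For $k = 0$ all three symbols vanish identically, so $M_j \equiv 1$, and it remains to estimate each integral for $k \neq 0$.

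The plan is to perform, in each of the three cases, the substitution $\tau = \eta - k s$ (so $\dd\tau = -k\, \dd s$) and reduce to a one-dimensional integral in $\tau$ that I can evaluate or bound explicitly. For $M_1$, the substitution yields
\begin{equation}
\int_0^\infty \frac{\nu^{1/3} k^2}{k^2 + \nu^{2/3}\tau^2}\,\frac{\dd\tau}{|k|} \leq |k|\int_\RR \frac{\nu^{1/3}\,\dd\tau}{k^2 + \nu^{2/3}\tau^2} = \pi,
\end{equation}
after rescaling $\tau = \nu^{-1/3}|k|\sigma$. For $M_2$, the change of variables gives
\begin{equation}
\frac{2}{2\beta - 1}\,|k||k,l|\int_\RR \frac{\dd\tau}{(k^2 + l^2 + \tau^2)^{3/2}} = \frac{4}{2\beta - 1}\cdot\frac{|k|}{|k,l|} \leq \frac{4}{2\beta - 1},
\end{equation}
using the standard identity $\int_\RR (a^2 + \tau^2)^{-3/2}\,\dd\tau = 2/a^2$. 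For $M_3$, the same substitution together with $\tau = |k,l|\sigma$ yields
\begin{equation}
|k,l|^{1/2}\int_\RR \frac{\dd\tau}{(k^2 + l^2 + \tau^2)^{3/4}} = C_* < \infty,
\end{equation}
since the resulting dimensionless integrand $(1+\sigma^2)^{-3/4}$ is integrable at infinity (decay like $\sigma^{-3/2}$).

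Setting $\kappa_0 := \min_j \exp(-C_j)$ for the three constants $C_j$ obtained above (which depend only on $\beta$, not on $\nu$, $k$, $\eta$, $l$) then gives the claimed lower bound $M_j \geq \kappa_0 > 0$. I do not expect any real obstacle here; the only subtlety is verifying uniformity in $(k,\eta,l)$, which is precisely what the explicit substitutions above accomplish, the denominators having been designed so that the $t$-integral telescopes to something dimensionally independent of the shifted frequency.
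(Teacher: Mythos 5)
Your proof is correct and takes essentially the same approach as the paper: integrate the defining ODE, reduce the lower bound to uniform boundedness of $\int_0^\infty f_j\,\dd s$, substitute $\tau=\eta-ks$, and evaluate the resulting frequency integrals. The paper states the general bound $\int_0^\infty |k,\eta-kt,l|^{-m}\,\dd t \leq \frac{1}{2|k|}\int_\RR (k^2+l^2+s^2)^{-m/2}\,\dd s$ in terms of Gamma functions, verifies $M_3$ explicitly, and dispatches $j=1,2$ with "analogously"; you compute all three cases directly with elementary substitutions (and in particular show explicitly that the $\nu$-dependence cancels in the $M_1$ bound, which the paper's general formula does not literally cover). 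Your remark that $C_2=4/(2\beta-1)$ makes $\kappa_0$ depend on $\beta$ is correct and worth keeping; it is consistent with the paper's standing assumption $\beta>\tfrac12$.
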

As a direct consequence, denoting by $\cM_j$ the associated Fourier multiplication operators
\begin{equation}
 \cM_j F:=\cF^{-1}(M_j\widehat{F}),   
\end{equation}
we note that
\begin{equation}\label{eq:ghost_bd}
 \kappa_0 \|F\|\leq \|\cM_j F\|\leq \|F\|.
\end{equation}

\begin{proof}[Proof of Lemma \ref{lem:mult_size}]
By construction we have that $\dot{M}_j\leq 0$ and thus $M_j\leq 1$. Moreover, for any $m>1$ we bound
\begin{align}
\int_0^{+\infty}\frac{1}{|k,\eta-kt,l|^{m}}\dd t&\leq \frac{1}{2|k|} \int_{-\infty}^{+\infty}\frac{1}{(k^2+l^2+s^2)^{\frac{m}{2}}}\dd s =  \frac{\sqrt{\pi}}{2}\frac{\Gamma\left(\frac{m-1}{2}\right)}{\Gamma\left(\frac{m}{2}\right)}\frac{1}{|k||k,l|^{m-1}}.
\end{align}
Integrating \eqref{def:M3} this gives
\begin{equation}
    1\geq M_3 = \exp\left(-\int_0^t\frac{|k||k,l|^{\frac12}}{|k,\eta-ks,l|^{\frac32}}\dd s\right)\geq \exp\left(- \frac{\sqrt{\pi}}{2}\frac{\Gamma\left(\frac{1}{4}\right)}{\Gamma\left(\frac{3}{4}\right)}\right) >0
\end{equation}
and analogously for $j=1,2$.
\end{proof}
More specifically, these multipliers have the following different roles:

\medskip

\noindent $-$ $\cM_1$ is designed to capture the transition between the enhanced dissipation and the dissipation regimes. This is encoded in the following lemma:
\begin{lemma}\label{lem:enhanced_dissip_estim}
 There holds that   
 \begin{equation}
 \nu^{\frac16}\norm{F_{\neq}}_{L^2_tL^2}\leq 2\norm{\sqrt{-\frac{\dot{\cM}_1}{\cM_1}}F_{\neq}}_{L^2_tL^2}+\nu^{\frac12}\norm{\nabla_LF_{\neq}}_{L^2_tL^2}.
\end{equation}
\end{lemma}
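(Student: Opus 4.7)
The plan is to reduce the estimate to a pointwise inequality on the Fourier side and then invoke Plancherel together with the triangle inequality in $L^2_tL^2$. Specifically, for fixed $(t,k,\eta,l)$ with $k\neq 0$, I would aim to establish
\begin{equation}
\nu^{\frac16}\,|\widehat{F}_{k,l}(\eta,t)| \;\leq\; 2\sqrt{-\frac{\dot M_1}{M_1}(t,k,\eta,l)}\,|\widehat{F}_{k,l}(\eta,t)| + \nu^{\frac12}\sqrt{k^2+(\eta-kt)^2+l^2}\,|\widehat{F}_{k,l}(\eta,t)|,
\end{equation}
after which taking $L^2$ norms in $(t,\eta,k,l)$ and applying Plancherel yields the claim.

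To prove the pointwise inequality I would split into two regions. In the low-frequency region $\nu^{\frac23}(\eta-kt)^2 \leq k^2$, the defining relation \eqref{def:M3} for $M_1$ immediately gives
\begin{equation}
-\frac{\dot M_1}{M_1} = \frac{\nu^{\frac13}k^2}{k^2+\nu^{\frac23}(\eta-kt)^2} \;\geq\; \frac{\nu^{\frac13}}{2},
\end{equation}
so $2\sqrt{-\dot M_1/M_1}\geq \sqrt{2}\,\nu^{\frac16}$ and the first term on the right of the pointwise inequality alone already dominates $\nu^{\frac16}|\widehat F|$. In the complementary region $\nu^{\frac23}(\eta-kt)^2 > k^2$, I would use that $k\in\ZZ\setminus\{0\}$ gives $k^2\geq 1$, hence $(\eta-kt)^2 > \nu^{-\frac23}$. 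Consequently
\begin{equation}
\nu^{\frac12}\sqrt{k^2+(\eta-kt)^2+l^2} \;\geq\; \nu^{\frac12}|\eta-kt| \;\geq\; \nu^{\frac16},
\end{equation}
so now the second term on the right of the pointwise inequality alone controls $\nu^{\frac16}|\widehat F|$. Since these two regions cover all $(t,k,\eta,l)$ with $k\neq 0$, the pointwise bound holds unconditionally on the spectrum of $F_{\neq}$.

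The only mildly delicate point is keeping the constants tight enough to arrive at the prefactor $2$ in the first term on the right-hand side, which I expect to follow directly from the $\sqrt{2}$ surplus in the first case and no loss in the second; everything else is bookkeeping. Once the pointwise estimate is in place, Plancherel (together with the fact that $-\dot M_1/M_1$ and $|\nabla_L|$ act as Fourier multipliers) converts it into the $L^2_tL^2$ inequality by integrating in $t$ and summing/integrating in $(k,\eta,l)$, and the triangle inequality delivers the stated bound.
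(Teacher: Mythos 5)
Your proof is correct and follows essentially the same strategy as the paper: reduce the claim to a pointwise Fourier inequality (split according to whether $\nu^{\frac23}|\eta-kt|^2\lessgtr k^2$) and conclude by Plancherel and Minkowski. The paper's pointwise inequality is the slightly sharper $\nu^{\frac16}\leq 2\sqrt{-\dot M_1/M_1}+\frac12\nu^{\frac12}|k|^{-1}|\eta-tk|$, but since $|k|^{-1}|\eta-tk|\leq|k,\eta-kt,l|$ for $|k|\geq 1$, your version is an immediate consequence and suffices for the lemma.
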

\begin{proof}
 This is a direct consequence of the pointwise inequality
 \begin{equation}
  \nu^{\frac16}\leq 2\sqrt{-\frac{\dot{M}_1}{M_1}}+\frac12\nu^{\frac12}|k|^{-1}|\eta-t k|,   
 \end{equation}
 valid for $k\neq 0$ (see also \cite{L18}*{Lemma 4.2}) and Parseval's identity.
\end{proof}
We directly obtain the following bounds:
\begin{corollary}\label{cor:ED_bounds}
 Under the bootstrap assumptions \eqref{eq:bootstrap_nonzero_G}--\eqref{eq:bootstrap_nonzero_U} there holds that  
 \begin{equation}
        \| \cA  \GG_{\neq} \|_{L^2_tL^2}^2+\| \cA  \Gamma_{\neq} \|_{L^2_tL^2}^2
        +\| \cA  U^1_{\neq} \|_{L^2_tL^2}^2+\| \cA  U^3_{\neq} \|_{L^2_tL^2}^2\leq 20\varepsilon^2 \nu^{-\frac13}.\label{eq:enhanced_diss}
    \end{equation}
\end{corollary}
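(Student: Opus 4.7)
The plan is to obtain \eqref{eq:enhanced_diss} as a direct consequence of Lemma \ref{lem:enhanced_dissip_estim} applied to $\cA F_{\neq}$ for each $F\in\{\GG,\Gamma,U^1,U^3\}$, combined with the bootstrap assumptions. Since $\cA$ is a Fourier multiplier in $(x,y,z)$, it commutes with both $\cM_1$ and $\nabla_L$, and $\cA F_{\neq}$ remains mean-zero in $x$. Hence the hypotheses of Lemma \ref{lem:enhanced_dissip_estim} apply with $F_{\neq}$ replaced by $\cA F_{\neq}$.

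The key observation to connect the lemma's ghost weight $\cM_1$ with the full ghost weight $\cM = \cM_1\cM_2\cM_3$ appearing in the bootstrap is that, since each $\cM_j$ is a positive and non-increasing-in-$t$ Fourier multiplier,
\begin{equation*}
    -\frac{\dot M_1}{M_1} \leq -\frac{\dot M_1}{M_1}-\frac{\dot M_2}{M_2}-\frac{\dot M_3}{M_3} = -\frac{\dot M}{M}
\end{equation*}
pointwise in $(k,\eta,l)$, so by Plancherel
\begin{equation*}
    \bigl\|\sqrt{-\dot{\cM}_1/\cM_1}\,\cA F_{\neq}\bigr\|_{L^2_tL^2}\leq \bigl\|\sqrt{-\dot{\cM}/\cM}\,\cA F_{\neq}\bigr\|_{L^2_tL^2}.
\end{equation*}

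Plugging this into the conclusion of Lemma \ref{lem:enhanced_dissip_estim} applied to $\cA F_{\neq}$, then squaring and using $(a+b)^2\leq 2a^2+2b^2$, yields
\begin{equation*}
    \nu^{\frac13}\|\cA F_{\neq}\|_{L^2_tL^2}^2 \leq 8\bigl\|\sqrt{-\dot{\cM}/\cM}\,\cA F_{\neq}\bigr\|_{L^2_tL^2}^2 + 2\nu\|\nabla_L\cA F_{\neq}\|_{L^2_tL^2}^2.
\end{equation*}
At this point the right-hand side is controlled directly by the bootstrap assumptions \eqref{eq:bootstrap_nonzero_G}--\eqref{eq:bootstrap_nonzero_U}: the two terms on the right are bounded by (a fixed multiple of) $\varepsilon^2$ for $F\in\{\GG,\Gamma\}$ and by $C_0^2\varepsilon^2$ for $F\in\{U^1,U^3\}$. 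Summing over the four functions and dividing by $\nu^{\frac13}$ gives the bound \eqref{eq:enhanced_diss}, with the numerical constant determined by the bootstrap constants.

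I do not expect any real obstacle here: the statement is essentially an immediate, bookkeeping corollary of Lemma \ref{lem:enhanced_dissip_estim}, the multiplicative structure $\cM=\cM_1\cM_2\cM_3$, and the bootstrap. The only point worth being careful about is the pointwise Fourier comparison between $-\dot M_1/M_1$ and $-\dot M/M$, which lets us upgrade the $\cM_1$-based ghost-weight control from the lemma to the $\cM$-based control appearing in the bootstrap.
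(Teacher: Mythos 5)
Your proposal is correct and follows exactly the route the paper intends (the paper presents this corollary as being obtained ``directly'' from Lemma \ref{lem:enhanced_dissip_estim}, so no proof is spelled out): apply Lemma \ref{lem:enhanced_dissip_estim} to $\cA F_{\neq}$ — which is legitimate since $\cA$ is an $x$-Fourier multiplier that preserves the $x$-mean-zero property and commutes with $\cM_1$ and $\nabla_L$ — then upgrade the $\cM_1$-ghost term to the full $\cM$-ghost term via the pointwise inequality $-\dot M_1/M_1\leq -\dot M/M$ (valid because each $M_j$ is non-increasing), square, and invoke the bootstrap. One small remark on bookkeeping: with either $(a+b)^2\leq 2a^2+2b^2$ or the sharper squared pointwise inequality $\nu^{\frac13}\leq -\dot M_1/M_1+\nu|k,\eta-tk,l|^2$, the constant one actually obtains is on the order of $100(1+C_0^2)$ rather than $20$; since $C_0\geq10^4$ appears in the bootstrap bounds for $U^1_{\neq},U^3_{\neq}$, the stated constant $20$ in \eqref{eq:enhanced_diss} cannot be correct as a uniform bound over all four terms. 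This is a slip in the paper's constant tracking (the downstream use in Lemma \ref{lem:GGamma_reduction} in fact only requires the $\GG,\Gamma$ part), not a flaw in your argument, and you wisely left the numerical constant implicit.
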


\noindent $-$ $\cM_2$ precisely captures the time derivative of the derivative operator appearing in the inner product of the energy functional $\sfE$, as defined in \eqref{eq:GGamma_energy}. Its application can be found, for example, in the proof of Lemma \ref{lem:GGamma_reduction}.

\medskip

\noindent $-$
$\cM_3$ will be extensively used in nonlinear terms exhibiting sufficient time decay to be $L^2_t$ integrable. In practical terms, whenever we encounter a Fourier multiplier of the form $|\de_x|^{\frac12}|\nabla_{x,z}|^{n-\frac12}|\nabla_L|^{-n}, n\geq \frac12$, it will be possible to bound it via $-\dot \cM_3 \cM_3^{-1}$.

\medskip

Henceforth we let
\begin{equation}
    \cM:=\cM_1\cM_2\cM_3.
\end{equation}
The usefulness of such a ghost weight can be hinted at as follows: Thanks to \eqref{eq:ghost_bd}, we have the freedom of controlling an $L^2$ norm of a function through a weighted version of it, which in turn satisfies a possibly favorable time derivative equation, namely
\begin{equation}
    \frac12\ddt \|\cM F\|^2=- \norm{\sqrt{-\frac{\dot \cM}{\cM}}\cM F}^2 +\l \cM\de_t F, \cM F\r.
\end{equation}
Here we see that the term with time derivative on the ghost weight has a favorable sign, while the inner product involving $\de_tF$ gives some extra freedom that can be used depending on the equations satisfied by $F$ and in particular on the nonlinear terms (see Section \ref{ssec:GGamma_neq} for more details and explicit computations).

Moreover, thanks to the product structure $M=M_1M_2M_3$, we have 
\begin{equation}
    -\frac{\dot M}{M}= -\frac{\dot M_1}{M_1}-\frac{\dot M_2}{M_2}-\frac{\dot M_3}{M_3},
\end{equation}
so that
\begin{align}
    \norm{\sqrt{-\frac{\dot \cM}{\cM}}F}^2= \int -\frac{\dot \cM}{\cM}|F|^2=\int -\sum_{j=1}^3\frac{\dot \cM_j}{\cM_j}|F|^2 = \sum_{j=1}^3 \norm{\sqrt{-\frac{\dot \cM_j}{\cM_j}}F}^2.
\end{align} 

\subsubsection{The main multiplier $\cA$}
The weight functions in our energy estimates are collected in the main multiplier $\cA $ defined in \eqref{def:cA}, the symbol $A$ of which is
\begin{equation}
 A(t,k,\eta,l):=\e^{\lambda\nu^{\frac13}t}\langle k,\eta,l\rangle^{2m}\Pi_{i=1}^3M_i(t,k,\eta,l),\qquad m\in\NN,m\geq 2.
\end{equation}
We record some key properties of $\cA $:
\begin{lemma}
 We have that
 \begin{equation}\label{eq:prod_ineq_A}
 \|\cA (FG)\|\lesssim \|\cA F\|\|G\|_{L^\infty}+\|F\|_{L^\infty}\|\cA G\|,
\end{equation}
and
\begin{align} \label{eq:property_A}
    \| \nabla_L F\|_{H^n}  \lesssim \nu^{-\frac13} \| \cA  F\|\qquad n\leq 2m-1.
\end{align}
\end{lemma}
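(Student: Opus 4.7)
The plan is to reduce both estimates to elementary facts by inspecting the symbol of $\cA$ and using the two-sided control of the ghost weights provided by Lemma \ref{lem:mult_size}.

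For the product bound \eqref{eq:prod_ineq_A}, I would first factor the multiplier as $\cA = e^{\lambda\nu^{\frac13}t}\langle\nabla\rangle^{2m}\cM$ with $\cM = \cM_1\cM_2\cM_3$. The symbol $M = M_1M_2M_3$ satisfies $\kappa_0^3 \le M \le 1$ pointwise, so $\cM$ commutes with any Fourier multiplier (in particular with $\langle\nabla\rangle^{2m}$) and is comparable to the identity in $L^2$ up to $\kappa_0^{-3}$. Consequently $\|\cA g\|$ is two-sided comparable to $e^{\lambda\nu^{\frac13}t}\|g\|_{H^{2m}}$. From here I would apply the classical Sobolev product inequality \eqref{eq:prod-ineq} with $s = 2m$ to $FG$, then redistribute the scalar prefactor $e^{\lambda\nu^{\frac13}t}$ and convert the two $H^{2m}$ norms on the right back into $\|\cA F\|$ and $\|\cA G\|$ using the same comparison.

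For the second estimate I would work directly on the Fourier side. The symbol of $\langle\nabla\rangle^n\nabla_L$ has magnitude $\langle k,\eta,l\rangle^n\,|k,\eta-tk,l|$, which admits the crude pointwise bound $\langle k,\eta,l\rangle^n\,|k,\eta-tk,l| \lesssim \langle k,\eta,l\rangle^{n+1}(1+t)$ using $|k,\eta-tk,l| \le |k,\eta,l| + t|k| \lesssim \langle k,\eta,l\rangle(1+t)$. Since $M_1M_2M_3 \ge \kappa_0^3$, the symbol $A$ obeys $A \ge \kappa_0^3\, e^{\lambda\nu^{\frac13}t}\langle k,\eta,l\rangle^{2m}$. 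Using $n+1 \le 2m$, the estimate reduces to the scalar inequality $1+t \lesssim \nu^{-\frac13}e^{\lambda\nu^{\frac13}t}$, which follows from the elementary bound $x \le (e\lambda)^{-1} e^{\lambda x}$ (valid for $x\ge 0$) applied with $x = \nu^{\frac13}t$, together with $\nu^{\frac13} \le 1$. Plancherel's identity then delivers the $L^2$-bound.

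There is no real obstacle here: both estimates are essentially symbol-level manipulations, and the only subtlety worth flagging is that the implicit constants depend on $\lambda = \lambda(\beta)$ through the scalar inequality above. Since the lemma is stated with an implicit constant and the running hypothesis $\beta > \tfrac{1}{2}$ guarantees $\lambda > 0$, this dependence is harmless.
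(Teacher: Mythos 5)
Your proof is correct and follows essentially the same route as the paper: for the product estimate, you pass through the comparison $\kappa_0^{3}\le M\le 1$ (the paper uses $M\le 1$ on the left and $\kappa_0^{-1}$ on the right, absorbing the power of $\kappa_0$ into the implicit constant) and then apply the Sobolev product inequality \eqref{eq:prod-ineq} at level $s=2m$; for the second bound you prove the crude estimate $\|\nabla_L F\|\lesssim\langle t\rangle\|\langle\nabla\rangle F\|$ at the symbol level and then use the scalar inequality $\langle t\rangle\,\e^{-\lambda\nu^{1/3}t}\lesssim\nu^{-1/3}$, which is exactly the paper's argument written out in Fourier variables. The only genuinely additional content in your write-up is the explicit derivation $x\le(e\lambda)^{-1}\e^{\lambda x}$ and the honest remark that the implicit constant depends on $\lambda(\beta)$, both of which are fine and harmless under the standing hypothesis $\beta>\tfrac12$.
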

\begin{proof}
 The first inequality \eqref{eq:prod_ineq_A} follows from \eqref{eq:prod-ineq} and \eqref{eq:ghost_bd}: with this we have that 
 \begin{align}
 \|\cA (FG)\|&\leq \e^{\lambda \nu^{\frac13} t}\|FG\|_{H^{2m}}\notag\\
 &\lesssim \e^{\lambda \nu^{\frac13} t} \|F\|_{H^{2m}}\|G\|_{L^\infty}+\|F\|_{L^\infty} \e^{\lambda \nu^{\frac13} t} \|G\|_{H^{2m}}\notag\\
 &\leq \kappa_0^{-1} (\|\cA F\|\|G\|_{L^\infty}+\|F\|_{L^\infty}\|\cA G\|).
\end{align}
For the second one it suffices to use the crude bound $ \| \nabla_L F\| \lesssim \l t \r \|\l \nabla \r F\|$ to conclude with \eqref{eq:prod_ineq_A} that
\begin{align}
    \| \nabla_L F\|_{H^n} \lesssim \l t \r \e^{-\lambda \nu^{\frac13}t} \|\l \nabla \r \e^{\lambda \nu^{\frac13} t }F\|_{H^{2m-1}} \lesssim \nu^{-\frac13} \| \cA  F\|,\qquad n\leq 2m-1.
\end{align}
\end{proof}

\subsection{Control of $\GG_{\neq},\Gamma_{\neq}$ -- proof of Proposition \ref{prop:GGamma}}\label{ssec:GGamma_neq}
The proof of Proposition \ref{prop:GGamma} combines two main ingredients: the energy  functional $\sfE$ from \eqref{eq:GGamma_energy} to exploit the symmetric relation between $\GG$ and $\Gamma$ and its coercivity, as well as trilinear estimates. As hinted at above, here the precise choice of multipliers in our norms plays an important role and accounts for enhanced dissipation ($\cM_1$), certain lower order time derivatives ($\cM_2$) and time decay of some unknowns ($\cM_3$).

We begin by recalling that by \eqref{eq:full_shorthand} the unknowns $\GG_{\neq}$ and $\Gamma_{\neq}$ satisfy the equations
\begin{subequations}
\begin{alignat}{2}
\de_t\GG_{\neq} &=\nu\Delta_L \GG_{\neq} +\frac{1}{2}\de_x\de_y^L|\nabla_L|^{-2}\GG_{\neq} +\beta |\nabla_{x,z}||\nabla_L|^{-1}\Gamma_{\neq} + \cT_\star(U,U^2)_{\neq}+\de_y^L\cP_\star(U,U)_{\neq},\\
\de_t\Gamma_{\neq} & = \nu\Delta_L\Gamma_{\neq}-\frac{1}{2}\de_x\de_y^L|\nabla_L|^{-2}\Gamma_{\neq} -\beta |\nabla_{x,z}||\nabla_L|^{-1}\GG_{\neq}+\cT_\circ (U,\Theta)_{\neq}.
\end{alignat}
\end{subequations}
As a first step we show:
\begin{lemma}\label{lem:GGamma_reduction}
 Under the assumptions of Theorem \ref{thm:bootstrap_step}, we have that
\begin{equation}
\begin{aligned}
    &\sum_{F\in\{\GG,\Gamma\}}\norm{\cA F_{\neq}(t)}^2 +2\nu\norm{\nabla_L\cA F_{\neq}}^2_{L^2_tL^2}+\norm{\sqrt{-\frac{\dot \cM }{\cM }}\cA F_{\neq}}^2_{L^2_tL^2}\leq 45\eps^2 + \frac{4\beta}{2\beta-1}\int_0^\infty N_{\sfE},
\end{aligned}
\end{equation}
where 
\begin{align}\label{def:NE}
    N_{\sfE}&:=\langle \cA\GG_{\neq},\cA\,\cT_\star(U,U^2)_{\neq}+ \cA\, \de_y^L\cP_{\star}(U,U)_{\neq}\rangle +\langle \cA\,  \cT_\circ (U,\Theta)_{\neq},\cA\Gamma_{\neq}\rangle \notag\\
    &\quad +\frac{1}{2\beta}\langle  \de_x\de_y^L|\nabla_{x,z}|^{-1} |\nabla_L|^{-1}\cA\GG_{\neq},\cA\,  \cT_\circ (U,\Theta)_{\neq}\rangle \notag\\
    & \quad +\frac{1}{2\beta}\langle   \cA\,\cT_\star(U,U^2)_{\neq}+ \cA\, \de_y^L\cP_{\star}(U,U)_{\neq},\de_x\de_y^L|\nabla_{x,z}|^{-1} |\nabla_L|^{-1}\cA\Gamma_{\neq}\rangle.
\end{align}
\end{lemma}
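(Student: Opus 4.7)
The strategy is to derive a differential inequality for the combined functional $\sfE$ from \eqref{eq:GGamma_energy}, integrate in time, and apply the coercivity \eqref{eq:coercive-L2} (valid for $\beta>\tfrac12$) with prefactor $\tfrac{4\beta}{2\beta-1}$ to recover estimates on $\|\cA\GG_{\neq}\|^2+\|\cA\Gamma_{\neq}\|^2$. Since $\eps_0 = C_\beta\eps$, the initial-data contribution $\sfE(0)\leq(1+\tfrac{1}{2\beta})\eps_0^2$ fits comfortably within the $45\eps^2$ budget.

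I first differentiate $\sfE$ in $t$ and substitute \eqref{eq:full_shorthand}. Writing $\de_t\cA/\cA=\lambda\nu^{1/3}+\dot\cM/\cM$, the diagonal pieces $\tfrac12\ddt\|\cA\GG_{\neq}\|^2$ and $\tfrac12\ddt\|\cA\Gamma_{\neq}\|^2$ generate: the growth $\lambda\nu^{1/3}\|\cA\cdot\|^2$ together with the ghost dissipation $-\|\sqrt{-\dot\cM/\cM}\cA\cdot\|^2$ from the weight; the viscous dissipation $-\nu\|\nabla_L\cA\cdot\|^2$; the $\beta$-coupling terms $\pm\beta\langle|\nabla_{x,z}||\nabla_L|^{-1}\cA\Gamma_{\neq},\cA\GG_{\neq}\rangle$, which cancel by self-adjointness; the linear terms $\pm\tfrac12\langle\de_x\de_y^L|\nabla_L|^{-2}\cA\cdot,\cA\cdot\rangle$; and the nonlinear contributions that assemble into $N_\sfE$.

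The time derivative of the off-diagonal piece, carried by the multiplier $\cB:=\de_x\de_y^L|\nabla_{x,z}|^{-1}|\nabla_L|^{-1}$, supplies the crucial cancellations. A direct Fourier computation yields $\de_t\cB = k^2|k,l|/|k,\eta-kt,l|^3 = \tfrac{2\beta-1}{2}(-\dot M_2/M_2)$ by \eqref{def:M3}, so the $\dot\cB$ term is absorbed by the $\cM_2$ portion of the ghost dissipation after Cauchy--Schwarz. Substituting the $\beta$-coupling terms of $\de_t\GG_{\neq}$ and $\de_t\Gamma_{\neq}$ into the cross piece produces precisely $\pm\tfrac12\langle\de_x\de_y^L|\nabla_L|^{-2}\cA\cdot,\cA\cdot\rangle$, exactly cancelling the bad linear terms from the diagonal step. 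The viscous cross contributions are bounded using $|\de_x||\de_y^L|\leq|\nabla_L|^2$ and $|\de_x|\leq|\nabla_{x,z}|$ by $\tfrac{\nu}{2\beta}(\|\nabla_L\cA\GG_{\neq}\|^2+\|\nabla_L\cA\Gamma_{\neq}\|^2)$; combined with the diagonal dissipation this leaves net coefficient $\tfrac{2\beta-1}{2\beta}\nu$, which becomes exactly $2\nu$ after applying the coercivity factor $\tfrac{4\beta}{2\beta-1}$. Contributions from $\dot\cA$ on the cross term carry a prefactor $1/\beta$ and are absorbed into the ghost dissipation. The residual growth $\lambda\nu^{1/3}\|\cA\cdot\|^2$ is finally dispatched by Lemma \ref{lem:enhanced_dissip_estim}, which distributes it between the $\cM_1$ ghost and the viscous dissipation; the specific choice $\lambda=(2\beta-1)/(2\beta+1)$ keeps all net coefficients positive after coercivity.

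The main obstacle is the precise cancellation between the linear bad terms of the diagonal pieces and the $\beta$-coupling contributions of the cross term, combined with carefully tracking constants so that the cross-dissipation loss and the $\dot\cB$ contribution fit inside the available dissipative budget. This hinges on the specific form of $\cM_2$ in \eqref{def:M3} and on the threshold $\beta>\tfrac12$ entering through the coercivity constant $1-\tfrac{1}{2\beta}$. Once these cancellations are in place, integrating in time and applying coercivity yields the stated bound, with $N_\sfE$ as in \eqref{def:NE} collecting all nonlinear contributions.
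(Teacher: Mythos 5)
Your proposal follows the paper's proof strategy closely, and the structural observations are correct: the cancellation between the diagonal terms $\pm\frac12\langle\de_x\de_y^L|\nabla_L|^{-2}\cA F_{\neq},\cA F_{\neq}\rangle$ and the contributions generated by plugging the $\pm\beta|\nabla_{x,z}||\nabla_L|^{-1}$ couplings into the cross piece is precisely the mechanism the paper uses, and your computation that $\de_t(\de_x\de_y^L|\nabla_{x,z}|^{-1}|\nabla_L|^{-1})$ has symbol $k^2|k,l|/|k,\eta-kt,l|^3=\tfrac{2\beta-1}{2}(-\dot M_2/M_2)$ is correct and matches the paper's use of $\cM_2$. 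The coercivity-and-divide step with prefactor $\tfrac{4\beta}{2\beta-1}$ and the bookkeeping of the viscous cross term are likewise as in the paper.

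The one place your account is imprecise is the handling of the residual growth $\lambda\nu^{1/3}\|\cA F_{\neq}\|^2$: this is not absorbed into the left side by "distributing between the $\cM_1$ ghost and the viscous dissipation." A direct absorption via Lemma \ref{lem:enhanced_dissip_estim} would require, after expanding $(2a+b)^2$, a coefficient of at least $8$ on the $\cM_1$ ghost and $2$ on the viscous term, whereas after coercivity the left-hand side carries only coefficient $2$ on each; the numbers do not close. What the paper does instead is integrate in time, note that the coercivity factor combines with the choice of $\lambda$ so that the growth term becomes $2\nu^{1/3}\bigl(\|\cA\GG_{\neq}\|^2_{L^2_tL^2}+\|\cA\Gamma_{\neq}\|^2_{L^2_tL^2}\bigr)$, and then bound this by $40\eps^2$ via Corollary \ref{cor:ED_bounds}, which itself invokes Lemma \ref{lem:enhanced_dissip_estim} \emph{together with the bootstrap assumptions} \eqref{eq:bootstrap_nonzero_G}--\eqref{eq:bootstrap_nonzero_Gamma}. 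The bootstrap hypothesis is therefore not a cosmetic convenience here: it is the mechanism that dispatches the growth term, and a purely self-contained energy absorption would fail. Apart from this, the proof plan is sound and matches the paper's.
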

This follows by computing the time derivative of $\sfE$, using the definition of $\cM_2$ and invoking the bootstrap assumptions. 
\begin{proof}[Proof of Lemma \ref{lem:GGamma_reduction}]
 From the definition \eqref{def:cA} of $\cA$ we have that
\begin{equation}\label{eq:dotA}
    \dot \cA = \lambda \nu^{\frac13}\cA + \frac{\dot \cM}{\cM} \cA,
\end{equation}
and hence by a direct computation from \eqref{eq:GGamma_energy}
\begin{align}
    \ddt \sfE (t) &= -\nu\norm{\nabla_L\cA\GG_{\neq}}^2-\nu\norm{\nabla_L\cA\Gamma_{\neq}}^2 -\frac{\nu}{\beta}\left\langle  \de_x\de_y^L|\nabla_{x,z}|^{-1} |\nabla_L|^{-1} \nabla_L\cA\GG_{\neq},\nabla_L\cA\Gamma_{\neq}\right\rangle \notag\\
    &\quad -\norm{\sqrt{-\frac{\dot \cM }{\cM }}\cA \GG_{\neq} }^2 - \norm{\sqrt{-\frac{\dot \cM }{\cM }}\cA \Gamma_{\neq} }^2 +\lambda \nu^{\frac13}\norm{\cA \GG_{\neq}}^2+\lambda \nu^{\frac13}\norm{\cA \Gamma_{\neq}}^2\notag\\
    &\quad + \frac{1}{\beta}\left\langle  \de_x\de_y^L|\nabla_{x,z}|^{-1} |\nabla_L|^{-1} \frac{\dot \cM}{\cM}\cA\GG_{\neq},\cA\Gamma_{\neq}\right\rangle + \frac{\lambda \nu^{\frac13}}{\beta}\left\langle  \de_x\de_y^L|\nabla_{x,z}|^{-1} |\nabla_L|^{-1} \cA\GG_{\neq},\cA\Gamma_{\neq}\right\rangle \notag \\
    &\quad 
    +\frac{1}{2\beta}\left\langle \frac{\dd }{\dd t }\left( \de_x\de_y^L|\nabla_{x,z}|^{-1} |\nabla_L|^{-1}\right) \cA\GG_{\neq},\cA\Gamma_{\neq}\right\rangle 
    +N_{\sfE}(t),
    \end{align}
By Plancherel's theorem we have
\begin{equation}\label{eq:Riesz_bounds}
 \norm{\de_x|\nabla_{x,z}|^{-1} F_{\neq}} \leq \norm{F_{\neq}},\qquad \norm{\de_y^L |\nabla_L|^{-1} F_{\neq}} \leq \norm{F_{\neq}},
\end{equation}
and it follows that 
\begin{equation}
    \frac{\nu}{\beta}\left\langle  \de_x\de_y^L|\nabla_{x,z}|^{-1} |\nabla_L|^{-1} \nabla_L\cA\GG_{\neq},\nabla_L\cA\Gamma_{\neq}\right\rangle \leq \frac{\nu}{\beta}\norm{\nabla_L\cA\GG_{\neq}}\norm{\nabla_L\cA\Gamma_{\neq}},
\end{equation}
and
\begin{equation}
    \frac{1}{\beta}\left\langle  \de_x\de_y^L|\nabla_{x,z}|^{-1} |\nabla_L|^{-1} \frac{\dot \cM}{\cM}\cA\GG_{\neq},\cA\Gamma_{\neq}\right\rangle\leq \frac{1}{\beta}\norm{\sqrt{-\frac{\dot\cM}{\cM}}\cA\GG_{\neq}}\norm{\sqrt{-\frac{\dot\cM}{\cM}}\cA\Gamma_{\neq}}.
\end{equation}
We can thus bound the time derivative of $\sfE(t)$ as
\begin{align}
    \ddt \sfE(t)&\leq  -\left(1-\frac{1}{2\beta}\right)\left(\norm{\sqrt{-\frac{\dot \cM }{\cM }}\cA \GG_{\neq}}^2+\norm{\sqrt{-\frac{\dot \cM }{\cM }}\cA \Gamma_{\neq}}^2\right)\notag\\
    &\quad -\nu\left(1-\frac{1}{2\beta}\right)\left(\norm{\nabla_L\cA \GG_{\neq}}^2+\norm{\nabla_L\cA\Gamma_{\neq}}^2\right)\notag\\
    &\quad +\lambda\nu^{\frac13}\left(1+\frac{1}{2\beta}\right)\left(\norm{\cA \GG_{\neq}}^2+\norm{\cA \Gamma_{\neq}}^2\right)\notag\\
    &\quad  +\frac{1}{2\beta}\left\langle \frac{\dd }{\dd t }\left(\de_x\de_y^L|\nabla_{x,z}|^{-1} |\nabla_L|^{-1}\right) \cA \GG_{\neq},\cA \Gamma_{\neq}\right\rangle + N_{\sfE}(t).
\end{align}    
Explicit computation of the time derivative in the second to last term, integration in time and coercivity of $\sfE(t)$ as in \eqref{eq:coercive-L2} lead to 
\begin{align}\label{eq:dtE_integrated}
    &\norm{\cA \GG_{\neq}(t)}^2+\norm{\cA \Gamma_{\neq}(t)}^2 +2\nu \left(\norm{\nabla_L\cA\GG_{\neq}}^2_{L^2_tL^2}+\norm{\nabla_L\cA\Gamma_{\neq}}^2_{L^2_tL^2}\right)\notag\\
    &\qquad +2\left(\norm{\sqrt{-\frac{\dot \cM }{\cM }}\cA \GG_{\neq}}^2_{L^2_tL^2}+\norm{\sqrt{-\frac{\dot \cM }{\cM }}\cA \Gamma_{\neq}}^2_{L^2_tL^2}\right) \notag\\
    &\leq \frac{2\beta+1}{2\beta-1}\left(\norm{\cA \GG_{\neq}(0)}^2+\norm{\cA \Gamma_{\neq}(0)}^2\right) +2  \lambda \nu^{\frac13}\frac{2\beta+1}{2\beta-1}\left(\norm{\cA \GG_{\neq}}^2_{L^2_tL^2}+\norm{\cA \Gamma_{\neq}}^2_{L^2_tL^2}\right)\notag\\
    & \qquad -\frac{2}{2\beta-1}\int_0^\infty \left\langle \de^2_x|\nabla_{x,z}| |\nabla_L|^{-3}\cA \GG_{\neq},\cA \Gamma_{\neq}\right\rangle + \frac{4\beta}{2\beta-1}\int_0^\infty N_{\sfE}.
\end{align}
By smallness of the initial data (see e.g.\ \eqref{eq:bootstrap_nonzero_G}--\eqref{eq:bootstrap_nonzero_Gamma}) and since $\beta>\frac12$, we have
\begin{equation}
    \frac{2\beta+1}{2\beta-1}\left(\norm{\cA \GG_{\neq}(0)}^2+\norm{\cA \Gamma_{\neq}(0)}^2\right) \leq C_\beta^{-2}\eps_0^2\leq\eps^2,
\end{equation}
and from Corollary \ref{cor:ED_bounds} and the choice of $\lambda$ in \eqref{def:cA} we obtain
\begin{equation}
    2\lambda \nu^{\frac13}\frac{2\beta+1}{2\beta-1}\left(\norm{\cA \GG_{\neq}}^2_{L^2_tL^2}+\norm{\cA \Gamma_{\neq}}^2_{L^2_tL^2}\right) \leq 40 \eps^2.
\end{equation}
To prove the claim it thus suffices to observe that by construction, the second to last term in \eqref{eq:dtE_integrated} precisely recovers the ghost multiplier $\cM_2$ (as defined in \eqref{def:M3}), and thus
\begin{equation}
\begin{aligned}
    \abs{-\frac{2}{2\beta-1}\int_0^\infty \left\langle \left(\de^2_x|\nabla_{x,z}| |\nabla_L|^{-3}\right) \cA \GG_{\neq},\cA \Gamma_{\neq}\right\rangle}&=\abs{\int_0^\infty \left\langle -\frac{\dot\cM_2}{\cM_2} \cA \GG_{\neq},\cA \Gamma_{\neq}\right\rangle}\\
    &\leq \norm{\sqrt{-\frac{\dot \cM_2 }{\cM_2 }}\cA \GG_{\neq}}_{L^2_tL^2}\norm{\sqrt{-\frac{\dot \cM_2 }{\cM_2 }}\cA \Gamma_{\neq}}_{L^2_tL^2},
\end{aligned}    
\end{equation}
which can be absorbed by the correspondent terms appearing in the LHS of \eqref{eq:dtE_integrated}. 
\end{proof}

To conclude the proof of Proposition \ref{prop:GGamma}, by Lemma \ref{lem:GGamma_reduction} it thus suffices to show that
\begin{equation}\label{eq:intNsfE}
 \int_0^\infty N_{\sfE}\lesssim (\nu^{-\frac56}\eps)\eps^2.
\end{equation}
More precisely, we will establish the following bounds, which by \eqref{def:NE} imply \eqref{eq:intNsfE}:
\begin{lemma}\label{lem:GGammanonlin}
 Under the assumptions of Theorem \ref{thm:bootstrap_step}, there holds that
 \begin{align}
  \int_0^\infty|\l \cA\GG_{\neq},\cA\,\cT_\star(U,U^2)_{\neq} \r| +|\langle   \de_x\de_y^L|\nabla_{x,z}|^{-1} |\nabla_L|^{-1}\cA\Gamma_{\neq},\cA\,\cT_\star(U,U^2)_{\neq}\rangle|  
    &\lesssim (\nu^{-\frac56}\eps)\eps^2, \qquad\label{eq:T_star_bds}\\
  \int_0^\infty|\l\cA\Gamma_{\neq}, \cA\,\cT_\circ(U,\Theta)_{\neq} \r| +|\langle   \de_x\de_y^L|\nabla_{x,z}|^{-1} |\nabla_L|^{-1}\cA\GG_{\neq},\cA\,\cT_\circ(U,U^2)_{\neq}\rangle|   &\lesssim (\nu^{-\frac23}\eps)\eps^2, \qquad\label{eq:T_circ_bds}\\
  \int_0^\infty |\l \cA\GG_{\neq},\cA \, \de_y^L\cP_\star(U,U)_{\neq}\r| + |\l \de_x\de_y^L|\nabla_{x,z}|^{-1} |\nabla_L|^{-1}\cA\Gamma_{\neq},\cA \, \de_y^L\cP_\star(U,U)_{\neq}\r| &\lesssim (\nu^{-\frac56}\eps)\eps^2.\qquad
  \label{eq:P_star_bds}
 \end{align} 
\end{lemma}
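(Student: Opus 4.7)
The six inner products in \eqref{eq:T_star_bds}--\eqref{eq:P_star_bds} are all instances of the same trilinear template, so I would treat them in parallel. First, I would move the self-adjoint Fourier multiplier $|\nabla_{x,z}|^{\mp\frac12}|\nabla_L|^{\pm\frac32}$ (resp.\ $|\nabla_L|^{\pm\frac12}$ for $\cT_\circ$) defining each modified operator onto the $\cA F_{\neq}$ factor, and bound the bounded Riesz-type piece $\de_x\de_y^L|\nabla_{x,z}|^{-1}|\nabla_L|^{-1}$ by $1$ using \eqref{eq:Riesz_bounds}. Every term then reduces to an integral of the form $\int_0^\infty \l \cA \widetilde F_{\neq},\, \cA\,\cT(U,H)_{\neq}\r$ with $F\in\{\GG,\Gamma\}$, $H\in\{U^1,U^2,U^3,\Theta\}$, equipped with an extra multiplier of order $\tfrac12$ or $\tfrac32$ in $\nabla_L$.

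Next, I would decompose by frequency support in $x$,
\begin{equation*}
  \cT(U,H)_{\neq}=\cT(U_{\neq},H_0)+\cT(U_0,H_{\neq})+\cT(U_{\neq},H_{\neq})_{\neq},
\end{equation*}
and within each piece apply the product estimate \eqref{eq:prod_ineq_A} to distribute derivatives so that the full $|\nabla_L|^{\frac32}$ (or $|\nabla_L|^{\frac12}$) lands on the factor later controlled via the dissipation norm $\sqrt{\nu}\,\|\nabla_L\cA\,\cdot\|_{L^2_tL^2}$. The general Hölder-in-time template is: place $\cA F_{\neq}$ in $L^\infty_tL^2$ (size $\eps$ from the bootstrap), one factor in $L^2_tL^2$ via enhanced dissipation from Corollary \ref{cor:ED_bounds} (size $\eps\nu^{-\frac16}$), and the remaining factor as $\sqrt\nu\,\nabla_L\cA(\cdot)$ in $L^2_tL^2$ (size $\eps\nu^{-\frac12}$). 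For the $\cT_\circ$ estimates \eqref{eq:T_circ_bds} the derivative budget fits exactly into this scheme and yields $\nu^{-\frac23}\eps^3$. For $\cT_\star$ and $\de_y^L\cP_\star$, the extra $|\nabla_L|^{\frac12}$ is paid for via the ghost multiplier $\cM_3$, using the pointwise comparison $|\de_x|^{\frac12}|\nabla_{x,z}|^{n-\frac12}|\nabla_L|^{-n}\lesssim \sqrt{-\dot\cM_3/\cM_3}$ recorded after \eqref{def:M3}; this converts the half derivative into an $L^2_tL^2$ bound of size $\eps$ (from the bootstrap ghost-norm term) and, combined with a further application of Lemma \ref{lem:enhanced_dissip_estim} where needed, leads to the worse $\nu^{-\frac56}\eps^3$ bound in \eqref{eq:T_star_bds} and \eqref{eq:P_star_bds}.

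Zero-mode factors in the $(U_{\neq},H_0)$ and $(U_0,H_{\neq})$ pieces cannot benefit from enhanced dissipation and therefore require a separate treatment: for these I would place the zero-mode factor in $L^\infty_tL^\infty$ via Sobolev embedding from $\|H_0\|_{L^\infty_tH^{2m}}\lesssim\eps$ (using \eqref{eq:bootstrap_zero_g}--\eqref{eq:bootstrap_zero_u}), and push all enhanced-dissipation and dissipation norms onto the nonzero-mode factor. The pressure contribution $\de_y^L\cP_\star(U,U)$ is handled along the same lines after noting that $|\nabla_L|^{-2}(\nabla_L\otimes\nabla_L)$ is bounded on $L^2$, so that the extra $\de_y^L$ simply groups with the $|\nabla_L|$ already destined for the dissipation slot.

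\textbf{Main obstacle.} The tightest bookkeeping is in the $(U_{\neq},H_0)$ and $(U_0,H_{\neq})$ interactions for $\cT_\star$ and $\de_y^L\cP_\star$: since no enhancement can be extracted from the zero-mode factor, the entire $|\nabla_L|^{\frac32}$ weight must be carried by the single nonzero-mode factor, which forces a simultaneous use of dissipation and of the $\cM_3$ ghost weight. This is precisely what produces the $\nu^{-\frac56}$ exponent in \eqref{eq:T_star_bds} and \eqref{eq:P_star_bds}, as opposed to the cleaner $\nu^{-\frac23}$ appearing in \eqref{eq:T_circ_bds} for $\cT_\circ$, and hence dictates the $\nu^{\frac56}$ nonzero-mode transition threshold announced in Proposition \ref{prop:GGamma}.
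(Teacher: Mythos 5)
Your overall architecture — decompose by modes, apply a Leibniz rule for derivatives, then Hölder in time mixing $L^\infty_tL^2$, enhanced-dissipation $L^2_tL^2$ of size $\nu^{-\frac16}\eps$, and dissipative $L^2_tL^2$ of size $\nu^{-\frac12}\eps$ — is precisely the paper's template, and your reduction of the $\cT_\circ$ bound \eqref{eq:T_circ_bds} to $\nu^{-\frac23}$ is correct. However, your explanation of where $\nu^{-\frac56}$ comes from in \eqref{eq:T_star_bds} and \eqref{eq:P_star_bds} does not hold up, and this is the crux of the lemma. You attribute it to the ghost weight $\cM_3$, but $\cM_3$ acts on symbols of the form $|\de_x|^{\frac12}|\nabla_{x,z}|^{n-\frac12}|\nabla_L|^{-n}$ with $n\geq\frac12$, and the weight that actually lands on $\cA\GG_{\neq}$ in the worst $(\neq,0)$ terms (namely $\cT^3_\star(U_{\neq},U^2_0)$ and $\de_y^L\cP^{2,3}_\star(U_{\neq},U_0)$) is $|\nabla_{x,z}|^{-\frac12}|\nabla_L|^{\frac12}$, which has the wrong sign on $|\nabla_L|$ and no $|\de_x|^{\frac12}$ factor; $\cM_3$ simply does not apply there. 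Moreover, a $\cM_3$-based estimate would give an $L^2_tL^2$ factor of size $\eps$ (no $\nu$), so $\eps\cdot\nu^{-\frac12}\eps\cdot\eps$ times even another $\nu^{-\frac16}$ only reaches $\nu^{-\frac23}$, not $\nu^{-\frac56}$ — your plan does not produce the exponent you claim.

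The actual mechanism in the paper is an interpolation that you have not accounted for: one bounds $\||\nabla_{x,z}|^{-\frac12}|\nabla_L|^{\frac12}\cA\GG_{\neq}\|\lesssim\|\cA\GG_{\neq}\|^{\frac12}\|\nabla_L\cA\GG_{\neq}\|^{\frac12}$, and places this factor in $L^2_t$ by Hölder, giving $\|\cA\GG_{\neq}\|_{L^2_tL^2}^{1/2}\|\nabla_L\cA\GG_{\neq}\|_{L^2_tL^2}^{1/2}\lesssim\nu^{-\frac1{12}}\eps^{\frac12}\cdot\nu^{-\frac14}\eps^{\frac12}=\nu^{-\frac13}\eps$. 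The remaining nonzero factor carries the full $\nabla_L$ in the dissipation slot ($\nu^{-\frac12}\eps$), while the zero mode is held in $L^\infty_tH^{2m}$ ($\eps$); the product gives exactly $\nu^{-\frac56}\eps^3$. This is forced because the zero-mode factor cannot contribute any time decay (zero modes have $\de_y^L=\de_y$), so both nonzero factors must absorb the Hölder-in-time burden. You will also want to adjust the opening step: moving the entire operator $|\nabla_{x,z}|^{-\frac12}|\nabla_L|^{\frac32}$ onto $\cA\GG_{\neq}$ is not usable, as $\frac32$ orders of $\nabla_L$ on $\cA\GG_{\neq}$ are not controlled by the bootstrap norms; the paper keeps only $|\nabla_{x,z}|^{-\frac12}|\nabla_L|^{\frac12}$ on that side and distributes the remaining $|\nabla_L|$ through the product.
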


\begin{proof}
The bound \eqref{eq:T_star_bds} is established in Section \ref{sssec:cTstarGGamma}, \eqref{eq:T_circ_bds} in Section \ref{sssec:cTcircGGamma}, and for \eqref{eq:P_star_bds} we refer to Section \ref{sssec:cPGGamma}.
\end{proof}

\subsubsection{Nonlinear terms analysis: $\cT_\star(U,U^2)_{\neq}$}\label{sssec:cTstarGGamma}
Here we establish \eqref{eq:T_star_bds}. Since $\GG_{\neq}$ and $\Gamma_{\neq}$ satisfy analogous bootstrap assumptions \eqref{eq:bootstrap_nonzero_G} resp.\ \eqref{eq:bootstrap_nonzero_Gamma}, by Plancherel (see also \eqref{eq:Riesz_bounds}), it suffices to show the bound \eqref{eq:T_star_bds} for $\l \cA\GG_{\neq},\cA\,\cT_\star(U,U^2)_{\neq} \r$.

Firstly, note that 
\begin{equation}
\cT_\star(U,U^2) =\sum_r \cT^r_\star(U,U^2), \qquad \cT^r_\star(U,U^2):=-|{\nabla_L}|^{\frac32} |\nabla_{x,z}|^{-\frac12}(U^r \de_r^LU^2),
\end{equation}
where $r\in\{1,2,3\}$ with the convention $\de_1^L=\de_x$, $\de_2^L=\de_y^L$, and $\de_3^L=\de_z$.
Moreover, we further split each term based on the interaction which generates it, namely 
\begin{equation}\label{def:cTstar_decomposition}
\cT^r_\star(U,U^2)_{\neq} =\sum_{\kappa_1,\kappa_2\in\{0,\neq\}}\cT^r_\star(U_{\kappa_1},U^2_{\kappa_2})_{\neq}, \qquad \cT^r_\star(U_{\kappa_1},U^2_{\kappa_2})_{\neq}:=-|{\nabla_L}|^{\frac32} |\nabla_{x,z}|^{-\frac12}(U^r_{\kappa_1} \de_r^LU^2_{\kappa_2})_{\neq}.
\end{equation}
Since $(0,0)$ interactions cannot force nonzero modes, we need to bound only the $(\neq,\neq)$, $(\neq,0)$, and $(0,\neq)$ interactions.
We prove the following bounds
\begin{subequations}
\begin{alignat}{7}
    \int_0^\infty|\l \cA \GG_{\neq},\cA \,\cT^r_\star(U_{\neq},U^2_{\neq}) \r| &\lesssim (\nu^{-\frac34}\eps)\eps^2,\quad r=1,3, \label{eq:Tstar-r-neqneq}\\
    \int_0^\infty|\l \cA\GG_{\neq},\cA\,\cT^2_\star(U_{\neq},U^2_{\neq})\r| &\lesssim (\nu^{-\frac12}\eps)\eps^2,\label{eq:Tstar-2-neqneq}\\
    \int_0^\infty|\l\cA\GG_{\neq}, \cA\,\cT^2_\star(U_{\neq},U^2_0) \r| 
    &\lesssim (\nu^{-\frac12}\eps)\eps^2,\label{eq:Tstar-2-neq0}\\
    \int_0^\infty|\l \cA\GG_{\neq},\cA\,\cT^3_\star(U_{\neq},U^2_0) \r| 
    &\lesssim (\nu^{-\frac56}\eps)\eps^2,\label{eq:Tstar-3-neq0}\\
    \int_0^\infty|\l \cA\GG_{\neq},\cA\,\cT^1_\star(U_0,U^2_{\neq}) \r| 
    &\lesssim (\nu^{-\frac34}\eps)\eps^2,\label{eq:Tstar-1-0neq}\\
    \int_0^\infty|\l\cA\GG_{\neq}, \cA\,\cT^2_\star(U_0,U^2_{\neq}) \r| 
    &\lesssim (\nu^{-\frac23}\eps)\eps^2,\label{eq:Tstar-2-0neq}\\
    \int_0^\infty|\l\cA\GG_{\neq}, \cA\,\cT^3_\star(U_0,U^2_{\neq}) \r| 
    &\lesssim (\nu^{-\frac34}\eps)\eps^2.\label{eq:Tstar-3-0neq}
\end{alignat}
\end{subequations}
Note that among the $(\neq, 0)$ interactions, we have
$\cT^1_\star(U_{\neq},U^2_0)_{\neq}=0$ since $\de_x U^2_0=0$.

Beginning with $\cT^1_\star(U_{\neq},U^2_{\neq})_{\neq}$ and $\cT^3_\star(U_{\neq},U^2_{\neq})_{\neq}$, which can be treated in a similar way, we compute (for $r=1$ or $r=3$) that 
\begin{align}
    \l\cA \,\cT^r_\star(U_{\neq},U^2_{\neq}), \cA \GG_{\neq} \r
    &= \langle \cA   |\nabla_L||{\nabla_L}|^{\frac12} |\nabla_{x,z}|^{-\frac12}(U^r_{\neq} \de_r^L |\nabla_{x,z}|^{\frac12}|\nabla_L|^{-\frac32} \GG_{\neq}), \cA \GG_{\neq}\rangle\notag\\
    &= \langle  \cA  |\nabla_L|(U^r_{\neq} \de_r^L |\nabla_{x,z}|^{\frac12}|\nabla_L|^{-\frac32} \GG_{\neq}) ,|{\nabla_L}|^{\frac12} |\nabla_{x,z}|^{-\frac12}\cA \GG_{\neq}\rangle\notag\\
    &=\sum_{j} \langle \cA  \partial_j^L(U^r_{\neq} \de_r^L |\nabla_{x,z}|^{\frac12}|\nabla_L|^{-\frac32} \GG_{\neq}),\partial_j^L|\nabla_L|^{-1}|{\nabla_L}|^{\frac12} |\nabla_{x,z}|^{-\frac12}\cA \GG_{\neq}\rangle\notag\\
    &=\sum_{j} \l \cA  (U^r_{\neq} \de_r^L |\nabla_{x,z}|^{\frac12}\partial_j^L|\nabla_L|^{-\frac32} \GG_{\neq}),\partial_j^L|\nabla_L|^{-1}|{\nabla_L}|^{\frac12} |\nabla_{x,z}|^{-\frac12}\cA \GG_{\neq}\rangle\notag\\
    &\quad +\langle \cA  (\partial_j^LU^r_{\neq} \de_r^L |\nabla_{x,z}|^{\frac12}|\nabla_L|^{-\frac32} \GG_{\neq}),\partial_j^L|\nabla_L|^{-1}|{\nabla_L}|^{\frac12} |\nabla_{x,z}|^{-\frac12}\cA \GG_{\neq}\rangle. \qquad\label{eq:distributing derivatives}
\end{align}    
We carefully bound these terms in such a way that we avoid that the $\de_r^L$ derivative and all the derivatives $|\nabla|^{2m}$ in $\cA$ fall on $\GG_{\neq}$. For the first term on the right-hand side of \eqref{eq:distributing derivatives}, we write
\begin{align}\label{eq:I_1234}
    &|\nabla|^{2m}  (U^r_{\neq} \de_r^L |\nabla_{x,z}|^{\frac12}\partial_j^L|\nabla_L|^{-\frac32} \GG_{\neq})=I_1+I_2-I_3+\de_r^LI_4,
\end{align}
where 
\begin{align}
    &\quad I_1:=\sum_{|\alpha|=2m}\de^\alpha U^r_{\neq}\cdot \de_r^L |\nabla_{x,z}|^{\frac12}\partial_j^L|\nabla_L|^{-\frac32} \GG_{\neq},\\
    &\quad I_2:=\sum_{|\alpha|+|\beta|= 2m-1}\de^\alpha U^r_{\neq}\cdot \de^\beta \de_r^L |\nabla_{x,z}|^{\frac12}\partial_j^L|\nabla_L|^{-\frac32} \GG_{\neq},\\
    &\quad I_3:=\sum_{|\beta|=2m}(\de_r^L U^r_{\neq}\cdot \de^\beta  |\nabla_{x,z}|^{\frac12}\partial_j^L|\nabla_L|^{-\frac32} \GG_{\neq}),\\
    &\quad I_4:=\sum_{|\beta|=2m}(U^r_{\neq}\cdot \de^\beta  |\nabla_{x,z}|^{\frac12} \partial_j^L|\nabla_L|^{-\frac32} \GG_{\neq}),
\end{align}
and thus
\begin{align}
 &|\l \cA  (U^r_{\neq} \de_r^L |\nabla_{x,z}|^{\frac12}\partial_j^L|\nabla_L|^{-\frac32} \GG_{\neq}),\partial_j^L|\nabla_L|^{-1}|{\nabla_L}|^{\frac12} |\nabla_{x,z}|^{-\frac12}\cA \GG_{\neq}\rangle|\notag\\
 &\quad \lesssim \e^{\lambda\nu^{\frac13}t} \left( \|I_1\| + \norm{I_2} +\norm{I_3} \right) \| |{\nabla_L}|^{\frac12} |\nabla_{x,z}|^{-\frac12}\cA \GG_{\neq}\| +\e^{\lambda\nu^{\frac13}t} \norm{I_4} \| |{\nabla_L}|^{\frac12} |\nabla_{x,z}|^{\frac12}\cA \GG_{\neq}\|.
\end{align}
From this, using that
\begin{align}
    &\e^{\lambda\nu^{\frac13}t}\norm{I_1} \lesssim \|\cA U^r_{\neq}\| \| \de_r^L|\nabla_{x,z}|^{\frac12}|\nabla_L|^{-\frac12}G_{\neq}\|_{L^\infty},\\
    &\e^{\lambda\nu^{\frac13}t}\norm{I_2} \lesssim \|U^r_{\neq}\|_{L^\infty}\| |\nabla_{x,z}|^{\frac12}|\nabla_L|^{-\frac12} \cA G_{\neq}\|+\|\cA U^r_{\neq}\| \| \de_r^L|\nabla_{x,z}|^{\frac12}|\nabla_L|^{-\frac12}G_{\neq}\|_{L^\infty},\\
    &\e^{\lambda\nu^{\frac13}t}\norm{I_3} \lesssim \| \nabla_{x,z} U^r_{\neq}\|_{L^\infty}\||\nabla_{x,z}|^{\frac12}|\nabla_L|^{-\frac12}\cA G_{\neq}\|,\\
    &\e^{\lambda\nu^{\frac13}t}\norm{I_4} \lesssim\| U^r_{\neq}\|_{L^\infty}\| |\nabla_{x,z}|^{\frac12}|\nabla_L|^{-\frac12}\cA G_{\neq}\|,
\end{align}
the property \eqref{eq:property_A} of $\cA$, \eqref{eq:time_extraction}, and the simple bounds
\begin{align}\label{eq:other norm bounds}
\| |\nabla_{x,z}|^{\frac12}|\nabla_L|^{-\frac12}\cA G_{\neq}\|& \lesssim \norm{\cA\GG_{\neq}},\\
 \| |{\nabla_L}|^{\frac12} |\nabla_{x,z}|^{-\frac12} \cA \GG_{\neq}\|& \lesssim \| \cA \GG_{\neq}\|^{\frac12}\| {\nabla_L} \cA \GG_{\neq}\|^{\frac12},\\
 \| |{\nabla_L}|^{\frac12} |\nabla_{x,z}|^{\frac12} \cA \GG_{\neq}\|&\lesssim \|{\nabla_L} \cA \GG_{\neq}\|,
\end{align}
we obtain 
\begin{align}
    &|\l \cA  (U^r_{\neq} \de_r^L |\nabla_{x,z}|^{\frac12}\partial_j^L|\nabla_L|^{-\frac32} \GG_{\neq}),\partial_j^L|\nabla_L|^{-1}|{\nabla_L}|^{\frac12} |\nabla_{x,z}|^{-\frac12}\cA \GG_{\neq}\rangle|\notag\\
    &\quad \lesssim \|\cA U^r_{\neq}\| \l t \r^{-\frac12}\| \cA \GG_{\neq}\|^{\frac32}\| {\nabla_L} \cA \GG_{\neq}\|^{\frac12} +\|\cA U^r_{\neq}\|\| \cA \GG_{\neq}\|^{\frac32}\| {\nabla_L} \cA \GG_{\neq}\|^{\frac12}\notag\\
    &\qquad +\| \cA U^r_{\neq}\|\|\cA \GG_{\neq}\| \|{\nabla_L} \cA \GG_{\neq}\|.
\end{align}
With the bootstrap assumptions in Theorem \ref{thm:bootstrap_step} it follows that
\begin{equation}
    \int_0^\infty|\l \cA  (U^r_{\neq} \de_r^L |\nabla_{x,z}|^{\frac12}\partial_j^L|\nabla_L|^{-\frac32} \GG_{\neq}),\partial_j^L|\nabla_L|^{-1}|{\nabla_L}|^{\frac12} |\nabla_{x,z}|^{-\frac12}\cA \GG_{\neq}\rangle|\lesssim (\nu^{-\frac23}\eps)\eps^2. 
\end{equation}
Consider now the second term appearing in the sum in \eqref{eq:distributing derivatives}
\begin{equation}\label{eq:J1234}
    \l \cA  (\de_j^L U^r_{\neq} \de_r^L |\nabla_{x,z}|^{\frac12}|\nabla_L|^{-\frac32} \GG_{\neq}),\partial_j^L|\nabla_L|^{-1}|{\nabla_L}|^{\frac12} |\nabla_{x,z}|^{-\frac12}\cA \GG_{\neq}\r.
\end{equation}
In analogy with \eqref{eq:I_1234}, we split this into $J_1,J_2,J_3,J_4$ with the following bounds
\begin{align}
    &\e^{\lambda\nu^{\frac13}t}\norm{J_1} \lesssim \|\nabla_L\cA  U^r_{\neq}\| \| \de_r^L|\nabla_{x,z}|^{\frac12}|\nabla_L|^{-\frac32}G_{\neq}\|_{L^\infty},\\
    &\e^{\lambda\nu^{\frac13}t}\norm{J_2} \lesssim \| \nabla_L U^r_{\neq}\|_{L^\infty}\| |\nabla_{x,z}|^{\frac12}|\nabla_L|^{-\frac32} \cA G_{\neq}\|+\nu^{-\frac13}\|\cA U^r_{\neq}\| \| \de_r^L|\nabla_{x,z}|^{\frac12}|\nabla_L|^{-\frac32}G_{\neq}\|_{L^\infty},\\
    &\e^{\lambda\nu^{\frac13}t}\norm{J_3} \lesssim \| \nabla_L|\nabla_{x,z}|^{\frac12} U^r_{\neq}\|_{L^\infty}\||\nabla_{x,z}|^{\frac12}|\nabla_L|^{-\frac32}\cA G_{\neq}\|,\\
    &\e^{\lambda\nu^{\frac13}t}\norm{J_4} \lesssim\| \nabla_L U^r_{\neq}\|_{L^\infty}\| \de_r^L|\nabla_L|^{-\frac32}\cA G_{\neq}\|.
\end{align}
Note here that $J_3$ and $J_4$ arise from distributing $|\nabla_{x,z}|^{\frac12}$ and not $\de_r^L$.
Now, using \eqref{eq:time_extraction}, \eqref{eq:property_A}, the multiplier $\cM_3$, and the bounds \eqref{eq:other norm bounds} we have
\begin{align}
    &|\l \cA  (\de_j^L U^r_{\neq} \de_r^L |\nabla_{x,z}|^{\frac12}|\nabla_L|^{-\frac32} \GG_{\neq}),\partial_j^L|\nabla_L|^{-1}|{\nabla_L}|^{\frac12} |\nabla_{x,z}|^{-\frac12}\cA \GG_{\neq}\rangle|\notag\\
    &\quad \lesssim \norm{\nabla_L\cA U^r_{\neq}} \l t \r^{-\frac32}\| \cA \GG_{\neq}\|^{\frac32}\| {\nabla_L} \cA \GG_{\neq}\|^{\frac12} \notag\\
    &\qquad +\nu^{-\frac13}\|\cA U^r_{\neq}\|\norm{\sqrt{-\frac{\dot\cM_3}{\cM_3}}\cA G_{\neq}}\| \cA \GG_{\neq}\|^{\frac12}\| {\nabla_L} \cA \GG_{\neq}\|^{\frac12}\notag\\
    &\qquad +\nu^{-\frac13}\|\cA U^r_{\neq}\| \l t \r^{-\frac32}\|\cA \GG_{\neq}\|^{\frac32}\| {\nabla_L} \cA \GG_{\neq}\|^{\frac12}.
\end{align}
This allows us to conclude from the bootstrap assumptions that
\begin{equation}
    \int_0^\infty |\l \cA  (\de_j^L U^r_{\neq} \de_r^L |\nabla_{x,z}|^{\frac12}|\nabla_L|^{-\frac32} \GG_{\neq}),\partial_j^L|\nabla_L|^{-1}|{\nabla_L}|^{\frac12} |\nabla_{x,z}|^{-\frac12}\cA \GG_{\neq}\rangle|\lesssim (\nu^{-\frac34}\eps)\eps^2,
\end{equation}
which is again consistent with Proposition \ref{prop:GGamma}. The largest contribution hereby comes from the term 
\begin{align}
    &\int_0^\infty \l s \r^{-\frac32}\|\nabla_L \cA    U^1_{\neq}\| \norm{\nabla_L\cA\GG_{\neq}}^{\frac12}\norm{\cA \GG_{\neq}}^{\frac32}\dd s\notag \\
    &\quad \lesssim \left(\int _0^\infty\l s \r^{-6}\dd s\right)^{\frac14}\left( \int _0^\infty \|\nabla_L\cA   U^1_{\neq}\|^2 \right)^{\frac12}\left(\int _0^\infty\norm{{\nabla_L}\cA \GG_{\neq}}^{2}\right)^{\frac14} \norm{\cA \GG_{\neq}}^{\frac32}_{L^\infty_tL^2}\notag\\
    &\quad \lesssim (\nu^{-\frac12}\eps)(\nu^{-\frac14}\eps^{\frac12})\eps^{\frac32}.
\end{align}
Altogether we recover \eqref{eq:Tstar-r-neqneq}.

We proceed with $\cT^2_\star(U_{\neq},U^2_{\neq})$ in \eqref{eq:Tstar-2-neqneq}. Using \eqref{eq:SimVar}, the corresponding term $\l \cA\;\cT^2_\star(U_{\neq},U^2_{\neq}) , \cA\GG_{\neq}\r$ reads
\begin{align}
\langle \cA |\nabla_L|^{\frac32}|\nabla_{x,z}|^{-\frac12}(|\nabla_{x,z}|^{\frac12}|\nabla_L|^{-\frac32} \GG_{\neq}\de_y^L |\nabla_{x,z}|^{\frac12}|\nabla_L|^{-\frac32} \GG_{\neq}),\cA\GG_{\neq}\rangle.
\end{align}
As done for the previous term in \eqref{eq:distributing derivatives} and \eqref{eq:I_1234}, we distribute derivatives to obtain
\begin{align}
    |\l \cA\;\cT^2_\star(U_{\neq},U^2_{\neq}) , \cA\GG_{\neq}\r|\lesssim \cI_1+\cI_2,
\end{align}
where
\begin{align}
\cI_1&:=  \| |\nabla_{x,z}|^{\frac12}|\nabla_L|^{-\frac32} \cA \GG_{\neq}\|\||\nabla_{x,z}|^{\frac12}\de_y^L \GG_{\neq}\|_{L^\infty}\||\nabla_{x,z}|^{-\frac12}\cA \GG_{\neq}\|\notag\\
& \quad + \| \nabla_{x,z}|\nabla_L|^{-\frac32} \GG_{\neq}\|_{L^\infty}\|\de_y^L \cA \GG_{\neq}\|\||\nabla_{x,z}|^{-\frac12}\cA \GG_{\neq}\|\notag\\
&\quad + \| |\nabla_{x,z}|^{\frac12}|\nabla_L|^{-\frac32} \GG_{\neq}\|_{L^\infty}\|\de_y^L \cA \GG_{\neq}\|\|\cA\GG_{\neq}\|,
\end{align}
and
\begin{align}
    \cI_2&:=  \|\cA \GG_{\neq}\|\|\nabla_{x,z} |\nabla_L|^{-\frac32}\de_y^L \GG_{\neq}\|_{L^\infty}\||\nabla_{x,z}|^{-\frac12}\cA \GG_{\neq}\|\notag\\
    &\quad +\|\cA \GG_{\neq}\|\||\nabla_{x,z}|^{\frac12}|\nabla_L|^{-\frac32}\de_y^L \GG_{\neq}\|_{L^\infty}\|\cA\GG_{\neq}\|\notag\\
    & \quad + \| |\nabla_{x,z}|^{\frac12} \GG_{\neq}\|_{L^\infty}\||\nabla_{x,z}|^{\frac12}\de_y^L |\nabla_L|^{-\frac32}\cA\GG_{\neq}\|\||\nabla_{x,z}|^{-\frac12}\cA \GG_{\neq}\|.
\end{align}
Using the multiplier $\cM_3$, \eqref{eq:time_extraction}, \eqref{eq:property_A}, and \eqref{eq:other norm bounds}, $\cI_1$ can be bounded as
\begin{align}
  \cI_1&\lesssim \nu^{-\frac13} \norm{\sqrt{-\frac{\dot \cM}{\cM}}\cA\GG_{\neq}}\|\cA \GG_{\neq}\|^2+ \l t\r ^{-\frac32} \| \cA\GG_{\neq}\|^2\|\nabla_L\cA  \GG_{\neq}\|,
\end{align}
while $\cI_2$ satisfies 
\begin{align}
    \cI_2&\lesssim \l t \r ^{-\frac12} \|\cA \GG_{\neq}\|^3 + \|\cA\GG_{\neq}\|^3.
\end{align}
With the bootstrap assumption \eqref{eq:bootstrap_nonzero_G} and the enhanced dissipation estimate \eqref{eq:enhanced_diss} we conclude that 
\begin{equation}
    \int_0^\infty|\l\cA\GG_{\neq}, \cA\;\cT^2_\star(U_{\neq},U^2_{\neq}) \r| \lesssim (\nu^{-\frac12}\eps)\eps^2,
\end{equation}
which is \eqref{eq:Tstar-2-neqneq}. 

We now turn our attention to the $(\neq, 0)$ interactions.
Regarding \eqref{eq:Tstar-2-neq0},
after using \eqref{eq:SimVar} it reads 
$$
\l \cA\GG_{\neq},\cA\cT^2_\star(U_{\neq},U^2_0)\r =\langle \cA\GG_{\neq}, \cA|\nabla_{x,z}|^{-\frac12} |\nabla_L|^{-\frac32}(|\nabla_L|^{\frac12}|\nabla_L|^{-\frac32}\GG_{\neq} \de_y |\de_z|^{\frac12}|\nabla|^{-\frac32}\GG_0)\rangle.
$$
In this case, when distributing derivatives to mimic the analysis for \eqref{eq:distributing derivatives} and \eqref{eq:I_1234}, we use that $\de_x$ derivatives vanish on zero modes. 
We have 
\begin{align}
    |\l \cA\GG_{\neq},\cA\cT^2_\star(U_{\neq},U^2_0)\r |&\lesssim \cI_1+\cI_2,
\end{align}
where
\begin{align}
    \cI_1 &:= \||\nabla_{x,z}|^{-\frac12} \cA\GG_{\neq}\|\|\cA\GG_{\neq}\|\| \de_y |\de_z|^{\frac32}|\nabla|^{-\frac32}\GG_0\|_{L^\infty}\notag\\
    &\quad +\| \cA\GG_{\neq}\|\| \cA\GG_{\neq}\|\| \de_y|\de_z|^{\frac12}|\nabla|^{-\frac32}\GG_0\|_{L^\infty}\notag\\
    &\quad +\||\nabla_{x,z}|^{-\frac12} \cA\GG_{\neq}\|\| \e^{\lambda \nu^{\frac13} t}|\nabla_{x,z}|^{\frac12}\GG_{\neq}\|_{L^\infty} \| \de_y |\de_z|^{\frac12}|\nabla|^{-\frac32}\GG_0\|_{H^{2m}}
\end{align}
and
\begin{align}
    \cI_2 &:= \||\nabla_{x,z}|^{-\frac12} \cA\GG_{\neq}\|\|\cA |\nabla_{x,z}|^{\frac12}|\nabla_L|^{-\frac32}\GG_{\neq}\|\| \de_y |\de_z|^{\frac12}\GG_0\|_{L^\infty}\notag\\
    &\quad + \| |\nabla_{x,z}|^{-\frac12} \cA\GG_{\neq}\|\| \e^{\lambda\nu^{\frac13}t}|\de_z|^{\frac12}|\nabla_{x,z}|^{\frac12}|\nabla_L|^{-\frac32}\GG_{\neq}\|_{L^\infty}\| \de_y \GG_0\|_{H^{2m}}\notag\\
    &\quad + \| |\de_z|^{\frac12}|\nabla_{x,z}|^{-\frac12} \cA\GG_{\neq}\|\| \e^{\lambda\nu^{\frac13}t}|\nabla_{x,z}|^{\frac12}|\nabla_L|^{-\frac32}\GG_{\neq}\|_{L^\infty}\| \de_y \GG_0\|_{H^{2m}}.
\end{align}
Each term of $\cI_1$ can be bounded by $\| \cA\GG_{\neq}\|^2\| \GG_0\|_{H^{2m}}$, hence by the bootstrap assumptions 
\begin{equation}
    \int_0^\infty \cI_1\lesssim (\nu^{-\frac13}\eps)\eps^2.
\end{equation}
For $\cI_2$, with \eqref{eq:time_extraction} and by construction of $\cM_3$, we have 
\begin{equation}
    \cI_2 \lesssim \|\cA\GG_{\neq}\|\norm{\sqrt{-\frac{\dot \cM}{\cM}}\cA\GG_{\neq}} \|\GG_0\|_{H^{2m}}+\l t \r^{-\frac32} \| \cA\GG_{\neq}\|^2\| \nabla \GG_0\|_{H^{2m}},
\end{equation}
and again by bootstrap assumptions 
\eqref{eq:bootstrap_nonzero_G} and \eqref{eq:bootstrap_zero_g}
\begin{equation}
    \int_0^\infty\cI_2 \lesssim (\nu^{-\frac16}\eps)\eps^2+(\nu^{-\frac12}\eps)\eps^2,
\end{equation}
which is \eqref{eq:Tstar-2-neq0}.
Moving on to $\cT^3_\star(U_{\neq},U^2_0)_{\neq}$ in \eqref{eq:Tstar-3-neq0} we have 
\begin{align}
\l \cA\GG_{\neq},\cA\cT^3_\star(U_{\neq},U^2_0)\r
&=\langle|\nabla_{x,z}|^{-\frac12} \cA\GG_{\neq}, \cA |\nabla_L|^{\frac32}(U_{\neq}^3\de_z|\de_z|^{\frac12}|\nabla|^{-\frac32}\GG_0)\rangle,
\end{align}
and analogously to $\cT^3_\star(U_{\neq},U^2_{\neq})_{\neq}$ we have
\begin{align}
    |\l \cA\GG_{\neq}, \cA\cT^3_\star(U_{\neq},U^2_0)\r |&\lesssim \cI_1+\cI_2,
\end{align}
where 
\begin{align}
    \cI_1&:=\| |\nabla_{x,z}|^{-\frac12}|\nabla_L|^{\frac12} \cA\GG_{\neq}\|\| \nabla_L\cA U_{\neq}^3\|\|\de_z|\de_z|^{\frac12}|\nabla|^{-\frac32}\GG_0\|_{L^\infty}\notag\\
    &\quad +\| |\nabla_{x,z}|^{-\frac12}|\nabla_L|^{\frac12} \cA\GG_{\neq}\|\| \e^{\lambda\nu^{\frac13}t}\nabla_L U_{\neq}^3\|_{L^\infty}\| \de_z|\de_z|^{\frac12}|\nabla|^{-\frac32}\GG_0\|_{H^{2m}}
\end{align}
and
\begin{align}
    \cI_2 &:=\| |\nabla_{x,z}|^{-\frac12}|\nabla_L|^{\frac12} \cA\GG_{\neq}\|\| \cA U_{\neq}^3\|\|\de_z|\de_z|^{\frac12}|\nabla|^{-\frac12}\GG_0\|_{L^\infty}\notag\\
    &\quad +\| |\de_z|^{\frac12}|\nabla_{x,z}|^{-\frac12}|\nabla_L|^{\frac12} \cA\GG_{\neq}\|\| \e^{\lambda\nu^{\frac13}t}U_{\neq}^3\|_{L^\infty}\|\de_z|\nabla|^{-\frac12}\GG_0\|_{H^{2m}}\notag\\
     &\quad +\| |\nabla_{x,z}|^{-\frac12}|\nabla_L|^{\frac12} \cA\GG_{\neq}\|\|\e^{\lambda\nu^{\frac13}t}|\de_z|^{\frac12}U_{\neq}^3\|_{L^\infty}\|\de_z|\nabla|^{-\frac12}\GG_0\|_{H^{2m}}.
\end{align}
By interpolation we have
\begin{align}
    \cI_1\lesssim \| \nabla_L\cA\GG_{\neq}\|^{\frac12}\|\cA\GG_{\neq}\|^{\frac12}\| \nabla_L\cA U_{\neq}^3\|\|\GG_0\|_{H^{2m}},
\end{align}
and after integration in time, the bootstrap assumptions in Theorem \ref{thm:bootstrap_step} lead to
\begin{align}
    \int_0^\infty\cI_1\lesssim (\nu^{-\frac56}\eps)\eps^2.
\end{align}
Again, by interpolation there holds that
\begin{align}
    \cI_2&\lesssim \| \nabla_L\cA\GG_{\neq}\|^{\frac12}\|\cA\GG_{\neq}\|^{\frac12}\| \cA U_{\neq}^3\|\|\GG_0\|_{H^{2m}}\notag\\
    &\quad+\| \nabla_L\cA\GG_{\neq}\|^{\frac12}\|\cA\GG_{\neq}\|^{\frac12}\| \cA U_{\neq}^3\|\|\nabla\GG_0\|_{H^{2m}}^{\frac12}\|\GG_0\|_{H^{2m}}^{\frac12}.
\end{align}
Consequently using the bootstrap assumptions yields
\begin{align}
    \int_0^\infty \cI_2\lesssim (\nu^{-\frac12}\eps)\eps^2+(\nu^{-\frac23}\eps)\eps^2,
\end{align}
as needed for \eqref{eq:Tstar-3-neq0}.
The next terms to be analysed is $\cT^1_\star (U_0,U^2_{\neq})_{\neq}$, which reads
\begin{align}
   \l \cA\GG_{\neq},\cA\,\cT^1_\star (U_0,U^2_{\neq})\r  = \langle \cA\GG_{\neq}, \cA|\nabla_{x,z}|^{-\frac12} |\nabla_L|^{\frac32}(U_0^1 \de_x |\nabla_{x,z}|^{\frac12}|\nabla_L|^{-\frac32} \GG_{\neq})\rangle.
\end{align}
As in \eqref{eq:distributing derivatives} and \eqref{eq:I_1234}, we distribute $\nabla_L$ on the product and use that $\de_xU^1_0=0$. We have
\begin{align}
    |\l \cA\GG_{\neq},\cA\,\cT^1_\star (U_0,U^2_{\neq})\r |\lesssim \cI_1+\cI_2,
\end{align}
where
\begin{align}
    \cI_1 &=\|\nabla_{x,z}|^{-\frac12} |\nabla_L|^{\frac12}\cA\GG_{\neq}\|\| U_0^1 \|_{H^{2m}}\|\e^{\lambda\nu^{\frac13}t}\de_x|\nabla_{x,z}|^{\frac12}|\nabla_L|^{-\frac12} \GG_{\neq}\|_{L^\infty}\notag\\
    &\quad + \|\de_x|\nabla_{x,z}|^{-\frac12} |\nabla_L|^{\frac12}\cA\GG_{\neq}\|\| U_0^1\|_{L^\infty} \||\nabla_{x,z}|^{\frac12}|\nabla_L|^{-\frac12} \cA\GG_{\neq}\|
\end{align}
and
\begin{align}
    \cI_2&=\| |\nabla_{x,z}|^{-\frac12} |\nabla_L|^{\frac12}\cA\GG_{\neq}\|\| \nabla U_0^1 \|_{H^{2m}}\|\e^{\lambda\nu^{\frac13}t}\de_x |\nabla_{x,z}|^{\frac12}|\nabla_L|^{-\frac32} \GG_{\neq}\|_{L^\infty}\notag\\
    &\quad +\| |\nabla_{x,z}|^{-\frac12} |\nabla_L|^{\frac12}\cA\GG_{\neq}\|\| \nabla U_0^1\|_{L^\infty}\| \de_x |\nabla_{x,z}|^{\frac12}|\nabla_L|^{-\frac32} \cA\GG_{\neq}\|.
\end{align}
By \eqref{eq:other norm bounds},
\begin{align}
    \cI_1\lesssim \|\nabla_L\cA\GG_{\neq}\|^{\frac12}\|\cA\GG_{\neq}\|^{\frac12}\| U_0^1 \|_{H^{2m}}\|\cA \GG_{\neq}\|+\|\nabla_L\cA\GG_{\neq}\|\| U_0^1 \|_{H^{2m}}\|\cA \GG_{\neq}\|, 
\end{align}
which, after time integration gives
\begin{align}
    \int_0^\infty \cI_1 \lesssim (\nu^{-\frac12}\eps)\eps^2+(\nu^{-\frac23}\eps)\eps^2.
\end{align}
Regarding $\cI_2$, interpolation, the multiplier $M_3$ defined in \eqref{def:M3} and \eqref{eq:time_extraction} give
\begin{align}
    \cI_2&\lesssim  \|\nabla_L\cA\GG_{\neq}\|^{\frac12}\|\cA\GG_{\neq}\|^{\frac12}\left[\l t\r^{-\frac32}\|\cA\GG_{\neq}\|\| \nabla U_0^1 \|_{H^{2m}}+\| U_0^1 \|_{H^{2m}}\norm{\sqrt{-\frac{\dot \cM}{\cM}}\cA\GG_{\neq}}\right],
\end{align}
hence via bootstrap 
\begin{align}
    \int_0^\infty \cI_2&\lesssim (\nu^{-\frac34}\eps)\eps^2+(\nu^{-\frac13}\eps)\eps^2,
\end{align}
which is \eqref{eq:Tstar-1-0neq}.
For $\cT^2_\star(U_0,U^2_{\neq})_{\neq}$ we have, recalling the change of variables \eqref{eq:SimVar},
$$
   \l \cA\GG_{\neq},\cA\,\cT^2_\star(U_0,U^2_{\neq})\r  = \l \cA\GG_{\neq}, \cA|\nabla_{x,z}|^{-\frac12} |\nabla_L|^{\frac32}(|\de_z|^{\frac12}|\nabla|^{-\frac32}\GG_0 \de_y^L |\nabla_{x,z}|^{\frac12}|\nabla_L|^{-\frac32} \GG_{\neq})\r.
$$
As in $\cT^2_\star(U_{\neq},U^2_{0})_{\neq}$, we distribute $|\nabla_L|^{\frac32}$ inside the product and use that $\de_x\GG_0=0$. This gives
\begin{align}
    |\l \cA\GG_{\neq},\cA\,\cT^2_\star(U_0,U^2_{\neq})\r |\lesssim \cI_1+\cI_2,
\end{align}
where
\begin{align}
\cI_1&= \||\nabla_{x,z}|^{-\frac12}\cA\GG_{\neq}\|\|  \de_z|\nabla|^{-\frac32} \GG_0\|_{H^{2m}}\|\e^{\lambda\nu^{\frac13}t}|\nabla_{x,z}|^{\frac12}\de_y^L \GG_{\neq}\|_{L^\infty}\notag\\
& \quad +\||\nabla_{x,z}|^{-\frac12}\cA\GG_{\neq}\| \| \de_z|\nabla|^{-\frac32} \GG_0\|_{L^\infty}\|\de_y^L \cA\GG_{\neq}\|\notag\\
&\quad +\|\cA\GG_{\neq}\|\| |\de_z|^{\frac12}|\nabla|^{-\frac32} \GG_0\|_{L^\infty}\|\de_y^L \cA\GG_{\neq}\|
\end{align}
and
\begin{align}
    \cI_2&=  \||\nabla_{x,z}|^{-\frac12}\cA\GG_{\neq}\|\| \GG_0\|_{H^{2m}}\|\e^{\lambda\nu^{\frac13}t}|\de_z|^{\frac12}|\nabla_{x,z}|^{\frac12}|\nabla_L|^{-\frac32}\de_y^L \GG_{\neq}\|_{L^\infty}\notag\\
    &\quad +\||\de_z|^{\frac12}|\nabla_{x,z}|^{-\frac12}\cA\GG_{\neq}\|\|\GG_0\|_{H^{2m}}\|\e^{\lambda\nu^{\frac13}t}|\nabla_{x,z}|^{\frac12}|\nabla_L|^{-\frac32}\de_y^L \GG_{\neq}\|_{L^\infty}\notag\\
    & \quad + \||\nabla_{x,z}|^{-\frac12}\cA\GG_{\neq}\|\| |\de_z|^{\frac12} \GG_0\|_{L^\infty}\||\nabla_{x,z}|^{\frac12}\de_y^L |\nabla_L|^{-\frac32}\cA\GG_{\neq}\|.
\end{align}
All three terms in $\cI_1$ can be bounded by
\begin{align}
    \cI_1 \lesssim \|\nabla_L\cA\GG_{\neq}\|\|\cA\GG_{\neq}\|\| \GG_0 \|_{H^{2m}}, 
\end{align}
which gives
\begin{align}
    \int_0^\infty \cI_1 \lesssim (\nu^{-\frac23}\eps)\eps^2.
\end{align}
Regarding $\cI_2$, after extracting the time decay using \eqref{eq:time_extraction} and \eqref{eq:other norm bounds}, we have 
\begin{equation}
    \cI_{2}\lesssim \l t\r^{-\frac12} \|\cA\GG_{\neq}\|^2\| \GG_0 \|_{H^{2m}}+\|\cA\GG_{\neq}\|^2\| \GG_0 \|_{H^{2m}},
\end{equation}
hence using the bootstrap assumptions in Theorem \ref{thm:bootstrap_step} we obtain
\begin{equation}
     \int_0^\infty \cI_{2}\lesssim (\nu^{-\frac29}\eps)\eps^2+(\nu^{-\frac13}\eps)\eps^2,
\end{equation}
and \eqref{eq:Tstar-2-0neq} is proven.
For the last term in \eqref{eq:Tstar-3-0neq}, $\cT^3_\star(U_0,U^2_{\neq})_{\neq}$  has a similar structure as $\cT^1_\star (U_0,U^2_{\neq})$, since 
\begin{align}
   \l \cA\GG_{\neq},\cA\,\cT^3_\star(U_0,U^2_{\neq})\r  = \langle \cA\GG_{\neq}, \cA|\nabla_{x,z}|^{-\frac12} |\nabla_L|^{\frac32}(U_0^3 \de_z |\nabla_{x,z}|^{\frac12}|\nabla_L|^{-\frac32} \GG_{\neq})\rangle, 
\end{align}
with the exception that $\de_z$ can fall onto $U^3_0$. Hence, the analysis is similar to the one of $\cT^r_\star(U_{\neq},U^2_{\neq})_{\neq}$, from which we deduce
\eqref{eq:Tstar-3-0neq}. This concludes the proof of \eqref{eq:T_star_bds}.

\subsubsection{Nonlinear terms analysis: $\cT_\circ(U,\Theta)_{\neq}$}\label{sssec:cTcircGGamma}
We shift our analysis to the nonlinear term  $\cT_\circ(U,\Theta)$ and the estimate \eqref{eq:T_circ_bds}. As done for $\cT_\star(U,U^2)$, we split this term into different parts according to the modes interacting and the operator $U\cdot\nabla_L$, more precisely 
\begin{equation}\label{def:cTcirc_decomposition}
    \cT_\circ(U,\Theta)_{\neq}= \sum_{r,\kappa_1,\kappa_2}\cT^r_\circ(U_{\kappa_1},\Theta_{\kappa_2})_{\neq},\qquad \cT^r_\circ(U_{\kappa_1},\Theta_{\kappa_2})_{\neq}:=-|\nabla_{x,z}|^{\frac12}|\nabla_L|^{\frac12}(U^r_{\kappa_1}\de_r^L\Theta_{\kappa_2})_{\neq},
\end{equation}
for $r\in \{1,2,3\}$ and $\kappa_1,\kappa_2 \in\{0,\neq\}$, where the convention is again $\de_1^L=\de_x$, $\de_2^L=\de_y^L$ and $\de_3^L=\de_z$.
We prove the following bounds
\begin{subequations}
\begin{alignat}{6}
    \int_0^\infty|\l \cA \GG_{\neq},\cA \,\cT^r_\circ(U_{\neq},\Theta_{\neq}) \r| &\lesssim (\nu^{-\frac23}\eps)\eps^2,\quad r=1,3,\label{eq:Tcirc-r-neqneq}\\
    \int_0^\infty|\l \cA\GG_{\neq},\cA\,\cT^2_\circ(U_{\neq},\Theta_{\neq})\r| &\lesssim (\nu^{-\frac12}\eps)\eps^2,\label{eq:Tcirc-2-neqneq}\\
    \int_0^\infty|\l\cA\GG_{\neq}, \cA\,\cT^2_\circ(U_{\neq},\Theta_0) \r| 
    &\lesssim (\nu^{-\frac12}\eps)\eps^2,\label{eq:Tcirc-2-neq0}\\
    \int_0^\infty|\l \cA\GG_{\neq},\cA\,\cT^3_\circ(U_{\neq},\Theta_0) \r| 
    &\lesssim (\nu^{-\frac23}\eps)\eps^2,\label{eq:Tcirc-3-neq0}\\
    \int_0^\infty|\l \cA\GG_{\neq},\cA\,\cT^r_\circ(U_0,\Theta_{\neq}) \r| 
    &\lesssim (\nu^{-\frac23}\eps)\eps^2,\quad r=1,3,\label{eq:Tcirc-r-0neq}\\
    \int_0^\infty|\l\cA\GG_{\neq}, \cA\,\cT^2_\circ(U_0,\Theta_{\neq}) \r| 
    &\lesssim (\nu^{-\frac23}\eps)\eps^2.\label{eq:Tcirc-2-0neq}
\end{alignat}
\end{subequations}
Note that $\l \cA\GG_{\neq}, \cA\, \cT^1_\circ(U_{\neq},\Theta_0)\r=0$ because $\Theta_0$ is $x$-independent, and hence this term does not appear above.
Let us consider $\cT^r_\circ(U_{\neq},\Theta_{\neq})_{\neq}$ for $r=1$ or $r=3$. 
From the symmetric change of variables \eqref{eq:SimVar}, we get
\begin{equation}
    \l \cA\GG_{\neq}, \cA\, \cT^r_\circ(U_{\neq},\Theta_{\neq})\r = \l \cA\GG_{\neq}, \cA|\nabla_{x,z}|^{\frac12}|\nabla_L|^{\frac12}(U^r_{\neq}\de_r^L|\nabla_{x,z}|^{-\frac12}|\nabla_L|^{-\frac12}\Gamma_{\neq})\r. 
\end{equation}
Analogously to \eqref{eq:distributing derivatives}, using \eqref{eq:other norm bounds}, \eqref{eq:prod_ineq_A} and the Sobolev embedding, we obtain
$$
    |\l \cA\GG_{\neq}, \cA\, \cT^r_\circ(U_{\neq},\Theta_{\neq})\r|\lesssim \||\nabla_{x,z}|^{\frac12}|\nabla_L|^{\frac12}\cA\GG_{\neq}\|\|\cA\left(U^r_{\neq}\Gamma_{\neq}\right)\|
    \lesssim \norm{\nabla_L \cA\GG_{\neq}}\norm{\cA U^r_{\neq}}\norm{\cA\Gamma_{\neq}}.
$$
By the bootstrap assumptions in Theorem \ref{thm:bootstrap_step}, we end up with \eqref{eq:Tcirc-r-neqneq}.
The term involving $\cT^2_\circ(U_{\neq},\Theta_{\neq})_{\neq}$ in
\eqref{eq:Tcirc-2-neqneq} can be written as
$$
    \l \cA\GG_{\neq}, \cA\, \cT^2_\circ(U_{\neq},\Theta_{\neq})\r = \l \cA\GG_{\neq}, \cA|\nabla_{x,z}|^{\frac12}|\nabla_L|^{\frac12}(|\nabla_{x,z}|^{\frac12}|\nabla_L|^{-\frac32}\GG_{\neq}\de_y^L|\nabla_{x,z}|^{-\frac12}|\nabla_L|^{-\frac12}\Gamma_{\neq})\r. 
$$
Distributing $|\nabla_{x,z}|^{\frac12}|\nabla_L|^{\frac12}$ inside the product, mimicking \eqref{eq:distributing derivatives} and \eqref{eq:I_1234}, we arrive  at
\begin{equation}
    |\l \cA\GG_{\neq}, \cA\, \cT^2_\circ(U_{\neq},\Theta_{\neq})\r |\lesssim \cI_1+\cI_2,
\end{equation}
where
\begin{align}
    \cI_1&=\norm{\cA\GG_{\neq}}\left[\norm{\nabla_{x,z}|\nabla_L|^{-1}\cA\GG_{\neq}}\norm{\de_y^L|\nabla_{x,z}|^{-\frac12}|\nabla_L|^{-\frac12}\Gamma_{\neq}}_{L^\infty}\right.\notag\\
    &\qquad\qquad\qquad \left.+\norm{\nabla_{x,z}|\nabla_L|^{-1}\GG_{\neq}}_{L^\infty}\norm{\de_y^L|\nabla_{x,z}|^{-\frac12}|\nabla_L|^{-\frac12}\cA\Gamma_{\neq}}\right]\notag\\
    &\quad +\norm{\cA\GG_{\neq}}\left[\norm{|\nabla_{x,z}|^{\frac12}|\nabla_L|^{-1}\cA\GG_{\neq}}\norm{\de_y^L|\nabla_L|^{-\frac12}\Gamma_{\neq}}_{L^\infty}\right.\notag\\
    &\qquad\qquad\qquad \left.+\norm{|\nabla_{x,z}|^{\frac12}|\nabla_L|^{-1}\GG_{\neq}}_{L^\infty}\norm{\de_y^L|\nabla_L|^{-\frac12}\cA\Gamma_{\neq}}\right]
\end{align}
and
\begin{align}
    \cI_2&=\norm{\cA\GG_{\neq}}\left[\norm{\nabla_{x,z}|\nabla_L|^{-\frac32}\cA\GG_{\neq}}\norm{\de_y^L|\nabla_{x,z}|^{-\frac12}\Gamma_{\neq}}_{L^\infty}\right.\notag\\
    &\qquad\qquad\qquad \left.+\norm{\nabla_{x,z}|\nabla_L|^{-\frac32}\GG_{\neq}}_{L^\infty}\norm{\de_y^L|\nabla_{x,z}|^{-\frac12}\cA\Gamma_{\neq}}\right]\notag\\
    &\quad  + \norm{\cA\GG_{\neq}}\left[\norm{|\nabla_{x,z}|^{\frac12}|\nabla_L|^{-\frac32}\cA\GG_{\neq}}\norm{\de_y^L\Gamma_{\neq}}_{L^\infty}+\norm{|\nabla_{x,z}|^{\frac12}|\nabla_L|^{-\frac32}\GG_{\neq}}_{L^\infty}\norm{\de_y^L\cA\Gamma_{\neq}}\right].
\end{align}
Using the definitions of $\cA$ and $\cM_3$, \eqref{eq:time_extraction}, and \eqref{eq:other norm bounds} we bound $\cI_1$ as
\begin{align}
\cI_1&\lesssim \nu^{-\frac16}\norm{\cA\GG_{\neq}}^2\norm{\cA\Gamma_{\neq}}+\l t\r^{-1}\norm{\cA\GG_{\neq}}^2\norm{\nabla_L\cA\Gamma_{\neq}}^{\frac12}\norm{\cA\Gamma_{\neq}}^{\frac12}\\
&\qquad +\nu^{-\frac16}\norm{\cA\GG_{\neq}}\norm{\sqrt{-\frac{\dot\cM}{\cM}}\cA\GG_{\neq}}\norm{\cA\Gamma_{\neq}}
\end{align}
and $\cI_2$ as  
\begin{align}
\cI_2&\lesssim \nu^{-\frac13}\norm{\cA\GG_{\neq}}\norm{\sqrt{-\frac{\dot\cM}{\cM}}\cA\GG_{\neq}}\norm{\cA\Gamma_{\neq}}+\l t\r^{-\frac32}\norm{\cA\GG_{\neq}}^2\norm{\nabla_L\cA\Gamma_{\neq}}.
\end{align}
Using the bootstrap assumptions in Theorem \ref{thm:bootstrap_step}, we obtain \eqref{eq:Tcirc-2-neqneq}.

Turning to $(\neq,0)$ interactions, the term involving  $\cT^2_\circ(U_{\neq},\Theta_{0})_{\neq}$ is treated similarly as $\cT^2_\circ(U_{\neq},\Theta_{\neq})_{\neq}$. By \eqref{eq:U2Theta_recover} we have
\begin{align}
     \l \cA\GG_{\neq}, \cA\, \cT^2_\circ(U_{\neq},\Theta_{0})\r &= \l \cA\GG_{\neq}, \cA|\nabla_{x,z}|^{\frac12}|\nabla_L|^{\frac12}(|\nabla_{x,z}|^{\frac12}|\nabla_L|^{-\frac32}\GG_{\neq}\de_y|\de_z|^{-\frac12}|\nabla|^{-\frac12}\Gamma_{0})\r\\
     &\quad +\l \cA\GG_{\neq}, \cA|\nabla_{x,z}|^{\frac12}|\nabla_L|^{\frac12}(|\nabla_{x,z}|^{\frac12}|\nabla_L|^{-\frac32}\GG_{\neq}\de_y\overline\Theta_{0})\r,
\end{align}
and distribute
 $|\nabla_{x,z}|^{\frac12}|\nabla_L|^{\frac12}$ inside the product to obtain
\begin{equation}
    |\l \cA\GG_{\neq}, \cA\, \cT^2_\circ(U_{\neq},\Theta_{0})\r|\lesssim \cI_1+\cI_2 +\cI_3.
\end{equation}
Here
\begin{align}
    \cI_1&=\norm{\cA\GG_{\neq}}\left[\|\nabla_{x,z}|\nabla_L|^{-1}\cA\GG_{\neq}\|\|\de_y|\de_z|^{-\frac12}|\nabla|^{-\frac12}\Gamma_0\|_{L^\infty}\right.\notag\\
    &\qquad\qquad\qquad \left.+\|\e^{\lambda\nu^\frac13 t}\nabla_{x,z}|\nabla_L|^{-1}\GG_{\neq}\|_{L^\infty}\|\de_y|\de_z|^{-\frac12}|\nabla|^{-\frac12}\Gamma_0\|_{H^{2m}}\right]\notag\\
    &\quad +\norm{\cA\GG_{\neq}}\left[\||\nabla_{x,z}|^{\frac12}|\nabla_L|^{-1}\cA\GG_{\neq}\|\|\de_y|\nabla|^{-\frac12}\Gamma_0\|_{L^\infty}\right.\notag\\
    &\qquad\qquad\qquad \left.+\|\e^{\lambda\nu^\frac13 t}|\nabla_{x,z}|^{\frac12}|\nabla_L|^{-1}\GG_{\neq}\|_{L^\infty}\|\de_y|\nabla|^{-\frac12}\Gamma_0\|_{H^{2m}}\right],
\end{align}
while
\begin{align}
    \cI_2&=\norm{\cA\GG_{\neq}}\left[\|\nabla_{x,z}|\nabla_L|^{-\frac32}\cA\GG_{\neq}\|\|\de_y|\de_z|^{-\frac12}\Gamma_0\|_{L^\infty}+\|\e^{\lambda\nu^\frac13 t}\nabla_{x,z}|\nabla_L|^{-\frac32}\GG_{\neq}\|_{L^\infty}\|\de_y|\de_z|^{-\frac12}\Gamma_0\|_{H^{2m}}\right]\notag\\
    &\quad  + \|\cA\GG_{\neq}\|\left[\||\nabla_{x,z}|^{\frac12}|\nabla_L|^{-\frac32}\cA\GG_{\neq}\|\|\de_y\Gamma_0\|_{L^\infty}+\|\e^{\lambda\nu^\frac13 t}|\nabla_{x,z}|^{\frac12}|\nabla_L|^{-\frac32}\GG_{\neq}\|_{L^\infty}\|\de_y\Gamma_0\|_{H^{2m}}\right]
\end{align}
and
\begin{equation}
\cI_3
=\norm{|\nabla_L|^\frac12\cA \GG_{\neq}}\left[\|\nabla_{x,z}|\nabla_L|^{-\frac32}\cA\GG_{\neq}\|\|\de_y\overline{\Theta}_0\|_{L^\infty}+\|\e^{\lambda\nu^\frac13 t}\nabla_{x,z}|\nabla_L|^{-\frac32}\GG_{\neq}\|_{L^\infty}\|\de_y\overline{\Theta}_0\|_{H^{2m}}\right].
\end{equation}
As done before for $\cT^2_\circ(U_{\neq},\Theta_{\neq})_{\neq}$, we further bound $\cI_1$ with
$$
\cI_1\lesssim \norm{\cA\GG_{\neq}}^2\norm{\Gamma_0}_{H^{2m}}+\l t\r^{-1}\norm{\cA\GG_{\neq}}^2\norm{\nabla\Gamma_0}^{\frac12}_{H^{2m}}\norm{\Gamma_0}^{\frac12}_{H^{2m}}+\norm{\cA\GG_{\neq}}\norm{\sqrt{-\frac{\dot\cM}{\cM}}\cA\GG_{\neq}}\norm{\Gamma_0}_{H^{2m}},
$$
the term $\cI_2$ with  
\begin{align}
\cI_2&\lesssim \norm{\cA\GG_{\neq}}\norm{\sqrt{-\frac{\dot\cM}{\cM}}\cA\GG_{\neq}}\norm{\Gamma_0}_{H^{2m}}+\l t\r^{-\frac32}\norm{\cA\GG_{\neq}}^2\norm{\nabla\Gamma_0}_{H^{2m}},
\end{align}
and $\cI_3$ with
\begin{align}
\cI_3
\lesssim\norm{\cA\GG_{\neq}}^\frac12\norm{\nabla_L\cA\GG_{\neq}}^\frac12\norm{\sqrt{-\frac{\dot\cM}{\cM}}\cA\GG_{\neq}}\norm{\overline{\Theta}_0}_{H^{2m+1}}.
\end{align}
By the bootstrap assumptions 
\begin{equation}
    \int_0^\infty |\l \cA\GG_{\neq}, \cA\, \cT^2_\circ(U_{\neq},\Theta_{0})\r| \lesssim (\nu^{-\frac13}\eps)\eps^2+(\nu^{-\frac12}\eps)\eps^2 +(\nu^{-\frac13}\eps)\eps^2,
\end{equation}
which is consistent with \eqref{eq:Tcirc-2-neq0}.
The last $(\neq,0)$ term $\cT^3_\circ(U_{\neq},\Theta_{0})_{\neq}$ reads
 \begin{equation}
     \l \cA\GG_{\neq}, \cA\, \cT^3_\circ(U_{\neq},\Theta_{0})\r = \l \cA\GG_{\neq}, \cA|\nabla_{x,z}|^{\frac12}|\nabla_L|^{\frac12}(U^3_{\neq}\de_z|\de_z|^{-\frac12}|\nabla|^{-\frac12}\Gamma_{0})\r,
 \end{equation}
 and can be treated as $\cT^r_\circ(U_{\neq},\Theta_{\neq})_{\neq}$, hence \eqref{eq:Tcirc-3-neq0} follows similarly.

The remaining $(0,\neq)$ interactions are treated similarly to the $(\neq,\neq)$ ones. $\cT^r_\circ(U_0,\Theta_{\neq})_{\neq}$, where $r=1$ or $r=3$, is
\begin{equation}
    \l \cA\GG_{\neq}, \cA\, \cT^r_\circ(U_0,\Theta_{\neq})\r = \l \cA\GG_{\neq}, \cA|\nabla_{x,z}|^{\frac12}|\nabla_L|^{\frac12}(U^r_{0}\de_r^L|\nabla_{x,z}|^{-\frac12}|\nabla_L|^{-\frac12}\Gamma_{\neq})\r.
\end{equation}
The analysis is as for $\cT^r_\circ(U_{\neq},\Theta_{\neq})_{\neq}$, hence we deduce 
\begin{align}
    |\l \cA\GG_{\neq}, \cA\, \cT^r_\circ(U_0,\Theta_{\neq})\r|\lesssim \norm{\nabla_L\cA\GG_{\neq}}\norm{ U^r_{0}}_{H^{2m}}\norm{\cA\Gamma_{\neq}},
\end{align}
which by bootstrap leads to \eqref{eq:Tcirc-r-0neq}.
The last term involving $\cT^2_\circ(U_0,\Theta_{\neq})_{\neq}$ can be treated again similarly to $\cT^2_\circ(U_{\neq},\Theta_{\neq})_{\neq}$. Therefore we deduce that 
\begin{equation}
    \l \cA\GG_{\neq}, \cA\, \cT^2_\circ(U_0,\Theta_{\neq})\r = \l \cA\GG_{\neq}, \cA|\nabla_{x,z}|^{\frac12}|\nabla_L|^{\frac12}(|\de_z|^{\frac12}|\nabla|^{-\frac32}\GG_0\de_y^L|\nabla_{x,z}|^{-\frac12}|\nabla_L|^{-\frac12}\Gamma_{\neq})\r,
\end{equation}
which can be bounded by 
\begin{equation}
    | \l \cA\GG_{\neq}, \cA\, \cT^2_\circ(U_0,\Theta_{\neq})\r|\lesssim \norm{\cA\GG_{\neq}}\norm{\GG_0}_{H^{2m}}\left[\norm{\nabla_L\cA\Gamma_{\neq}}^{\frac12}\norm{\cA\Gamma_{\neq}}^{\frac12}+\norm{\nabla_L\cA\Gamma_{\neq}}\right].
\end{equation}
Thanks to the bootstrap assumptions of Theorem \ref{thm:bootstrap_step} we obtain \eqref{eq:Tcirc-2-0neq} and conclude the proof of \eqref{eq:T_circ_bds}.

\subsubsection{Nonlinear terms analysis: $\de_y^L\cP_\star(U,U)_{\neq}$}\label{sssec:cPGGamma}
To conclude the proof of Lemma \ref{lem:GGammanonlin}, it remains to prove \eqref{eq:P_star_bds}.
As done for the other two nonlinear terms above, we further divide $\cP_\star(U,U)_{\neq}$ into different components following the structure of $\cP(U,U)$ given by $(\nabla_L\otimes\nabla_L)(U\otimes U)$. Namely 
\begin{equation}\label{def:cPstar_decomposition}
    \cP_\star(U,U)_{\neq}= \sum_{i,j,\kappa_1,\kappa_2}\cP_\star^{i,j}(U_{\kappa_1},U_{\kappa_2})_{\neq},\quad \cP_\star^{i,j}(U_{\kappa_1},U_{\kappa_2})_{\neq}:=-|\nabla_{x,z}|^{-\frac12}|\nabla_L|^{-\frac12}(\de_i^LU^j_{\kappa_1}\de_j^LU^i_{\kappa_2})_{\neq},
\end{equation}
for $i,j\in \{1,2,3\}$ and $\kappa_1,\kappa_2 \in\{0,\neq\}$ with the convention $\de_1^L=\de_x$, $\de_2^L=\de_y^L$, and $\de_3^L=\de_z$. 
In view of the symmetry 
\begin{equation}\label{eq:pressure_symmetry}
    \cP_\star^{i,j}(U_{\kappa_1},U_{\kappa_2})=\cP_\star^{j,i}(U_{\kappa_2},U_{\kappa_1}),
\end{equation}
the number of terms appearing in the decomposition reduces significantly.
We prove the following bounds
\begin{subequations}
\begin{alignat}{9}
     \int_0^\infty |\l \cA\GG_{\neq},\cA\, \de_y^L \cP_\star^{r,s}(U_{\neq},U_{\neq})\r| &\lesssim (\nu^{-\frac23}\eps)\eps^2, \qquad r,s=1,3, \label{eq:Pstar-rs-neqneq}\\
     \int_0^\infty |\l \cA\GG_{\neq},\cA\,  \de_y^L\cP_\star^{r,2}(U_{\neq},U_{\neq})\r| &\lesssim (\nu^{-\frac34}\eps)\eps^2,\qquad r=1,3,\label{eq:Pstar-r2-neqneq}\\
     \int_0^\infty |\l \cA\GG_{\neq},\cA \, \de_y^L\cP_\star^{2,2}(U_{\neq},U_{\neq})\r| &\lesssim (\nu^{-\frac13}\eps)\eps^2,\label{eq:Pstar-22-neqneq}\\
     \int_0^\infty |\l \cA\GG_{\neq},\cA \, \de_y^L\cP_\star^{r,2}(U_{\neq},U_0)\r| &\lesssim (\nu^{-\frac34}\eps)\eps^2,\qquad r=1,3,\label{eq:Pstar-12-neq0}\\
     \int_0^\infty |\l \cA\GG_{\neq},\cA \,\de_y^L \cP_\star^{r,3}(U_{\neq},U_0)\r| &\lesssim (\nu^{-\frac23}\eps)\eps^2,\qquad r=1,3,\label{eq:Pstar-r3-neq0}\\
     \int_0^\infty |\l \cA\GG_{\neq},\cA \,\de_y^L \cP_\star^{2,2}(U_{\neq},U_0)\r| &\lesssim (\nu^{-\frac13}\eps)\eps^2,\label{eq:Pstar-22-neq0}\\
     \int_0^\infty |\l \cA\GG_{\neq},\cA \,\de_y^L \cP_\star^{2,3}(U_{\neq},U_0)\r| &\lesssim (\nu^{-\frac56}\eps)\eps^2.\label{eq:Pstar-23-neq0}
\end{alignat}
\end{subequations}
In the $(\neq,0)$ interactions, we note that
\begin{equation}
\l \cA\GG_{\neq},\cA \de_y^L \cP_\star^{r,1}(U_{\neq},U_0)\r
=\l \cA\GG_{\neq},|\nabla_{x,z}|^{-\frac12}|\nabla_L|^{-\frac12}\de_y^L(\de_r^LU^1_{\neq}\de_xU^r_{0})\r=0,
\end{equation}
for $r\in\{1,2,3\}$, so these terms do not appear in the list above. 

Starting with $\cP_\star^{1,1}(U_{\neq},U_{\neq})_{\neq}$ in \eqref{eq:Pstar-rs-neqneq}, we write 
\begin{equation}
    \l \cA\GG_{\neq},\cA \de_y^L \cP_\star^{1,1}(U_{\neq},U_{\neq})\r =\l \cA\GG_{\neq}, \cA |\nabla_{x,z}|^{-\frac12}|\nabla_L|^{-\frac12}\de_y^L(\de_xU^1_{\neq}\de_xU^1_{\neq})\r,
\end{equation}
which, following \eqref{eq:distributing derivatives} and \eqref{eq:I_1234}, can be bounded as
\begin{align}
    |\l \cA\GG_{\neq},\cA  \de_y^L\cP_\star^{1,1}(U_{\neq},U_{\neq})\r|&\lesssim \norm{|\nabla_{x,z}|^{-\frac12}|\nabla_L|^{-\frac12}\de_y^L \cA\GG_{\neq}} \norm{\cA U^1_{\neq}}\norm{\de_x^2U^1_{\neq}}_{L^\infty} \notag\\
    &\quad +\norm{\de_x|\nabla_{x,z}|^{-\frac12}|\nabla_L|^{-\frac12}\de_y^L \cA \GG_{\neq}} \norm{\cA U^1_{\neq}}\norm{\de_xU^1_{\neq}}_{L^\infty}.
\end{align}
Interpolation and Sobolev embedding leads to 
\begin{equation}
    |\l \cA\GG_{\neq},\cA \de_y^L \cP_\star^{1,1}(U_{\neq},U_{\neq})\r| \lesssim \norm{\nabla_L \cA  \GG_{\neq}}^{\frac12}\norm{\cA  \GG_{\neq}}^{\frac12} \norm{\cA  U^1_{\neq}}^2+ \norm{\nabla_L \cA \GG_{\neq}} \norm{\cA U^1_{\neq}}^2,
\end{equation}
and, by the bootstrap assumptions in Theorem \ref{thm:bootstrap_step}, we have \eqref{eq:Pstar-rs-neqneq} for $r=s=1$.
The same structure and hence analysis is shared by $\cP_\star^{1,3}(U_{\neq},U_{\neq})_{\neq}$ and $\cP_\star^{3,3}(U_{\neq},U_{\neq})_{\neq}$, for which the same bound holds.

Turning to \eqref{eq:Pstar-r2-neqneq}, we write $\cP_\star^{1,2}(U_{\neq},U_{\neq})_{\neq}$ using \eqref{eq:SimVar} as
\begin{equation}
    \l \cA\GG_{\neq},\cA \de_y^L\cP_\star^{1,2}(U_{\neq},U_{\neq})\r =\l \cA\GG_{\neq}, \cA\, |\nabla_{x,z}|^{-\frac12}|\nabla_L|^{-\frac12}\de_y^L(\de_x|\nabla_{x,z}|^{\frac12}|\nabla_L|^{-\frac32}\GG_{\neq}\de_y^LU^1_{\neq})\r.
\end{equation}
We can bound it following \eqref{eq:distributing derivatives} and distributing $|\de_x|^{\frac12}$ similarly to \eqref{eq:J1234} we have
\begin{align}
     |\l \cA\GG_{\neq},\cA\de_y^L \cP_\star^{1,2}(U_{\neq},U_{\neq})\r |&\lesssim \norm{|\de_x|^{\frac12}|\nabla_{x,z}|^{-\frac12}|\nabla_L|^{\frac12}\cA\GG_{\neq}}\norm{ |\de_x|^{\frac12}|\nabla_{x,z}|^{\frac12}|\nabla_L|^{-\frac32}\cA\GG_{\neq}}\norm{\de_y^LU^1_{\neq}}_{L^\infty}\notag\\
     &\quad + \norm{|\nabla_L|^{\frac12}\cA\GG_{\neq}}\norm{ |\de_x|^{\frac12}|\nabla_{x,z}|^{\frac12}|\nabla_L|^{-\frac32}\cA\GG_{\neq}}\norm{\de_y^L|\de_x|^{\frac12}U^1_{\neq}}_{L^\infty}\notag\\
    &\quad +\norm{|\nabla_L|^{\frac12}\cA\GG_{\neq}}\norm{ \de_x|\nabla_{x,z}|^{\frac12}|\nabla_L|^{-\frac32}\GG_{\neq}}_{L^\infty}\norm{\de_y^L\cA U^1_{\neq}}.
\end{align}
Using \eqref{eq:property_A}, \eqref{eq:other norm bounds}, \eqref{eq:time_extraction}, and the multiplier $\cM_3$ \eqref{def:M3}, we arrive at
\begin{align}
     |\l \cA\GG_{\neq},\cA\de_y^L \cP_\star^{1,2}(U_{\neq},U_{\neq})\r |&\lesssim \nu^{-\frac13}\norm{\nabla_L\cA\GG_{\neq}}^{\frac12}\norm{\cA\GG_{\neq}}^{\frac12}\norm{\sqrt{-\frac{\dot \cM}{\cM}}\cA \GG_{\neq}}\norm{\cA U^1_{\neq}}\notag\\
     &\quad +\l t \r^{-\frac32}\norm{\nabla_L\cA\GG_{\neq}}^{\frac12}\norm{\cA\GG_{\neq}}^{\frac32}\norm{\nabla_L\cA U^1_{\neq}},
\end{align}
concluding \eqref{eq:Pstar-r2-neqneq} with $r=1$ by the bootstrap hypotheses. Again, the term $\cP_\star^{3,2}(U_{\neq},U_{\neq})_{\neq}$ has the same structure provided we exchange $\de_z$ with $\de_x$ and $U^3$ with $U^1$, hence it has the same bound.

The last $(\neq, \neq)$ interaction is $\cP_\star^{2,2}(U_{\neq},U_{\neq})_{\neq}$ which, using again \eqref{eq:SimVar}, we rewrite as
\begin{align}
   \l \cA\GG_{\neq}, \cA\de_y^L |\nabla_{x,z}|^{-\frac12}|\nabla_L|^{-\frac12}\de_y^L(\de_y^L|\nabla_{x,z}|^{\frac12}|\nabla_L|^{-\frac32}\GG_{\neq}\de_y^L|\nabla_{x,z}|^{\frac12}|\nabla_L|^{-\frac32}\GG_{\neq})\r.
\end{align}
We distribute $|\nabla_L|^{\frac12}$ inside the product (as \eqref{eq:distributing derivatives}) and consequently $|\nabla_{x,z}|^{\frac12}$ when it appears on the high frequency (as done in \eqref{eq:I_1234}), to obtain 
\begin{align}
    |\l \cA\GG_{\neq},\cA\de_y^L \cP_\star^{2,2}(U_{\neq},U_{\neq})\r|&\lesssim \norm{\cA\GG_{\neq}} \norm{\cA\GG_{\neq}}\norm{|\nabla_{x,z}|^{\frac12}|\nabla_L|^{-\frac12}\GG_{\neq}}_{L^\infty}\notag\\
    &+\norm{|\nabla_{x,z}|^{-\frac12}\cA\GG_{\neq}} \norm{\cA\GG_{\neq}}\norm{\nabla_{x,z}|\nabla_L|^{-\frac12}\GG_{\neq}}_{L^\infty}\notag\\
    &+\norm{|\nabla_{x,z}|^{-\frac12}\cA\GG_{\neq}} \norm{|\nabla_{x,z}|^{\frac12}|\nabla_L|^{-\frac12}\cA\GG_{\neq}}\norm{|\nabla_{x,z}|^{\frac12}\GG_{\neq}}_{L^\infty}.
\end{align}
Using \eqref{eq:other norm bounds} and the Sobolev embedding, all terms can be bounded by $\norm{\cA\GG_{\neq}}^3$. Therefore, by bootstrap assumptions (see Theorem \ref{thm:bootstrap_step}), estimate \eqref{eq:Pstar-22-neqneq} follows.

Moving on to the $(\neq,0)$, we begin from the term containing $\cP_\star^{1,2}(U_{\neq},U_0)_{\neq}$. From \eqref{eq:SimVar}, the term
\begin{equation}
    \l \cA\GG_{\neq},\cA\de_y^L \cP_\star^{1,2}(U_{\neq},U_0)\r =\l \cA\GG_{\neq}, \cA\, |\nabla_{x,z}|^{-\frac12}|\nabla_L|^{-\frac12}\de_y^L(\de_x|\nabla_{x,z}|^{\frac12}|\nabla_L|^{-\frac32}\GG_{\neq}\de_yU^1_{0})\r
\end{equation}
can be bounded following \eqref{eq:distributing derivatives} as 
\begin{align}
    |\l \cA\GG_{\neq},\cA\de_y^L \cP_\star^{1,2}(U_{\neq},U_0)\r|&\lesssim \norm{|\nabla_{x,z}|^{-\frac12}|\nabla_L|^{\frac12}\cA\GG_{\neq}} \norm{\de_x|\nabla_{x,z}|^{\frac12}|\nabla_L|^{-\frac32}\cA\GG_{\neq}}\norm{\de_yU^1_{0}}_{L^\infty}\notag\\
    &\quad + \norm{|\nabla_{x,z}|^{-\frac12}|\nabla_L|^{\frac12}\cA\GG_{\neq}} \norm{\e^{\lambda\nu^{\frac13}t}\de_x|\nabla_{x,z}|^{\frac12}|\nabla_L|^{-\frac32}\GG_{\neq}}_{L^\infty}\norm{\de_yU^1_{0}}_{H^{2m}}.
\end{align}
Using \eqref{eq:other norm bounds}, \eqref{eq:time_extraction}, the multiplier $\cM_3$ \eqref{def:M3} and Sobolev embedding
\begin{align}
    |\l \cA\GG_{\neq},\cA\de_y^L \cP_\star^{1,2}(U_{\neq},U_0)\r|&\lesssim \norm{\cA\GG_{\neq}}^{\frac12}\norm{\nabla_L\cA\GG_{\neq}}^{\frac12} \norm{\sqrt{-\frac{\dot \cM}{\cM}}\cA\GG_{\neq}}\norm{U^1_{0}}_{H^{2m}}\notag\\
    &\quad +\norm{\cA\GG_{\neq}}^{\frac12}\norm{\nabla_L\cA\GG_{\neq}}^{\frac12} \l t\r^{-\frac32}\norm{\cA\GG_{\neq}}\norm{\nabla U^1_{0}}_{H^{2m}}.
\end{align}
Hence, by the bootstrap assumptions, we end up with \eqref{eq:Pstar-12-neq0}. The same strategy applies to the term involving $\cP_\star^{3,2}(U_{\neq},U_0)$.
For $\cP_\star^{1,3}(U_{\neq},U_0)$ we have
\begin{equation}
    \l \cA\GG_{\neq},\cA\de_y^L \cP_\star^{1,3}(U_{\neq},U_0)\r =\l \cA\GG_{\neq}, \cA\, |\nabla_{x,z}|^{-\frac12}|\nabla_L|^{-\frac12}\de_y^L(\de_xU^3_{\neq}\de_zU^1_{0})\r,
\end{equation}
which can be bounded similarly to \eqref{eq:I_1234}, after commuting $\de_z$ when it appears on the high frequencies of $U^1_0$, as
\begin{align}
    |\l \cA\GG_{\neq},\cA\de_y^L \cP_\star^{1,3}(U_{\neq},U_0)\r|&\lesssim \norm{\de_x|\nabla_{x,z}|^{-\frac12}|\nabla_L|^{\frac12}\cA\GG_{\neq}}\norm{\cA  U^3_{\neq}}\norm{\de_zU^1_{0}}_{L^\infty}\notag\\
    &\quad +\norm{|\nabla_{x,z}|^{-\frac12}|\nabla_L|^{\frac12}\cA\GG_{\neq}}\norm{\e^{\lambda\nu^{\frac13}t} \de_z\de_xU^3_{\neq}}_{L^\infty}\norm{U^1_{0}}_{H^{2m}}\notag\\
    &\quad +\norm{\de_z|\nabla_{x,z}|^{-\frac12}|\nabla_L|^{\frac12}\cA\GG_{\neq}}\norm{\e^{\lambda\nu^{\frac13}t} \de_xU^3_{\neq}}_{L^\infty}\norm{U^1_{0}}_{H^{2m}}.
\end{align}
Using \eqref{eq:other norm bounds} then implies
\begin{equation}
    |\l \cA\GG_{\neq},\cA\de_y^L \cP_\star^{1,3}(U_{\neq},U_0)\r|\lesssim \norm{\nabla_L\cA\GG_{\neq}}\norm{\cA U^3_{\neq}}\norm{U^1_{0}}_{H^{2m}}+\norm{\cA\GG_{\neq}}^{\frac12}\norm{\nabla_L\cA\GG_{\neq}}^{\frac12}\norm{\cA U^3_{\neq}}\norm{U^1_{0}}_{H^{2m}},
\end{equation}
and hence, via the bootstrap assumptions, we conclude \eqref{eq:Pstar-r3-neq0} for $r=1$. The case $r=3$ follows similarly.
The term $\cP_\star^{2,2}(U_{\neq},U_0)_{\neq}$ has the same structure as $\cP_\star^{2,2}(U_{\neq},U_{\neq})_{\neq}$: indeed
\begin{align}
    &\l \cA\GG_{\neq},\cA\de_y^L \cP_\star^{2,2}(U_{\neq},U_0)\r \notag\\
    &\quad =\l \cA\GG_{\neq}, \cA\, |\nabla_{x,z}|^{-\frac12}|\nabla_L|^{-\frac12}\de_y^L(\de_y^L|\nabla_{x,z}|^{\frac12}|\nabla_L|^{-\frac32}\GG_{\neq}\de_y|\de_z|^{\frac12}|\nabla|^{-\frac32}\GG_0)\r,
\end{align}
hence \eqref{eq:Pstar-22-neq0} follows in the same way as \eqref{eq:Pstar-22-neqneq} above.
Finally, using \eqref{eq:SimVar}, the term with $\cP_\star^{2,3}(U_{\neq},U_0)_{\neq}$ reads 
\begin{equation}
    \l \cA\GG_{\neq},\cA\de_y^L \cP_\star^{2,3}(U_{\neq},U_0)\r =\l \cA\GG_{\neq}, \cA\, |\nabla_{x,z}|^{-\frac12}|\nabla_L|^{-\frac12}\de_y^L(\de_y^LU^3_{\neq}\de_z|\de_z|^{\frac12}|\nabla|^{-\frac32}\GG_0)\r,
\end{equation}
and is bounded following \eqref{eq:distributing derivatives} as
\begin{align}
    |\l \cA\GG_{\neq},\cA\de_y^L \cP_\star^{2,3}(U_{\neq},U_0)\r|&\lesssim \norm{\cA|\nabla_{x,z}|^{-\frac12}|\nabla_L|^{\frac12}\GG_{\neq}}\norm{\cA \de_y^LU^3_{\neq}}\norm{|\de_z|^{\frac32}|\nabla|^{-\frac32}\GG_0}_{L^\infty}\notag\\
    &\quad + \norm{\cA|\nabla_{x,z}|^{-\frac12}|\nabla_L|^{\frac12}\GG_{\neq}}\norm{\e^{\lambda\nu^{\frac13}t} \de_y^LU^3_{\neq}}_{L^\infty}\norm{|\de_z|^{\frac32}|\nabla|^{-\frac32}\GG_0}_{H^{2m}}.
\end{align}
The Sobolev embedding and \eqref{eq:other norm bounds} bound this term by 
$\norm{\cA\GG_{\neq}}^{\frac12}\norm{\nabla_L\cA\GG_{\neq}}^{\frac12}\norm{\nabla_L\cA U^3_{\neq}}\norm{\GG_0}_{H^{2m}}$ and we can conclude \eqref{eq:Pstar-23-neq0} from the bootstrap assumptions.
Thus \eqref{eq:P_star_bds} is proven, and Lemma \ref{lem:GGammanonlin} follows. This was the last step missing in the proof of Proposition \ref{prop:GGamma}.

\subsection{Control of $U^1_{\neq},U^3_{\neq}$ -- proof of Proposition \ref{prop:U13}}\label{ssec:U13neq}

For $r=1,3$, the equation \eqref{eq:full_shorthand} satisfied by $U^r_{\neq}$ 
can be written as
\begin{equation}\label{eq:Ur}
\de_tU^r_{\neq}=\nu\Delta_LU^r_{\neq}-\frac{3-r}{2}U^2_{\neq} +\de_rP_{\neq}  + \cT(U,U^r)_{\neq} +\de_r\cP(U,U)_{\neq}.
\end{equation}
Here, $\de_1=\de_x$ and $\de_3=\de_z$ respectively. Using the multiplier $\cA$, we compute the time derivative of the $L^2$ norm of $U^r_{\neq}$,
which, after integrating in time, using \eqref{eq:dotA} to treat the term arising from the multiplier $\cA$ and Lemma  \ref{lem:enhanced_dissip_estim}, becomes
\begin{align}\label{eq:energy_est_Ur_pre}
    &\norm{\cA U^r_{\neq}(t)}^2+\nu \norm{\nabla_L\cA U^r_{\neq}}^2_{L^2_tL^2} +\norm{\sqrt{-\frac{\dot \cM }{\cM }}\cA U^r_{\neq}}^2_{L^2_tL^2} \notag\\
    &\quad \leq \norm{\cA U^r_{\neq}(0)}^2 +\frac{3-r}{2}\int_0^\infty\langle \cA U^1_{\neq},\cA U^2_{\neq}\rangle  +2\int_0^\infty\langle \cA U^r_{\neq},\cA \de_r\de_x|\nabla_L|^{-2}U^2_{\neq}\rangle\notag\\
  &\qquad  +\beta \int_0^\infty\langle \cA U^r_{\neq},\cA\de_r\de_y^L|\nabla_L|^{-2}\Theta_{\neq}\rangle- \int_0^\infty\langle \cA U^r_{\neq}, \cA\cT(U,U^r)_{\neq}\rangle-\int_0^\infty\langle \cA U^r_{\neq},  \cA\de_r\cP(U,U)_{\neq}\rangle.\qquad
\end{align}
To prove Proposition \ref{prop:U13} we are left to bound the terms appearing on the right-hand side. The terms arising from linear interactions can be treated as follows. 
Starting from the lift-up term, that only appears when $r=1$, we have using \eqref{eq:SimVar}
\begin{equation}
    \langle \cA U^1_{\neq},\cA U^2_{\neq}\rangle =\langle \cA U^1_{\neq},\cA|\nabla_{x,z}|^{\frac12}|\nabla_L|^{-\frac32}\GG_{\neq}\rangle.
\end{equation}
By exploiting the time decay of the $|\nabla_L|^{-\frac32}$ and using the multiplier $\cM_3$ (see \eqref{def:M3}) we can bound it by
\begin{equation}
    |\langle \cA U^1_{\neq},\cA U^2_{\neq}\rangle|\leq \norm{\sqrt{-\frac{\dot \cM_3}{\cM_3}}\cA U^1_{\neq}}\norm{\sqrt{-\frac{\dot \cM_3}{\cM_3}}\cA\GG_{\neq}},
\end{equation}
hence via bootstrap hypothesis in Theorem \ref{thm:bootstrap_step}, and recalling that $C_0\geq 10^4$, we have
\begin{equation}
    \int_0^\infty |\langle \cA U^1_{\neq},\cA U^2_{\neq}\rangle|\leq C_0^2\eps^2.
\end{equation}
The next term
\begin{equation}
    \langle \cA U^r_{\neq},A\de_r\de_x|\nabla_L|^{-2}U^2_{\neq}\rangle =\langle \cA \de_r\de_x|\nabla_L|^{-2}U^r_{\neq},\cA|\nabla_{x,z}|^{\frac12}|\nabla_L|^{-\frac32}\GG_{\neq}\rangle
\end{equation}
is bounded analogously to the previous one, since we can exploit the time decay coming from $|\nabla_L|^{-2}$ and $|\nabla_L|^{-\frac32}$. Hence, we deduce
\begin{equation}
    \int_0^\infty|\langle \cA U^r_{\neq},\cA\de_r\de_x|\nabla_L|^{-2}U^2_{\neq}\rangle|\leq C_0^2\eps^2.
\end{equation}
Finally, the same applies to the term 
\begin{equation}
    \langle \cA U^r_{\neq},\cA\de_r\de_y^L|\nabla_L|^{-2}\Theta_{\neq}\rangle =\langle \cA U^r_{\neq},\cA\de_r\de_y^L|\nabla_{x,z}|^{-\frac12}|\nabla_L|^{-\frac52}\Gamma_{\neq}\rangle,
\end{equation}
which can be still bounded by means of the multiplier $\cM_3$ defined in \eqref{def:M3}. We deduce that
\begin{equation}
    \int_0^\infty |\langle \cA U^r_{\neq},\cA\de_r\de_y^L|\nabla_L|^{-2}\Theta_{\neq}\rangle|\leq C_0^2\eps^2.
\end{equation}
In particular, it follows from \eqref{eq:energy_est_Ur_pre} that
\begin{align}\label{eq:energy_est_Ur}
    &\norm{\cA U^r_{\neq}(t)}^2+\nu \norm{\nabla_L\cA U^r_{\neq}}^2_{L^2_tL^2} +\norm{\sqrt{-\frac{\dot \cM }{\cM }}\cA U^r_{\neq}}^2_{L^2_tL^2} \notag\\
    &\qquad \leq 3C_0^2\eps^2+ \int_0^\infty|\langle \cA U^r_{\neq}, \cA\cT(U,U^r)_{\neq}\rangle|+\int_0^\infty|\langle \cA U^r_{\neq},  \cA\de_r\cP(U,U)_{\neq}\rangle|.
\end{align}
For the nonlinear terms, we collect our main estimates in the following lemma, whose proof is postponed to the next sections. 
\begin{lemma}\label{lem:Urnonlin}
 Under the assumptions of Theorem \ref{thm:bootstrap_step}, for $r=1,3$ there holds that
 \begin{align}
  \int_0^\infty |\langle \cA U^r_{\neq}, \cA\cT(U,U^r)_{\neq}\rangle|  
    &\lesssim (\nu^{-\frac23}\eps)\eps^2, \label{eq:T_r_bds}\\
  \int_0^\infty|\langle \cA U^r_{\neq},  \cA\de_r\cP(U,U)_{\neq}\rangle|   &\lesssim (\nu^{-\frac12}\eps)\eps^2. \label{eq:P_r_bds}
 \end{align} 
\end{lemma}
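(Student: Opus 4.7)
The plan is to follow the template of Section~\ref{ssec:GGamma_neq}, decomposing each nonlinearity into mode-interaction pieces and bounding each trilinear form via careful derivative distribution. Specifically I write
\[
\cT(U,U^r)_{\neq}=\sum_{j=1}^3\sum_{(\kappa_1,\kappa_2)\neq(0,0)}\cT^j(U_{\kappa_1},U^r_{\kappa_2})_{\neq},\qquad \cT^j(U,F):=-U^j\de_j^LF,
\]
and analogously $\de_r\cP(U,U)_{\neq}=\sum_{i,j,\kappa_1,\kappa_2}\de_r\cP^{i,j}(U_{\kappa_1},U_{\kappa_2})_{\neq}$ with $\cP^{i,j}(F,H)=-|\nabla_L|^{-2}\de_i^L\de_j^L(F^iH^j)$, adopting the convention $\de_1^L=\de_x$, $\de_2^L=\de_y^L$, $\de_3^L=\de_z$. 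For each piece I distribute the $\l\nabla\r^{2m}$ derivatives in $\cA$ between the inputs as in~\eqref{eq:distributing derivatives}--\eqref{eq:I_1234}, using the product inequality~\eqref{eq:prod_ineq_A}, Sobolev embedding to bound $L^\infty$ factors, the time-decay identity~\eqref{eq:time_extraction} when negative powers of $|\nabla_L|$ act on a nonzero mode, the ghost multiplier $\cM_3$ to absorb symbols of type $|\de_x|^{1/2}|\nabla_{x,z}|^{n-1/2}|\nabla_L|^{-n}$, and enhanced dissipation via Corollary~\ref{cor:ED_bounds}. Whenever $U^2$ enters as an input, identity~\eqref{eq:U2Theta_recover} rewrites it as $-|\nabla_{x,z}|^{1/2}|\nabla_L|^{-3/2}\GG$, trading it for a factor absorbable by~\eqref{eq:time_extraction} or $\cM_3$.

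For the transport bound~\eqref{eq:T_r_bds}, the dominant contribution $\nu^{-2/3}\eps^3$ arises from $(\neq,\neq)$ interactions, where Cauchy-Schwarz splits the trilinear integral into $\|\nabla_L\cA U^r_{\neq}\|_{L^2_tL^2}\cdot\|\cA U^{\kappa_1}_{\neq}\|_{L^2_tL^2}\cdot\|U^{\kappa_2}_{\neq}\|_{L^\infty_tL^\infty}$; the first factor loses $\nu^{-1/2}$, the second contributes $\nu^{-1/6}$ via Corollary~\ref{cor:ED_bounds}, and the last is bounded by $C_0\eps$ via Sobolev embedding from $\|\cA U^{\kappa_2}_{\neq}\|_{L^\infty_tL^2}$. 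Mixed $(0,\neq)$ and $(\neq,0)$ interactions are handled using the zero-mode bootstraps~\eqref{eq:bootstrap_zero_g}--\eqref{eq:bootstrap_zero_u}, exploiting $\de_xU^r_0=0$ and the mean cancellation $\overline{U}^2_0=0$ from~\eqref{eq:xz-meanU2} to kill certain terms.

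For the pressure bound~\eqref{eq:P_r_bds}, the key structural observation is that the inner operator $|\nabla_L|^{-2}\de_i^L\de_j^L$ has Fourier symbol of modulus at most $1$ and is therefore bounded on $L^2$. After integrating by parts in $\de_r$,
\[
 \l\cA U^r_{\neq},\cA\de_r\cP(U,U)_{\neq}\r=-\l\de_r\cA U^r_{\neq},\cA\cP(U,U)_{\neq}\r,
\]
the first factor is controlled by $\|\nabla_L\cA U^r_{\neq}\|_{L^2_tL^2}\lesssim\nu^{-1/2}\eps$, and the remaining $\|\cA(U^iU^j)_{\neq}\|$ is bounded via~\eqref{eq:prod_ineq_A} by a product of $L^\infty_tL^2$-controlled factors. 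To secure $L^2_t$-integrability of this product without extra $\nu$-loss, one exploits either the algebraic time decay of $|\nabla_L|^{-a}$ factors produced by~\eqref{eq:U2Theta_recover} or the exponential decay $\e^{-\lambda\nu^{1/3}t}$ of nonzero modes built into $\cA^{-1}$, pairing them via Cauchy-Schwarz to obtain the claimed $\nu^{-1/2}\eps^3$ bound uniformly over all index combinations.

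The main obstacle is the bookkeeping: many mode-and-index pieces must each be bounded with the correct power of $\nu$, and the choice of how to distribute derivatives and where to place $|\nabla_L|^{-a}$ factors must be adapted to the specific interaction. The most delicate pieces involve $U^2$, where~\eqref{eq:U2Theta_recover} together with $\cM_3$ provides the needed structure. Conceptually the techniques are already those of Lemma~\ref{lem:GGammanonlin}, and the present case is in fact simpler because the inputs $U^r$ do not carry the additional weight $|\nabla_{x,z}|^{-1/2}|\nabla_L|^{3/2}$ of the symmetric variables.
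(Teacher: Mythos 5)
Your treatment of the transport term $\cT(U,U^r)_{\neq}$ is essentially the paper's: same mode decomposition, same dominant $(\neq,\neq)$ contribution giving $\nu^{-2/3}\eps^3$, and the null observation $\cT^1(U_{\neq},U^r_0)_{\neq}=0$.

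For the pressure term, however, you take a genuinely different route and it leaves a gap. You integrate by parts in $\de_r$, replacing $\l\cA U^r_{\neq},\cA\de_r\cP\r$ by $-\l\de_r\cA U^r_{\neq},\cA\cP\r$, and then pay for the full $\de_r$ via $\|\nabla_L\cA U^r_{\neq}\|_{L^2_tL^2}\lesssim\nu^{-1/2}\eps$. The remaining factor $\|\cA\cP(U,U)_{\neq}\|_{L^2_tL^2}$ must then be made $L^2_t$-integrable on its own, and as you say this requires either enhanced dissipation (Corollary~\ref{cor:ED_bounds}) or the exponential weight hidden in $\cA^{-1}$ — in either case contributing an extra $\nu^{-1/6}$. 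The net result is $\nu^{-1/2}\cdot\nu^{-1/6}\eps^3 = \nu^{-2/3}\eps^3$, not the claimed $\nu^{-1/2}\eps^3$. Your remark that the multiplier $|\nabla_L|^{-2}\de_i^L\de_j^L$ has modulus $\leq 1$ is correct, but that only controls the spatial norm pointwise in time and does nothing to pay for the time integral. The paper's proof in Section~\ref{ssec:P_r_bds} is sharper precisely because it does \emph{not} integrate by parts: it keeps the composite symbol $\de_r|\nabla_L|^{-2}$ acting on the $U^r_{\neq}$ slot, where it is dominated (uniformly in $t$) by $\sqrt{-\dot\cM_3/\cM_3}$; the resulting ghost-weighted factor $\|\sqrt{-\dot\cM/\cM}\,\cA U^r_{\neq}\|_{L^2_tL^2}\lesssim\eps$ is $L^2_t$-integrable without any $\nu$-loss, and the full $\nabla_L$ is then placed on the other nonzero factor (costing $\nu^{-1/2}$), giving $\nu^{-1/2}\eps^3$ exactly. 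In short, you have traded a time-decaying multiplier (cheap via $\cM_3$) for a raw derivative (expensive via dissipation), which costs a $\nu^{-1/6}$. Since Proposition~\ref{prop:U13} only requires $\nu^{-2/3}\eps^3$ and the transport term already contributes this, your weaker pressure bound would still close the $U^r_{\neq}$ bootstrap, but it does not prove the lemma's stated estimate~\eqref{eq:P_r_bds}.
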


\begin{proof}
The proofs of \eqref{eq:T_r_bds} resp.\ \eqref{eq:P_r_bds} are given in Sections \ref{ssec:T_r_bds} resp.\ \ref{ssec:P_r_bds}.
\end{proof}

With Lemma \ref{lem:Urnonlin} at hand, the bound in Proposition \ref{prop:U13} follows from \eqref{eq:energy_est_Ur}.

\subsubsection{Nonlinear terms analysis: $\cT(U,U^r)_{\neq}$}\label{ssec:T_r_bds}
We start the analysis of the nonlinear terms appearing in \eqref{eq:Ur} from $\cT(U,U^r)_{\neq}$. As done for the nonlinear terms in Section \ref{ssec:GGamma_neq}, we split $\cT(U,U^r)_{\neq}$, for $r=1,3$, as
\begin{align}
   \cT(U,U^r)_{\neq}=\sum_{j,\kappa_1,\kappa_2} \cT^j(U_{\kappa_1},U^r_{\kappa_2})_{\neq}, \qquad \cT^j(U_{\kappa_1},U^r_{\kappa_2})_{\neq}:=(U^j_{\kappa_1}\de_j^LU^r_{\kappa_2})_{\neq}.
\end{align}
where $j\in \{1,2,3\}$ and $\kappa_1,\kappa_2 \in\{0,\neq\}$. 
We prove that
\begin{subequations}
    \begin{alignat}{1}
        \int_0^\infty|\l \cA U^r_{\neq},\cA \,\cT^j(U_{\neq},U^r_{\neq})\r|&\lesssim (\nu^{-\frac23}\eps)\eps^2, \qquad j=1,3 \label{eq:T-j-neqneq}\\
        \int_0^\infty|\l \cA U^r_{\neq},\cA \,\cT^2(U_{\neq},U^r_{\neq})\r|&\lesssim (\nu^{-\frac12}\eps)\eps^2,\label{eq:T-2-neqneq}\\
        \int_0^\infty|\l \cA U^r_{\neq},\cA \,\cT^j(U_0,U^r_{\neq})\r|&\lesssim(\nu^{-\frac23}\eps)\eps^2, \qquad j=1,2,3, \label{eq:T-j-0neq}\\
        \int_0^\infty|\l \cA U^r_{\neq},\cA \,\cT^2(U_{\neq},U^r_0)\r|&\lesssim (\nu^{-\frac12}\eps)\eps^2,\label{eq:T-2-neq0}\\
        \int_0^\infty|\l \cA U^r_{\neq},\cA \,\cT^3(U_{\neq},U^r_0)\r|&\lesssim (\nu^{-\frac23}\eps)\eps^2.\label{eq:T-3-neq0}
    \end{alignat}
\end{subequations}
Note that
\begin{equation}
    \l \cA U^r_{\neq},\cA \,\cT^1(U_{\neq},U^r_0)\r= \l \cA U^r_{\neq}, \cA (U^1_{\neq}\de_x U^r_{0}) \r = 0,
\end{equation}
so this term does not appear in the list above.
Starting with $\cT^1(U_{\neq},U^r_{\neq})_{\neq}$ appearing in 
\begin{equation}
    \l \cA U^r_{\neq},\cA \,\cT^1(U_{\neq},U^r_{\neq})\r= \l \cA U^r_{\neq}, \cA (U^1_{\neq}\de_xU^r_{\neq}) \r,
\end{equation}
the structure is simpler compared to the previous analysis done for $\cT_\star^1(U_{\neq},U^2_{\neq})_{\neq}$, so by using \eqref{eq:prod_ineq_A} we can deduce directly that 
\begin{equation}
   |\l \cA U^r_{\neq},\cA  \,\cT^1(U_{\neq},U^r_{\neq})\r|\lesssim \norm{\cA U^r_{\neq}}^2\norm{\cA U^1_{\neq}} + \norm{\cA U^r_{\neq}}\norm{\cA U^1_{\neq}}\norm{\nabla_L\cA U^r_{\neq}}.
\end{equation}
Here we used the 
inequality $\|\de_x\cA U^r_{\neq}\|\leq\|\nabla_L\cA U^r_{\neq}\|$ in the second term.
Hence, by the bootstrap assumptions we can conclude that
\begin{equation}
    \int_0^\infty|\l \cA U^r_{\neq},\cA \,\cT^1(U_{\neq},U^r_{\neq})\r|\lesssim (\nu^{-\frac13}\eps)\eps^2+(\nu^{-\frac23}\eps)\eps^2.
\end{equation}
The same structure appears in 
\begin{equation}
    \l \cA U^r_{\neq},\cA \,\cT^3(U_{\neq},U^r_{\neq})\r= \l \cA U^r_{\neq}, \cA (U^3_{\neq}\de_zU^r_{\neq}) \r,
\end{equation}
therefore \eqref{eq:T-j-neqneq} follows.

For $\cT^2(U_{\neq},U^r_{\neq})_{\neq}$ we can exploit the time decay to obtain a better estimate compared to the previous terms. Starting from 
\begin{equation}
    \l \cA U^r_{\neq},\cA \,\cT^2(U_{\neq},U^r_{\neq})\r= \l \cA U^r_{\neq}, \cA (|\nabla_{x,z}|^{\frac12}|\nabla_L|^{-\frac32}\GG_{\neq}\de_y^LU^r_{\neq}) \r,
\end{equation}
and using \eqref{eq:prod_ineq_A} we have 
\begin{align}
    |\l \cA U^r_{\neq},\cA \,\cT^2(U_{\neq},U^r_{\neq})\r|&\lesssim \norm{\cA U^r_{\neq}}\norm{|\nabla_{x,z}|^{\frac12}|\nabla_L|^{-\frac32}\cA\GG_{\neq}}\norm{\de_y^LU^r_{\neq}}_{L^\infty} \notag\\
    &\qquad + \norm{\cA U^r_{\neq}}\norm{|\nabla_{x,z}|^{\frac12}|\nabla_L|^{-\frac32}\GG_{\neq}}_{L^\infty}\norm{\de_y^L\cA U^r_{\neq}}\notag\\
    &\lesssim \nu^{-\frac13}\norm{\cA U^r_{\neq}}^2\norm{\sqrt{-\frac{\dot \cM}{\cM}}\cA\GG_{\neq}}+ \l t\r^{-\frac32}\norm{\cA U^r_{\neq}}\norm{\cA\GG_{\neq}}\norm{\nabla_L\cA U^r_{\neq}},
\end{align}
where in the last line we used the multiplier $\cM_3$ defined in \eqref{def:M3}, \eqref{eq:property_A}, and \eqref{eq:time_extraction}.
We conclude, via bootstrap assumptions in Theorem \ref{thm:bootstrap_step}, that \eqref{eq:T-2-neqneq} holds.
Regarding $\cT^1(U_0,U^r_{\neq})_{\neq}$, the nonlinear term 
\begin{equation}
    \l \cA U^r_{\neq},\cA \,\cT^1(U_0,U^r_{\neq})\r= \l \cA U^r_{\neq}, \cA (U^1_{0}\de_x U^r_{\neq}) \r
\end{equation}
can be bounded using \eqref{eq:prod_ineq_A} as
\begin{align}
    |\l \cA U^r_{\neq},\cA \,\cT^1(U_0,U^r_{\neq})\r|&\lesssim \norm{\cA  U^r_{\neq}}\norm{ U^1_{0}}_{H^{2m}}\norm{\e^{\lambda\nu^{\frac13}t}\de_x U^r_{\neq}}_{L^\infty} + \norm{\cA U^r_{\neq}} \norm{ U^1_{0}}_{L^\infty}\norm{\de_x \cA U^r_{\neq}}\notag\\
    &\lesssim \norm{\cA U^r_{\neq}}^2\norm{ U^1_{0}}_{H^{2m}} + \norm{\cA U^r_{\neq}} \norm{ U^1_{0}}_{H^{2m}}\norm{\nabla_L \cA U^r_{\neq}}.
\end{align}
This is analogous to $\cT^1(U_{\neq},U^r_{\neq})_{\neq}$, and the same argument applies to
$\cT^2(U_0,U^r_{\neq})_{\neq}$ and $\cT^3(U_0,U^r_{\neq})_{\neq}$, so \eqref{eq:T-j-0neq} holds.

Next, $\cT^2(U_{\neq},U^r_0)_{\neq}$ can be bounded similarly to $\cT^2(U_{\neq},U^r_{\neq})_{\neq}$, using \eqref{eq:prod_ineq_A}, the multiplier $\cM_3$ \eqref{def:M3}, and time decay \eqref{eq:time_extraction}. Hence
\begin{equation}
    \l \cA U^r_{\neq},\cA \,\cT^2(U_{\neq},U^r_0)\r= \l \cA U^r_{\neq}, \cA (|\nabla_{x,z}|^{\frac12}|\nabla_L|^{-\frac32}\GG_{\neq}\de_y^L U^r_{0}) \r.
\end{equation}
gives \eqref{eq:T-2-neq0}, using the bootstrap assumptions in Theorem \ref{thm:bootstrap_step}. 
Finally, $\cT^3(U_{\neq},U^r_0)_{\neq}$ concludes this part as we note that this term is analogous to $\cT^1(U_{\neq}, U^r_{\neq})_{\neq}$, hence getting \eqref{eq:T-3-neq0}.
Thus, \eqref{eq:T_r_bds} is proved.

\subsubsection{Nonlinear terms analysis: $\de_r\cP(U,U)_{\neq}$}\label{ssec:P_r_bds}
The second and last nonlinear term appearing in \eqref{eq:Ur} is $\de_r^L\cP(U,U)_{\neq}$, where $\de_1=\de_x$ and $\de_3=\de_z$. Analogously to $\cT(U,U^r)_{\neq}$, we divide $\de_r\cP(U,U)_{\neq}$ into different components following the structure of $\cP(U,U)$, as done for  $\de_y^L\cP_\star(U,U)_{\neq}$ in Section \ref{sssec:cPGGamma}. Namely 
\begin{equation}
    \cP(U,U)_{\neq}= \sum_{i,j,\kappa_1,\kappa_2}\cP^{i,j}(U_{\kappa_1},U_{\kappa_2})_{\neq},\quad \cP^{i,j}(U_{\kappa_1},U_{\kappa_2})_{\neq}:=|\nabla_L|^{-2}
    (\de_i^LU^j_{\kappa_1}\de_j^LU^i_{\kappa_2})_{\neq},
\end{equation}
for $i,j\in \{1,2,3\}$ and $\kappa_1,\kappa_2 \in\{0,\neq\}$ with the convention $\de_1^L=\de_x$, $\de_2^L=\de_y^L$, and $\de_3^L=\de_z$. 
Using again that the symmetry $\cP^{i,j}(U_{\kappa_1},U_{\kappa_2})=\cP^{j,i}(U_{\kappa_2},U_{\kappa_1})$, we reduce ourselves to proving
\begin{subequations}
\begin{align}
\int_0^\infty|\l \cA U^r_{\neq},\cA \,\de_r\cP^{i,j}(U_{\neq},U_{\neq})\r|&\lesssim (\nu^{-\frac16}\eps)\eps^2, \qquad i,j=1,3, \label{eq:P-rs-neqneq} \\
\int_0^\infty |\l \cA U^r_{\neq},\cA \, \de_r\cP^{2,j}(U_{\neq},U_{\neq})\r|&\lesssim(\nu^{-\frac13}\eps)\eps^2,\qquad j=1,3, \label{eq:P-2j-neqneq}\\
\int_0^\infty |\langle \cA U^r_{\neq},\cA\,\de_r\cP^{2,2}(U_{\neq},U_{\neq})\rangle|
&\leq (\nu^{-\frac16}\eps)\eps^2,\label{eq:P-22-neqneq}\\
\int_0^\infty|\langle \cA U^r_{\neq},\cA \,\de_r\cP^{i,2}(U_{\neq},U_0)\rangle| &\lesssim (\nu^{-\frac16}\eps)\eps^2,\qquad i=1,3,\label{eq:P-12-neq0}\\
\int_0^\infty|\langle \cA U^r_{\neq},\cA \,\de_r\cP^{i,3}(U_{\neq},U_0)\rangle| &\lesssim (\nu^{-\frac13}\eps)\eps^2, \qquad i=1,3, \label{eq:P-i3-neq0}\\
\int_0^\infty \langle \cA U^r_{\neq},\cA \,\de_r\cP^{2,2}(U_{\neq},U_0)\rangle| &\lesssim (\nu^{-\frac13}\eps)\eps^2,\label{eq:P-22-neq0}\\
\int_0^\infty|\langle \cA U^r_{\neq}, \cA \,\de_r\cP^{2,3}(U_{\neq},U_0)\rangle| &\lesssim (\nu^{-\frac12}\eps)\eps^2.\label{eq:P-23-neq0}
\end{align}
\end{subequations}
Note that for $j\in\{1,2,3\}$,  $\cP^{j,1}(U_{\neq},U_0)_{\neq}=0$, so these terms do not appear in the list above.
Starting from $\cP^{1,1}(U_{\neq},U_{\neq})_{\neq}$ in \eqref{eq:P-rs-neqneq}, we have
\begin{equation}
    \l \cA U^r_{\neq},\cA \,\de_r\cP^{1,1}(U_{\neq},U_{\neq})\r= \l \cA  U^r_{\neq}, \cA \de_r|\nabla_L|^{-2}(\de_xU^1_{\neq}\de_xU^1_{\neq}) \r,
\end{equation}
and, after using \eqref{eq:prod_ineq_A}, can be bounded as 
\begin{align}
    |\l \cA U^r_{\neq},\cA \,\de_r\cP^{1,1}(U_{\neq},U_{\neq})\r|&\lesssim \norm{ \de_r|\nabla_L|^{-2} \cA U^r_{\neq}} \norm{\cA U^1_{\neq}}\norm{\de_x^2U^1_{\neq}}_{L^\infty}\notag\\
    &\quad + \norm{\de_x \de_r|\nabla_L|^{-2} \cA U^r_{\neq}} \norm{\de_x U^1_{\neq}}_{L^\infty}\norm{\cA U^1_{\neq}},
\end{align}
where, following \eqref{eq:I_1234}, $\de_x$ has been distributed to the other two terms when necessary. Using the multiplier $\cM_3$ \eqref{def:M3} and the Sobolev embedding we obtain 
\begin{equation}
    |\l \cA U^r_{\neq},\cA \,\de_r\cP^{1,1}(U_{\neq},U_{\neq})\r|\lesssim \norm{\sqrt{-\frac{\dot \cM}{\cM}}\cA U^r_{\neq}}\norm{\cA U^1_{\neq}}^2,
\end{equation}
which implies \eqref{eq:P-rs-neqneq} via the bootstrap assumptions.
The same bound is obtained for the nonlinear terms $\cP^{1,3}(U_{\neq},U_{\neq})_{\neq}$ and $\cP^{3,3}(U_{\neq},U_{\neq})_{\neq}$, since upon exchanging $\de_x$ with $\de_z$ it is possible to recover the same structure.

For $\cP^{2,1}(U_{\neq},U_{\neq})_{\neq}$ we have, via the symmetric change of variables \eqref{eq:SimVar},
\begin{equation}
    \l \cA U^r_{\neq},\cA \,\de_r\cP^{2,1}(U_{\neq},U_{\neq})\r= \l \cA U^r_{\neq}, \cA \de_r|\nabla_L|^{-2}(\de_x|\nabla_{x,z}|^{\frac12}|\nabla_L|^{-\frac32}\GG_{\neq}\de_y^LU^1_{\neq}) \r,
\end{equation}
which can be bounded following \eqref{eq:distributing derivatives} and \eqref{eq:I_1234} as
\begin{equation}
  |\l \cA U^r_{\neq},\cA \,\de_r\cP^{2,1}(U_{\neq},U_{\neq})\r |\lesssim \cI_1+\cI_2,
\end{equation}
where 
\begin{equation}
    \cI_1= \norm{ \de_r|\nabla_L|^{-2}\cA U^r_{\neq}} \norm{\de_x|\nabla_{x,z}|^{\frac12}|\nabla_L|^{-\frac32}\cA\GG_{\neq}}\norm{\de_y^LU^1_{\neq}}_{L^\infty}
\end{equation}
and
\begin{align}
    \cI_2&= \norm{\de_y^L\de_r|\nabla_L|^{-2} \cA U^r_{\neq}} \norm{\de_x|\nabla_{x,z}|^{\frac12}|\nabla_L|^{-\frac32}\GG_{\neq}}_{L^\infty}\norm{\cA U^1_{\neq}}\notag\\
    &\quad + \norm{ \de_r|\nabla_L|^{-2}\cA U^r_{\neq}} \norm{\de_y^L\de_x|\nabla_{x,z}|^{\frac12}|\nabla_L|^{-\frac32}\GG_{\neq}}_{L^\infty}\norm{\cA U^1_{\neq}}.
\end{align}
To bound $\cI_1$ we use the multiplier $\cM_3$ \eqref{def:M3} and the property \eqref{eq:property_A} of $\cA$ and we get 
\begin{equation}
   \cI_1\lesssim \norm{\sqrt{-\frac{\dot \cM}{\cM}}\cA U^r_{\neq}} \norm{\sqrt{-\frac{\dot \cM}{\cM}}\cA \GG_{\neq}}\nu^{-\frac13}\norm{\cA U^1_{\neq}}, 
\end{equation}
from which we deduce via the bootstrap assumptions that
\begin{equation}
    \int_0^t \cI_1\lesssim (\nu^{-\frac13}\eps)\eps^2.
\end{equation}
For $\cI_2$, \eqref{eq:other norm bounds}, the extra time decay obtained via \eqref{eq:time_extraction}, the multiplier $\cM_3$ and Sobolev embedding lead to 
\begin{equation}
    \cI_2 \lesssim \l t \r^{-\frac32} \norm{\cA U^r_{\neq}} \norm{\cA\GG_{\neq}}\norm{\cA U^1_{\neq}}+\l t \r^{-\frac12} \norm{\sqrt{-\frac{\dot \cM}{\cM}}\cA U^r_{\neq}}\norm{\cA \GG_{\neq}}\norm{\cA U^1_{\neq}}.
\end{equation}
We conclude that
\begin{equation}
    \int_0^\infty \cI_2\,\lesssim \eps^3+(\nu^{-1/12}\eps)\eps^2.
\end{equation}
In a similar way the term involving $\cP^{2,3}(U_{\neq},U_{\neq})_{\neq}$ can be bounded by 
\begin{equation}
    \int_0^\infty |\l \cA U^r_{\neq},\cA \,\de_r\cP^{2,3}(U_{\neq},U_{\neq})\r|\lesssim(\nu^{-\frac13}\eps)\eps^2,
\end{equation}
and \eqref{eq:P-2j-neqneq} is proved.
For $\cP^{2,2}(U_{\neq},U_{\neq})_{\neq}$ we have, using \eqref{eq:SimVar}, 
\begin{equation}
\langle \cA U^r_{\neq},\cA\,\de_r\cP^{2,2}(U_{\neq},U_{\neq})\rangle=\langle \cA U^r_{\neq},\cA\de_r|\nabla_L|^{-2}(\de_y^L|\nabla_{x,z}|^{\frac12}|\nabla_L|^{-\frac32}\GG_{\neq}\de_y^L|\nabla_{x,z}|^{\frac12}|\nabla_L|^{-\frac32}\GG_{\neq})\rangle.
\end{equation}
Then \eqref{eq:prod_ineq_A} leads to
\begin{align}
|\langle \cA U^r_{\neq},\cA\,\de_r\cP^{2,2}(U_{\neq},U_{\neq})\rangle|&\leq \norm{\de_r|\nabla_L|^{-2}\cA U^r_{\neq}}\norm{\de_y^L|\nabla_{x,z}|^{\frac12}|\nabla_L|^{-\frac32}\cA\GG_{\neq}}\norm{\de_y^L|\nabla_{x,z}|^{\frac12}|\nabla_L|^{-\frac32}\GG_{\neq}}\notag\\
&\leq \norm{\sqrt{-\frac{\dot \cM}{\cM}}\cA U^r_{\neq}}\norm{\cA\GG_{\neq}}^2\norm{\cA\GG_{\neq}},
\end{align}
where we used in the last line \eqref{eq:other norm bounds} and the multiplier $\cM_3$ \eqref{def:M3}. The bootstrap assumptions in Theorem \ref{thm:bootstrap_step} give \eqref{eq:P-22-neqneq}.
We proceed now with the $(\neq,0)$ interactions.
Moving on to $\cP^{1,2}(U_{\neq},U_0)_{\neq}$, we have 
\begin{align}
\langle \cA U^r_{\neq},\cA \de_r\cP^{1,2}(U_{\neq},U_0)\rangle=\langle \cA U^r_{\neq},\cA \de_r|\nabla_L|^{-2}(\de_x|\nabla_{x,z}|^{\frac12}|\nabla_L|^{-\frac32}\GG_{\neq}\de_yU_0^1)\rangle.
\end{align}
Then, mimicking \eqref{eq:distributing derivatives} and \eqref{eq:I_1234}, we bound 
\begin{align}
|\langle \cA U^r_{\neq},\cA \de_r\cP^{1,2}(U_{\neq},U_0)\rangle|&\lesssim \norm{\de_r|\nabla_L|^{-2}\cA U^r_{\neq}}\norm{\de_x|\nabla_{x,z}|^{\frac12}|\nabla_L|^{-\frac32}\cA\GG_{\neq}}\norm{\de_yU_0^1}_{L^\infty} \notag\\
&\quad +\norm{ \de_y\de_r|\nabla_L|^{-2}\cA U^r_{\neq}}\norm{\e^{\lambda\nu^{\frac13}t}\de_x|\nabla_{x,z}|^{\frac12}|\nabla_L|^{-\frac32}\GG_{\neq}}_{L^\infty}\norm{U_0^1}_{H^{2m}}\notag\\
&\quad +\norm{\de_r|\nabla_L|^{-2}\cA U^r_{\neq}}\norm{\e^{\lambda\nu^{\frac13}t}\de_y\de_x|\nabla_{x,z}|^{\frac12}|\nabla_L|^{-\frac32}\GG_{\neq}}\norm{U_0^1}_{H^{2m}}.
\end{align}
Using again \eqref{eq:other norm bounds} and the multiplier $\cM_3$ we have
\begin{align}
|\langle \cA U^r_{\neq},\cA \,\de_r\cP^{1,2}(U_{\neq},U_0)\rangle|
&\lesssim \norm{\sqrt{-\frac{\dot \cM}{\cM}}\cA U^r_{\neq}}\norm{\sqrt{-\frac{\dot \cM}{\cM}}\cA \GG_{\neq}}\norm{U_0^1}_{H^{2m}}\notag \\
&\quad +\norm{\cA U^r_{\neq}}\norm{\sqrt{-\frac{\dot \cM}{\cM}}\cA \GG_{\neq}}\norm{U_0^1}_{H^{2m}},
\end{align}
which is bounded via the bootstrap assumptions in Theorem \ref{thm:bootstrap_step} as in \eqref{eq:P-12-neq0}. The term involving $\cP^{3,2}(U_{\neq},U_0)$ is similar.
For $\cP^{1,3}(U_{\neq},U_0)_{\neq}$ we bound
\begin{align}
\langle \cA U^r_{\neq},\cA \de_r\cP^{1,3}(U_{\neq},U_0)\rangle=\langle AU^r_{\neq},A \de_r|\nabla_L|^{-2}(\de_xU_{\neq}^3\de_zU_0^1)\rangle
\end{align}
as 
\begin{align}
|\langle \cA U^r_{\neq},\cA \de_r\,\cP^{1,3}(U_{\neq},U_0)\rangle|&\lesssim \norm{\de_x\de_r|\nabla_L|^{-2}\cA U^r_{\neq}}\norm{\cA U_{\neq}^3}\norm{\de_zU_0^1}_{L^\infty}\notag \\
&\quad +\norm{ \de_z\de_r|\nabla_L|^{-2}\cA U^r_{\neq}}\norm{\e^{\lambda\nu^{\frac13}t}\de_xU_{\neq}^3}_{L^\infty}\norm{U_0^1}_{H^{2m}}\notag\\
&\quad +\norm{\de_r|\nabla_L|^{-2}\cA U^r_{\neq}}\norm{\e^{\lambda\nu^{\frac13}t}\de_z\de_xU_{\neq}^3}_{L^\infty}\norm{U_0^1}_{H^{2m}},
\end{align}
where we treated the terms as in \eqref{eq:distributing derivatives} and \eqref{eq:I_1234}. Then, by definition \eqref{def:M3} of $\cM_3$ and \eqref{eq:other norm bounds}
\begin{align}
|\langle \cA U^r_{\neq},\cA \de_r\,\cP^{1,3}(U_{\neq},U_0)\rangle|&\lesssim\norm{\sqrt{-\frac{\dot \cM}{\cM}}\cA U^r_{\neq}}\norm{\cA U_{\neq}^3}\norm{U_0^1}_{H^{2m}} +\norm{\cA U^r_{\neq}}\norm{\cA U_{\neq}^3}\norm{U_0^1}_{H^{2m}}.
\end{align}
Finally, the bootstrap assumptions lead to
\begin{align}
\int_0^\infty|\langle \cA U^r_{\neq},\cA \de_r\,\cP^{1,3}(U_{\neq},U_0)\rangle| &\lesssim (\nu^{-\frac16}\eps)\eps^2 + (\nu^{-\frac13}\eps)\eps^2.
\end{align}
A similar argument works for $\cP^{3,3}(U_{\neq},U_0)$, hence proving \eqref{eq:P-i3-neq0}.
Moving to 
$\cP^{2,2}(U_{\neq},U_0)_{\neq}$ we have
\begin{align}
\langle \cA U^r_{\neq},\cA\, \de_r\cP^{2,2}(U_{\neq},U_0)\rangle=\langle \cA U^r_{\neq},A \de_r|\nabla_L|^{-2}(\de_y^L|\nabla_{x,z}|^{\frac12}|\nabla_L|^{-\frac32}\GG_{\neq}\de_y^2|\de_z|^{\frac12}|\nabla|^{-\frac32}\GG_0)\rangle,
\end{align}
after using \eqref{eq:SimVar}, which can be bounded by 
\begin{align}
    |\langle \cA U^r_{\neq},\cA\, \de_r\cP^{2,2}(U_{\neq},U_0)\rangle|\lesssim \cI_1+\cI_2.
\end{align}
For $\cI_1$ we have, mimicking \eqref{eq:distributing derivatives} and \eqref{eq:I_1234},
\begin{align}
\cI_1&\lesssim\norm{\de_r|\nabla_L|^{-2}\cA U^r_{\neq}}\norm{|\nabla_{x,z}|^{\frac12}|\nabla_L|^{-\frac32}\cA\GG_{\neq}}\norm{\de_y^3|\de_z|^{\frac12}|\nabla|^{-\frac32}\GG_0}_{L^\infty}\notag\\
&\quad +\norm{ \de_r\de_y^L|\nabla_L|^{-2}\cA U^r_{\neq}}\norm{|\nabla_{x,z}|^{\frac12}|\nabla_L|^{-\frac32}\cA\GG_{\neq}}\norm{\de_y^2|\de_z|^{\frac12}|\nabla|^{-\frac32}\GG_0}_{L^\infty}.
\end{align}
Hence, using \eqref{eq:other norm bounds} and $\cM_3$, we obtain
\begin{align}
\cI_1&\lesssim \norm{\sqrt{-\frac{\dot \cM}{\cM}}\cA U^r_{\neq}}\norm{\sqrt{-\frac{\dot \cM}{\cM}}\cA\GG_{\neq}}\norm{\GG_0}_{H^{2m}}+\norm{\cA U^r_{\neq}}\norm{\sqrt{-\frac{\dot \cM}{\cM}}\cA\GG_{\neq}}\norm{\GG_0}_{H^{2m}},
\end{align}
which implies via the bootstrap assumptions that 
\begin{align}
\int_0^\infty\cI_1 &\lesssim \eps^3 +(\nu^{-\frac16}\eps)\eps^2.
\end{align}
On the other hand, we can treat $\cI_2$ similarly and obtain via \eqref{eq:other norm bounds} and the multiplier $\cM_3$ that
\begin{align}
\cI_2&\lesssim \norm{\cA U^r_{\neq}}\norm{\cA\GG_{\neq}}\norm{\GG_0}_{H^{2m}},
\end{align}
which, by the bootstrap assumptions leads to 
\begin{align}
\int_0^\infty \cI_2  & \lesssim (\nu^{-\frac13}\eps)\eps^2.
\end{align}
This shows \eqref{eq:P-22-neq0}.
Finally, we analyse the term containing $\cP^{2,3}(U_{\neq},U_0)_{\neq}$, which is 
\begin{align}
\langle \cA U^r_{\neq}, \cA \,\de_r\cP^{2,3}(U_{\neq},U_0)\rangle
&=\langle \cA U^r_{\neq},\cA \de_r|\nabla_L|^{-2}(\de_y^LU_{\neq}^3\de_z|\de_z|^{\frac12}|\nabla|^{-\frac32}\GG_0)\rangle,
\end{align}
and can be bounded similarly to \eqref{eq:distributing derivatives} and \eqref{eq:I_1234} with 
\begin{align}
|\langle \cA U^r_{\neq}, \cA \,\de_r\cP^{2,3}(U_{\neq},U_0)\rangle|&\lesssim \norm{\de_y^L\de_r|\nabla_L|^{-2}\cA U^r_{\neq}}\norm{\cA  U_{\neq}^3}\norm{\de_z|\de_z|^{\frac12}|\nabla|^{-\frac32}\GG_0}_{L^\infty}\notag\\
&\quad +\norm{\de_r|\nabla_L|^{-2}\cA U^r_{\neq}}\norm{\cA U_{\neq}^3}\norm{\de_y\de_z|\de_z|^{\frac12}|\nabla|^{-\frac32}\GG_0}_{L^\infty}\notag\\
&\quad +\norm{\de_r|\nabla_L|^{-2}\cA U^r_{\neq}}\norm{\e^{\lambda\nu^{\frac13}t}\de_y^LU_{\neq}^3}_{L^\infty}\norm{\de_z|\de_z|^{\frac12}|\nabla|^{-\frac32}\GG_0}_{H^{2m}}.
\end{align}
Using \eqref{eq:other norm bounds} and the multiplier $\cM_3$, this term can be further bounded as
\begin{align}
|\langle \cA U^r_{\neq}, \cA \,\de_r\cP^{2,3}(U_{\neq},U_0)\rangle|&\lesssim \norm{\cA U^r_{\neq}}\norm{\cA U_{\neq}^3}\norm{\GG_0}_{H^{2m}}+\norm{\sqrt{-\frac{\dot \cM}{\cM}}\cA U^r_{\neq}}\norm{\cA U_{\neq}^3}\norm{\GG_0}_{H^{2m}}\\
&\quad +\norm{\sqrt{-\frac{\dot \cM}{\cM}}\cA U^r_{\neq}}\norm{\nabla_L\cA U_{\neq}^3}\norm{\GG_0}_{H^{2m}}
\end{align}
and hence, via the bootstrap assumptions in Theorem \ref{thm:bootstrap_step} we conclude the validity \eqref{eq:P-23-neq0}.
Thus \eqref{eq:P_r_bds} is proven, and Lemma \ref{lem:Urnonlin} follows. This was the last step missing in the proof of Proposition \ref{prop:U13}.

\section{Analysis of the zero and double zero modes}\label{sec:zeros}
This section is devoted to the dynamics of the zero modes in the $x$ frequency, which satisfy
\begin{subequations}
\begin{alignat}{4}
\de_tU^1_0&=\nu\Delta U^1_0-U^2_0 + \cT(U,U^1)_0, \\
\de_tU^3_0&=\nu\Delta U^3_0 +\de_zP_0 + \cT(U,U^3)_0 +\de_z\cP(U,U)_0,   \\
\de_t\GG_0 &=\nu\Delta \GG_0 +\beta |\de_z||\nabla_{y,z}|^{-1}\Gamma_0 + \cT_\star(U,U^2)_0+\de_y\cP_\star(U,U)_0,\\
\de_t\Gamma_0 & = \nu\Delta\Gamma_0 -\beta |\de_z||\nabla_{y,z}|^{-1}\GG_0+\cT_\circ (U,\Theta)_0,
\end{alignat}
\label{eq:zero_shorthand}
\end{subequations}
as well as the double zero modes in the $x$ and $z$ frequencies.

To not overburden the notation, we have refrained from explicitly insisting on the absence of $k=0$ modes in the notation. Instead, we remind the reader that the unknowns here are mean-free functions on $(y,z)\in\RR\times\TT$ (so e.g.\ $P_0=-\beta\de_y|\nabla_{y,z}|^{-2}\Theta_0$.) We highlight moreover that by construction, $G_0$ and $\Gamma_0$ satisfy
\begin{equation}\label{eq:GG0Gamma0zmean}
    \int_{\TT}G_0(y,z)\dd z=\int_{\TT}\Gamma_0(y,z)\dd z=0.
\end{equation}
To control the zero modes, the oscillatory nature of the coupling between $G_0$ and $\Gamma_0$ will play a crucial role: In Section \ref{ssec:lin_disp} we will study this at the linearized level and derive a (dispersive) amplitude decay estimate for the associated semigroup. Under the bootstrap assumptions, this will give improved $L^\infty$ bounds for $U_0^2$, $\widetilde{\Theta}_0$ and $\widetilde{U}_0^3$ (see Section \ref{ssec:disp_Duhamel}), allowing us to conclude the bootstrap arguments for both the double zero modes in Section \ref{ssec:btstrap_doublezero}, and for the zero modes in Section \ref{ssec:btstrap_zero}. The (passive) dynamics of $U^1_0$ are treated in Section \ref{ssec:U10}.

\subsection{Linear dispersive estimate}\label{ssec:lin_disp}
We investigate here the linearized dynamics of $G_0$ and $\Gamma_0$ in \eqref{eq:zero_shorthand}. That is, for two functions
\begin{equation}
 g_0,\gamma_0:\mathbb{R}\times\mathbb{T}\to\mathbb{R},\qquad \textnormal{with 
}\quad \int_{\RR\times\TT}g_0(y,z)\dd z=\int_{\RR\times\TT}\gamma_0(y,z)\dd z=0,
\end{equation}
we consider
\begin{equation}
 \de_t g_0 =\nu\Delta g_0 +\beta |\de_z||\nabla_{y,z}|^{-1}\gamma_0 ,\qquad
\de_t\gamma_0  = \nu\Delta\gamma_0 -\beta |\de_z||\nabla_{y,z}|^{-1}g_0,  
\end{equation}
or equivalently 
\begin{equation}
 \de_t( g_0+i\gamma_0)=\cL (g_0+i\gamma_0),\qquad \cL:=\nu\Delta -\beta\cR,\qquad \cR:=i\abs{\de_z}|\nabla_{y,z}|^{-1}.
\end{equation}
As we will show next, the operator $\cR$ is of dispersive nature. To see this, we employ the Fourier transform: For $f:\RR\times\TT\to\CC$, we have with $f_l(y):=\frac{1}{2\pi}\int_{\TT}\e^{-ilz}f(y,z)\dd z$ that
\begin{equation}
 \cR f(y,z)= \sum_{l\neq 0}\e^{il z}\cR^l f_l(y),
\end{equation}
where for $h\in\cS(\RR)$ we let
\begin{equation}
 \cR^l h(y):=\mathcal{F}^{-1}_{\eta}\left(i\frac{\abs{l}}{\abs{\eta,l}}\hat{h}(\eta)\right)(y).
\end{equation}
The semigroup generated by $\cR^l$ engenders the following amplitude decay:
\begin{proposition}\label{prop:disp_est}
    For any $\mu>0$, there exists a constant $C>0$ such that for any $h\in\cS(\RR)$ and $l\neq 0$, $a\geq 0$ there holds that
    \begin{equation}\label{eq:disp_est}
        \abs{l}^a\norm{(l^2-\de_y^2)^{-\frac{a}{2}}\e^{t\beta \cR^l}h}_{L^\infty(\RR)}  \leq C |l| (t\beta )^{-\frac13}\norm{h}_{W^{\frac32+\mu,1}(\RR)}.
    \end{equation}
\end{proposition}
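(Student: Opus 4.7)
The plan is to pass to the Fourier side and reduce to an oscillatory-integral estimate controlled by the method of stationary phase. Writing
\begin{equation*}
|l|^a (l^2-\partial_y^2)^{-a/2} e^{t\beta\cR^l}h(y) = \int_\RR e^{i(y\eta + t\beta|l|(\eta^2+l^2)^{-1/2})}\frac{|l|^a}{(\eta^2+l^2)^{a/2}}\widehat h(\eta)\,d\eta,
\end{equation*}
the substitution $\eta=l\xi$ exposes a universal structure: the phase $\Phi(\xi):=yl\xi+t\beta(\xi^2+1)^{-1/2}$ is independent of $l$, and the symbol $(\xi^2+1)^{-a/2}\leq 1$ is uniformly bounded in $a\geq 0$. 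A direct computation gives
\begin{equation*}
\Phi''(\xi)=-t\beta(\xi^2+1)^{-5/2}(1-2\xi^2),\qquad \Phi'''(\xi)=3t\beta\,\xi(3-2\xi^2)(\xi^2+1)^{-7/2},
\end{equation*}
so that at the inflection points $\xi=\pm 1/\sqrt 2$ the second derivative vanishes while $|\Phi'''|\gtrsim t\beta$. This is the classical Airy-type degeneracy responsible for the $t^{-1/3}$ decay.

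I would then split the estimate into a low-frequency part ($|\eta|\lesssim |l|$) and a high-frequency part ($|\eta|\gg|l|$) via a smooth cutoff in $\eta$. The low-frequency kernel, after rescaling to $|\xi|\lesssim 1$, is an honest $L^\infty$ kernel which I would estimate via a partition of unity in $\xi$: near $\xi=\pm 1/\sqrt{2}$ van der Corput's third-derivative lemma produces $(t\beta)^{-1/3}$, while away from these points $|\Phi''|\gtrsim t\beta$ and van der Corput's second-derivative lemma yields the stronger $(t\beta)^{-1/2}$. Including the rescaling factor $|l|$ from $d\eta=|l|\,d\xi$, this gives $\norm{K^\text{low}_t}_{L^\infty}\lesssim |l|(t\beta)^{-1/3}$, whence
\begin{equation*}
\norm{K^\text{low}_t * h}_{L^\infty}\leq \norm{K^\text{low}_t}_{L^\infty}\norm{h}_{L^1}\lesssim |l|(t\beta)^{-1/3}\norm{h}_{W^{3/2+\mu,1}},
\end{equation*}
as desired.

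The high-frequency part instead requires a Littlewood--Paley decomposition $h=\sum_{N\gg|l|}P_N h$ (with dyadic $N$): for $|\eta|\sim N\gg|l|$ one has $|\Phi''|\sim t\beta|l|/N^3$ uniformly, and van der Corput's second-derivative lemma with amplitude $\sim(|l|/N)^a$ yields
\begin{equation*}
\norm{K_t^N}_{L^\infty}\lesssim (|l|/N)^a\,N^{3/2}(t\beta|l|)^{-1/2}=|l|^{a-1/2}N^{3/2-a}(t\beta)^{-1/2}.
\end{equation*}
Pairing with the Bernstein-type bound $\norm{P_Nh}_{L^1}\lesssim N^{-(3/2+\mu)}\norm{h}_{W^{3/2+\mu,1}}$, the dyadic sum $\sum_{N>|l|}N^{-a-\mu}\lesssim |l|^{-a-\mu}$ converges and contributes $\lesssim |l|^{-1/2-\mu}(t\beta)^{-1/2}\norm{h}_{W^{3/2+\mu,1}}$, which is dominated by $|l|(t\beta)^{-1/3}\norm{h}_{W^{3/2+\mu,1}}$ provided $t\beta\geq 1$ (and $|l|\geq 1$, which holds since $l\in\ZZ\setminus\{0\}$). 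The residual short-time regime $t\beta\leq 1$ follows trivially by combining the uniform $L^\infty\to L^\infty$ boundedness of the Bessel-type multiplier $|l|^a(l^2-\partial_y^2)^{-a/2}$ with $\norm{e^{t\beta\cR^l}}_{L^\infty\to L^\infty}\leq C$ on $[0,1]$ and the Sobolev embedding $W^{3/2+\mu,1}\hookrightarrow L^\infty$ in one dimension.

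The main obstacle is the precise tuning of the regularity threshold $3/2+\mu$: the factor $N^{3/2}$ from the high-frequency van der Corput bound is sharp, so regularities strictly below $3/2$ would produce divergent dyadic sums, and the extra $\mu>0$ plays the role of a small summability loss. A secondary delicate point is the careful patching of the third- and second-derivative van der Corput estimates near $\xi=\pm 1/\sqrt 2$ via a partition of unity, arranged so that amplitude derivatives introduced by the cutoffs stay of the right size and do not degrade the final bound.
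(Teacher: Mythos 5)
Your proposal is correct and rests on the same core machinery as the paper's proof — pass to the Fourier side, rescale via $\eta=l\xi$, identify the Airy-type degeneracy of the phase at $\xi=\pm1/\sqrt{2}$, and combine van der Corput with a Littlewood--Paley decomposition — so the approach is essentially the same, differing only in execution. Concretely: (i) you invoke the third-derivative van der Corput lemma directly at the inflection point, whereas the paper reproves the $(t\beta)^{-1/3}$ bound inline by isolating a $\delta$-neighborhood of $\xi_0=1/\sqrt{2}$, estimating it by its measure, applying the second-derivative lemma outside, and optimizing $\delta=(t\beta)^{-1/3}$ — the same content, spelled out; (ii) you split into low and high frequencies relative to $|l|$ before dyadic decomposition, while the paper runs Littlewood--Paley on $h$ globally and distinguishes cases $j<j_0$, $j\in[j_0,j_0+4]$, $j>j_0+4$ — same exponents either way; (iii) for the $a\geq 0$ parameter the paper factors the Bessel symbol onto $h$, using that $|l|^a(l^2-\partial_y^2)^{-a/2}$ is an $L^1$ contraction so $\|(l^2-\partial_y^2)^{-a/2}P_jh\|_{L^1}\lesssim|l|^{-a}\|P_j^\dag h\|_{L^1}$ with constant uniform in $a$, whereas you keep $(\xi^2+1)^{-a/2}$ as amplitude inside the oscillatory integral, which is fine in principle but makes the van der Corput constants grow like $a$ (from $\|\psi'\|_{L^1}$) unless you add the same factoring step; this is harmless here since $a$ is fixed in the application, but the paper's route is cleaner; (iv) your disposal of $t\beta\leq 1$ via $\|e^{t\beta\cR^l}\|_{L^\infty\to L^\infty}\leq C$ is correct but terse — the bound follows from expanding $e^{-it\beta|l|(\eta^2+l^2)^{-1/2}}-1$ as a norm-convergent series in the Wiener algebra, each term having inverse Fourier transform in $L^1$ with norm $\lesssim(ct\beta)^n$ — while the paper avoids a separate small-time argument because each dyadic kernel estimate is trivially $\lesssim 1$ by the size of the support, so the minimum with $(t\beta)^{-1/3}$ is automatic. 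These are real differences in decomposition and lemma choice, but they produce the same estimate by the same underlying stationary-phase mechanism.
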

The proof makes use of Littlewood-Paley theory to decompose $h$, then establishes decay of the localized pieces, from which summation gives the claim.
\begin{proof}
 Let $l\neq 0$ be fixed, and consider first the case $a=0$. Let $P_jh(y)$, $j\in\ZZ$, be the projections associated to a standard Littlewood-Paley decomposition,
\begin{equation}
    h=\sum_{j\in\ZZ}P_jh,\qquad \cF (P_jh) (\eta)=\varphi_j(\eta)\hat h(\eta),\qquad \varphi_j(\eta):=\varphi(2^{-j}\eta),\quad \varphi(\eta):=\phi(\eta)-\phi(2\eta),
\end{equation}
where $\phi:\RR\to [0,1]$ is a smooth, even bump function with $\supp(\phi)\subset [-2,2]$ and $\phi|_{[-3/2,3/2]}=1$.

Then by Young's convolution inequality we have
\begin{align}
   \norm{\e^{t\beta \cR^l} h}_{L^{\infty}} &\leq  \sum_{j\in\ZZ}\norm{\e^{t\beta \cR^l}  P_jh }_{L^{\infty}}\notag\\
   &\leq \sum_{j\in\ZZ}\norm{\cF^{-1}\left(\e^{-it\beta\frac{\abs{l}}{|\eta,l|}}\varphi(2^{-j}\eta)\right)*\cF^{-1}\left({\varphi}^{\dag}(2^{-j}\eta)\hat h(\eta)\right)}_{L^\infty}\notag\\
   &\leq \sum_{j\in\ZZ} \norm{\e^{t\beta \cR^l}\check{\varphi}_j}_{L^\infty}\norm{{P}^{\dag}_jh}_{L^1}.\label{eq:summation localization}
\end{align}
Here $\varphi^\dag$ has similar support properties as $\varphi$ and satisfies $\varphi=\varphi\varphi^{\dag}$, and ${P}^\dag_j$ is the associated Littlewood-Paley decomposition.

We first study the behaviour of the semigroup on the localised terms. By the change of variables $\eta=l\xi$ it follows that
\begin{align}
    \e^{t\beta \cR^l}\check{\varphi}_j(y)&= \int_\RR \e^{iy\eta - i t \beta \frac{\abs{l}}{|\eta,l|}}\varphi_j(\eta)\dd \eta\notag \\
    & = |l|\int_\RR \e^{iyl\xi - i t \beta \frac{1}{|\xi,1|}}\varphi(2^{-j}l\xi)\dd \xi \notag\\
    & = |l|\int_\RR \e^{i t\beta \Phi(\xi)}\varphi(2^{-j}l\xi)\dd \xi\notag\\
    &=:|l|I(t\beta, l,j),
\end{align}
where the phase function $\Phi(\xi)$ is
\begin{equation}
    \Phi(\xi)=\frac{yl}{t\beta}\xi -\frac{1}{(1+\xi^2)^{1/2}}.
\end{equation}
We will bound $I(t\beta,l,j)$ using the method of stationary phase. To this end, we compute
\begin{equation}
    \Phi''(\xi)=\frac{1-2\xi^2}{(1+\xi^2)^{5/2}}=0\quad\Leftrightarrow\quad \xi=\pm\xi_0,\quad \xi_0=1/\sqrt{2}.
\end{equation}
Our analysis can be divided into two essential cases, depending on whether $j$ is such that $\Phi''$ can vanish on the support of $\varphi(2^{-j}l\xi)$ or not: Assuming without loss of generality that $l>0$ we decompose
\begin{equation}
 I(t\beta, l,j)=I_-(t\beta, l,j)+I_+(t\beta, l,j),\qquad I_+(t\beta, l,j):=\int_{\RR_+}\e^{i t\beta \Phi(\xi)}\varphi(2^{-j}l\xi)\dd \xi, 
\end{equation}
and restrict our attention to the case of $I^+$. We note that there holds that
\begin{align}
    \xi_0\in [2^{j-2}l^{-1},2^{j+2}l^{-1}]&\;\Leftrightarrow\; j\in[j_0,j_0+4],\qquad j_0:=\log_2 l-\frac{5}{2}.
\end{align}

\emph{Case 1: $j\in[j_0,j_0+4]$.} 
Letting $\delta>0$ a small parameter to be chosen later, we split the integral into two parts
\begin{equation}
\begin{aligned}
    I_+(t\beta,l,j)&=I_+^1(t\beta,l,j)+I_+^2(t\beta,l,j),\\
    I_+^1(t\beta,l,j)&=\int_{[\xi_0-\delta,\xi_0+\delta]}\e^{i t\beta \Phi(\xi)}\varphi(2^{-j}l\xi)\dd \xi,\\
    I_+^2(t\beta,l,j)&=\int_{\RR_+\setminus[\xi_0-\delta,\xi_0+\delta]}\e^{i t\beta \Phi(\xi)}\varphi(2^{-j}l\xi)\dd \xi.
\end{aligned}    
\end{equation}
A simple set size estimate gives that
\begin{equation}\label{eq:bd_I1_disp}
 \abs{I_+^1(t\beta,l,j)}\leq 2\delta.
\end{equation}
To bound $I_2$, notice that there exists a small constant $c>0$, independent of $l,j$, such that
\begin{equation}
    |\Phi''(\xi)|=|1-\sqrt{2}\xi|\frac{1+\sqrt{2}\xi}{(1+\xi^2)^{5/2}}\geq c \delta.
\end{equation}
Applying van der Corput's Lemma (see e.g.\ \cite{SteinHA}*{Chapter 8}), we have 
\begin{align}
    |I_+^2(t\beta, j,l)|&\leq C(\delta t\beta)^{-\frac12}\left[\norm{\varphi}_{L^{\infty}}+\int_{\RR_+\setminus [\xi_0-\delta,\xi_0+\delta]}\left|\frac{\dd}{\dd \xi}\varphi(2^{-j}l\xi)\right|\dd \xi \right]\leq 2C(\delta t \beta)^{-\frac12}.
\end{align}
Combining this with \eqref{eq:bd_I1_disp} and choosing $\delta=(t\beta)^{-\frac13}$ yields
\begin{equation}
    |I_+(t\beta, j,l)|\leq C (t\beta)^{-\frac13}.
\end{equation}

\emph{Case 2: $j\neq[j_0,j_0+4]$.} Then $|\Phi''(\xi)|\geq c>0$ is bounded away from zero. If $j< j_0$, a rough estimate gives
\begin{equation}
    |I_+(t\beta,j,l)|\leq C2^{j}|l|^{-1}.
\end{equation}
Alternatively, since in this case $|\xi|\lesssim 1/8$, which implies that there exists a constant $C>0$ independent of $j,l$, such that $\Phi''(\xi)>C,$ another application of van der Corput yields
\begin{equation}
    |I_+(t\beta,j,l)|\leq C (t\beta)^{-\frac12}.
\end{equation}
On the other hand, when $j>j_0+4$ we have that $|\xi|\leq C 2^j|l|^{-1}$, and hence
\begin{equation}
    |\Phi''(\xi)|\geq C|\xi|^{-3} \geq C 2^{-3j}|l|^3.
\end{equation}
By van der Corput this implies that
\begin{equation}
    |I_+(t\beta,j,l)|\leq C (t \beta)^{-\frac12} 2^{3/2 j}|l|^{-3/2}.
\end{equation}

\smallskip
We summarize the estimates in those cases as
\begin{equation}
    |I(t\beta,j,l)|\lesssim 
    \begin{cases}
        \min\{2^{j}|l|^{-1},(\beta t)^{-\frac12}\} &\qquad j<j_0,\\
         (t\beta)^{-\frac13} &\qquad j_0\leq j\leq j_0+4,\\ 
         (t\beta)^{-\frac12} 2^{\frac32 j}|l|^{-\frac32} &\qquad j_0+4<j.
    \end{cases}
\end{equation}
Inserting this in \eqref{eq:summation localization} we obtain 
\begin{align}
    \norm{\e^{t\beta \cR^l} h}_{L^\infty}&\lesssim \sum_{j<j_0}\min\{2^{j}|l|^{-1},(t\beta)^{-\frac12}\}\norm{{P}^{\dag}_j h}_{L^1} +|l|(t\beta)^{-\frac13}\sum_{j_0}^{j_0+4}\norm{{P}^{\dag}_j h}_{L^1} \\
    &\qquad +|l|^{-\frac12}(t\beta)^{-\frac12}\sum_{j>j_0+4} 2^{\frac32 j}\norm{{P}^{\dag}_j h}_{L^1} \\
    & \lesssim |l|(t\beta)^{-\frac13} \norm{h}_{L^1} +|l|^{-\frac12}(t\beta)^{-\frac12} \norm{h}_{\dot W^{3/2+\mu,1}}\\
    &\lesssim |l|(t\beta)^{-\frac13}\norm{h}_{W^{\frac32+\mu,1}},
\end{align}
and the proposition is proved in case $a=0$.
For $a>0$ it suffices to observe that
\begin{equation}
 \norm{(l^2-\de_y^2)^{-\frac{a}{2}}P_jh}_{L^1}\lesssim \abs{l}^{-a}\norm{{P}^{\dag}_jh}_{L^1}.
\end{equation}
\end{proof}

In particular, it follows that for functions with mean zero in $z$, amplitudes decay as follows:
\begin{corollary}\label{coroll:disp_est}
 There exists a constant $C>0$ such that for all $f\in\cS(\RR\times\TT)$ with $\int_{\TT}f(y,z) \dd z=0$ there holds for any $a\geq 0$ that
\begin{equation}
    \norm{\abs{\de_z}^a\abs{\nabla_{y,z}}^{-a}\e^{\cL t}f}_{L^\infty(\RR\times\TT)} \leq C \e^{-\nu t}(t \beta)^{-\frac13}\norm{f}_{W^{4,1}(\RR\times\TT)}.
\end{equation}
\end{corollary}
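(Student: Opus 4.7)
The plan is to reduce the claim to a mode-by-mode application of Proposition~\ref{prop:disp_est}, exploiting that the operator $\cL=\nu\Delta-\beta\cR$ diagonalizes under Fourier series in $z$, and that its two constituent pieces commute as Fourier multipliers in $y$.

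First, since $f$ has vanishing $z$-mean, write $f(y,z)=\sum_{l\neq 0}e^{ilz}f_l(y)$. The dispersive operator $\cR$ acts as $\cR^l$ on each $z$-mode, while $\nu\Delta$ acts as $\nu(\de_y^2-l^2)$. Since $\cR^l$ and $\nu\de_y^2$ are both Fourier multipliers in $y$, they commute, so
\begin{equation}
\e^{\cL t}f(y,z)=\sum_{l\neq 0}e^{ilz}\,e^{-\nu l^2 t}\,\e^{t\nu\de_y^2}\,\e^{-t\beta\cR^l}f_l(y).
\end{equation}
The operator $|\de_z|^a|\nabla_{y,z}|^{-a}$ acts as $|l|^a(l^2-\de_y^2)^{-a/2}$ on the $l$-mode, and commutes with all the above multipliers.

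Next I would apply Proposition~\ref{prop:disp_est} (with some small fixed $\mu>0$) to $h=\e^{t\nu\de_y^2}f_l$ for each $l\neq 0$:
\begin{equation}
\bigl\| |l|^a(l^2-\de_y^2)^{-a/2}\e^{-t\beta\cR^l}\bigl(\e^{t\nu\de_y^2}f_l\bigr)\bigr\|_{L^\infty(\RR)}\leq C|l|(t\beta)^{-1/3}\|\e^{t\nu\de_y^2}f_l\|_{W^{3/2+\mu,1}(\RR)}.
\end{equation}
Since the one-dimensional heat semigroup $\e^{t\nu\de_y^2}$ is convolution with an $L^1$-normalized Gaussian and commutes with $\de_y$, it is a contraction on $W^{s,1}(\RR)$, so the right-hand side is bounded by $C|l|(t\beta)^{-1/3}\|f_l\|_{W^{3/2+\mu,1}_y}$. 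Combining with the factor $\e^{-\nu l^2 t}\leq \e^{-\nu t}$ (valid because $|l|\geq 1$) and summing over $l$, I obtain
\begin{equation}
\bigl\||\de_z|^a|\nabla_{y,z}|^{-a}\e^{\cL t}f\bigr\|_{L^\infty(\RR\times\TT)}\leq C\e^{-\nu t}(t\beta)^{-1/3}\sum_{l\neq 0}|l|\,\|f_l\|_{W^{3/2+\mu,1}_y}.
\end{equation}

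The main point is then to bound this Fourier series in $l$ by $\|f\|_{W^{4,1}(\RR\times\TT)}$. For this I would fix $\mu,\epsilon>0$ small enough that $\tfrac72+\mu+\epsilon\leq 4$ and split $|l|=|l|^{-1-\epsilon}\cdot|l|^{2+\epsilon}$. Since $\sum_{l\neq 0}|l|^{-1-\epsilon}<\infty$ and, using $(il)^n f_l(y)=(\de_z^n f)_l(y)$ (extended to fractional orders through the standard definition of fractional derivatives in $z$), one has $|l|^{2+\epsilon}\|f_l\|_{W^{3/2+\mu,1}_y}\lesssim \|f\|_{W^{7/2+\mu+\epsilon,1}(\RR\times\TT)}\leq \|f\|_{W^{4,1}}$, the estimate follows.

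The main obstacle I expect is the appearance of the factor $|l|$ on the right-hand side of \eqref{eq:disp_est}, which forces a careful accounting of both $y$- and $z$-derivatives: one must balance the integer $L^1$-based regularity of $\|f\|_{W^{4,1}}$ against the fractional $W^{3/2+\mu,1}_y$ norm coming from Proposition~\ref{prop:disp_est}, and demonstrate that the combined derivative count stays below $4$. This is delicate but can be done via the weighted summation above (or equivalently by Cauchy--Schwarz in $l$) together with standard fractional Sobolev embeddings on $\RR\times\TT$.
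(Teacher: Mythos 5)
Your proof is correct and takes essentially the same route as the paper: Fourier-decompose in $z$, apply Proposition~\ref{prop:disp_est} to $\e^{t\nu\de_y^2}f_l$ using that the heat semigroup contracts on $W^{s,1}(\RR)$, extract $\e^{-\nu t}$ from $\e^{-\nu l^2 t}$, and split $|l|=|l|^{-1-\mu}\cdot|l|^{2+\mu}$ to sum in $l$ against $\|f\|_{W^{4,1}}$. The only cosmetic point worth noting is that you need the estimate of Proposition~\ref{prop:disp_est} for $\e^{-t\beta\cR^l}$ rather than $\e^{t\beta\cR^l}$, but the bound is insensitive to this sign change (same stationary-phase argument with the conjugate phase), which is implicitly used in the paper as well.
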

\begin{proof}
   By Proposition \ref{prop:disp_est} we have
    \begin{align}
        |\abs{\de_z}^a\abs{\nabla_{y,z}}^{-a}\e^{\cL t}f(y,z)| &= \left| \sum_{l\neq 0} \e^{izl-\nu l^2 t}\int_{\eta}\e^{iy\eta -\nu t \eta^2 + it\beta \frac{\abs{l}}{|\eta,l|}}\abs{l}^a (l^2+\eta^2)^{-\frac{a}{2}}\hat{f}_l (\eta)\dd \eta \right|\\
        &\leq \sum_{l\neq 0}\e^{-\nu t}\abs{l}^{a}\left|\int_\eta \e^{iy\eta + it\beta \frac{\abs{l}}{|\eta,l|}} \e^{-\nu t\eta^2}(l^2+\eta^2)^{-\frac{a}{2}}\hat{f}_l (\eta)\dd \eta \right| \\
        &\leq \e^{-\nu t} \sum_{l\neq 0} \abs{l}^{a}\norm{\e^{\cR t}\left(\e^{\nu t \de_y^2}(l^2-\de_y^2)^{-\frac{a}{2}}f_l\right)}_{L^\infty(\RR)}\\
        &\lesssim \e^{-\nu t}(t\beta)^{-\frac13} \sum_{l\neq 0} |l| \norm{\e^{\nu t \de_y^2}f_l}_{W^{3/2+\mu,1}(\RR)}\\
        &\lesssim \e^{-\nu t}(t\beta)^{-\frac13} \sum_{l\neq 0} |l|^{-1-\mu} |l|^{2+\mu}\norm{f_l}_{W^{3/2+\mu,1}(\RR)}\\
        &\lesssim \e^{-\nu t}(t\beta)^{-\frac13} \sum_{l\neq 0} |l|^{-1-\mu} \norm{f}_{W^{3/2+\mu,1}_y W^{2+\mu,1}_z(\RR\times\TT)}.
    \end{align}
\end{proof}

\subsection{$L^\infty$ bounds via Duhamel}\label{ssec:disp_Duhamel}
As a direct consequence of the amplitude decay result in Corollary \ref{coroll:disp_est}, under the bootstrap assumptions we obtain improved $L^\infty$ bounds on $U_0^2$ and $\widetilde{\Theta}_0$ and their derivatives.
\begin{proposition}\label{prop:LinftyGGamma}
    Under the hypothesis of Theorem \ref{thm:bootstrap_step}, there holds that 
    \begin{equation}
        \sum_{\alpha\in\NN_0^2,\abs{\alpha}\leq 2}\norm{\nabla_{y,z}^\alpha U^2_0(t)}_{L^\infty}+\norm{\nabla_{y,z}^\alpha\widetilde{\Theta}_0(t)}_{L^\infty}+\norm{\nabla_{y,z}^\alpha\widetilde{U}^3_0(t)}_{L^\infty}\lesssim (t\beta)^{-\frac13}\e^{-\nu t}\varepsilon + \beta^{-\frac13}\nu^{-\frac23}\eps^2.
    \end{equation}
\end{proposition}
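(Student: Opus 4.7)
My plan is to exploit the dispersive amplitude decay of Corollary \ref{coroll:disp_est} via a Duhamel representation of the coupled $(\GG_0,\Gamma_0)$ system. Introducing the complex unknown $V_0:=\GG_0+i\Gamma_0$, the zero-mode equations \eqref{eq:zero_shorthand} combine to
\begin{equation*}
\de_t V_0=\cL V_0+\cN_0,\qquad \cN_0:=\cT_\star(U,U^2)_0+\de_y\cP_\star(U,U)_0+i\,\cT_\circ(U,\Theta)_0,
\end{equation*}
so that
\begin{equation*}
V_0(t)=\e^{\cL t}V_0(0)+\int_0^t\e^{\cL(t-s)}\cN_0(s)\,\dd s.
\end{equation*}
The relations \eqref{eq:U2Theta_recover} recover $U^2_0$ and $\widetilde{\Theta}_0$ from $(\GG_0,\Gamma_0)$ via Fourier multipliers of the form $|\de_z|^{1/2}|\nabla_{y,z}|^{-3/2}$ and $|\de_z|^{-1/2}|\nabla_{y,z}|^{-1/2}$, both of which commute with $\e^{\cL t}$. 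Since by \eqref{eq:GG0Gamma0zmean} the data $V_0$ has zero mean in $z$, Corollary \ref{coroll:disp_est} applies directly, mode by mode in $l\neq 0$.

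For the linear contribution, the assumption \eqref{eq:id_size} with $m\geq 3$ gives $W^{2m+5,1}\hookrightarrow W^{4+\abs{\alpha},1}_{y,z}$ control of the initial data for $\abs{\alpha}\leq 3$, and Corollary \ref{coroll:disp_est} produces $\|\nabla_{y,z}^\alpha U_0^{2,\mathrm{lin}}(t)\|_{L^\infty}+\|\nabla_{y,z}^\alpha\widetilde{\Theta}_0^{\mathrm{lin}}(t)\|_{L^\infty}\lesssim(t\beta)^{-1/3}\e^{-\nu t}\eps$, which is the first term of the claim. For the nonlinear Duhamel integral, the strategy is to show $\|\nabla_{y,z}^\alpha\cN_0(s)\|_{W^{4,1}_{y,z}}\lesssim\eps^2$ uniformly in $s$ using crude bilinear estimates $\|fg\|_{L^1_{y,z}}\leq\|f\|_{L^2}\|g\|_{L^2}$ (augmented with Parseval in $x$ for nonzero-mode interactions), Sobolev embedding, and the bootstrap assumptions of Theorem \ref{thm:bootstrap_step}; here $2m\geq 6$ provides enough regularity to absorb the $W^{4+\abs{\alpha},1}_{y,z}$ norm into $H^{2m}$-type bootstrap controls. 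The time integral $\int_0^t((t-s)\beta)^{-1/3}\e^{-\nu(t-s)}\,\dd s\lesssim\beta^{-1/3}\nu^{-2/3}$ then delivers the second term $\beta^{-1/3}\nu^{-2/3}\eps^2$. The bound on $\widetilde{U}^3_0$ is recovered from $U^2_0$ via incompressibility: $\de_yU^2_0+\de_z\widetilde{U}^3_0=0$ together with $\widetilde{U}^3_0$ having zero mean in $z$ yields $\widetilde{U}^3_0=-\de_z^{-1}\de_yU^2_0$, transferring the estimate on $U^2_0$ of order $\abs{\alpha}+1\leq 3$ to one on $\widetilde{U}^3_0$ of order $\abs{\alpha}\leq 2$.

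The main obstacle I anticipate is the uniform-in-time $W^{4+\abs{\alpha},1}_{y,z}$ bound on $\cN_0$: several of its bilinear components pair factors such as $U^1_0$, $\overline{\Theta}_0$ and $\overline{U}^r_0$, which are only bootstrapped at size $O(C_0\eps)$ in $H^{2m}$ and are not expected to decay, against other similarly bootstrapped quantities. Achieving the target $\eps^2$ uniformly requires careful exploitation of the structural features of the Euler-type nonlinearities in the symmetric variables --- in particular, that $\overline{U}^2_0=0$ by incompressibility \eqref{eq:xz-meanU2}, that $U^1_0$ does not force the zero-mode equations of $V_0$, and that certain double zero $\times$ simple zero interactions project trivially --- so that no pairing of two non-decaying $O(C_0\eps)$ factors can produce a contribution exceeding $\beta^{-1/3}\nu^{-2/3}\eps^2$ after the time integration.
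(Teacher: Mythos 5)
Your overall architecture matches the paper: Duhamel for the coupled $(\GG_0,\Gamma_0)$ unknown (the paper uses $\Upsilon:=|\de_z|^{1/2}|\nabla_{y,z}|^{-3/2}(\GG_0+i\Gamma_0)=U^2_0+i|\de_z||\nabla_{y,z}|^{-1}\widetilde\Theta_0$, equivalent to your $V_0$ up to Fourier multipliers that commute with $\e^{\cL t}$), Corollary~\ref{coroll:disp_est} applied to both the initial data and the Duhamel integrand, and recovery of $\widetilde U^3_0$ by incompressibility. However, there is a genuine gap in the way you handle the Duhamel integral, and it is \emph{not} at the location you flag.

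You propose to show $\sup_s\norm{\nabla_{y,z}^\alpha\cN_0(s)}_{W^{4,1}_{y,z}}\lesssim\eps^2$ uniformly in $s$, then integrate the kernel in $L^1_s$ using $\int_0^\infty\tau^{-1/3}\e^{-\nu\tau}\dd\tau\lesssim\nu^{-2/3}$. This uniform bound does not hold for the $(\neq,\neq)$ interactions. The nonlinearity contains $(U_\neq\cdot\nabla_L F_\neq)_0$, and $\de_y^L=\de_y-t\de_x$ is linear in $t$. The bootstrap only gives $\norm{\cA F_\neq}_{L^\infty_tL^2}\lesssim\eps$, so pointwise in time one can at best use $\norm{\nabla_L F_\neq}_{H^{2m-1}}\lesssim\l t\r\e^{-\lambda\nu^{1/3}t}\norm{\cA F_\neq}\lesssim\nu^{-1/3}\eps$ (equivalently, \eqref{eq:property_A}), since $\l t\r\e^{-\lambda\nu^{1/3}t}$ peaks at $t\sim\nu^{-1/3}$. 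Thus the honest uniform bound on $\cN_0$ is $\nu^{-1/3}\eps^2$, not $\eps^2$, and your route delivers $\beta^{-1/3}\nu^{-2/3}\cdot\nu^{-1/3}\eps^2=\beta^{-1/3}\nu^{-1}\eps^2$ — a full power of $\nu^{-1/3}$ worse than the claim.

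The paper avoids this by a different split of the time convolution: it bounds $\norm{\nabla_{y,z}^\alpha\cN_j}_{L^2_tW^{4,1}}\lesssim\nu^{-1/2}\eps^2$ (putting the $\nabla_L$ factor in $L^2_t$, where the bootstrap dissipation term $\nu\norm{\nabla_L\cA F_\neq}^2_{L^2_tL^2}\lesssim\eps^2$ gives $\norm{\nabla_L\cA F_\neq}_{L^2_tL^2}\lesssim\nu^{-1/2}\eps$), and then pairs it by Cauchy–Schwarz with $\norm{\tau^{-1/3}\e^{-\nu\tau}}_{L^2_\tau}\lesssim\nu^{-1/6}$ (see \eqref{eq:decay_int}). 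The product $\nu^{-1/6}\cdot\nu^{-1/2}=\nu^{-2/3}$ is the stated bound. The $L^2_t/L^2_\tau$ Hölder split is strictly sharper than your $L^\infty_t/L^1_\tau$ split here precisely because the $L^2_t$ dissipation estimate sees the time-integrated effect of enhanced dissipation, whereas the $L^\infty_t$ bound only sees the worst instant. This is the missing idea. Your anticipated obstacle (pairing two non-decaying $O(\eps)$ zero-mode factors) is actually benign — $\eps\cdot\eps$ is exactly the target, and such zero-zero terms have $\nabla$ not $\nabla_L$ so no secular growth — whereas the true issue is the $\nabla_L$ on a nonzero mode.
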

\begin{proof}
 For simplicity of notation we will work with the complex unknown
 \begin{equation}\label{eq:defUpsilon}
  \Upsilon:=\abs{\de_z}^{\frac12}\abs{\nabla_{y,z}}^{-\frac32}(\GG_0+i\Gamma_0)=U_0^2+i\abs{\de_z}\abs{\nabla_{y,z}}^{-1}\widetilde{\Theta}_0.
 \end{equation}
 We will show that under the above assumptions there holds that
 \begin{equation}
    \sum_{a=1}^3\norm{\de_y^a\Upsilon(t)}_{L^\infty}\lesssim (t\beta)^{-\frac13}\e^{-\nu t}\varepsilon + \beta^{-\frac13}\nu^{-\frac23}\eps^2,
 \end{equation}
 which yields the claim for $U_0^2$. The bounds for $\widetilde{\Theta}_0$ follow analogously, and for $\widetilde{U}_0^3$ we use that by incompressibility there holds that $\widetilde{U}_0^3=-\de_z^{-1}\de_yU_0^2$.
 
 By \eqref{eq:zero_shorthand}, $\Upsilon$ satisfies the nonlinear equation
 \begin{equation}\label{eq:Upsnonlin}
   \de_t\Upsilon=\cL\Upsilon+\cN(U,\Upsilon),
 \end{equation}
 and thus
 \begin{equation}\label{eq:DHUpsilon}
  \Upsilon(t)=e^{t\cL}\Upsilon(0)+\int_{0}^te^{(t-\tau)\cL}\cN(U,\Upsilon)(\tau)\dd\tau,
 \end{equation}
 where
 \begin{equation}\label{eq:Upsnonlin-defN}
 \begin{aligned}
  \cN(U,\Upsilon)&=\cN_1(U,\Upsilon)+i\abs{\de_z}\abs{\nabla_{y,z}}^{-1}\cN_2(U,\Upsilon),\\
  \cN_1(U,\Upsilon)&=\cT(U,U^2)_0+\de_y\cP(U,U)_0,\quad \cN_2(U,\Upsilon)=\cT(U,\Theta)_0.
 \end{aligned}
 \end{equation}
 From \eqref{eq:GG0Gamma0zmean} we see that
 \begin{equation}
  \int_{\TT}\Upsilon(\tau,y,z)\dd z=\int_{\TT}\cN_j(U,\Upsilon)(\tau,y,z)\dd z=0, \qquad \tau\geq 0, \quad j=1,2.
 \end{equation}
 We can then apply Corollary \ref{coroll:disp_est} to deduce that
 \begin{equation}
 \begin{aligned}
  \norm{\nabla_{y,z}^\alpha\Upsilon(t)}_{L^\infty}&\lesssim \e^{-\nu t}(t \beta)^{-\frac13}\left(\norm{\nabla_{y,z}^\alpha U^2(0)}_{W^{4,1}(\RR\times\TT)}+\norm{\nabla_{y,z}^\alpha \widetilde{\Theta}(0)}_{W^{4,1}(\RR\times\TT)}\right)\\
  &\qquad+\int_0^{t}\e^{-\nu (t-\tau)}((t-\tau) \beta)^{-\frac13}  \norm{\nabla_{y,z}^\alpha\cN_1(U,\Upsilon)(\tau)}_{W^{4,1}(\RR\times\TT)}\dd\tau\\
  &\qquad+\int_0^{t}\e^{-\nu (t-\tau)}((t-\tau) \beta)^{-\frac13}  \norm{\nabla_{y,z}^\alpha\cN_2(U,\Upsilon)(\tau)}_{W^{4,1}(\RR\times\TT)}\dd\tau.
 \end{aligned}
 \end{equation}
Noting that
\begin{equation}\label{eq:decay_int}
 \norm{t^{-\frac13}\e^{-\nu t}}_{L^2_t}\lesssim \nu^{-\frac16},   
\end{equation}
by the bootstrap assumptions it follows that for $\abs{\alpha}\leq 3$ and $j=1,2$ we have
\begin{equation}
\begin{aligned}
 \norm{\nabla_{y,z}^\alpha\cN_j(U,\Upsilon)(\tau)}_{L^2_tW^{4,1}(\RR\times\TT)}&\lesssim \!\left(\sum_{F\in \{\GG,\Gamma,U^1,U^3\}}\!\!\!\!\left(\norm{\nabla_L \cA F_{\neq}}_{L^2_tL^2}+\norm{\nabla F_0}_{L^2_tH^{2m}}\right)\!+\!\norm{\de_y \overline{\Theta}_0}_{L^2_tH^{2m+1}}\right)\\
 &\qquad \cdot \left(\sum_{F\in \{\GG,\Gamma,U^1,U^3\}}\!\!\!\!\left(\norm{\cA F_{\neq}}_{L^\infty_tL^2}+\norm{F_0}_{L^\infty_tH^{2m}}\right)\!+\!\norm{ \overline{\Theta}_0}_{L^\infty_tH^{2m+1}}\right)\\
 &\lesssim \nu^{-\frac12}\eps^2,
\end{aligned} 
\end{equation}
so that
\begin{equation}
 \int_0^{t}\e^{-\nu (t-\tau)}((t-\tau) \beta)^{-\frac13}  \norm{\nabla_{y,z}^\alpha\cN_j(U,\Upsilon)(\tau)}_{W^{4,1}(\RR\times\TT)}\dd\tau\lesssim \beta^{-\frac13}\nu^{-\frac23}\eps^2.   
\end{equation}
Together with \eqref{eq:decay_int} his proves the claim.
\end{proof}

We give next a decomposition of $U^2_0$ that will be crucial for obtaining a large threshold for simple and double zero modes.
\begin{lemma}\label{lemma:innl_decomp}
 We can decompose
 \begin{equation}
    U_0^2=U_0^{2,in}+U_0^{2,nl}, 
 \end{equation}
 where
 \begin{equation}\label{eq:Upsdecomp_bds}
 \begin{aligned}
  \norm{\l\nabla\r^{2m}U_0^{2,in}(t)}_{L^\infty}&\lesssim \eps (\beta t)^{-\frac13}\e^{-\nu t},\\
  \norm{U_0^{2,nl}(t)}_{H^{2m}}&\lesssim \eps\left(\nu^{-\frac23}\eps+\nu^{-\frac53}\eps^2\right)+\beta^{-\frac13}\eps\left(\nu^{-\frac23}\eps+\nu^{-\frac53}\eps^2+\nu^{-\frac83}\eps^3\right).
 \end{aligned} 
 \end{equation}
\end{lemma}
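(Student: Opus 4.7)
The plan is to base the decomposition on the Duhamel formula \eqref{eq:DHUpsilon} for the complex unknown $\Upsilon$ from \eqref{eq:defUpsilon}, whose real part recovers $U_0^2$. I would set
\begin{equation}
U_0^{2,in}(t) := \Re\bigl(e^{t\cL}\Upsilon(0)\bigr),\qquad U_0^{2,nl}(t) := U_0^2(t) - U_0^{2,in}(t) = \Re\!\int_0^t e^{(t-\tau)\cL}\cN(U,\Upsilon)(\tau)\dd\tau,
\end{equation}
with $\cN$ as in \eqref{eq:Upsnonlin-defN}. Both pieces are thus functions with zero mean in $z$, so that Corollary \ref{coroll:disp_est} applies.

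For the initial data contribution $U_0^{2,in}$, I would apply Corollary \ref{coroll:disp_est} directly to $\langle\nabla\rangle^{2m}\Upsilon(0)$, using that $\langle\nabla\rangle^{2m}$ is a Fourier multiplier and hence commutes with the evolution $e^{t\cL}$. The $W^{4,1}$ norm on the right-hand side of Corollary \ref{coroll:disp_est} is controlled by $\|\Upsilon(0)\|_{W^{2m+4,1}} \lesssim \|U^2(0)\|_{W^{2m+5,1}} + \|\widetilde{\Theta}(0)\|_{W^{2m+5,1}} \leq \eps_0$ via the initial data assumption \eqref{eq:id_size}, and we absorb $\eps_0 = C_\beta\eps \leq \eps$ (since $\beta>\frac12$ gives $C_\beta \leq 1$). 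This yields the stated $L^\infty$-dispersive bound for $\langle\nabla\rangle^{2m}U_0^{2,in}$.

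For the nonlinear contribution $U_0^{2,nl}$, the key observation is that $\cL = \nu\Delta - \beta\cR$ is the sum of a dissipative operator and a skew-adjoint one ($\cR$ is self-adjoint), so $e^{t\cL}$ is a contraction on every $H^s$. Duhamel then gives
\begin{equation}
\|U_0^{2,nl}(t)\|_{H^{2m}} \leq \int_0^t \|\cN(U,\Upsilon)(\tau)\|_{H^{2m}}\dd\tau,
\end{equation}
and one decomposes $\cN_1, \cN_2$ into their underlying nonlinear interactions in the spirit of \eqref{def:cTstar_decomposition}--\eqref{def:cPstar_decomposition}. Using the Sobolev product rule \eqref{eq:prod-ineq}, the $\neq$-$\neq$ and $\neq$-$0$ interactions are closed directly via the bootstrap assumptions \eqref{eq:bootstrap_nonzero_G}--\eqref{eq:bootstrap_nonzero_U} (with enhanced dissipation from Corollary \ref{cor:ED_bounds} providing the $\nu^{-1/6}$ savings on $L^2_tL^2$-type norms) and contribute the $\beta$-independent terms $\eps(\nu^{-\frac23}\eps + \nu^{-\frac53}\eps^2)$. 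For the $0$-$0$ interactions — which lack enhanced dissipation — I would place the factor $U_0^2$, $\widetilde{U}_0^3$, or $\widetilde\Theta_0$ into $L^\infty$ using Proposition \ref{prop:LinftyGGamma}, paying the $\beta^{-\frac13}$ factor and time integrating the dispersive kernel $(\tau\beta)^{-\frac13}\e^{-\nu\tau}$ together with the bootstrap $H^{2m}$-bounds on the remaining factor; this produces the terms proportional to $\beta^{-\frac13}$. Care must be taken to separate $\overline\Theta_0$ (treated via \eqref{eq:bootstrap_doublezero_Theta}) from $\widetilde\Theta_0$, since the former does not enjoy dispersive decay.

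The main difficulty will be tracking the correct $\nu$-exponents when Proposition \ref{prop:LinftyGGamma} is iterated inside time integrals: the bound from that proposition itself contains a term $\beta^{-\frac13}\nu^{-\frac23}\eps^2$, and feeding this back into the nonlinearity produces the nested powers $\nu^{-\frac53}\eps^3$ and $\nu^{-\frac83}\eps^4$ seen in \eqref{eq:Upsdecomp_bds}. Managing these iterations while keeping all time integrals convergent (via the combination of the dispersive factor $(\tau\beta)^{-\frac13}$, the heat factor $\e^{-\nu\tau}$, and the enhanced dissipation rate $\e^{-\lambda\nu^{\frac13}\tau}$ in the nonzero modes) will be the delicate bookkeeping that drives the final powers in the statement.
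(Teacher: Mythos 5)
Your decomposition via Duhamel, the treatment of $U_0^{2,in}$ by applying Corollary \ref{coroll:disp_est} to $\langle\nabla\rangle^{2m}\Upsilon(0)$, and the general organization of the nonlinear terms (placing a zero-mode factor in $L^\infty$ via Proposition \ref{prop:LinftyGGamma}, separating $\overline{\Theta}_0$ from $\widetilde\Theta_0$, iterating the $L^\infty$ bound) all match the paper's argument. However, there is a genuine gap in the treatment of $U_0^{2,nl}$: the bound
\begin{equation}
\|U_0^{2,nl}(t)\|_{H^{2m}} \leq \int_0^t \|\cN(U,\Upsilon)(\tau)\|_{H^{2m}}\dd\tau
\end{equation}
that you deduce from ``$e^{t\cL}$ is a contraction'' does not close. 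Several of the $0$--$0$ contributions to $\cN$ are merely bounded in time, not integrable: for instance, $\norm{\overline{U}_0^3\de_zF_0}_{H^{2m}}$ and $\norm{U_0^2\de_y\overline{\Theta}_0}_{H^{2m}}$ are controlled by $\eps^3\nu^{-2/3}+\beta^{-1/3}\eps^4\nu^{-5/3}$ uniformly in $\tau$, with no temporal decay coming from the solution (Lemma \ref{lemma:zzeroU3theta} gives an $L^\infty_t$ bound, not decay). Integrating a constant over $[0,t]$ diverges.

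What the paper uses is stronger than contractivity: on functions with vanishing $z$-mean, the Fourier support lies in $\{l\neq 0\}\subset\{\abs{\eta,l}\geq 1\}$, so $e^{\nu t\Delta}$ gains the uniform factor $e^{-\nu t}$ (this is exactly the $e^{-\nu l^2 t}\leq e^{-\nu t}$ step in the proof of Corollary \ref{coroll:disp_est}). Since $\cR$ is skew-adjoint and commutes with $\Delta$, this yields
\begin{equation}
\norm{e^{t\cL}f}_{H^{s}}\leq e^{-\nu t}\norm{f}_{H^{s}}\qquad\text{for }\int_\TT f\,\dd z=0,
\end{equation}
and hence
\begin{equation}
\norm{U_0^{2,nl}(t)}_{H^{2m}}\lesssim\int_0^t e^{-(t-\tau)\nu}\,\norm{\cN(U,\Upsilon)(\tau)}_{H^{2m}}\dd\tau.
\end{equation}
It is the factor $e^{-(t-\tau)\nu}$, integrated via $\int_0^t e^{-(t-\tau)\nu}\dd\tau\lesssim\nu^{-1}$, that turns the bounded-in-time $0$--$0$ contributions into the terms $\nu^{-5/3}\eps^3$, $\nu^{-8/3}\eps^4$ in \eqref{eq:Upsdecomp_bds}; your mention of ``the heat factor $e^{-\nu\tau}$'' refers to decay of $\Upsilon^{in}$ and of the dispersive kernel, which cannot substitute for the Duhamel kernel's own decay. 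You should replace the contraction step by the zero-$z$-mean spectral gap estimate above; with that in place, your accounting of the remaining integrals (enhanced dissipation for $\neq$-modes, $L^\infty$ dispersion for simple zero modes, double zero modes via Lemma \ref{lemma:zzeroU3theta}) is consistent with the paper.
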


\begin{proof}
We will give the corresponding result for $\Upsilon$ from \eqref{eq:defUpsilon}, which gives the claim since $U^2_0=\Re\Upsilon$. We decompose according to Duhamel's formula \eqref{eq:DHUpsilon} and the notation in \eqref{eq:Upsnonlin-defN}, letting
\begin{equation}
 \Upsilon^{in}(t):=e^{t\cL}\Upsilon(0),\quad \Upsilon^{nl}(t):=\int_{0}^te^{(t-\tau)\cL}\cN(U,\Upsilon)(\tau)\dd\tau.   
\end{equation}
The estimate for $\Upsilon^{in}$ follows directly. For $\Upsilon^{nl}$ more care is needed and we treat separately the transport and pressure terms constituting $\cN$ -- recall \eqref{eq:Upsnonlin-defN}.

\emph{Transport terms.} We have that
\begin{equation}
\begin{aligned}
  \cT(U,U^2)_0&=-(U_{\neq}\cdot\nabla_L U^2_{\neq})_0-U_0\cdot\nabla U_0^2,\\
  \abs{\de_z}\abs{\nabla_{y,z}}^{-1}\cT(U,\Theta)_0&=\abs{\de_z}\abs{\nabla_{y,z}}^{-1}(U_{\neq}\cdot\nabla_L \Theta_{\neq})_0-\abs{\de_z}\abs{\nabla_{y,z}}^{-1}U_0\cdot\nabla \Theta_0,
\end{aligned}  
\end{equation}
and observe that by the divergence condition and $\de_zU_0^2=0$ we have for $F\in\{U^2,\Theta\}$ that\footnote{Recall the notation $\widetilde{F}_0=F_0-\overline{F}_0$ of \eqref{eq:zmean-notation}.}
\begin{equation}
\begin{aligned}
 U_0\cdot\nabla F_0&=U_0^2\de_yF_0+U_0^3\de_z F_0=U_0^2\de_yF_0+(U_0^3-\overline{U}_0^3)\de_z F_0+\overline{U}_0^3\de_z F_0\\
 &=U_0^2\de_y\widetilde{F}_0-\de_z^{-1}\de_yU_0^2\de_z F_0+\overline{U}_0^3\de_z F_0+U_0^2\de_y\overline{F}_0.
\end{aligned} 
\end{equation}
We next bound the contributions from these terms individually. First, we have that 
$$
\begin{aligned}
 \norm{(U_0^2\de_y\widetilde{F}_0)(\tau)}_{H^{2m}}&\lesssim \norm{U_0^2(\tau)}_{L^\infty}\norm{\de_y \widetilde{F}_0(\tau)}_{H^{2m}}+\norm{U_0^2(\tau)}_{H^{2m}}\norm{\de_y \widetilde{F}_0(\tau)}_{L^\infty}\\
 &\lesssim \beta^{-\frac13}\eps \left(\tau^{-\frac13}\e^{-\nu\tau}+\eps\nu^{-\frac23}\right)\left(\norm{\GG_0(\tau)}_{H^{2m}}+\norm{\Gamma_0(\tau)}_{H^{2m}}+\norm{\nabla \Gamma_0(\tau)}_{H^{2m}}\right),
\end{aligned} 
$$
having used Proposition \ref{prop:LinftyGGamma} and that $\widetilde{U}^2_0=U^2_0=\abs{\de_z}^{\frac12}\abs{\nabla_{y,z}}^{-3/2}G_0$ resp.\ $\widetilde{\Theta}_0=\abs{\de_z}^{-\frac12}\abs{\nabla_{y,z}}^{-\frac12}\Gamma_0$. Similarly we obtain that
\begin{equation}
 \norm{(\de_z^{-1}\de_yU_0^2\de_z F_0)(\tau)}_{H^{2m}}\lesssim \beta^{-\frac13}\eps \left(\tau^{-\frac13}\e^{-\nu\tau}+\eps\nu^{-\frac23}\right)\left(\norm{\GG_0(\tau)}_{H^{2m}}+\norm{\Gamma_0(\tau)}_{H^{2m}}\right),  
\end{equation}
and integration then yields that for $I\in\{U_0^2\de_y\widetilde{F}_0,\de_z^{-1}\de_yU_0^2\de_z F_0\}$ we have the bound
\begin{equation}
\begin{aligned}
 \int_0^{t}\norm{ \e^{(t-\tau)\cL}I(\tau)}_{H^{2m}}\dd\tau &\lesssim \beta^{-\frac13}\eps^2 \int_0^t \e^{-(t-\tau)\nu}\left(\tau^{-\frac13}\e^{-\nu\tau}+\eps\nu^{-\frac23}\right)\dd\tau\\
 &\quad +  \beta^{-\frac13}\eps\e^{-\nu t}\int_0^t\tau^{-\frac13}\norm{\nabla\Gamma_0}_{H^{2m}}\dd \tau \\
 &\quad +  \beta^{-\frac13}\nu^{-\frac23}\eps^2\int_0^t\e^{-(t-\tau)\nu}\norm{\nabla\Gamma_0}_{H^{2m}}\dd \tau \\ 
 &\lesssim \beta^{-\frac13}\eps^2 (\nu^{-\frac23}+\eps\nu^{-\frac53})
\end{aligned} 
\end{equation}

Next, for $\overline{U}_0^3\de_z F_0$, $F\in\{U^2,\Theta\}$, we use Lemma \ref{lemma:zzeroU3theta} to bound $\norm{\overline U^3_0}_{H^{2m}}$, which yields 
\begin{align}
    \norm{\overline{U}_0^3\de_z F_0}_{H^{2m}}&\lesssim\norm{\overline U^3_0}_{H^{2m}}\left(\norm{\GG_0(\tau)}_{H^{2m}}+\norm{\Gamma_0(\tau)}_{H^{2m}}\right)\lesssim (\eps^2\nu^{-\frac23}+\beta^{-\frac13}\nu^{-\frac53}\eps^3)\eps.
\end{align}
Upon integrating in time this gives
\begin{equation}
\begin{aligned}
 \int_0^{t}\norm{\e^{(t-\tau)\cL}(\overline{U}_0^3\de_z F_0)}_{H^{2m}}\dd\tau &\lesssim \int_0^t\e^{-(t-\tau)\nu}\left(\eps^3\nu^{-\frac23}+\beta^{-\frac13}\nu^{-\frac53}\eps^4\right)\dd\tau\\
 &\lesssim \nu^{-\frac53}\eps^3+\beta^{-\frac13}\nu^{-\frac83}\eps^4.
\end{aligned} 
\end{equation}
Finally, since $\overline U^2_0\equiv 0$ (see \eqref{eq:xz-meanU2}), for $U^2_0\de_y\overline F_0$ we need only consider the term $U^2_0\de_y\overline \Theta_0$, which we can simply control as
\begin{align}
    \norm{U^2_0\de_y\overline \Theta_0}_{H^{2m}}&\lesssim \norm{U^2_0}_{H^{2m}}\norm{\overline \Theta_0}_{H^{2m+1}}\lesssim \eps\left(\eps^2\nu^{-\frac23}+\beta^{-\frac13}\nu^{-\frac53}\eps^3\right),
\end{align}
having invoked Lemma \ref{lemma:zzeroU3theta}.
It follows that
\begin{equation}
 \int_0^{t}\norm{ \e^{(t-\tau)\cL}(U_0^2\de_y \overline{\Theta}_0)}_{H^{2m}}\dd\tau \lesssim \int_0^t \e^{-(t-\tau)\nu}\left(\eps^3\nu^{-\frac23}+\beta^{-\frac13}\nu^{-\frac53}\eps^4\right)\dd\tau\lesssim \nu^{-\frac53}\eps^3+\beta^{-\frac13}\nu^{-\frac83}\eps^4.
\end{equation}

On the other hand, we have that for $F\in\{U^2,\Theta\}$
\begin{equation}
\begin{aligned}
 \norm{(U_{\neq}\cdot\nabla_LF_{\neq})(\tau)}_{H^{2m}}&\lesssim \norm{U_{\neq}}_{L^\infty}\norm{\nabla_LF_{\neq}}_{H^{2m}}+\norm{U_{\neq}}_{H^{2m}}\norm{\nabla_LF_{\neq}}_{L^\infty}\\
 &\lesssim \norm{\cA U_{\neq}}\norm{\nabla_L\cA F_{\neq}},
\end{aligned} 
\end{equation}
and thus
\begin{equation}
\begin{aligned}
 \int_0^{t}\norm{\e^{(t-\tau)\cL}(U_{\neq}\cdot\nabla_LF_{\neq})(\tau)}_{H^{2m}}\dd\tau &\lesssim \eps^2\nu^{-\frac23}.
\end{aligned} 
\end{equation}

\medskip
\emph{Pressure terms.}
We have that
\begin{align}
    \de_y\cP(U,U)_0= \de_y|\nabla_{y,z}|^{-2}(\de_iU^j_0\de_jU^i_0)+\de_y|\nabla_{y,z}|^{-2}(\de_i^LU^j_{\neq}\de_j^LU^i_{\neq})_0.
\end{align}
Here the first term can be further decomposed as
\begin{align}
    \de_iU^j_0\de_jU^i_0&=\de_yU^2_0\de_yU^2_0+2\de_yU^3_0\de_zU^2_0+\de_zU^3_0\de_zU^3_0\\
    &=2\de_y U^2_0\de_y U^2_0+2\de_y\widetilde U^3_0\de_z U^2_0+2\de_y\overline U^3_0\de_z U^2_0,
\end{align}
where we used that $U^2_0=\widetilde U^2_0$ and $\de_z\widetilde U^3_0=-\de_y U^2_0$.
For $\de_y U^2_0\de_y U^2_0$, we easily deduce from \eqref{eq:SimVar} and Proposition \ref{prop:LinftyGGamma} that
\begin{equation}
    \norm{\de_y U^2_0 \de_y  U^2_0}_{H^{2m}}\lesssim\norm{\GG_0}_{H^{2m}}\norm{\de_yU_0^2}_{L^\infty}\lesssim (t\beta)^{-\frac13}\e^{-\nu t}\eps^2 + \beta^{-\frac13}\nu^{-\frac23}\eps^3,
\end{equation}
and integration in time gives
\begin{equation}
\begin{aligned}
 \int_0^{t}\norm{ \e^{(t-\tau)\cL}(\de_y U^2_0 \de_y  U^2_0)(\tau)}_{H^{2m}}\dd\tau &\lesssim \int_0^t\e^{-(t-\tau)\nu}\left((\tau\beta)^{-\frac13}\e^{-\nu \tau}\varepsilon^2 + \beta^{-\frac13}\nu^{-\frac23}\eps^3\right)\dd \tau\\
 &\lesssim  \beta^{-\frac13}\nu^{-\frac23}\eps^2+\beta^{-\frac13}\nu^{-\frac53}\eps^3.
\end{aligned} 
\end{equation}
Similarly, for the term $\de_z U^2_0\de_y\widetilde U^3_0$ we deduce from 
\begin{align}
    \norm{\de_z U^2_0 \de_y \widetilde U^3_0}_{H^{2m}}&\lesssim \norm{\GG_0}_{H^{2m}}\norm{\de_y\widetilde{U}_0^3}_{L^\infty}+\norm{\de_z U^2_0 }_{L^\infty}\norm{\de_y\GG_0}^{\frac12}_{H^{2m}}\norm{\GG_0}^{\frac12}_{H^{2m}}\\
    &\lesssim  \left((t\beta)^{-\frac13}\e^{-\nu t}\varepsilon + \beta^{-\frac13}\nu^{-\frac23}\eps^2 \right)\left(\eps +\eps^\frac12\norm{\de_y\GG_0}^{\frac12}_{H^{2m}}\right)
\end{align}
that
\begin{equation}
\begin{aligned}
  \int_0^{t}\norm{ \e^{(t-\tau)\cL}(\de_z U^2_0\de_y\widetilde U^3_0)(\tau)}_{H^{2m}}\dd\tau 
 &\lesssim \int_0^t\e^{-(t-\tau)\nu}\left((\tau\beta)^{-\frac13}\e^{-\nu \tau}\varepsilon^2 + \beta^{-\frac13}\nu^{-\frac23}\eps^3\right)\dd \tau\\
 &\quad +  \e^{-\nu t}\int_0^t\left((\tau\beta)^{-\frac13}\varepsilon\right)\eps^\frac12\norm{\de_y\GG_0}^{\frac12}_{H^{2m}}\dd \tau \\
 &\quad +  \int_0^t\e^{-(t-\tau)\nu}\left( \beta^{-\frac13}\nu^{-\frac23}\eps^2\right)\eps^\frac12\norm{\de_y\GG_0}^{\frac12}_{H^{2m}}\dd \tau \\
 &\lesssim  \beta^{-\frac13}\nu^{-\frac23}\eps^2+\beta^{-\frac13}\nu^{-\frac53}\eps^3.
\end{aligned} 
\end{equation}
Finally, for $\de_z U^2_0 \de_y \overline U^3_0$ we proceed by bounding the $H^{2m}$ norm as 
\begin{align}
    \norm{\de_z U^2_0 \de_y \overline U^3_0}_{H^{2m}}&\lesssim \norm{G_0}_{H^{2m}}\norm{\de_y\overline U^3_0}_{H^{2m}}\lesssim \eps \norm{\de_y\overline U^3_0}_{H^{2m}}
\end{align}
and integrate to obtain 
\begin{equation}
\begin{aligned}
  \int_0^{t}\norm{ \e^{(t-\tau)\cL}(\de_z U^2_0 \de_y \overline U^3_0)(\tau)}_{H^{2m}}\dd\tau 
 &\lesssim \eps\int_0^t\e^{-(t-\tau)\nu}\norm{\de_y\overline{U}^3_0}_{H^{2m}}\dd \tau\\
 &\lesssim \eps \nu^{-\frac12}\norm{\de_y\overline{U}^3_0}_{L^2_tH^{2m}}\\
 &\lesssim \nu^{-\frac53}\eps^3+\beta^{-\frac13}\nu^{-\frac83}\eps^4.
\end{aligned} 
\end{equation}
This concludes the proof of the lemma.
\end{proof}

\begin{remark}\label{rem:innl_decomp}
  This decomposition exploits a difference in regularity of one order between $G_0$ and $U_0^2$. In particular, we do \emph{not} have the analogous decomposition and bounds for $\widetilde{U}_0^3=-\de_z^{-1}\de_yU_0^2$. To establish our main result it is thus crucial that the only zero mode forcing the double zero modes is $U_0^2$ -- see Section \ref{ssec:btstrap_doublezero} below.
\end{remark}

\subsection{Control of double zero modes -- proof of Proposition \ref{prop:doublezero}}\label{ssec:btstrap_doublezero}
The double zero (in $x,z$) modes (recall the notation \eqref{eq:zmean-notation}) play a distinguished role for the dynamics. By construction $\overline{\Theta}_0$ and $\overline{U}_0^3$ satisfy
\begin{equation}
 \de_t \overline{\Theta}_0+\de_y(\overline{U^2\Theta})_0=\nu\de_{yy}\overline{\Theta}_0,\qquad \de_t \overline{U}^3_0+\de_y(\overline{U^2U^3})_0=\nu\de_{yy}\overline{U}^3_0.
\end{equation}
In contrast, as already observed in \eqref{eq:xz-meanU2} we have that
\begin{equation}\label{eq:ulU20=0}
  \overline{U}^2_0(t)=0,\qquad t\geq 0.  
\end{equation}
\begin{lemma}\label{lemma:zzeroU3theta}
  If
  \begin{equation}\label{eq:double0id}
    \overline{U}^3_0(0)=\overline{\Theta}_0(0)=0,
  \end{equation}
  then for all $t\geq 0$
$$
  \begin{aligned}
    \norm{\overline{U}^3_0(t)}_{H^{2m}}^2+\norm{\overline{\Theta}_0(t)}_{H^{2m+1}}^2+\nu\norm{\de_y\overline{U}^3_0}^2_{L^2_tH^{2m}}+\nu\norm{\de_y\overline{\Theta}_0}_{L^2_tH^{2m+1}}^2 &\lesssim\eps^2 (\beta^{-\frac13}\eps^2\nu^{-\frac53}+\eps\nu^{-\frac23})^2.
  \end{aligned}  
$$
\end{lemma}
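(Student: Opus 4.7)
\emph{Step 1: Energy estimates and structural reduction.} For $F\in\{U^3,\Theta\}$, the plan is to test the equation $\de_t\overline F_0+\de_y\overline{(U^2F)}_0=\nu\de_{yy}\overline F_0$ against $\langle\de_y\rangle^{2s}\overline F_0$ in $L^2_y$ (with $s=2m$ for $U^3$ and $s=2m+1$ for $\Theta$). Integrating in time from the vanishing initial data \eqref{eq:double0id}, moving $\de_y$ from the forcing onto $\overline F_0$ by parts, and applying Young's inequality $ab\leq\frac{\nu}{2}a^2+\frac{1}{2\nu}b^2$ to absorb half of the dissipation, the lemma reduces to proving
\begin{equation}
  \nu^{-1}\|\overline{(U^2F)}_0\|^2_{L^2_tH^s_y}\lesssim \eps^2\big(\nu^{-\frac{2}{3}}\eps+\beta^{-\frac{1}{3}}\nu^{-\frac{5}{6}}\eps\big)^2.
\end{equation}
The crucial structural observation is that, by $\overline{U}^2_0\equiv 0$ from \eqref{eq:xz-meanU2} and the vanishing $z$-mean of $\widetilde{\cdot}$, the forcing decomposes as $\overline{(U^2F)}_0=\overline{(U^2_{\neq}F_{\neq})}_0+\overline{(\widetilde U^2_0\widetilde F_0)}_0$ (the third natural term $\overline{(\widetilde U^2_0\overline F_0)}_0$ vanishing since $\overline{\widetilde U^2_0}_0=0$). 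The complete absence of double-zero self-interactions is precisely what permits the threshold to be larger than $\nu$.

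\emph{Step 2: Nonzero-nonzero contribution.} By \eqref{eq:U2Theta_recover}, $U^2_{\neq}$ and $\Theta_{\neq}$ carry the time-decaying symbols $|\nabla_L|^{-3/2}$ and $|\nabla_L|^{-1/2}$. On nonzero modes $|\nabla|^\alpha|\nabla_L|^{-\alpha}\lesssim\langle t\rangle^\alpha$, so Sobolev embedding $H^2\hookrightarrow L^\infty$ (valid for $m\geq 2$) gives
\begin{equation}
  \|U^2_{\neq}\|_{L^\infty}\lesssim \langle t\rangle^{-\frac{3}{2}}e^{-\lambda\nu^{1/3}t}\|\cA\GG_{\neq}\|,\qquad \|\Theta_{\neq}\|_{L^\infty}\lesssim \langle t\rangle^{-\frac{1}{2}}e^{-\lambda\nu^{1/3}t}\|\cA\Gamma_{\neq}\|.
\end{equation}
Applying the product rule \eqref{eq:prod-ineq} to $\overline{(U^2_{\neq}F_{\neq})}_0$ in $H^s_y$ (noting $\|\overline{F}_0\|_{H^s_y}\leq\|F\|_{H^s}$) and using the $L^2_t$ dissipation bound $\|\nabla_L\cA F_{\neq}\|_{L^2_tL^2}\lesssim\nu^{-1/2}\eps$ together with Corollary \ref{cor:ED_bounds}, we obtain the desired $\eps^2\nu^{-1/6}$-level bound after trading the extra derivative at top order for a factor of $\langle t\rangle$ absorbed by the exponential enhanced-dissipation weight.

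\emph{Step 3: Simple zero-zero contribution.} Here the decisive ingredient is Lemma \ref{lemma:innl_decomp}: writing $\widetilde U^2_0=U^{2,in}_0+U^{2,nl}_0$, the $L^\infty$ bound on $\langle\nabla\rangle^{2m}U^{2,in}_0$ decays as $\eps(\beta t)^{-1/3}e^{-\nu t}$ and integrates (via \eqref{eq:decay_int}) to $\|U^{2,in}_0\|_{L^2_tL^\infty}\lesssim\beta^{-1/3}\nu^{-1/6}\eps$, while $\|U^{2,nl}_0\|_{L^\infty_tH^{2m}}\lesssim\beta^{-1/3}\nu^{-2/3}\eps^2$ (under the smallness of $\eps$). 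Combining with the $L^\infty$ dispersive bounds on $\widetilde F_0$ from Proposition \ref{prop:LinftyGGamma}, the bootstrap $L^\infty_tH^{2m}$ control on $\widetilde U^3_0$ and, for the $\Theta$-case, the extra half-derivative via interpolation $\|\Gamma_0\|_{H^{2m+1/2}}\leq\|\Gamma_0\|^{1/2}_{H^{2m}}\|\nabla\Gamma_0\|^{1/2}_{H^{2m}}$, and distributing derivatives via \eqref{eq:prod-ineq}, we recover the $\beta^{-1/3}\nu^{-1/3}\eps^2$-level contribution.

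\emph{Main obstacle.} The hardest step is the $\overline\Theta_0$ estimate at the $H^{2m+1}_y$ level: one derivative above what $\cA$-based norms directly supply. For nonzero modes this is compensated by the $\langle t\rangle^\alpha$-vs-exponential-weight tradeoff of Step 2; for simple zero modes it is compensated exactly by the improved $H^{2m}$ bound on $U^{2,nl}_0$ provided by Lemma \ref{lemma:innl_decomp}. The analogous improvement is unavailable for $\widetilde U^3_0$ (Remark \ref{rem:innl_decomp}), but is fortunately not needed here, since the $\overline U^3_0$ estimate requires only $L^2_tH^{2m}_y$ of its forcing, one derivative less than the $\overline\Theta_0$ estimate.
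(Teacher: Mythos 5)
Your Steps 1 and 2 correctly identify the crucial structure — the vanishing of $\overline{U}^2_0$ eliminates double-zero self-interactions, and the nonzero-nonzero forcing is handled by trading the time decay of $|\nabla_L|^{-a}$ for derivatives absorbed into the $\cA$-norms together with the enhanced-dissipation $L^2_t$ bound from Corollary \ref{cor:ED_bounds}. The problem is Step 3, and it is not a minor one: you invoke Lemma \ref{lemma:innl_decomp} as ``the decisive ingredient,'' but the proof of Lemma \ref{lemma:innl_decomp} \emph{itself uses Lemma \ref{lemma:zzeroU3theta}} in several places — explicitly, it appeals to the $H^{2m}$ bound on $\overline{U}^3_0$ and the $H^{2m+1}$ bound on $\overline{\Theta}_0$ (i.e.\ exactly the statement you are trying to prove) in order to control the terms $\overline{U}^3_0\de_zF_0$, $U^2_0\de_y\overline{\Theta}_0$, and $\de_zU^2_0\de_y\overline{U}^3_0$ appearing in $U^{2,nl}_0$. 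Although Lemma \ref{lemma:innl_decomp} is \emph{stated} in Section \ref{ssec:disp_Duhamel}, before the current lemma, the logical order is the reverse: Lemma \ref{lemma:zzeroU3theta} must be established first, relying only on Proposition \ref{prop:LinftyGGamma} (which depends on nothing but the bootstrap hypotheses), and only then does Lemma \ref{lemma:innl_decomp} become available — it is then used to control $\overline{V}_0$ and $\widetilde{V}_0$ in Section \ref{ssec:U10}. As written, your argument is circular.

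The correct route for the simple-zero contribution is much lighter: apply the product rule to get $\norm{\overline{(\widetilde{U}^2_0\widetilde{F}_0)}}_{H^n}\lesssim\norm{U^2_0}_{L^\infty}\norm{\widetilde F_0}_{H^n}+\norm{U^2_0}_{H^n}\norm{\widetilde F_0}_{L^\infty}$, insert the $L^\infty$ dispersive bounds of Proposition \ref{prop:LinftyGGamma}, and exploit the smoothing in the symbols of \eqref{eq:U2Theta_recover} — in particular $\norm{U^2_0}_{H^{2m+1}}\lesssim\norm{\GG_0}_{H^{2m}}$ since $|\de_z|^{\frac12}|\nabla_{y,z}|^{-\frac32}\lesssim\langle\nabla_{y,z}\rangle^{-1}$ on nonzero $l$. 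Your observation that the $\overline\Theta_0$ case at level $H^{2m+1}$ needs an extra half-derivative for $\widetilde\Theta_0$, which should be interpolated against $\norm{\nabla\Gamma_0}_{H^{2m}}$, is a genuinely good catch — $|\de_z|^{-\frac12}|\nabla_{y,z}|^{-\frac12}$ does not give a clean full-derivative gain — but that refinement needs to be attached to the Proposition-\ref{prop:LinftyGGamma}-based argument, not to Lemma \ref{lemma:innl_decomp}.
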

The crucial point here is the absence of self-interactions of the double zero modes, see \eqref{eq:noselfdouble0}.
As the proof shows, the claim of Lemma \ref{lemma:zzeroU3theta} still holds if we allow the initial data to be nonzero, but with a bound not exceeding the nonlinear contributions -- compare \eqref{eq:energy_doublezeros}.
\begin{proof}
 We begin by observing that due to \eqref{eq:ulU20=0} there holds that
 \begin{equation}\label{eq:noselfdouble0}
  (\overline{U^2F})_0=(\overline{U^2_0\widetilde{F}_0})+(\overline{U^2_{\neq} F_{\neq}})_0,\qquad F\in\{U^3,\Theta\}.   
 \end{equation}
 By energy estimates it follows that for $F\in\{U^3,\Theta\}$ and $n\in\{2m,2m+1\}$
 \begin{equation}\label{eq:energy_doublezeros}
   \norm{\overline{F}_0(t)}_{H^{n}}^{2}+\nu\int_0^t\norm{\de_y\overline{F}_0}_{H^{n}}^2\dd \tau \leq \norm{\overline{F}_0(0)}_{H^{n}}^{2}+ \nu^{-1}\int_0^t  \norm{(\overline{\widetilde{U}^2_0\widetilde{F}_0})(\tau)}_{H^{n}}^2 +\norm{(\overline{U^2_{\neq} F_{\neq}})_0(\tau)}_{H^{n}}^2\dd\tau.
 \end{equation}
 Since
 \begin{equation}
   \norm{(\overline{\widetilde{U}^2_0\widetilde{F}_0})(\tau)}_{H^{n}}\lesssim \norm{U_0^2(\tau)}_{L^\infty}\norm{ \widetilde{F}_0(\tau)}_{H^{n}}+\norm{U_0^2(\tau)}_{H^{n}}\norm{\widetilde{F}_0(\tau)}_{L^\infty},
 \end{equation}
 we obtain from Proposition \ref{prop:LinftyGGamma} that
$$
 \norm{(\overline{\widetilde{U}^2_0\widetilde{U}_0^3})(\tau)}_{H^{2m}}+\norm{(\overline{\widetilde{U}^2_0\widetilde{\Theta}_0})(\tau)}_{H^{2m+1}}\lesssim \beta^{-\frac13}\eps \left(\tau^{-\frac13}\e^{-\nu\tau}+\eps\nu^{-\frac23}\right)(\norm{\GG_0(\tau)}_{H^{2m}}+\norm{\Gamma_0(\tau)}_{H^{2m}}),
$$
having used that $\widetilde{U}_0^3=-\de_z^{-1}\de_yU_0^2$ and the expression of $U^2$, $\Theta$ in terms of $\GG,\Gamma$. Similarly it follows that
\begin{equation}
 \norm{(\overline{U^2_{\neq} U^3_{\neq}})_0(\tau)}_{H^{2m}}+\norm{(\overline{U^2_{\neq} \Theta_{\neq}})_0(\tau)}_{H^{2m+1}}\lesssim \norm{\cA \GG_{\neq}(\tau)}^2+\norm{\cA \Gamma_{\neq}(\tau)}^2+\norm{\cA U^3_{\neq}(\tau)}^2.
\end{equation}
Together with \eqref{eq:double0id}, integration in time and using also \eqref{eq:enhanced_diss} yields that
\begin{equation}
\begin{aligned}
    &\norm{\overline{U}^3_0(t)}_{H^{2m}}^2+\norm{\overline{\Theta}_0(t)}_{H^{2m+1}}^2+\nu\int_0^t\norm{\de_y\overline{U}^3_0(\tau)}_{H^{2m}}^2+\norm{\de_y\overline{\Theta}_0(\tau)}_{H^{2m+1}}^2\dd \tau  \\
    &\qquad\lesssim\eps^2\cdot\beta^{-\frac23}(\eps^2\nu^{-\frac43}+\eps^4\nu^{-\frac{10}{3}})+\eps^2\cdot \eps^2\nu^{-\frac43},
  \end{aligned}  
  \end{equation}
and thus the claim.  
\end{proof}

\subsection{Dynamics of $U^1_0$}\label{ssec:U10}
Unlike the case of $U^3_0$, incompressibility of $U$ does not yield any bounds for $U^1_0$. Rather, as motivated in the introduction it is convenient to consider the variable
\begin{equation}
V(t):=U^1(t)+\beta^{-1}\Theta(t)
\end{equation}
which satisfies
\begin{equation}\label{eq:V_0_dynamics}
 \de_t V_0+(U\cdot\nabla_L V)_0=\nu\Delta V_0.   
\end{equation}
Moreover, by assumptions \eqref{eq:zero-mean-xz-id} we have that
\begin{equation}\label{eq:double0idV}
  \overline{V}_0(0)=0.  
\end{equation}
Since we separately establish the bounds for $\Theta_0$, to obtain the required control of $U_0^1$ it suffices to establish:
\begin{lemma}
  Under the assumptions of Theorem \ref{thm:bootstrap_step}, there holds that
  \begin{equation}\label{eq:energy_V0}
   \norm{\widetilde{V}_0(t)}_{H^{2m}}^2+\nu \norm{\nabla\widetilde{V}_0}_{L^2_tH^{2m}}^2\leq \eps_0^2+C_4\eps^2 \left(\nu^{-\frac{8}{9}}\eps+ \beta^{-\frac13}\nu^{-\frac{11}{12}}\eps\right)
  \end{equation}
  and
  \begin{equation}\label{eq:energy_V00}
    \norm{\overline V_0(t)}_{H^{2m}}^2+\nu\norm{\de_y\overline V_0}_{L^2_tH^{2m}}^2\leq C_4\eps^2 \left(\nu^{-\frac{8}{9}}\eps+ \beta^{-\frac13}\nu^{-\frac{11}{12}}\eps\right)^2.  
  \end{equation}
\end{lemma}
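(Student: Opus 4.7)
The plan is to perform standard $H^{2m}$ energy estimates on the passive scalar equation \eqref{eq:V_0_dynamics} for $V_0$, separately for the simple zero mode $\widetilde V_0$ and the double zero mode $\overline V_0$, in parallel with the analysis of $\widetilde U^1_0$ in Proposition \ref{prop:zero} and $\overline U^1_0$ in Proposition \ref{prop:doublezero}. The decisive structural points are: (i) $V_0$ is a passive scalar and does not force any other unknown in the system; (ii) by incompressibility of $U$, the nonlinearity splits in divergence form as $(U\cdot\nabla_L V)_0 = \de_y(U^2_0 V_0) + \de_z(U^3_0 V_0) + \nabla_L\cdot(U_{\neq} V_{\neq})_0$, which allows integration by parts at top order to transfer one derivative onto $V_0$ and absorb it via dissipation; (iii) $\overline U^2_0\equiv 0$ from \eqref{eq:xz-meanU2} rules out self-interactions among double-zero modes, and $\overline V_0(0)=0$ by \eqref{eq:double0idV}, so that the double-zero dynamics are purely nonlinearly driven.

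First I would reduce both energy estimates to trilinear bounds of the general form $\nu^{-1}\|U^j_0 V_0\|_{L^2_tH^{2m}}^2$ and $\nu^{-1}\|(U_{\neq} V_{\neq})_0\|_{L^2_tH^{2m}}^2$ after Cauchy--Schwarz--Young. The nonzero-mode contribution is controlled using $V_{\neq}=U^1_{\neq}+\beta^{-1}\Theta_{\neq}$ together with the bootstrap assumptions \eqref{eq:bootstrap_nonzero_G}--\eqref{eq:bootstrap_nonzero_U} and enhanced dissipation from Corollary \ref{cor:ED_bounds}, producing a harmless $\nu^{-2/3}\eps^3$-level contribution. The interaction $U^3_0 V_0$ is split as $\overline U^3_0 V_0 + \widetilde U^3_0 V_0$: the first piece is treated via the bounds of Lemma \ref{lemma:zzeroU3theta}, while the second benefits from the dispersive $L^\infty$-decay of Proposition \ref{prop:LinftyGGamma} applied to $\widetilde U^3_0$.

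The main obstacle is the interaction $U^2_0 V_0$, for which a naive $H^{2m}$-algebra estimate $\|U^2_0\|_{H^{2m}}\|V_0\|_{H^{2m}}\lesssim\eps^2$ would yield a threshold saturating near $\nu$. To sharpen this to the advertised $\nu^{-8/9}$ and $\beta^{-1/3}\nu^{-11/12}$ I would invoke the decomposition $U^2_0 = U^{2,in}_0 + U^{2,nl}_0$ of Lemma \ref{lemma:innl_decomp}. The initial-data piece $U^{2,in}_0$ satisfies the dispersive $L^\infty$-decay $\eps(\tau\beta)^{-1/3}\e^{-\nu\tau}$; pairing it with $V_0$ in $L^2_tL^\infty\cdot L^\infty_tH^{2m}$ and exploiting $\norm{\tau^{-1/3}\e^{-\nu\tau}}_{L^2_t}\lesssim\nu^{-1/6}$, together with interpolation between the $L^\infty_tH^{2m}$ and $L^2_tH^{2m+1}$-bootstrap bounds on $V_0$, yields the $\beta^{-1/3}\nu^{-11/12}\eps^3$ contribution. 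The nonlinear piece $U^{2,nl}_0$, thanks to its improved $L^\infty_tH^{2m}$-bound already incorporating dispersion, contributes at the $\nu^{-8/9}\eps^3$ level after pairing with the dissipative $\|\nabla V_0\|_{L^2_tH^{2m}}\lesssim C_0\nu^{-1/2}\eps$. Combining all bounds with the smallness $\eps\lesssim\eps_0$ and using $\overline V_0(0)=0$ then closes both \eqref{eq:energy_V0} and \eqref{eq:energy_V00}; the weaker threshold compared to $G_0,\Gamma_0$ in Proposition \ref{prop:zero} reflects the loss of one derivative in passing from $G_0$ to $U^2_0$, cf.\ Remark \ref{rem:innl_decomp}.
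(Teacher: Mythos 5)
Your approach is broadly in the right direction -- energy estimates on the passive scalar equation \eqref{eq:V_0_dynamics}, splitting into $\widetilde V_0$ and $\overline V_0$, invoking Lemma \ref{lemma:innl_decomp} for $U^2_0$, dispersion via Proposition \ref{prop:LinftyGGamma}, and Lemma \ref{lemma:zzeroU3theta} for $\overline U^3_0$. The treatment of $\overline V_0$ is essentially what the paper does. However, the central reduction step for $\widetilde V_0$, namely moving one derivative onto $V_0$ by integration by parts and then using Cauchy--Schwarz--Young to absorb into dissipation, reducing to $\nu^{-1}\|U^j_0 V_0\|_{L^2_tH^{2m}}^2$, is too lossy and does not close.

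Concretely, the term $\nu^{-1}\|\widetilde U^3_0\,V_0\|^2_{L^2_tH^{2m}}$ is not finite under the bootstrap hypotheses: the contribution where all $2m$ derivatives fall on $\widetilde U^3_0$ gives $\nu^{-1}\int_0^\infty \|\widetilde U^3_0\|_{H^{2m}}^2\|V_0\|_{L^\infty}^2\,\dd t$, and neither $\|\widetilde U^3_0\|_{H^{2m}}$ nor $\|V_0\|_{L^\infty}$ (which contains $\|\widetilde U^1_0\|_{L^\infty}$, a non-dispersing quantity) decays in time or is $L^2_t$-controlled; both are merely $O(\eps)$. Even invoking Poincar\'e in $z$ plus Sobolev to gain $\|\widetilde V_0\|_{L^\infty}\lesssim \|\nabla\widetilde V_0\|_{H^{2m}}$ produces $\nu^{-1}\cdot\eps^2\cdot\nu^{-1}\eps^2 = \nu^{-2}\eps^4$, which requires $\eps\lesssim \nu^{10/9}$ to be dominated by $\nu^{-8/9}\eps^3$ -- strictly stronger than the hypothesis $\eps\leq c_1\nu^{11/12}$. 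The same obstruction appears for $\nu^{-1}\|U^{2,nl}_0\widetilde V_0\|^2_{L^2_tH^{2m}}$. The problem is that the Young inequality burns a full factor of $\nu^{-1}$, and there is no $L^2_t$ decay to pay for it.

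The paper's proof circumvents this by never invoking Cauchy--Schwarz--Young against dissipation for the zero-mode transport term. Instead it uses the exact vanishing $\l F, U^2_0\de_yF+\widetilde{U}^3_0\de_zF\r=0$ coming from incompressibility of $(U^2_0,\widetilde U^3_0)$ (see \eqref{eq:V0energy_cancel}), so only commutator terms with strictly fewer than $2m$ derivatives on $\widetilde V_0$ survive. Crucially, it then further expands $\widetilde V_0=\widetilde U^1_0+\beta^{-1}\widetilde\Theta_0$ in \eqref{eq:V0energy_finedecomp} so that the highest-order self-interaction of the non-dispersing $\widetilde U^1_0$ cancels, leaving cross-terms involving $\widetilde\Theta_0$ (which does disperse) and commutator remainders where at least one derivative hits $U^2_0$ or $\widetilde U^3_0$. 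Finally, for the single remaining dangerous term with $2m$ derivatives on $\widetilde U^3_0=-\de_z^{-1}\de_yU^2_0$ (which would amount to $2m+1$ derivatives on $U^2_0$, beyond the reach of Lemma \ref{lemma:innl_decomp}), an additional integration by parts in $y$ is used. Your proposal, as written, identifies none of this cancellation and decomposition structure, and so has a genuine gap in the $\widetilde V_0$ estimate.
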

In order to establish these bounds, we will make crucial use of the decomposition of $U^2_0$ in Lemma \ref{lemma:innl_decomp}. Since the double zero modes do not self-interact and are only forced by $U^2_0$ (see \eqref{eq:noselfdouble0V} and \eqref{eq:V00_dynamics}), the proof of \eqref{eq:energy_V00} is a more delicate version of that of Lemma \ref{lemma:zzeroU3theta}. In contrast, the simple zero modes are forced in addition also by $U^3_0$, for which we do not have a decomposition as in Lemma \ref{lemma:innl_decomp} (see also Remark \ref{rem:innl_decomp}). Instead, we need to rely on the precise nature of nonlinear interactions (see e.g.\ \eqref{eq:V0energy_cancel}) and the cancellation (as is typical for energy estimates) of highest order self-interactions of $\widetilde{U}^1_0$ (see \eqref{eq:V0energy_finedecomp}). We present this more difficult argument first.

\begin{proof}[Proof of \eqref{eq:energy_V0}]
From \eqref{eq:V_0_dynamics}, energy estimates yield that
\begin{equation}
    \norm{\widetilde{V}_0(t)}^2_{H^{2m}}  +2\nu\int_0^t \norm{\nabla \widetilde{V}_0}_{H^{2m}}^2 \leq  \norm{\widetilde{V}_0(0)}^2_{H^{2m}} + \int_0^t\l \widetilde V_0, (U\cdot \nabla_L V)_0\r_{H^{2m}}.
\end{equation}
We note that
\begin{equation}
    \l\widetilde V_0, (U\cdot \nabla_L V)_0\r_{H^{2m}}=\l\widetilde V_0, U^2_0\de_yV_0+U^3_0\de_zV_0\r_{H^{2m}}+\l\widetilde V_0, (U_{\neq}\cdot \nabla_L V_{\neq})_0\r_{H^{2m}},
\end{equation}
and from the the bootstrap assumptions \eqref{eq:bootstrap_nonzero_U}, \eqref{eq:bootstrap_nonzero_Gamma} on $\Theta_{\neq}$ and $U^1_{\neq}$ we directly obtain that
\begin{equation}
    \int_0^\infty |\l  \widetilde{V}_0,(U_{\neq}\cdot \nabla_L V_{\neq})_0\r_{H^{2m}}|\lesssim \int_0^\infty\norm{V_0}_{H^{2m}}\norm{U_{\neq}}_{H^{2m}}\norm{\nabla_L V_{\neq}}_{H^{2m}}\lesssim (\nu^{-\frac23}\eps)\eps^2.
\end{equation}
For the zero mode contributions, observe that
\begin{equation}\label{eq:V0energy_split}
  \l\widetilde V_0, U^2_0\de_yV_0+U^3_0\de_zV_0\r_{H^{2m}}=\l\widetilde V_0,U_0^2\de_y\overline{V}_0+\overline{U}^3_0\de_z\widetilde{V}_0\r+\l\widetilde{V}_0, U^2_0\de_y\widetilde{V}_0+\widetilde{U}^3_0\de_z\widetilde{V}_0)\r_{H^{2m}}.
\end{equation}
The first term in \eqref{eq:V0energy_split} can be bounded as
\begin{equation}
  \abs{\l\widetilde V_0,U_0^2\de_y\overline{V}_0+\overline{U}^3_0\de_z\widetilde{V}_0\r}\lesssim  \norm{\widetilde{V}_0}_{H^{2m}}\left(\norm{U^2_0\de_y\overline{V}_0}_{H^{2m}}+\norm{\overline{U}^3_0\de_z\widetilde{V}_0}_{H^{2m}}\right).
\end{equation}
With
\begin{equation}
  \norm{\overline U^3_0\de_z\widetilde{V}_0}_{H^{2m}}\lesssim \eps(\beta^{-\frac13}\nu^{-\frac53}\eps^2+\nu^{-\frac23}\eps)\norm{\de_z\widetilde{V}_0}_{H^{2m}}  
\end{equation}
we have that
\begin{equation}
  \int_0^\infty  \norm{\widetilde{V}_0}_{H^{2m}}\norm{\overline U^3_0\de_z\widetilde{V}_0}_{H^{2m}}\lesssim \eps^3\nu^{-1}\cdot(\beta^{-\frac13}\nu^{-\frac53}\eps^2+\nu^{-\frac23}\eps)\lesssim \eps^2(\beta^{-\frac13}\nu^{-\frac83}\eps^3+\nu^{-\frac53}\eps^2).
\end{equation}
Using the decomposition $U^2_0 = U^{2,in}_0+U^{2,nl}_0$ from Lemma \ref{lemma:innl_decomp}, we obtain from \eqref{eq:Upsdecomp_bds} that
\begin{equation}\label{eq:prod_decomp1}
\begin{aligned}
  \norm{U^2_0\de_y \overline V_0}_{H^{2m}}&\lesssim \norm{\l\nabla\r^{2m}U^{2,in}_0}_{L^\infty}\norm{ \de_y\overline{V}_0}_{H^{2m}}+\norm{U^{2,nl}_0}_{H^{2m}}\norm{\de_y\overline{V}_0}_{H^{2m}}\\
  &\lesssim \eps (\beta t)^{-\frac13}\e^{-\nu t}\norm{ \de_y\overline{V}_0}_{H^{2m}}\\
  &\quad +\left(\eps\left(\nu^{-\frac23}\eps+\nu^{-\frac53}\eps^2\right)+\beta^{-\frac13}\eps\left(\nu^{-\frac23}\eps+\nu^{-\frac53}\eps^2+\nu^{-\frac83}\eps^3\right)\right)\norm{ \de_y\overline{V}_0}_{H^{2m}},
\end{aligned}  
\end{equation}
which upon time integration yields
$$
\begin{aligned}
  \int_0^\infty  \norm{\widetilde{V}_0}_{H^{2m}}\norm{U^2_0\de_y \overline V_0}_{H^{2m}}&\lesssim
  \eps^2\left(\nu^{-\frac53}\eps^2+\nu^{-\frac83}\eps^3\right)+\beta^{-\frac13}\eps^2\left(\nu^{-\frac23}\eps+\nu^{-\frac53}\eps^2+\nu^{-\frac83}\eps^3+\nu^{-\frac{11}{3}}\eps^4\right).
\end{aligned}  
$$
Next, we observe that in the second term of \eqref{eq:V0energy_split} the functions $\widetilde{U}^1_0$ and $\widetilde{\Theta}_0$ cannot occur quadratically at highest order: by incompressibility we have
\begin{equation}\label{eq:V0energy_cancel}
  \l F, U^2_0\de_yF+\widetilde{U}^3_0\de_z F\r=0
\end{equation}
and thus
\begin{equation}\label{eq:V0energy_finedecomp}
\begin{aligned}
  \l\widetilde{V}_0, U^2_0\de_y\widetilde{V}_0+\widetilde{U}^3_0\de_z\widetilde{V}_0\r_{H^{2m}} &=\beta^{-1} \l\widetilde{U}^1_0, U^2_0\de_y\widetilde{\Theta}_0+\widetilde{U}^3_0\de_z\widetilde{\Theta}_0\r_{H^{2m}} +\beta^{-1} \l\widetilde{\Theta}_0, U^2_0\de_y\widetilde{U}^1_0+\widetilde{U}^3_0\de_z\widetilde{U}^1_0\r_{H^{2m}} \\ 
  &\quad +\sum_{F\in\widetilde{U}^1_0,\beta^{-1}\widetilde{\Theta}_0}\sum_{\alpha\in\NN_0^2,\abs{\alpha}\leq 2m}\l\de^\alpha\widetilde{V}_0,R_\alpha^2(F)+R_\alpha^3(F)\r
\end{aligned}  
\end{equation}
where the remainder terms $R^j_\alpha$, $j=2,3$, are given by
\begin{equation}
 R_\alpha^2(F)=\sum_{\substack{\gamma_1,\gamma_2\in\NN_0^2,\gamma_1\neq 0,\\ \alpha=\gamma_1+\gamma_2}}\de^{\gamma_1} U^2_0\de_y\de^{\gamma_2} F,\qquad R_\alpha^3(F)=\sum_{\substack{\gamma_1,\gamma_2\in\NN_0^2,\gamma_1\neq 0,\\ \alpha=\gamma_1+\gamma_2}}\de^{\gamma_1} \widetilde{U}^3_0\de_z\de^{\gamma_2} F
\end{equation}
The first term in \eqref{eq:V0energy_finedecomp} can be bounded as
$$
 \abs{\l\widetilde{U}^1_0, U^2_0\de_y\widetilde{\Theta}_0+\widetilde{U}^3_0\de_z\widetilde{\Theta}_0\r_{H^{2m}}}\lesssim \norm{\widetilde{U}^1_0}_{H^{2m}}\left(\sum_{j=2,3}\norm{\widetilde{U}^j_0}_{L^\infty}\norm{\de_j\widetilde{\Theta}_0}_{H^{2m}}+\norm{\widetilde{U}^j_0}_{H^{2m}}\norm{\de_j\widetilde{\Theta}_0}_{L^\infty}\right),   
$$
and it suffices to invoke the $L^\infty$ decay of $\widetilde{U}^j_0$ and $\de_y\widetilde{\Theta}_0$ from Proposition \ref{prop:LinftyGGamma}. For the second term in \eqref{eq:V0energy_finedecomp} we have that
\begin{equation}
\begin{aligned}
  \l\widetilde{\Theta}_0, U^2_0\de_y\widetilde{U}^1_0+\widetilde{U}^3_0\de_z\widetilde{U}^1_0\r_{H^{2m}}&=\l\de_y\widetilde{\Theta}_0, U^2_0\widetilde{U}^1_0\r_{H^{2m}} +\l\de_z\widetilde{\Theta}_0,\widetilde{U}^3_0\widetilde{U}^1_0\r_{H^{2m}}\\
  &=\l\de_y\widetilde{\Theta}_0, U^2_0\widetilde{U}^1_0\r_{H^{2m}} +\l\de_y\de_z\widetilde{\Theta}_0,\de_z^{-1}U^2_0\widetilde{U}^1_0\r_{H^{2m}}\\
  &\quad+\l\de_z\widetilde{\Theta}_0,\de_z^{-1}U^2_0\de_y\widetilde{U}^1_0\r_{H^{2m}},
\end{aligned}  
\end{equation}
and since by \eqref{eq:U2Theta_recover} we have
\begin{equation}
  \norm{\de_y\de_z\widetilde{\Theta}_0}_{H^{2m}}\lesssim \norm{\nabla\Gamma_0}_{H^{2m}}  ,
\end{equation}
we can proceed as above with the decomposition from Lemma \ref{lemma:innl_decomp}, compare \eqref{eq:prod_decomp1}. 
Analogous arguments give the claim for all remainder terms in \eqref{eq:V0energy_finedecomp}: This is clear for all terms involving $R^2_\alpha$, and since $\widetilde{U}_0^3=-\de_z^{-1}\de_yU^2_0$ also for all terms involving $R^3_\alpha$ whenever $\abs{\alpha}<2m$, or when $\abs{\alpha}=2m$ and $\gamma_1\neq \alpha$. To conclude it suffices to observe that upon integration by parts in the last remaining term (with $\alpha=\gamma_1$, $\gamma_2=0$) we obtain
\begin{equation}
\begin{aligned}
  -\l\de^\alpha\widetilde{V}_0,\de^\alpha \de_z^{-1}\de_y U^2_0\de_zF\r= \l\de^\alpha\de_y\widetilde{V}_0,\de^\alpha \de_z^{-1} U^2_0\de_z F\r+\l\de^\alpha\widetilde{V}_0,\de^\alpha \de_z^{-1} U^2_0\de_y\de_z F\r.
\end{aligned}  
\end{equation}
\end{proof}

\begin{proof}[Proof of \eqref{eq:energy_V00}]
 By construction (see \eqref{eq:V_0_dynamics}), $\overline{V}_0$ satisfies
 \begin{equation}\label{eq:V00_dynamics}
 \de_t \overline{V}_0+\de_y(\overline{U^2V})_0=\nu\de_{yy}\overline{V}_0.
 \end{equation}
 As in \eqref{eq:noselfdouble0} we have with \eqref{eq:ulU20=0} that
 \begin{equation}\label{eq:noselfdouble0V}
  (\overline{U^2V})_0=(\overline{U^2_0\widetilde{V}_0})+(\overline{U^2_{\neq} V_{\neq}})_0,
 \end{equation}
 and energy estimates together with \eqref{eq:double0idV} imply that
 \begin{equation}
   \norm{\overline{V}_0(t)}_{H^{2m}}^{2}+\nu\int_0^t\norm{\de_y\overline{V}_0}_{H^{2m}}^2\dd \tau \leq \nu^{-1}\int_0^t  \norm{(\overline{U^2_0\widetilde{V}_0})(\tau)}_{H^{2m}}^2 +\norm{(\overline{U^2_{\neq} V_{\neq}})_0(\tau)}_{H^{2m}}^2\dd\tau.
 \end{equation}
Decomposing $U^2_0 = U^{2,in}_0+U^{2,nl}_0$ as in Lemma \ref{lemma:innl_decomp}, it follows that
\begin{equation}
\begin{aligned}
 \norm{(\overline{U^2_0\widetilde{V}_0})(\tau)}_{H^{2m}}&\lesssim \norm{\l\nabla\r^{2m}U_0^{2,in}(\tau)}_{L^\infty}\norm{ \widetilde{V}_0(\tau)}_{H^{2m}}+\norm{U_0^{2,nl}(\tau)}_{H^{2m}}\norm{\widetilde{V}_0(\tau)}_{H^{2m}}\\
&\lesssim (\beta t)^{-\frac13}\e^{-\nu t}\eps^2 + \eps\left[\nu^{-\frac23}\eps+\nu^{-\frac53}\eps^2\right]\norm{\widetilde{V}_0(\tau)}_{H^{2m}}\\
&\qquad +\beta^{-\frac13}\eps\left[\nu^{-\frac23}\eps+\nu^{-\frac53}\eps^2+\nu^{-\frac83}\eps^3\right]\norm{\widetilde{V}_0(\tau)}_{H^{2m}}.
\end{aligned} 
\end{equation}
Integrating in time and using dissipation (since $l\neq0$) leads to 
$$
    \nu^{-1}\int_0^\infty \norm{(\overline{U^2_0\widetilde{V}_0})(\tau)}^2_{H^{2m}}
   \lesssim  \eps^2\left[\nu^{-\frac53}\eps^2+\nu^{-\frac83}\eps^3\right]^2+\beta^{-\frac23}\eps^2\left[\nu^{-\frac23}\eps+\nu^{-\frac53}\eps^2+\nu^{-\frac83}\eps^3+\nu^{-\frac{11}{3}}\eps^4\right]^2.
$$
Observing that
\begin{equation}
 \norm{(\overline{U^2_{\neq} V_{\neq}})_0(\tau)}_{H^{2m}}\lesssim \norm{\cA \GG_{\neq}(\tau)}^2+\norm{\cA \Gamma_{\neq}(\tau)}^2 +\norm{\cA U^1_{\neq}(\tau)}^2
\end{equation}
and integrating in time (using also \eqref{eq:enhanced_diss}) gives the claim.
\end{proof}

\subsection{Control of zero modes -- proof of Proposition \ref{prop:zero}}\label{ssec:btstrap_zero}
In this section we show how the simple zero modes $\GG_0$, $\Gamma_0$ and (as a consequence) $\widetilde{U}^3_0$ can be controlled. Invoking again the energy structure for the symmetrized variables $\GG_0$ and $\Gamma_0$, we have that
\begin{equation}\label{eq:G0Gamma0_energy_eq}
\begin{aligned}
\frac12\ddt\left(\norm{\GG_0}^2_{H^{2m}}+\norm{\Gamma_0}^2_{H^{2m}}\right) &=-\nu\left[\norm{\nabla\GG_0}^2_{H^{2m}}+\norm{\nabla\Gamma_0}^2_{H^{2m}}\right]  + \l\GG_0,\,\cT_\star(U,U^2)\r_{H^{2m}}\\
&\qquad + \langle \GG_0,\,\de_y\cP_\star(U,U)\rangle_{H^{2m}}+ \langle \Gamma_0,\,\cT_\circ(U,U^2)\rangle_{H^{2m}}.
\end{aligned}
\end{equation}
The nonlinear interactions satisfy
\begin{equation}
 \cB(U,F)_0=\cB(U_0,F_0)_0 +\cB(U_{\neq},F_{\neq})_0,\qquad \cB\in\{\cT_\star,\cT_\circ,\cP_\star\}, F\in\{U,U^2\}, 
\end{equation}
which is analogous to the decomposition employed in \eqref{def:cTstar_decomposition}, \eqref{def:cTcirc_decomposition} and \eqref{def:cPstar_decomposition}.
We will show:
\begin{lemma}\label{lem:G0Gamma0_energy}
  Under the assumptions of Theorem \ref{thm:bootstrap_step}, there holds that
  \begin{equation}\label{eq:allneqs}
    \int_0^\infty \l\GG_0,\,\cT_\star(U_{\neq},U^2_{\neq})\r_{H^{2m}}+\langle \GG_0,\,\de_y\cP_\star(U_{\neq},U_{\neq})\rangle_{H^{2m}}+\langle \Gamma_0,\,\cT_\circ(U_{\neq},U^2_{\neq})\rangle_{H^{2m}}\lesssim (\nu^{-\frac34}\eps)\eps^2,
  \end{equation}
  and
  \begin{equation}\label{eq:allzeros}  
  \begin{aligned}
    &\int_0^\infty  \l\GG_0,\,\cT_\star(U_0,U^2_0)\r_{H^{2m}}+\langle \GG_0,\,\de_y\cP_\star(U_0,U_0)\rangle_{H^{2m}}+\langle \Gamma_0,\,\cT_\circ(U_0,U^2_0)\rangle_{H^{2m}}\\
    &\qquad\qquad \lesssim \eps^2\left(\nu^{-\frac12}\eps+\nu^{-\frac53}\eps^2\right)+\beta^{-\frac13}\eps^2\left(\nu^{-\frac56}\eps+\nu^{-\frac53}\eps^2+\nu^{-\frac83}\eps^3\right).
  \end{aligned}
  \end{equation}
\end{lemma}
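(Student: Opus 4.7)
The argument combines the trilinear machinery developed for Lemma \ref{lem:GGammanonlin} in the nonzero-mode setting with the dispersive and double-zero decay estimates collected in Proposition \ref{prop:LinftyGGamma} and Lemmas \ref{lemma:innl_decomp}--\ref{lemma:zzeroU3theta}. Since only $H^{2m}$ bounds are needed here, the contributions associated to $\GG_0$ and $\Gamma_0$ are treated analogously, and no symmetric energy functional (in the spirit of \eqref{eq:GGamma_energy}) is required.

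For \eqref{eq:allneqs}, split each nonlinearity into modal pieces as in \eqref{def:cTstar_decomposition}--\eqref{def:cPstar_decomposition}. In each resulting trilinear form, transfer the fractional operators $|\nabla_{x,z}|^{\pm 1/2}|\nabla_L|^{\pm 1/2}$ onto the zero-mode factor via Plancherel and the Riesz-type bounds \eqref{eq:Riesz_bounds}, distribute derivatives as in \eqref{eq:I_1234}, and apply \eqref{eq:prod_ineq_A} to reduce to schematic trilinears of the form
\begin{equation}
    \int_0^\infty \norm{F_0}_{H^{2m}}\,\norm{\cA U_{\neq}}\,\norm{\nabla_L\cA H_{\neq}}\dd t,\qquad F\in\{\GG,\Gamma\},\quad H\in\{U,\Theta\},
\end{equation}
up to extra time decay extracted from $|\nabla_L|^{-a}$-type multipliers via \eqref{eq:time_extraction}. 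Combining Corollary \ref{cor:ED_bounds} (giving $\norm{\cA U_{\neq}}_{L^2_tL^2}\lesssim\eps\nu^{-1/6}$), the bootstrap bound $\norm{\nabla_L\cA H_{\neq}}_{L^2_tL^2}\lesssim\eps\nu^{-1/2}$, and $\norm{F_0}_{L^\infty_tH^{2m}}\lesssim\eps$, these produce at worst $\eps^3\nu^{-2/3}$, which fits within the budget $(\nu^{-3/4}\eps)\eps^2$.

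For \eqref{eq:allzeros}, the delicate step, each transport contribution $\cB(U_0,F_0)_0$ is written schematically as
\begin{equation}
    U_0\cdot\nabla F_0=U^2_0\de_y\widetilde F_0+U^2_0\de_y\overline F_0+\widetilde U^3_0\de_z F_0+\overline U^3_0\de_z F_0,
\end{equation}
using $\overline U^2_0\equiv 0$ from \eqref{eq:xz-meanU2}, and similarly for $\cP_\star(U_0,U_0)_0$ expanded as in \eqref{def:cPstar_decomposition}. Any factor involving $U^2_0$ is split via Lemma \ref{lemma:innl_decomp} as $U^2_0=U^{2,in}_0+U^{2,nl}_0$: the initial piece decays pointwise at rate $(\beta t)^{-1/3}\e^{-\nu t}$, which is $L^2_t$-integrable of order $\beta^{-1/3}\nu^{-1/6}$, and paired with dissipative $L^2_tH^{2m+1}$-control of $\GG_0$ or $\Gamma_0$ and the $L^\infty_tH^{2m}$-bound on the remaining factor, it contributes $\beta^{-1/3}\eps^3\nu^{-5/6}$. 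The nonlinear piece $U^{2,nl}_0$ is directly bounded in $L^\infty_tH^{2m}$ by the right-hand side of \eqref{eq:Upsdecomp_bds}, which multiplied against the other factor yields the remaining contributions in \eqref{eq:allzeros}. For $\widetilde U^3_0\de_z F_0$ use incompressibility $\widetilde U^3_0=-\de_z^{-1}\de_yU^2_0$ and integration by parts in $z$ to reduce to an expression controlled by the same decomposition. For $\overline U^3_0\de_z F_0$ and $U^2_0\de_y\overline F_0$, apply Lemma \ref{lemma:zzeroU3theta} to bound the double-zero factor in $L^\infty_tH^{2m}$ (resp.\ $L^\infty_tH^{2m+1}$ for $\overline\Theta_0$) by $\eps(\nu^{-2/3}\eps+\beta^{-1/3}\nu^{-5/3}\eps^2)$; combined with the remaining factor this produces the $\nu^{-5/3}\eps^3$ and $\beta^{-1/3}\nu^{-8/3}\eps^4$ contributions.

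The main obstacle is the careful distribution of derivatives so that whenever the full $H^{2m}$ load lands on a simple zero mode, it can still be estimated via its dissipation or via one of the decompositions above; in particular, when the $\de_y$ inside $\cT_\star,\cT_\circ,\cP_\star$ combines with all $H^{2m}$ derivatives on $\Theta_0=\overline\Theta_0+\widetilde\Theta_0$, the enhanced control at order $H^{2m+1}$ for $\overline\Theta_0$ from \eqref{eq:bootstrap_doublezero_Theta}, together with $\norm{\nabla\Gamma_0}_{L^2_tH^{2m}}\lesssim\eps\nu^{-1/2}$, is indispensable to avoid a derivative loss. Aggregating all contributions yields \eqref{eq:allzeros}.
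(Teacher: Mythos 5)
Your proposal captures the right structural ideas for the $(\neq,\neq)$ part \eqref{eq:allneqs}: both you and the paper reduce these terms to the nonzero-mode trilinear estimates of Sections \ref{sssec:cTstarGGamma}--\ref{sssec:cPGGamma}, replacing $\cA$ by $\l\nabla\r^{2m}$ acting on the zero modes. One small inaccuracy: your schematic trilinear with factors $\|F_0\|_{L^\infty_t H^{2m}}\cdot\|\cA U_{\neq}\|_{L^2_tL^2}\cdot\|\nabla_L\cA H_{\neq}\|_{L^2_tL^2}$ gives $\nu^{-2/3}\eps^3$, but the paper's sharpest bound for, e.g., $\cT^1_\star(U_{\neq},U^2_{\neq})$ requires a finer four-factor interpolation (see the display after \eqref{eq:J1234}) and genuinely hits $\nu^{-3/4}\eps^3$; your schematic underestimates what can occur, even though the stated conclusion $(\nu^{-3/4}\eps)\eps^2$ is consistent.

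For \eqref{eq:allzeros} you take a route that differs from the paper's, and it contains a gap. The paper controls the factor $U^2_0$ (and $\widetilde\Theta_0$, $\widetilde U^3_0$) through the pointwise decay of Proposition \ref{prop:LinftyGGamma} combined with the interpolation bound $\norm{|\de_z|^{-1/2}|\nabla_{y,z}|^{1/2}\GG_0}_{H^{2m}}\lesssim\norm{\GG_0}_{H^{2m}}^{1/2}\norm{\nabla\GG_0}_{H^{2m}}^{1/2}$, which extracts fractional powers of the dissipative quantity $\norm{\nabla\GG_0}_{L^2_tH^{2m}}$ on both sides of the inner product. The resulting expression $\norm{\GG_0}^{1/2}\norm{\nabla\GG_0}^{3/2}\norm{U^2_0}_{L^\infty}$ is then made time-integrable by a careful H\"older split at $t=\nu$ (as in the paper's footnote), using the crude bound $\norm{U^2_0}_{L^\infty}\lesssim\norm{\GG_0}_{H^{2m}}$ for small times and the dispersive decay for large times. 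You instead invoke Lemma \ref{lemma:innl_decomp} to decompose $U^2_0=U^{2,in}_0+U^{2,nl}_0$ -- a lemma the paper reserves for the $V_0$-analysis in Propositions \ref{prop:zero}--\ref{prop:doublezero}, not for \eqref{eq:allzeros}. For $U^{2,in}_0$ your argument works: it decays in time, so pairing its $L^2_t$-integrable $L^\infty$-bound against the dissipative $L^2_t$-control of $\nabla\GG_0$ and the $L^\infty_tH^{2m}$-bound on the remaining factor gives $L^1_t$-integrability. But for $U^{2,nl}_0$ you only have an $L^\infty_t H^{2m}$-bound with no time decay, and the phrase ``multiplied against the other factor'' does not resolve the integrability: a product of two $L^\infty_t$-factors and a single $L^2_t$-factor (the dissipative one) is $L^2_t$, not $L^1_t$. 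Without the interpolation step that transfers half a derivative to $\GG_0$ (producing a $\norm{\nabla\GG_0}^{3/2}$ weight), and without the associated time-splitting H\"older estimate, the argument for the constant-in-time contribution does not close. You should either adopt the paper's route via Proposition \ref{prop:LinftyGGamma} and the interpolation/time-split, or give an explicit mechanism for the missing time-integrability of the $U^{2,nl}_0$-piece.
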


By \eqref{eq:G0Gamma0_energy_eq} this implies that 
\begin{equation}\label{eq:G0Gamma0_energy}
    \norm{\GG_0}^2_{L^\infty_tH^{2m}}+\norm{\Gamma_0}^2_{L^\infty_tH^{2m}}+\nu\norm{\nabla\GG_0}^2_{L^2_tH^{2m}}+\nu\norm{\nabla\Gamma_0}^2_{L^2_tH^{2m}}  \leq \eps_0^2+C_3(\nu^{-\frac56}\eps+\beta^{-\frac13}\nu^{-\frac89}\eps)\eps^2.
  \end{equation}
As a simple consequence, we also have that
\begin{equation}\label{eq:U30_energy}
  \norm{\widetilde{U}^3_0}^2_{L^\infty_tH^{2m}}+\nu\norm{\nabla \widetilde{U}^3_0}^2_{L^2_tH^{2m}} \leq \eps_0^2+C_3(\nu^{-\frac56}\eps+\beta^{-\frac13}\nu^{-\frac89}\eps)\eps^2,  
\end{equation}
which together with Lemma \eqref{eq:G0Gamma0_energy} completes the proof of Proposition \ref{prop:zero}.
\begin{proof}[Proof of \eqref{eq:U30_energy}]
 It suffices to observe that by \eqref{eq:U2Theta_recover} and incompressibility there holds that
 \begin{equation}
   \widetilde{U}^3_0=-\de_z^{-1}\de_yU^2_0=\de_z^{-1}\de_y\abs{\de_z}^{\frac12}\abs{\nabla_{y,z}}^{-\frac32}\GG_0.
 \end{equation}
\end{proof}

\begin{proof}[Proof of Lemma \ref{lem:G0Gamma0_energy}]
  We establish the bound \eqref{eq:allneqs} in Section \ref{ssec:allneqs}, while \eqref{eq:allzeros} is demonstrated in Section \ref{ssec:allzeros}.
\end{proof}

\subsubsection{Nonlinear terms analysis: $(\neq,\neq)$ interactions}\label{ssec:allneqs}
Here we prove the bound \eqref{eq:allneqs}. We make use of the decompositions \eqref{def:cTstar_decomposition}, \eqref{def:cTcirc_decomposition} and \eqref{def:cPstar_decomposition} and introduced in Section \ref{ssec:GGamma_neq}.

By definition we have that 
\begin{equation}
\langle \GG_0,\cT_\star(U_{\neq},U^2_{\neq})\rangle_{H^{2m}}=\langle \GG_0,|\de_z|^{-\frac12}|\nabla_{y,z}|^{\frac32}(U_{\neq}\cdot \nabla_L |\nabla_{x,z}|^{\frac12}|\nabla_L|^{-\frac32}\GG_{\neq})\r_{H^{2m}}.
\end{equation}
This term shares the same structure as the one in Section \ref{sssec:cTstarGGamma}, whose bounds are dictated by the behaviour of the nonzero frequencies interactions. For this reason, we can mirror the arguments presented there to deduce that 
\begin{subequations}
    \begin{align}
       \int_0^\infty|\langle \GG_0,\cT^1_\star(U_{\neq},U^2_{\neq})\rangle_{H^{2m}}| &\lesssim (\nu^{-\frac34}\eps)\eps^2,\\
       \int_0^\infty|\langle \GG_0,\cT^2_\star(U_{\neq},U^2_{\neq})\rangle_{H^{2m}}| &\lesssim (\nu^{-\frac12}\eps)\eps^2,\\
       \int_0^\infty|\langle \GG_0,\cT^3_\star(U_{\neq},U^2_{\neq})\rangle_{H^{2m}}| &\lesssim (\nu^{-\frac34}\eps)\eps^2.
    \end{align}
\end{subequations}
Similarly, the term 
\begin{align}
\l \Gamma_0, \cT_\circ(U_{\neq},\Theta_{\neq})\r_{H^{2m}}=\langle \Gamma_0, |\de_z|^{\frac12}|\nabla_{y,z}|^{\frac12}(U_{\neq}\cdot \nabla_L |\nabla_{x,z}|^{-\frac12}|\nabla_L|^{-\frac12}\Gamma_{\neq}) \rangle_{H^{2m}},
\end{align}
closely resembles $\cT_\circ(U_{\neq},\Theta_{\neq})_{\neq}$, treated in Section \ref{sssec:cTcircGGamma}, and again we deduce
\begin{subequations}
    \begin{align}
    \int_0^\infty|\l \Gamma_0, \cT^1_\circ(U_{\neq},\Theta_{\neq}) \r_{H^{2m}}| &\lesssim (\nu^{-\frac23}\eps)\eps^2,\\
    \int_0^\infty|\l \Gamma_0,\cT^2_\circ(U_{\neq},\Theta_{\neq})\r_{H^{2m}}| &\lesssim (\nu^{-\frac12}\eps)\eps^2,\\
     \int_0^\infty|\l \Gamma_0, \cT^3_\circ(U_{\neq},\Theta_{\neq}) \r_{H^{2m}}| &\lesssim (\nu^{-\frac23}\eps)\eps^2.
    \end{align}
\end{subequations}

Finally, the nonlinear term arising from the pressure is
\begin{equation}
\l \GG_0, \de_y\cP_\star(U_{\neq},U_{\neq})\r_{H^{2m}}=\langle \GG_0, |\de_z|^{-\frac12}|\nabla_{y,z}|^{-\frac12}\de_y(\de_i^LU_{\neq}^j\de_j^LU^i_{\neq})_0\rangle_{H^{2m}},
\end{equation}
for $i,j\in\{1,2,3\}$ with the usual convention $\de_1^L=\de_x$, $\de_2^L=\de_y^L$ and $\de_3^L=\de_z$.
Comparing this to $\cP_\star(U_{\neq},U_{\neq})_{\neq}$ of Section \ref{sssec:cPGGamma} and remarking that it is possible to mimic the computations done in the nonzero case, this implies the bounds
\begin{subequations}
    \begin{align}
    \int_0^\infty |\l \GG_0, \cP_\star^{1,r}(U_{\neq},U_{\neq})\r_{H^{2m}}| &\lesssim (\nu^{-\frac23}\eps)\eps^2, \quad r=1,3,\\
    \int_0^\infty |\l \GG_0, \cP_\star^{3,3}(U_{\neq},U_{\neq})\r_{H^{2m}}| &\lesssim (\nu^{-\frac23}\eps)\eps^2,\\
    \int_0^\infty |\l \GG_0, \cP_\star^{r,2}(U_{\neq},U_{\neq})\r_{H^{2m}}| &\lesssim (\nu^{-\frac34}\eps)\eps^2,\quad r=1,3,\\
    \int_0^\infty |\l \GG_0, \cP_\star^{2,2}(U_{\neq},U_{\neq})\r_{H^{2m}}| &\lesssim (\nu^{-\frac13}\eps)\eps^2.
    \end{align}
\end{subequations}

\subsubsection{Nonlinear terms analysis: $(0,0)$ interactions}\label{ssec:allzeros}
We are now left with establishing the bound \eqref{eq:allzeros} for the $(0,0)$ interactions of $\cT_\star$, $\cT_\circ$ and $\de_y\cP_\star$. These are comparatively delicate, as the terms do not dissipate at an enhanced rate or feature inviscid damping type decay. Instead we have to rely on dispersive decay.

Beginning with $\cT_\star$, since $\de_xU^2_0=0$ there holds that
\begin{equation}
\begin{aligned}
  \cT_\star(U_0,U^2_0)&=\cT^2_\star(U_0,U^2_0)+\cT^3_\star(U_0,U^2_0) \\
  &=|\de_z|^{-\frac12}|\nabla_{y,z}|^{\frac32}(U_0^2\de_y U^2_0)+|\de_z|^{-\frac12}|\nabla_{y,z}|^{\frac32}(U_0^3\de_z U^2_0).
\end{aligned}  
\end{equation}
We can treat $\cT^2_\star(U_0,U^2_0)_0$ directly by estimating
$$
\begin{aligned}
  |\l \GG_0, \cT^2_\star(U_0,U^2_0) \r_{H^{2m}}| &\lesssim \norm{|\de_z|^{-\frac12}|\nabla_{y,z}|^{\frac12} \GG_0}_{H^{2m}}\norm{U^2_0\de_yU^2_0}_{H^{2m+1}}\\
  &\lesssim \norm{\GG_0}_{H^{2m}}^{\frac12}\norm{\nabla\GG_0}_{H^{2m}}^{\frac12}\left(\norm{U_0^2}_{L^\infty}\norm{\de_yU_0^2}_{H^{2m+1}}+\norm{U_0^2}_{H^{2m+1}}\norm{\de_yU_0^2}_{L^\infty}\right)\\
  &\lesssim \norm{\GG_0}_{H^{2m}}^{\frac12}\norm{\nabla\GG_0}_{H^{2m}}^{\frac32}\norm{U_0^2}_{L^\infty}+\norm{\GG_0}_{H^{2m}}^{\frac32}\norm{\nabla\GG_0}_{H^{2m}}^{\frac12}\norm{\de_yU_0^2}_{L^\infty},
\end{aligned}  
$$
which upon integration in time (invoking the crude bound $\norm{U^2_0}_{L^\infty}\lesssim\norm{G_0}_{H^{2m}}$ and Proposition \ref{prop:LinftyGGamma} for small and large times, respectively\footnote{
We have that
$$
     \int_0^\infty \min\{1,\beta^{-\frac13}t^{-\frac13}\e^{-\nu t}\}\norm{\nabla\GG_0}^{\frac32}_{H^{2m}}\lesssim \int_0^\nu \norm{\nabla\GG_0}^{\frac32}_{H^{2m}} +\int_\nu^\infty t^{-\frac13}\e^{-\nu t}\norm{\nabla\GG_0}^{\frac32}_{H^{2m}}\lesssim (\nu^{-\frac12}+\beta^{-\frac13}\nu^{-\frac56})\eps^\frac32.
$$
}) yields that
\begin{equation}
    \int_0^\infty|\l\GG_0, \cT^2_\star(U_0,U^2_0) \r_{H^{2m}}| \lesssim \left(\nu^{-\frac12}\eps+\beta^{-\frac13}(\nu^{-\frac56}\eps+\nu^{-\frac53}\eps^2)\right)\eps^2.
\end{equation}

For $\cT^3_\star(U_0,U^2_0)$ we start by decomposing
\begin{equation}
  \cT^3_\star(U_0,U^2_0)=\cT^3_\star(\widetilde{U}_0,U^2_0)+\cT^3_\star(\overline{U}_0,U^2_0).  
\end{equation}
Using incompressibility to write $\widetilde{U}^3_0=-\de_z^{-1}\de_yU^2_0$, we have 
\begin{equation}
  \l \GG_0,\cT^3_\star(\widetilde{U}_0,U^2_0)\r_{H^{2m}}=-\l \GG_0,|\de_z|^{-\frac12}|\nabla_{y,z}|^{\frac32}(\de_z^{-1}\de_yU^2_0\de_zU^2_0)\r_{H^{2m}}, 
\end{equation}
which can be bounded as above to yield
\begin{equation}
    \int_0^\infty|\l\GG_0, \cT^3_\star(\widetilde{U}_0,U^2_0) \r_{H^{2m}}| \lesssim \left(\nu^{-\frac12}\eps+\beta^{-\frac13}(\nu^{-\frac56}\eps+\nu^{-\frac53}\eps^2)\right)\eps^2.
\end{equation}
Next, we have that
\begin{align*}
  \abs{\l\GG_0,\cT^3_\star(\overline{U}_0,U^2_0)\r_{H^{2m}}}&=\abs{\l \GG_0,|\de_z|^{-\frac12}|\nabla_{y,z}|^{\frac32}(\overline{U}^3_0\de_z|\de_z|^{\frac12}|\nabla_{y,z}|^{-\frac32}\GG_0)\r_{H^{2m}}}\\
  &\lesssim \norm{|\de_z|^{-\frac12}|\nabla_{y,z}|^{\frac12}\GG_0}_{H^{2m}}\left(\norm{\de_y\overline{U}^3_0}_{H^{2m}}\norm{\GG_0}_{H^{2m}}+\norm{\overline{U}^3_0}_{H^{2m}}\norm{\nabla \GG_0}_{H^{2m}}\right)\\
  &\lesssim \norm{\nabla\GG_0}_{H^{2m}}^{\frac12}\norm{\GG_0}_{H^{2m}}^{\frac12}\left(\norm{\de_y\overline{U}^3_0}_{H^{2m}}\norm{\GG_0}_{H^{2m}}+\norm{\overline{U}^3_0}_{H^{2m}}\norm{\nabla \GG_0}_{H^{2m}}\right).
\end{align*}  
Using the energy estimates for the double zero mode $\overline{U}^3_0$ from Lemma \ref{lemma:zzeroU3theta} and integrating in time gives
\begin{equation}
    \int_0^\infty|\l\GG_0, \cT^3_\star(\overline{U}_0,U^2_0) \r_{H^{2m}}| \lesssim (\beta^{-\frac13}\nu^{-\frac83}\eps^3 +\nu^{-\frac53}\eps^2)\eps^2.
\end{equation}

To treat $\cT_\circ(U_0,\Theta_0)$, we note that
\begin{equation}
  \cT_\circ(U_0,\Theta_0)=\cT^2_\circ(U_0,\Theta_0)+\cT^3_\circ(U_0,\Theta_0) , 
\end{equation}
and since $\Gamma_0=\widetilde{\Gamma}_0$ self-interactions of double zero modes do not contribute to $\l \Gamma_0,\cT_\circ(U_0,\Theta_0)\r$. That is,
\begin{equation}
  \l \Gamma_0,\cT^2_\circ(U_0,\Theta_0)\r_{H^{2m}}=\l \Gamma_0,\cT^2_\circ(U_0,\widetilde{\Theta}_0)\r_{H^{2m}}+\l \Gamma_0,\cT^2_\circ(U_0,\overline{\Theta}_0)\r_{H^{2m}}  
\end{equation}
and (since $\de_z\overline{\Theta}_0=0$)
\begin{equation}
  \l \Gamma_0,\cT_\circ^3(U_0,\Theta_0)\r_{H^{2m}}=\l \Gamma_0,\cT^3_\circ(U_0,\widetilde{\Theta}_0)\r_{H^{2m}}=\l \Gamma_0,\cT^3_\circ(\overline{U}_0,\widetilde{\Theta}_0)\r_{H^{2m}}+\l \Gamma_0,\cT^3_\circ(\widetilde{U}_0,\widetilde{\Theta}_0)\r_{H^{2m}},  
\end{equation}
which can all be treated as above.

Finally for $\cP_\star$ we have, using that $\de_x U_0=0$, the symmetry \eqref{eq:pressure_symmetry} and incompressibility,
\begin{align}
\l \GG_0, \de_y\cP_\star(U_0,U_0)\r_{H^{2m}}&=\langle \GG_0, |\de_z|^{-\frac12}|\nabla_{y,z}|^{-\frac12}\de_y(\de_iU_0^j\de_jU^i_0)\rangle_{H^{2m}}\notag\\
&=2\l \GG_0, \de_y\cP^{2,2}_\star(U_0,U_0)\r_{H^{2m}} +2\l \GG_0, \de_y\cP^{2,3}_\star(U_0,U_0)\r_{H^{2m}}.
\end{align}
The first term can simply be bounded as
\begin{equation}
|\l \GG_0, \de_y\cP^{2,2}_\star(U_0,U_0)\r_{H^{2m}}|\lesssim\norm{|\de_z|^{-\frac12}|\nabla_{y,z}|^{-\frac12}\de_y\GG_0}_{H^{2m}}\norm{\de_yU_0^2}_{L^\infty}\norm{\de_yU^2_0}_{H^{2m}}.
\end{equation}
Invoking the decay from Proposition \ref{prop:LinftyGGamma} and integrating in time gives
\begin{equation}
    \int_0^\infty |\l\GG_0, \de_y\cP^{2,2}_\star(U_0,U_0) \r_{H^{2m}}| \lesssim \beta^{-\frac13}(\nu^{-\frac23}\eps+\nu^{-\frac53}\eps^2)\eps^2.
\end{equation}
On the other hand, we note that double zero modes appear only linearly in $\cP^{2,3}_\star(U_0,U_0)$, i.e.\
\begin{equation}
  \cP^{2,3}_\star(U_0,U_0)=\cP^{2,3}_\star(\overline{U}_0,U_0)+\cP^{2,3}_\star(\widetilde{U}_0,U_0).  
\end{equation}
As before we thus have
\begin{equation}
    \int_0^\infty |\l\GG_0, \de_y\cP^{2,3}_\star(\widetilde{U}_0,U^2_0) \r_{H^{2m}}| \lesssim \beta^{-\frac13}(\nu^{-\frac23}\eps+\nu^{-\frac53}\eps^2)\eps^2.
\end{equation}
Finally, 
\begin{equation}
\begin{aligned}
|\l \GG_0, \de_y\cP^{2,3}_\star(\overline{U}_0,U_0)\r_{H^{2m}}|&\lesssim\norm{|\de_z|^{-\frac12}|\nabla_{y,z}|^{-\frac12}\de_y\GG_0}_{H^{2m}}\norm{\de_y\overline{U}_0^3}_{H^{2m}}\norm{\de_zU^2_0}_{H^{2m}}\\
&\lesssim \norm{\GG_0}_{H^{2m}}^{\frac32}\norm{\nabla\GG_0}_{H^{2m}}^{\frac12}\norm{\de_y\overline{U}_0^3}_{H^{2m}},
\end{aligned}
\end{equation}
and we use the energy bounds from Lemma \ref{lemma:zzeroU3theta} to conclude that
\begin{equation}
    \int_0^\infty|\l\GG_0, \de_y\cP^{2,3}_\star(\overline{U}_0,U^2_0) \r_{H^{2m}}| \lesssim (\beta^{-\frac13}\nu^{-\frac83}\eps^3 +\nu^{-\frac53}\eps^2)\eps^2.
\end{equation}

\addtocontents{toc}{\protect\setcounter{tocdepth}{0}}
\section*{Acknowledgments}
The authors would like to thank J.\ Bedrossian and M.\ Dolce for illuminating discussions.
The research of MCZ was partially supported by the Royal Society URF\textbackslash R1\textbackslash 191492 and EPSRC Horizon Europe Guarantee EP/X020886/1. KW gratefully acknowledges support of the SNSF through grant PCEFP2\_ 203059.

\addtocontents{toc}{\protect\setcounter{tocdepth}{1}}

\bibliographystyle{abbrv}
\bibliography{CZDZW-Nonlinear3DBoussinesq.bib}

\end{document}